\newtheorem{thm}{Theorem}
\newtheorem{thr}{Theorem}[section]
\newtheorem{cor}[thr]{Corollary}
\newtheorem{lem}[thr]{Lemma}
\newtheorem{prop}[thr]{Proposition}
\theoremstyle{definition}
\newtheorem{defn}[thr]{Definition}
\theoremstyle{remark}
\newtheorem{rem}[thr]{Remark}
\numberwithin{equation}{section}
\newtheorem{example}{Example}
\newcommand{\abs}[1]{\left\vert#1\right\vert}
\newcommand{\set}[1]{\left\{#1\right\}}
\newcommand{\knotgroup}{\pi}
\newcommand{\Too}{\longrightarrow}
\newcommand{\M}{M}
\newcommand{\V}{V}
\newcommand{\fs}{\footnotesize}
\newcommand{\seq}{\sim}
\newcommand{\ass}{\stackrel{\textup{\tiny def}}{=}} 
\newcommand{\Hash}{\thinspace\begin{minipage}{8pt}\includegraphics[width=8pt]{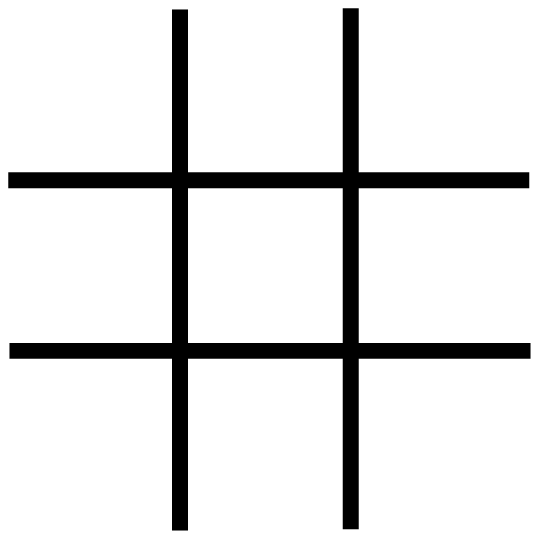}\end{minipage}\thinspace}
\newcommand{\lblY}[4]{\ \begin{minipage}{#4pt}\psfrag{a}[c]{\small$#1$}\psfrag{b}[c]{\small$#2$}\psfrag{c}[c]{\small$#3$}\includegraphics[width=#4pt]{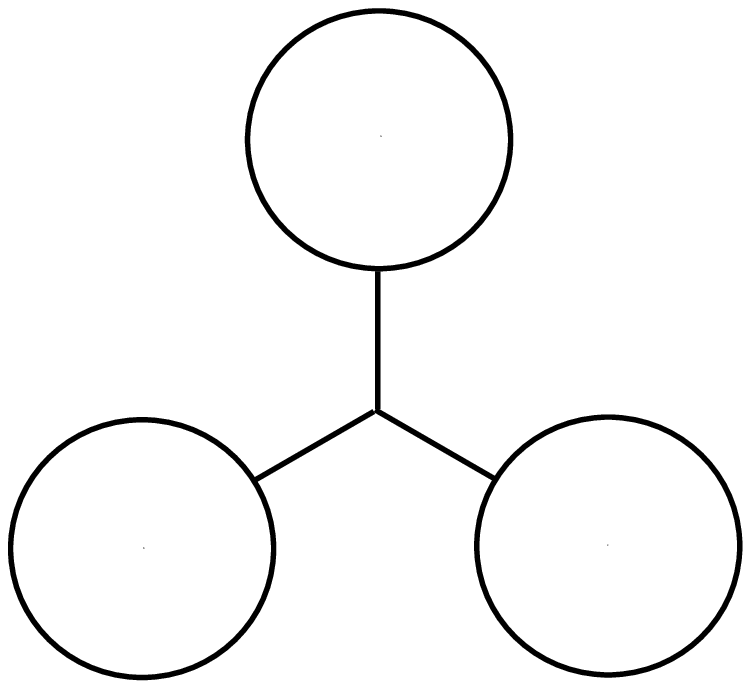}\end{minipage}\ }
\newcommand{\genusoneknot}[6]{\left(\rule{0pt}{18pt}\begin{bmatrix} #1 & #2\\ #3 & #4 \end{bmatrix}\raisebox{-5.5pt}{\huge{,}}\normalsize \begin{pmatrix} #5\\ #6 \end{pmatrix}\right)}
\def\ssa{{\text{=\rm \raisebox{0.03ex}{:}\vspace{-0.05ex}}}}
\def\co{\colon\thinspace}
\DeclareMathOperator{\Rank}{Rank}
\DeclareMathOperator{\Link}{Link}
\DeclareMathOperator{\Ab}{Ab}
\DeclareMathOperator{\Lk}{Lk}
\begin{document}

\title[Surgery Presentations for Coloured Knots]
{Surgery Presentations for Knots Coloured by Metabelian Groups}
\author{Daniel Moskovich}
\address{Department of Mathematics, University of Toronto, 40 St. George Street, Toronto, Ontario, Canada M5S 2E4}%
\email{ddmoskov@math.toronto.edu}%
\urladdr{http://www.sumamathematica.com/}

\thanks{The author would like to thank Tomotada Ohtsuki, Kazuo Habiro, Andrew Kricker, Julius Shaneson, Alexander Stoimenow, and Najmuddin Fakhruddin for helpful discussions, and also Charles Livingston, Kent Orr, Stefan Friedl and Steven Wallace for useful comments and for pointing out references. The bulk of this work was done with the support of a JSPS Research Fellowship for Young Scientists.}

\date{28th of December, 2010}
\subjclass{57M12, 57M25}%

\begin{abstract}
A $G$--coloured knot $(K,\rho)$ is a knot $K$ together with a representation $\rho$ of its knot group onto $G$. Two $G$--coloured knots are said to be $\rho$--equivalent if they are related by surgery around $\pm1$--framed unknots in the kernels of their colourings. The induced local move is a $G$--coloured analogue of the crossing change. For certain families of metabelian groups $G$, we classify $G$--coloured knots up to $\rho$--equivalence. Our method involves passing to a problem about $G$--coloured analogues of Seifert matrices.
\end{abstract}
\subjclass{57M12, 57M25}%
\maketitle

\section{Introduction}\label{S:Intro}

\subsection{Preamble}\label{SS:Results}

One of the fundamental facts in knot theory is that any knot can be untied by crossing changes, and that crossing changes are realized by surgery around $\pm1$--framed unknots. For $G$--coloured knots, where $G$ is a group, twist moves as in Figure \ref{F:FRMove} take the place of crossing changes, and these are realized by surgery around $\pm1$--framed unknots in the kernel of the $G$--colouring. Two $G$--coloured knots are said to be \emph{$\rho$--equivalent} if they are related, up to ambient isotopy, by a sequence of twist moves. How many $\rho$--equivalence classes of $G$--coloured knots are there? What distinguishes one from another?\par

In \cite{KM09}, Kricker and I considered the case of $G$ a dihedral group $D_{2n}=\mathcal{C}_2\ltimes\mathds{Z}/n\mathds{Z}$. We proved that the number of $\rho$--equivalence classes of $D_{2n}$-coloured knots is $n$. These are told apart by the \emph{coloured untying invariant}, an algebraic invariant of $\rho$--equivalence classes defined in terms of \emph{surface data} (see \cite{Mos06b}). Surface data is the analogue for a $G$--coloured knot of a Seifert matrix. Our proof was constructive, in the sense that it provided an explicit sequence of twist moves to relate each $D_{2n}$-coloured knot to a chosen representative of its $\rho$--equivalence class.\par

The purpose of this work is to expand the above result to knots coloured by a wider class of metabelian groups $G=\mathcal{C}_m\ltimes A$. We show that the results of \cite[Section 4]{KM09} extend to $G$--coloured knots for most metacyclic groups (Theorem \ref{T:metacyclic}), and for certain classes of metabelian groups with $\Rank(A)=2$ (Theorem \ref{T:R2M3Diag} and Theorem \ref{T:R2M3Non}). In particular, we classify $A_4$-coloured knots up to $\rho$--equivalence (Theorem \ref{T:A4Theorem}). In all cases, `the only obstruction to $\rho$--equivalence is the obvious one'. The obstruction to carrying out the same computations for metabelian groups with $\Rank(A)>2$ is identified by Theorem \ref{T:clasperprop}.\par

The starring role is played by the surface data. For a $G$--coloured knot, the surface data determines the $G$--colouring; moreover, the $S$--equivalence relation on Seifert matrices induces an $S$--equivalence relation on surface data (Section \ref{SS:S-equiv-matrix}). The relevant equivalence relation on $G$--coloured knots becomes \emph{$\bar\rho$--equivalence}, induced by a special kind of twist move called the \emph{null-twist} (Figure \ref{F:HosteMove}). To classify $G$--coloured knots up to $\rho$--equivalence, we first classify them up to $\bar\rho$--equivalence. When $\Rank(A)\leq 2$, two $G$-coloured knots with $S$--equivalent surface data must be $\bar\rho$--equivalent and therefore $\rho$--equivalent (Theorem \ref{T:clasperprop}). Thus, $\bar\rho$--equivalence classes are distinguished by invariants coming from surface data, which in turn have explicit linear algebraic formulae. Two such invariants are the \emph{surface untying invariant} (Section \ref{SS:su}) and the \emph{$S$--equivalence class of the colouring} (Section \ref{SS:sequiv}). To go further and to distinguish $\rho$--equivalence classes, we use the \emph{coloured untying invariant} (Section \ref{SS:cu}), also given in terms of surface data. To distinguish $\bar\rho$--equivalence classes when $\Rank{A}>2$, surface data alone turns out to be insufficient, and we must take into account also triple-linkage between bands (Section \ref{S:ClasperProof}).

\begin{figure}
\begin{minipage}{40pt}
\psfrag{a}[c]{\footnotesize$g_1$}\psfrag{b}[c]{\footnotesize$g_2$}\psfrag{c}[c]{\footnotesize$g_{r}$}
\includegraphics[height=135pt]{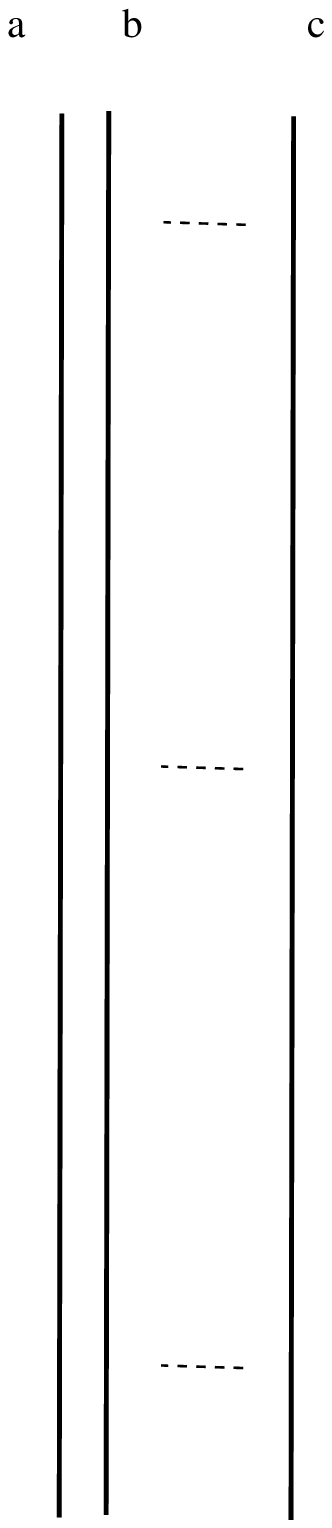}
\end{minipage}\ \ $\overset{\raisebox{2pt}{\scalebox{0.8}{\text{twist}}}}{\Longleftrightarrow}$\quad
\begin{minipage}{85pt}
\psfrag{a}[c]{\footnotesize$g_1$}\psfrag{b}[c]{\footnotesize$g_2$}\psfrag{c}[c]{\footnotesize$g_{r}$}
\psfrag{d}[c]{\footnotesize$g_1$}\psfrag{e}[c]{\footnotesize$g_2$}\psfrag{f}[c]{\footnotesize$g_{r}$}
\includegraphics[height=140pt]{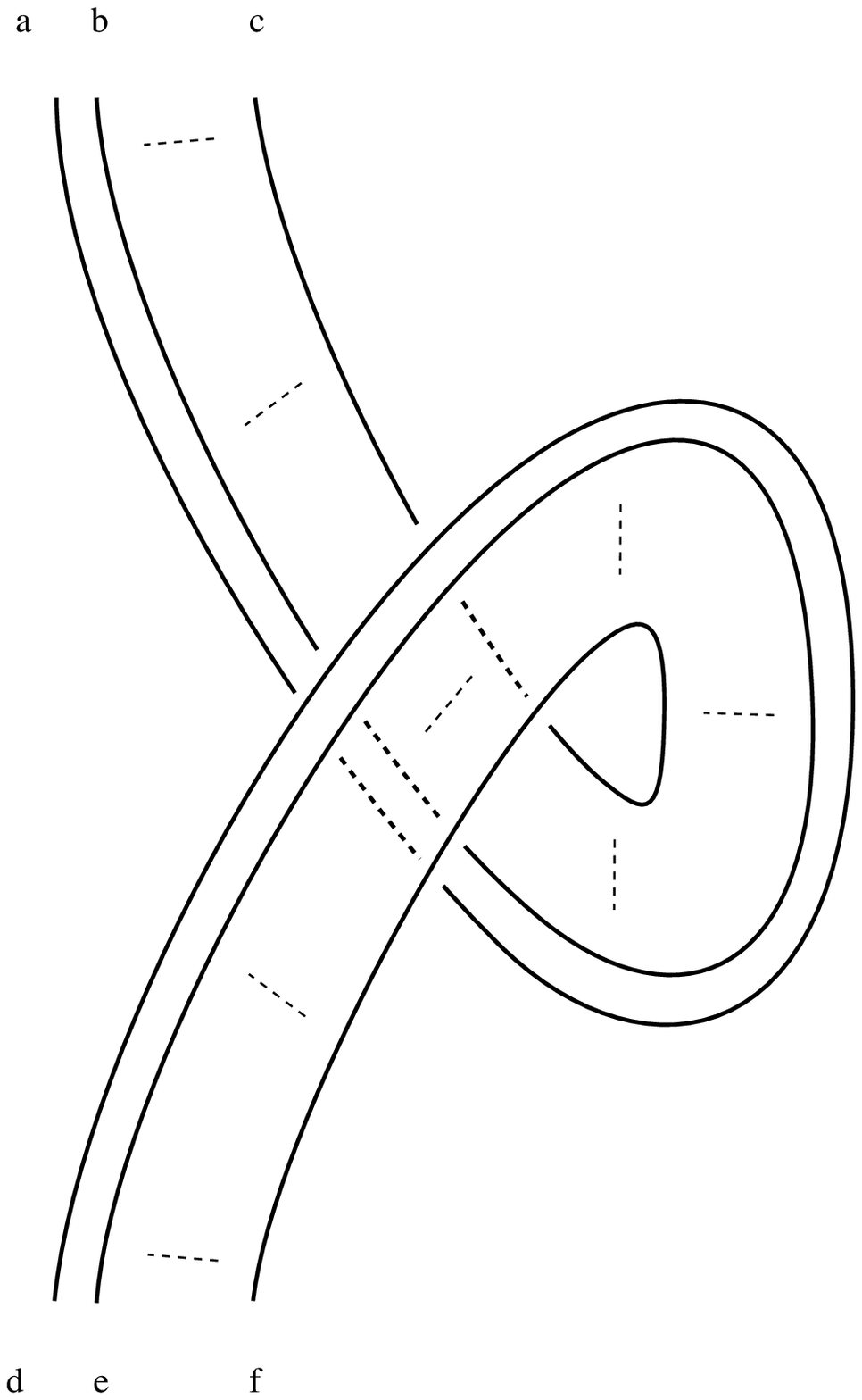}
\end{minipage}
\ \ \ $=$\ \ \
\begin{minipage}{60pt}
\psfrag{a}[c]{\footnotesize$g_1$}\psfrag{b}[c]{\footnotesize$g_2$}\psfrag{c}[c]{\footnotesize$g_{r}$}
\psfrag{d}[c]{\footnotesize$g_1$}\psfrag{e}[c]{\footnotesize$g_2$}\psfrag{f}[c]{\footnotesize$g_{r}$}
\psfrag{p}[c]{\footnotesize$2\pi$\ twist}
\includegraphics[height=140pt]{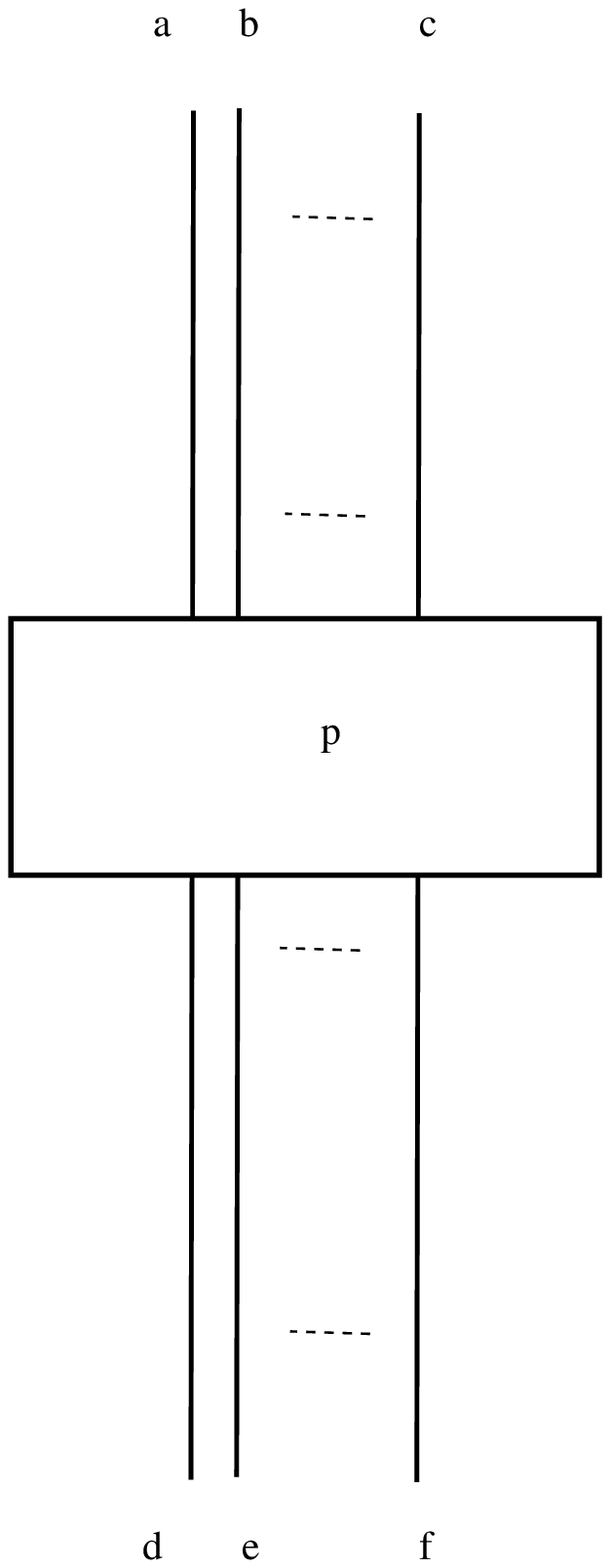}
\end{minipage}
\caption{\label{F:FRMove} This local move, called a \emph{twist move}, is defined whenever
$g_1^{\epsilon_1}g_2^{\epsilon_2}\cdots g_{r}^{\epsilon_r}\in G$ vanishes, where $\epsilon_i$ is $1$ if the strand is pointing up and $-1$ if it is pointing down.}
\end{figure}

\subsection{Technical Summary}\label{SS:Method}

Let $G= \mathcal{C}_m\ltimes_\phi A$ be a fixed metabelian group, where $\mathcal{C}_m=\left\langle\left.\rule{0pt}{9pt}t\thinspace\right|t^m=1\right\rangle$ is a cyclic group, and $A$ is an finitely generated abelian group. A \emph{$G$--coloured knot} is a pair $(K,\rho)$ of an oriented knot with basepoint $K\co S^1\hookrightarrow S^3$, together with a surjective homomorphism $\rho$ of the knot group of $K$ onto $G$. Such $G$--coloured knots were previously studied by Hartley \cite{Har79}. Two $G$--coloured knots are said to be \emph{$\rho$--equivalent} if they are related up to ambient isotopy by a finite sequence of twist moves. We bound the number of $\rho$--equivalence classes from above and from below. In favourable cases these bounds agree. In Section \ref{S:r12}, we classify $G$--coloured knots up to $\rho$--equivalence in all such favourable cases, when the rank of $A$ is at most $2$.\par

A key idea is to introduce various weaker equivalence relations. The $G$--colouring $\rho$ induces:

\begin{itemize}
\item An $A$--colouring $\bar\rho$ of a Seifert surface exterior $E(F)$.
\item For $\tilde{G}= \mathcal{C}_0\ltimes_\phi A$, and $\tilde{G}$--colouring $\hat\rho$ of $K$.
\item An $A$--colouring $\tilde\rho$ of the $m$--fold branched cyclic cover $C_m(K)$.
\end{itemize}

Each of these colourings in turn induces an equivalence relation on $G$--coloured knots, which we call $\bar\rho$--equivalence, $\hat\rho$--equivalence, and $\tilde\rho$--equivalence correspondingly. Chief among these is $\bar\rho$--equivalence. Two (rigid) knots are \emph{tube equivalent} if they possess tube equivalent Seifert surfaces (Definition \ref{D:tubequiv}). Two $G$--coloured knots are $\bar\rho$--equivalent if they are related up to tube equivalence by null-twists (see Figure \ref{F:HosteMove}). As $\bar\rho$--equivalence is defined with respect to a colouring of a Seifert surface by an abelian group, its study is amenable to linear algebraic techniques. Our main effort is to classify $G$--coloured knots up to $\bar\rho$--equivalence. Such a classification leads to a classification of $G$--coloured knots up to $\rho$--equivalence if either all of the equivalence relations happen to coincide (as is the case for some metabelian groups in Section \ref{S:r12}), or if $G$ is simple enough that the remaining work can be done by hand (as for the case $G=A_4$ in Section \ref{S:A4}).

\begin{rem}
In a different context, the twist move is called the Fenn--Rourke move, and the null-twist is called the Hoste move (see \textit{e.g.} \cite{Hab06}).
\end{rem}

Both a twist moves and a null-twist come from integral Dehn surgery, and the trace of such surgery a special kind of bordism (Proposition \ref{P:bordbarrho}). Therefore the order of the appropriately defined bordism group gives an upper bound on the number of possible $\rho$--equivalence classes of $G$--coloured knots. This upper bound was studied by Litherland and Wallace \cite{LiWal08} following work of Cochran, Gerges, and Orr \cite{CGO01}. Their result was that the number of $\rho$--equivalence classes of $G$--coloured knots is bounded above by the product of orders of certain homology groups. We tighten this upper bound by considering instead the $\bar\rho$--equivalence relation. We find that the order of $H_3(A;\mathds{Z})$ is an upper bound for the number of $\bar\rho$--equivalence classes (Corollary \ref{C:Wallacebound}).\par

\begin{figure}
\begin{minipage}{40pt}
\psfrag{a}[c]{\footnotesize$g_1$}\psfrag{b}[c]{\footnotesize$g_2$}\psfrag{c}[c]{}\psfrag{d}[c]{\footnotesize$g_{2r}$}
\includegraphics[height=135pt]{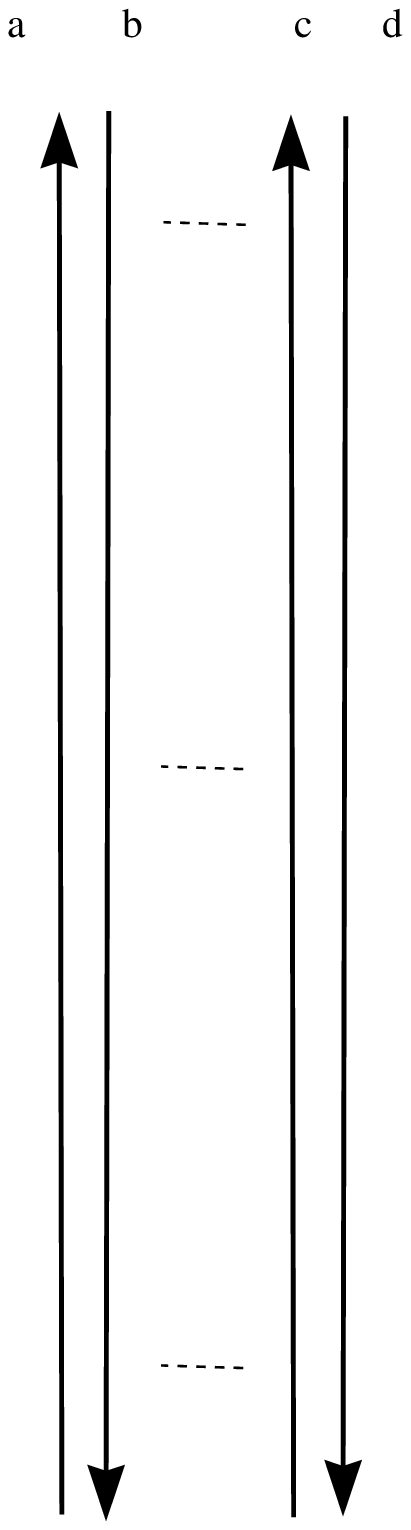}
\end{minipage}\ \ $\overset{\raisebox{2pt}{\scalebox{0.8}{\text{null-twist}}}}{\Longleftrightarrow}$\quad
\begin{minipage}{85pt}
\psfrag{a}[c]{\footnotesize$g_1$}\psfrag{b}[c]{\footnotesize$g_2$}\psfrag{c}[c]{}\psfrag{d}[c]{\footnotesize$g_{2r}$}
\psfrag{e}[c]{\footnotesize$g_1$}\psfrag{f}[c]{\footnotesize$g_2$}\psfrag{g}[c]{}\psfrag{h}[c]{\footnotesize$g_{2r}$}
\includegraphics[height=140pt]{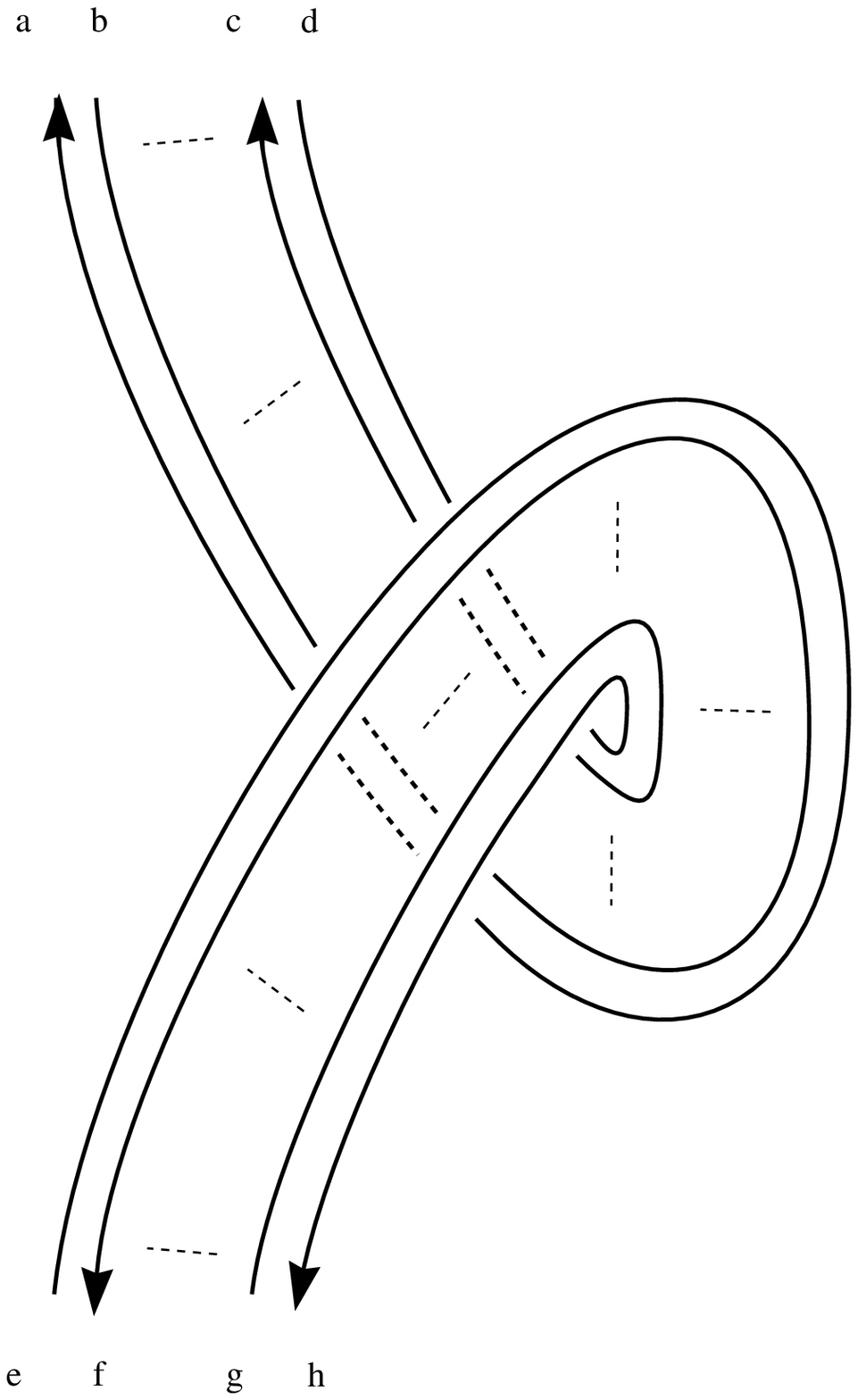}
\end{minipage}
\ \ \ $=$\ \ \
\begin{minipage}{60pt}
\psfrag{a}[c]{\footnotesize$g_1$}\psfrag{b}[c]{\footnotesize$g_2$}\psfrag{c}[c]{}\psfrag{d}[c]{\footnotesize$g_{2r}$}
\psfrag{e}[c]{\footnotesize$g_1$}\psfrag{f}[c]{\footnotesize$g_2$}\psfrag{g}[c]{}\psfrag{h}[c]{\footnotesize$g_{2r}$}
\psfrag{p}[c]{\footnotesize$2\pi$\
twist}
\includegraphics[height=140pt]{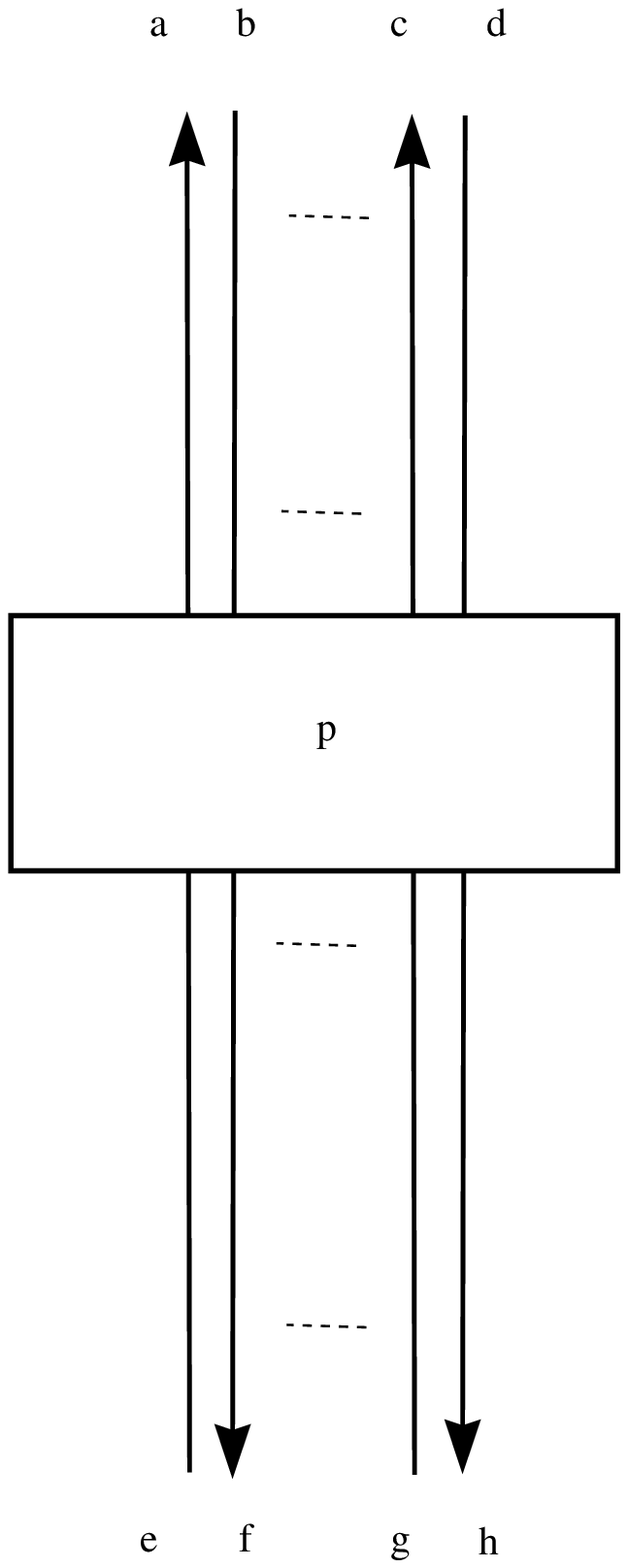}
\end{minipage}
\caption{\label{F:HosteMove} This local move, called a null-twist, is defined whenever
$g_1g_2^{-1}g_3g_4^{-1}\cdots g_{2r-1}g_{2r}^{-1}\in G$ vanishes.}
\end{figure}

For lower bound calculations, the goal is to compile the longest possible list of non--$\rho$--equivalent $G$--coloured knots. Recall \cite[Definition 3]{KM09}.

\begin{defn}
A \emph{complete set of base-knots} for a group $G$ is a set $\Psi$ of $G$--coloured knots $(K_i,\rho_i)$, no two of which are $\rho$--equivalent, such that any $G$--coloured knot $(K,\rho)$ is $\rho$--equivalent to some $(K_i,\rho_i)\in\Psi$. A element of $\Psi$ is called a \emph{base-knot} (the term imitates `base-point').
\end{defn}

 We remark that for the applications outlined in Section \ref{SS:Motivation}, base-knots should be chosen to be as ``nice'' as possible, in that they should be unknotting number $1$ knots whose irregular $G$--covers we know how to present explicitly.\par

 The method of this paper consists of transforming the geometric-topology problem of finding a complete set of base-knots into a problem in linear algebra over a commutative ring, and then solving that problem for the relevant commutative rings. I arrived at this approach by thinking hard about the band-sliding algorithm in \cite[Section 4]{KM09} until I understood the underlying algebraic mechanism that makes it work.\par

Choose a Seifert surface $F$ for $K$ and a basis $x_1,\ldots,x_{2g}$ for $H_1(F)$, which induces an associated basis $\xi_1,\ldots,\xi_{2g}$ for $H_1(E(F))$. The $G$--colouring $\rho$ restricts to an $A$--colouring $\bar\rho\co H_1(E(F))\to A$ (Section \ref{SS:ASeif}). We obtain a Seifert matrix $\M$ for $K$ and a \emph{colouring vector} $\V\in A^{2g}$, whose entries are the $\bar\rho$--images of the $\xi_i$'s. Such a pair $(\M,\V)$ is called \emph{surface data} for $(K,\rho)$. Surface data is the analogue for $G$--coloured knots of a Seifert matrix (Section \ref{S:surfacedata}). In particular, it makes sense to discuss \emph{$S$--equivalence} of surface data (Section \ref{SS:S-equiv-matrix}); and moreover, when $\Rank(A)\leq 2$, $S$--equivalence of surface data implies $\bar\rho$--equivalence of $G$--coloured knots (Theorem \ref{T:clasperprop}). The implication is that rather than working with twist-moves on $G$--coloured knots, we may instead work with the induced equivalence relation on surface data. Matrices are simpler mathematical objects that knots, and for `simple enough' groups $G$ the induced problem solves itself.\par

 To distinguish between $\bar\rho$--equivalence classes, we identify two $\bar\rho$--equivalence invariants coming from the surface data. The first of these, given in Section \ref{SS:su}, is an element of $A$ which is a version of the coloured untying invariant of \cite[Section 6]{Mos06b}, which we call the \emph{surface untying invariant}. It may be interpreted as a linking number of push-offs of curves naturally associated to the map $\bar\rho$. The second, which we call the \emph{$S$--equivalence class of the colouring}, is an element of $A\wedge A$ coming from the $S$--equivalence class of the surface data. These two invariants suffice to distinguish the base-knots presented in Sections \ref{S:r12} and \ref{S:A4} up to $\bar\rho$--equivalence. An extension of the coloured untying invariant (Section \ref{SS:cu}) is then used to distinguish these base-knots up to $\rho$--equivalence.\par

For a metacyclic group for which $2(\phi^{-3}-\mathrm{id})$ is invertible, two $G$--coloured knots are $\bar\rho$--equivalent if and only if they are $\rho$--equivalent, thus no extra work is required. Conversely, for $G=A_4$ the group of symmetries of an oriented tetrahedron, two $G$--coloured knots may even be ambient isotopic without being $\bar\rho$--equivalent! For this group, which is the smallest metabelian group with $\Rank(A)>1$ and is also a finite subgroup of $SO(3)$ and therefore interesting, we conclude the paper by showing `by hand' that the lower bound is sharp, \textit{i.e.} that the coloured untying invariant is a complete invariant of $\rho$--equivalence classes for $A_4$-coloured knots.\par

When $\Rank(A)>2$, an additional $\bigwedge^3 A$--valued obstruction to $\bar\rho$--equivalence emerges from triple-linkage between bands of the Seifert surface. This obstruction, which we call the \emph{$Y$--obstruction}, is the topic of Section \ref{S:ClasperProof}, where in Theorem \ref{T:clasperprop} we prove that two $S$--equivalent knots are $\bar\rho$--equivalent if and only if their $Y$--obstruction vanishes. Triple-linkage between bands detects information one step below the Alexander module in the derived series of the knot group \cite{Tur83,Tur84}.\par

The moral is that $\bar\rho$--equivalence is a useful equivalence relation to consider on $G$--coloured knots, because of its relationship to $S$--equivalence, and the fact that it is generated by a local move. Conceptually, it is a similar idea to null--equivalence \cite{GR04} and to $H_1$-bordism \cite{CM00}.\par

With $\Lk=0$ and \sout{Inn} short-hands for ``admit only null-twists'' and ``admit only tube equivalence'', the following summarizes the equivalence relations which this papers considers, and how they relate to one another.

\begin{equation}\label{E:equivrels}
\psfrag{a}[r]{$\rho$--equivalence}\psfrag{b}[c]{$\hat\rho$--equivalence}\psfrag{c}[c]{$\tilde\rho$--equivalence}\psfrag{d}[l]{$\bar\rho$--equivalence}
\psfrag{r}[c]{\fs$\Lk=0$}\psfrag{s}[c]{\fs \sout{Inn}}\psfrag{t}[c]{\fs \sout{Inn}}\psfrag{u}[c]{\fs$\Lk=0$}
\psfrag{1}[c]{$cu$,$s$}\psfrag{2}[c]{$\Omega$}\psfrag{3}[c]{$su$}
\begin{minipage}{300pt}\centering\includegraphics[width=260pt]{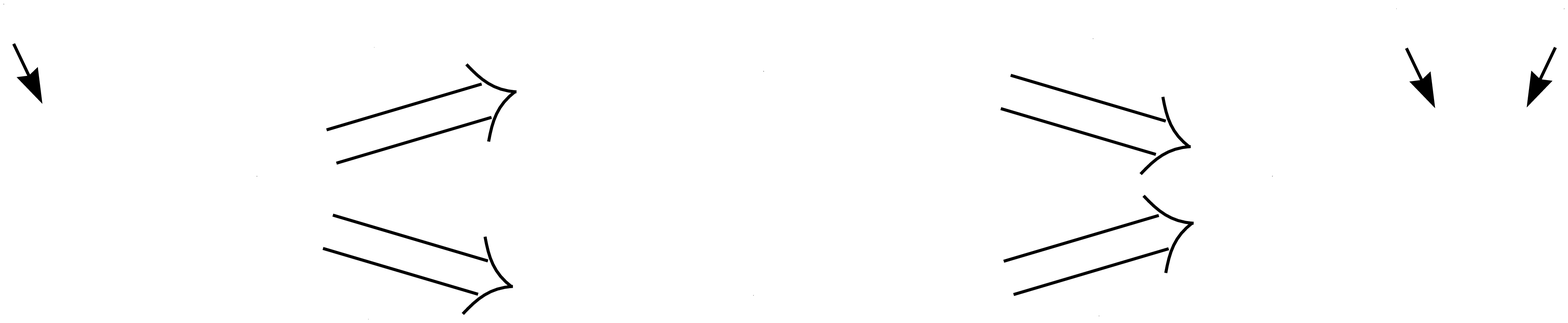}\end{minipage}
\end{equation}

If we would have used \emph{equivariant} homology and bordism, with respect to the action of $\mathcal{C}_m$ on $A$, then we could have pushed the bordism upper bound $\Omega$, the surface untying invariant $\mathrm{su}$, and the $S$--equivalence class $s$ of the colouring, all `one step to the left', so as to try to classify $G$--coloured knots up to $\hat\rho$--equivalence. 

\subsection{My motivation for studying $\rho$--equivalence}\label{SS:Motivation}

My motivation for studying $\rho$--equivalence is to construct quantum topological invariants associated to formal perturbative expansions around \emph{non-trivial} flat connections. Building on the results in this paper, I plan to mimic Garoufalidis and Kricker's construction of a rational Kontsevich invariant of a knot \cite{GK04} in the $G$--coloured setting. The $1$--loop part of the Garoufalidis--Kricker theory determines the Alexander polynomial, while the $2$--loop part contains the Casson invariant of cyclic branched coverings of a knot. Studying $G$--coloured analogues of the rational Kontsevich invariant might provide an avenue to attack the Volume Conjecture, by interpreting hyperbolic volume as $L^2$-torsion \cite[Theorem 4.3]{Lu02}, which has a formula in terms of Jacobians of the Fox matrix \cite[Theorem 4.9]{Lu02} and which should be closely related to the $1$--loop parts of our prospective invariants. This would seem to me to be a natural perturbative approach to proving conjectures about semiclassical limits of quantum invariants, because in physics the fundamental object is Witten's invariant rather than the LMO invariant--- the path integral over \emph{all} $SU(2)$--connections, as opposed to its perturbative expansion close to the trivial $SU(2)$--connection.\par

The LMO invariant and the rational Kontsevich invariant are built out of a surgery presentation for a knot, in the complement of a standard unknot (see \textit{e.g.} \cite[Chapter 10]{Oht02}). The analogue for $G$--coloured knots is a surgery presentation in the complement of a base-knot and in the kernel of its colouring. We will show in future work that, for sufficiently nice base-knots (the complete sets of base-knots in this paper are indeed `sufficiently nice'), a Kirby theorem-like result holds for such presentations, allowing us to prove invariance for quantum invariants coming from surgery. Thus, such surgery presentations provides a solid foundation on which to construct $G$--coloured rational Kontsevich invariants.\par

Invariants of $G$--coloured knots have proven useful in knot theory in that they detect information beyond $\pi/\pi^{\prime\prime}$. Classically, Reidemeister used the linking matrix of a knot's dihedral covering link to distinguish knots with the same Alexander polynomial (\cite{Rei29}, see also \textit{e.g.} \cite{Per74}). More recently, twisted Alexander polynomials have been receiving a lot of attention, particularly in the context of knot concordance (see \textit{e.g.} \cite{FV10}). For the groups in question, I hope and expect that these will be related to the ``$1$--loop part'' of the theory, which might lead in the direction of the Volume Conjecture. On the next level, Cappell and Shaneson \cite{CS75,CS84} found a formula for the Rokhlin invariant of a dihedral branched covering space, which provides an obstruction to a knot being ribbon. Presumably this will be related to the ``$2$--loop part'' of the theory.\par

An unrelated motivation is the study of faithful $G$--actions on a closed oriented connected smooth $3$--manifold $M$ by diffeomorphisms. The question is whether there exists a bordism $W$ and a handle decomposition of $W$ as $M_G\times I$ with $2$--handles attached, for some fixed standard $3$--manifold $M_G$, such that the $G$--action on $M$ extends to a smooth faithful $G$--action on $W$. If $G$ happens to be a finite subgroup of $SO(3)$, this is equivalent to the existence of a surgery presentation $L\subset S^3$ for $M$ which is invariant under the standard action of $G$ on $S^3$. This would imply that an invariant of $3$--manifolds which admits a surgery presentation must take on some symmetric form for such manifolds, as discussed by Przytycki and Sokolov \cite{PrzS01}. This was proven for cyclic groups in \cite{Sak01} following \cite{PrzS01}, and for free actions of dihedral groups in \cite{KM09}. In the same vein, the results of this paper will be used, in future work, to prove the above claim also for certain $A_4$ actions.

\subsection{Comparison with the literature}\label{SS:Compare}

The results of this paper generalize the results of my joint paper with Andrew Kricker \cite[Section 4]{KM09}, based in turn on \cite{Mos06b}, to a wider class of metabelian groups. The main innovation in our methodology is that \cite{KM09} works with knot diagrams, while we work with surface data.\par
Our bordism argument is based on \cite{LiWal08} and on Steven Wallace's thesis \cite{Wal08}.\par
The results of this paper imply that, for certain metabelian groups $G$, any $G$--coloured knot $(K,\rho)$ has a surgery presentation in the complement of a base-knot for any of our complete sets of base-knots, and that the components of that surgery presentation lie in $\ker\rho$. Such a surgery presentation of $(K,\rho)$ may be lifted to a surgery presentation of irregular covering spaces associated to $(K,\rho)$, containing embedded covering links. This construction was carried out for $D_{2n}$-coloured knots in \cite{KM09}. For the groups we consider, we defer the explicit construction of such surgery presentations to future work.\par
If our base-knots all have unknotting number $1$ then we can prove a Kirby Theorem-like result for surgery presentations of $(K,\rho)$, which we can then use to construct new invariants of a $G$--coloured knots and of their covering spaces and covering links. Thus, our approach is well-suited to \emph{constructing} invariants. On the other hand, if we wanted to \emph{calculate} known invariants, then generalizing the surgery presentations of David Schorow's thesis \cite{SchorowPhD}, based on the explicit bordism constructed by Cappell and Shaneson \cite{CS84}, looks promising to me. His surgery presentation is constructed directly from a $G$--coloured knot diagram, without first having to reduce it to a base-knot by twist moves.

\subsection{Why this generality?}\label{SS:Generality}

In this paper, $\rho$--equivalence is studied by applying linear algebra to surface data. In particular, we need a Seifert surface in order to define surface data. The widest class of topological objects with Seifert matrices is homology boundary links in integral homology spheres \cite{Ko87}. With effort, the results of this paper should extend to that setting.\par
The methods in this paper are largely linear algebraic, and linear algebra can only be performed over a commutative ring. For $G$ metabelian, a $G$--colouring of a knot $(K,\rho)$ induces an $A$--colouring $\bar\rho$ of a Seifert surface complement, which allows us to encode $\rho$ as a colouring vector. If $G$ were not metabelian, the colouring would no longer correspond to a vector, and we would need more than linear algebra to bound from below the number of $\bar\rho$--equivalence classes.\par
If $A$ were not finitely generated, then $\bar\rho$ would not be surjective, and the arguments of Section \ref{S:ClasperProof} and of Section \ref{S:untyinginvariants} would fail.

\subsection{Contents of this paper}\label{SS:contents}

In Section \ref{S:Prelims} we recall the concept of a $G$--coloured knot and we establish conventions and notation. In Section \ref{S:surfacedata} we define surface data and prove that it satisfies analogous properties to the Seifert matrix. In particular, it admits an $S$--equivalence relation. In Section \ref{S:rhoequiv} we define the various flavours of $\rho$--equivalence, and show their relation with relative bordism and how they are generated by local moves. In Section \ref{S:ClasperProof} we prove Theorem \ref{T:clasperprop}, relating $S$--equivalence with $\bar\rho$--equivalence. In Section \ref{S:untyinginvariants} we identify invariants of $\rho$--equivalence classes and of $\bar\rho$--equivalence classes in terms of homology and surface data. In Section \ref{S:r12} we apply the results of the previous sections, matching upper and lower bounds, to classify $G$--coloured knots up to $\bar\rho$--equivalence and up to $\rho$--equivalence, for families of metabelian groups with $\Rank(A)\leq 2$. In Section \ref{S:A4} we go beyond the algebraic techniques of earlier sections, and beginning from the $\bar\rho$--equivalence classification of $A_4$-coloured knots, we work `by hand' to classify $A_4$-coloured knots up to $\rho$--equivalence. The paper concludes by listing some open problems in Section \ref{S:conclusion}.

\section{Preliminaries}\label{S:Prelims}

\subsection{The metabelian group $G$}\label{SS:metagroup}

A metabelian homomorph $G$ of a knot group is finitely generated, of weight one \cite{GA75,Joh80}, and is isomorphic to a semi-direct product $\mathcal{C}_m\ltimes_\phi A$ where $\mathcal{C}_m=\left\langle\left.\rule{0pt}{9pt}t\thinspace\right|t^m=1\right\rangle$ is a (possibly infinite) cyclic group, and $A$ is an finitely generated abelian group. The above notation means that the conjugation action of $\mathcal{C}_m$ on $A$ is $t^{-1}at=\phi(a)$. Write $A$ additively, and write conjugation by $t$ as left multiplication, using a dot, while we don't write the dot for multiplication in $G$, so that $t\cdot a$ stands for $t^{-1}at$.

\begin{example}
Dihedral groups are metabelian homomorphs of knot groups. They have presentation
\[D_{2n}\ass\ \left\langle t,s\left|\rule{0pt}{9.5pt}\ t^{2}=s^{n}=1,\ tst=s^{-1}\right.\right\rangle.\]
\end{example}

\begin{example}
 The alternating group of order $4$ is another metabelian homomorph of knot groups, with presentation
 \[A_4\ass \ \left\langle t,s_1,s_2\left|\rule{0pt}{9.5pt}\ t^3=s_1^2=s_2^2=1,\ t^2s_1t=s_2,\ t^2s_2t=s_1s_2\right.\right\rangle.\]
\end{example}

\subsection{$G$--coloured knots}\label{SS:ColKnots}

We adopt conventions that facilitate concrete discussion. None of our results depend essentially on these conventions.\par
In this paper, every $n$--sphere comes equipped with a fixed parametrization
\[\left\{(x_1,\ldots,x_{n+1})\in\mathds{R}^{n+1}\left|\rule{0pt}{9.5pt}\ x_1^2+\cdots+x_{n+1}^2=1\right.\right\}\to S^n\]
and each disk with a fixed parametrization $[-1,1]^{\times n}\to D^n$.\par

A knot  is an embedding $K\co S^1\hookrightarrow S^3$ together with the orientation induced by the counter-clockwise orientation of $S^1$, and a basepoint $K|_{(0,1)}$. We parameterize a tubular neighbourhood of a knot $K$ as $N(K)\co D^2\times S_1\hookrightarrow S^3$ such that
$N(K)\left(\{(0,0)\}\times \{(x,y)\}\right)= K(x,y)$, and $\Link(K,\ell)=0$, where $\ell$ denotes $N(K)\left(\{(1,1)\}\times
S^1\right)$. Thus $K$ comes equipped with a distinguished meridian $\mu\ass N(K)\left(\partial D^2\times \{(0,0)\}\right)$ and with a canonical
longitude $\ell$.\par

The \emph{knot group} is $\pi\simeq \pi_1 E(K)$. A \emph{$G$--coloured knot} is a knot $K\subset S^3$ together with a surjective homomorphism $\rho\co\pi\twoheadrightarrow G$. We draw $G$--coloured knots by labeling arcs in a knot diagram by $\rho$--images of corresponding Wirtinger generators.\par

Because Wirtinger generators of a knot are all related by conjugation, they all map to elements of the same coset $t^a A$, where $a\neq 0$ because $\rho$ is surjective. By convention, set $a$ to be $1$, so that all Wirtinger generators map to elements of $t A$.

\begin{rem}
Our coloured knots are called \emph{based coloured knots} in \cite{LiWal08}.
\end{rem}

\begin{lem}\label{L:inneriso}
Consider $G$--colourings $\rho_{1,2}\co \pi\twoheadrightarrow G$ of a knot $K$. If there exists an inner automorphism $\psi$ of $G$ such that $\rho_1(x)=\psi(\rho_2(x))$ for all $x\in \pi$, then $(K,\rho_{1,2})$ are ambient isotopic.
\end{lem}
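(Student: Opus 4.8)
The plan is to realize the conjugation geometrically, by an explicit ambient isotopy of $S^3$ of ``point--push'' type that drags the basepoint once around a loop. The inner automorphism is $\psi=c_g$ for some $g\in G$; since $\rho_2$ is surjective, $g=\rho_2(w)$ for some $w\in\pi$, and then for every $x\in\pi$,
\[\rho_1(x)=g\,\rho_2(x)\,g^{-1}=\rho_2(w)\,\rho_2(x)\,\rho_2(w)^{-1}=\rho_2(wxw^{-1}),\]
so $\rho_1=\rho_2\circ c_w$, where now $c_w$ denotes conjugation by $w$ in $\pi=\pi_1(E(K),*)$. Thus it suffices to produce an ambient isotopy $\{h_t\}_{0\le t\le 1}$ of $S^3$, starting at the identity, that fixes $K$ together with all of its structure (orientation, basepoint, parametrized tubular neighbourhood, hence also its meridian and longitude), and whose time--one homeomorphism carries the colouring $\rho_2$ to $\rho_1$; such an isotopy is precisely what it means for $(K,\rho_1)$ and $(K,\rho_2)$ to be ambient isotopic as $G$--coloured knots.

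To build the isotopy, represent $w$ by a loop based at $*$; taking $*$ in the interior of $E(K)$ and using that a generic smooth map $S^1\to S^3$ is an embedding, we may assume this loop is a smoothly embedded circle $\gamma\subset \mathrm{int}\,E(K)$ carrying a tubular neighbourhood $N(\gamma)\cong S^1\times D^2$ disjoint from $N(K)$, with $*$ on its core. Inside $N(\gamma)$ there is an explicit ambient isotopy $\{h_t\}$ of $S^3$, supported in $N(\gamma)$, that drags a small ball $B_*$ about $*$ once around the core and returns it to itself with $h_1|_{B_*}=\mathrm{id}$; the chosen trivialization of the (necessarily trivial) tubular neighbourhood supplies the framing along which $B_*$ is pushed back without leftover rotation. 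Since $N(\gamma)\cap N(K)=\emptyset$, every $h_t$ restricts to the identity on $N(K)$, so $K$ together with all of its structure is fixed pointwise for all $t$, while the track $t\mapsto h_t(*)$ of the basepoint represents the class $\gamma\in\pi$.

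It then remains to read off the effect on the colouring. The isotopy carries $(K,\rho_2)$ to $(K,\rho_2')$, and since $(K,N(K))$ is fixed pointwise, $\rho_2'$ is $\rho_2$ transported along $h_1$; by the standard description of the automorphism of $\pi$ induced by an isotopy in terms of the track of its basepoint, $\rho_2'=\rho_2\circ c_\gamma^{\pm1}$ for a sign fixed by orientation conventions. Choosing $\gamma$ to represent whichever of $w,w^{-1}$ makes the signs agree --- legitimate since $\rho_2$ is onto, so $g^{\pm1}$ is $\rho_2$ of some element --- yields $\rho_2'=\rho_2\circ c_w=\rho_1$, completing the argument.

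The one genuinely delicate point is the construction in the middle step: one must check that the point push is a bona fide ambient isotopy of $S^3$ that ends by fixing $B_*$. This rests on the triviality of the tubular neighbourhood of an embedded loop in an orientable $3$--manifold (which pins down the direction along which to ``unwind'') together with the observation that the push can be localized in $N(\gamma)$ and hence kept off $N(K)$. No global input such as connectivity of $\mathrm{Diff}^+(S^3)$ is needed, and the sign bookkeeping in the last step is routine, being absorbed by the freedom to replace $g$ with $g^{-1}$.
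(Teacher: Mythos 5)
Your argument is correct, but it takes a genuinely different route from the paper's. The paper stays diagrammatic: since $\pi$ is normally generated by the meridian $\mu$, the group $G$ is generated by the labels of the arcs of a fixed diagram $D$, so $\psi$ factors as a composite of conjugations by individual arc labels; each of these is realized by a Reidemeister~I kink on the corresponding arc, shrinking the rest of the knot into a small ball and dragging it through the kink, then undoing the kink. You instead use surjectivity of $\rho_2$ once, up front, to lift the conjugating element $g$ to a single $w\in\pi$ with $\rho_1=\rho_2\circ c_w$, and then realize $c_w$ by a point--push of the basepoint around an embedded loop representing $w$, supported in a solid torus disjoint from $N(K)$. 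Your version is more intrinsic and more general: it never mentions diagrams, it produces the whole inner automorphism in one isotopy rather than as a composite over a normal generating expression for $g$, and it would work verbatim for links in other $3$--manifolds and for targets $G$ not normally generated by $\rho(\mu)$. What the paper's version buys in exchange is compatibility with the rest of its machinery --- it exhibits the isotopy as an explicit sequence of Reidemeister moves whose effect on the arc labels of $D$ (conjugating every label by the label of the chosen arc) can be read off directly, which is how the lemma is actually applied elsewhere. Both proofs gloss over the same basepoint bookkeeping (the canonical basepoint lives on $\partial N(K)$, so your loop and its tubular neighbourhood should be taken through a nearby interior point, identified with the canonical one along a short arc); this is harmless in both cases.
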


\begin{proof}
 We summarize the argument in \cite[Page 678]{Mos06b} and \cite[Lemma 14]{KM09}. Because $\pi$ is normally generated by $\mu$, the group $G$ is normally generated by $\rho(\mu)$, so conjugation by any $g\in G$ corresponds to some composition of conjugations by labels of arcs of some knot diagram $D$ for $K$. For each such arc $\alpha$ in turn, create a kink in $\alpha$ by a Reidemeister \textrm{I} move, shrink the rest of the knot to lie inside a small ball, drag the knot through the kink (the effect is to conjugate the labels of all arcs in $D$ by the label of $\alpha$), and get rid of the kink by another Reidemeister \textrm{I} move. This sequence of Reidemeister moves brings us back to $D$, and its combined effect will have been to realize the action of $\psi$ on $\rho_1$ by ambient isotopy.
\end{proof}

\begin{example}\label{Ex:cyclic}
The degenerate case of a $G$--coloured knot is a $\mathcal{C}_n$-coloured knot. Any knot is canonically $\mathcal{C}_n$-coloured by the mod $n$ linking pairing, which with our conventions sends all of its meridians to $t$. Thus the set of $\mathcal{C}_n$--coloured knots is in bijective correspondence with
the set of knots.
\end{example}

\begin{example}\label{Ex:dihedral}
The simplest non-degenerate case of a $G$--coloured knot is a knot coloured by a dihedral group. Each Wirtinger generator is mapped to an element of the form $ts^i\in D_{2n}$, which depends only on $i\in\mathds{Z}/n\mathds{Z}\triangleleft D_{2n}$. Therefore a $D_{2n}$-colouring is encapsulated by a labeling of arcs of a knot diagram by elements in $\mathds{Z}/n\mathds{Z}$.
Such a knot diagram, labeled by integers or with colours standing in for those integers, was called an $n$--coloured knot by Fox, and this is the genesis of the term `coloured knots' \cite{Fox62}. There is no need to orient the knot diagram, because a $\rho$--image of a Wirtinger generator is its own inverse. See Figure \ref{F:Fan}.
\end{example}

\begin{figure}
\psfrag{a}[c]{$4$}\psfrag{b}[c]{$3$}\psfrag{c}[c]{$2$}\psfrag{d}[c]{$0$}\psfrag{e}[c]{$1$}
\includegraphics[width=170pt]{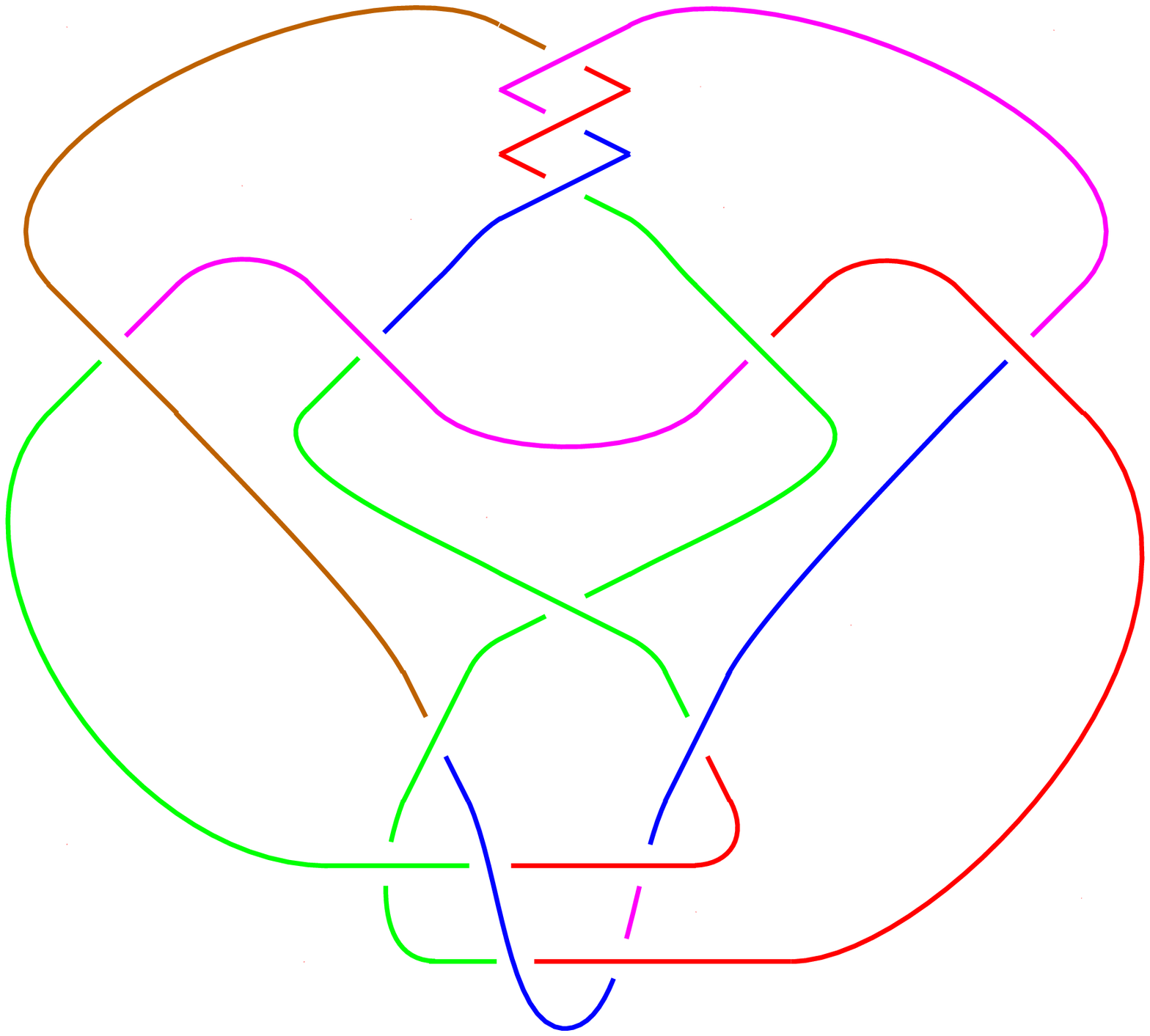}
\caption{\label{F:Fan} A $5$-coloured knot in the sense of Fox. To recover a $D_{10}$-coloured knot replace each label $i\in\mathds{Z}/n\mathds{Z}$ by $ts^i$.}
\end{figure}

\begin{example}\label{Ex:A_4}
The simplest example of a $G$--coloured knot for $G$ not metacyclic is a knot coloured by the alternating group. Each Wirtinger generator gets mapped to one of $\{t,ts_1,ts_2,ts_1s_2\}$. See Figure \ref{F:A4tref}.
\end{example}

\begin{figure}
\psfrag{a}{$t$}\psfrag{b}{$ts_1$}\psfrag{c}[c]{$ts_2$}
\includegraphics[width=90pt]{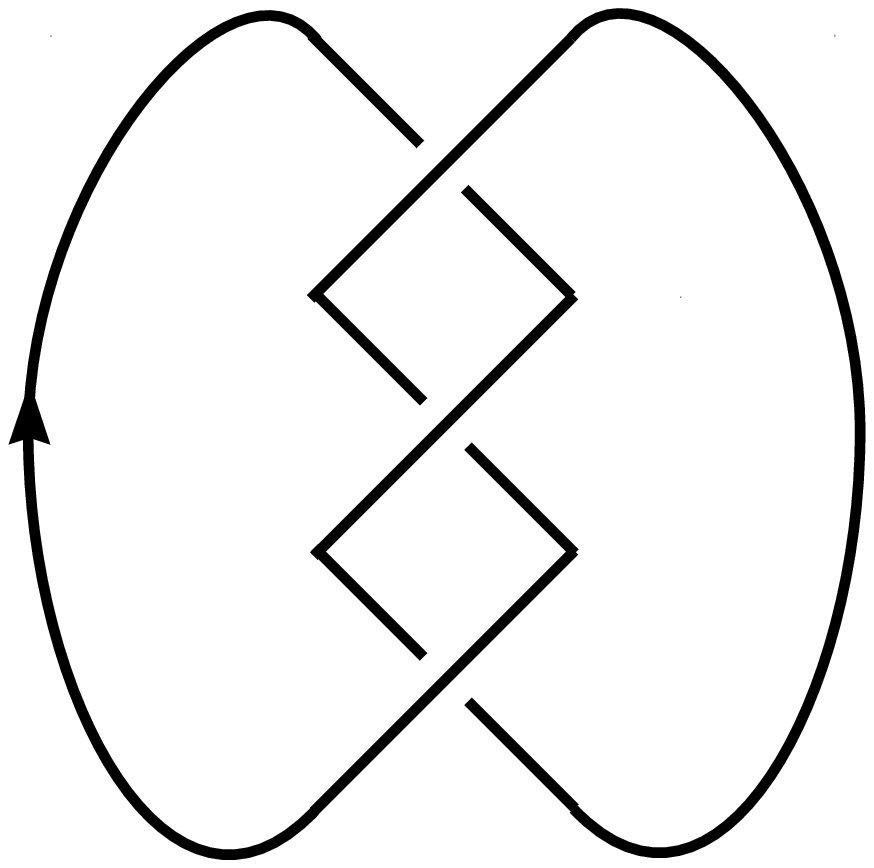}
\caption{\label{F:A4tref} An $A_4$-coloured trefoil.}
\end{figure}

\section{Surface data}\label{S:surfacedata}

Let $G= \mathcal{C}_m\ltimes_\phi A$ be a fixed metabelian homomorph of a knot group.\par

In this section we define and explore \emph{surface data}. Surface data is an analogue for $G$--coloured knots of the Seifert matrix. In particular, it admits an $S$--equivalence relation (Section \ref{SS:S-equiv-matrix}).\par

We fix some linear algebra notation for the rest of the paper. The transpose of a matrix $M$ is denoted $M^{\thinspace T}$. We write
both column vectors and row vectors as rows, but we separate row vector elements with commas and column vector elements with
semicolons. Thus $\left(v_1;\,\ldots;v_n\right)$ denotes $\left(\begin{smallmatrix}v_1\\\vdots\\v_n\end{smallmatrix}\right)$.
The number $0$ denotes a zero matrix, whose size depends on its context. The direct sum of matrices $M\oplus N$ is
$\left(\begin{smallmatrix}M & 0\\ 0 & N\end{smallmatrix}\right)$. We denote the $n\times n$ unit matrix by $I_n$. We use square brackets
for matrices over $\mathds{Z}$, and round brackets for matrices over $A$.

\subsection{$A$-coloured Seifert surfaces and covering spaces}\label{SS:ASeif}

Let $(K,\rho)$ be a $G$--coloured knot, and let $F$ be a Seifert surface for $K$. For us, a Seifert surface comes equipped with a basepoint on its boundary, an
orientation (right-hand convention), and a fixed parametrization, for instance as a zero mean curvature ``soap bubble'' surface with the parameterized knot $K$ as its boundary. Let $E(F)$ denote the exterior of $F$, which inherits a basepoint $\star_F$ from $F$ by pushing off along the positive normal.\par

Let $C_{m}(K)$ be the $m$--fold branched covering space of $K$, obtained from $E(F)$ via the standard cut-and-paste construction
(see \textit{e.g.} \cite[Chapter 5C]{Rol90}). By convention $C_{0}(K)\ass C_\infty(K)$.\par

 In this section we characterize the homomorphism $\bar\rho\co H_1\left(E(F)\right)\twoheadrightarrow A$ which arises from the restriction of $\rho$ to the complement of $F$, and the homomorphism $\tilde\rho\co H_1\left(C_m(K)\right)\twoheadrightarrow A$. This section generalizes \cite[Section 4.1.1]{KM09}, to which the reader is referred for details.\par

Write $\knotgroup$ as a semidirect product $\mathds{Z}\ltimes\knotgroup^{\prime}$. The abelianization map
$\Ab\co\ \knotgroup\twoheadrightarrow\mathcal{C}_0$ is given by $\Ab(x)=t^{\Link(x,K)}$, where
$\Link(x,K)$ equals the algebraic intersection number of $x$ with $F$. Any based loop $x$ in the complement of $F$ does
not intersect $F$. So the image of the map $\iota_*\co \pi_1 E(F)\rightarrow \knotgroup$ induced by the inclusion $\iota\co\ E(F) \hookrightarrow E(K)$ lies in $\knotgroup^\prime$. Additionally, the group $G$ factors as $G=\rho(\mathds{Z})\ltimes \rho(\knotgroup^\prime)$ with $\rho(\mathds{Z})=\mathcal{C}_m$ and
$\rho(\knotgroup^\prime)=A$ (see for instance \cite[Proposition 14.2]{BZ03}). Combining these facts tells us that the image of $\rho\circ \iota_\ast$ is contained in $A$, and we obtain a map $\rho^{(1)}\co \pi_1 E(F)\twoheadrightarrow A$. Apply the abelianization map to the domain and to the range of $\rho^{(1)}$ to obtain a map $\bar\rho\co H_1 \left(E(F)\right)\twoheadrightarrow A$, which we call the \emph{restriction of $\rho$ to the complement of $F$}.\par

In another direction, for $G\ass\mathcal{C}_m\ltimes_\phi A$ a metabelian homomorph of a knot group, a $G$--colouring $\rho$ of a knot $K$ factors as follows (see \emph{e.g.} \cite[Proposition 14.3]{BZ03}):

\begin{equation}
\begin{CD}
\rho\co \pi= \mathds{Z}\ltimes_\tau \pi^\prime @>\beta_n>> \mathcal{C}_m\ltimes_{\psi^\prime}H_1(C_m(K)) @>\rho^\prime>> G\\
 @. @VVV  @VVV\\
 \phantom{a} @. H_1(C_m(K)) @>\tilde{\rho}>> A
\end{CD}
\end{equation}

We will call $\tilde{\rho}$ the \emph{lift of $\rho$ to $C_m(K)$}.\par

The relationship between $\tilde\rho$ and $\bar\rho$ is as follows. Given a choice of $A$--coloured Seifert surface $(F,\bar\rho)$, construct $\mathrm{pr}\co C_{m}(K)\twoheadrightarrow E(K)$ by gluing together copies $R_0,\ldots,R_{m-1}$ of $E(F)$. A basis $\set{x_1,\ldots,x_{2g}}$ for $H_1(F)$ lifts to a generating set $\set{t^i\cdot x_1,\ldots,t^i\cdot x_{2g}}_{0\leq i\leq m-1}$ for $H_1(C_m)$. Choose indexes such that $t^i\cdot x_j\in R_i$ for all $i=0,\ldots m-1$ and $j=1,\ldots,2g$. This corresponds to a choice of a lift to $C_m(K)$ of $\star_F$. Then $\tilde\rho|_{R_0}=\bar\rho$. Conversely, given a choice of lift of $\star_F$, $\tilde\rho$ is recovered from $\bar\rho$ by setting $\tilde{\rho}(t^i\cdot x_j)\ass \phi^i \rho(x_j)$.\par

The discussion above is summarized by the commutative diagram below:

\begin{equation}
\begin{minipage}{300pt}\centering
\psfrag{a}[c]{$\pi$}\psfrag{b}[c]{$\pi^\prime$}\psfrag{c}[c]{$H_1(C_m(K))$}\psfrag{d}[c]{$G$}\psfrag{e}[c]{$\pi^\prime E(F)$}\psfrag{f}[c]{$H_1(E(F))$}
\psfrag{g}[c]{$A$}\psfrag{r}[c]{\fs$\rho$}\psfrag{s}[c]{\fs$\tilde\rho$}\psfrag{t}[c]{\fs$\bar\rho$}\psfrag{p}[r]{\fs$\mathrm{pr}_\ast$}
\includegraphics[width=210pt]{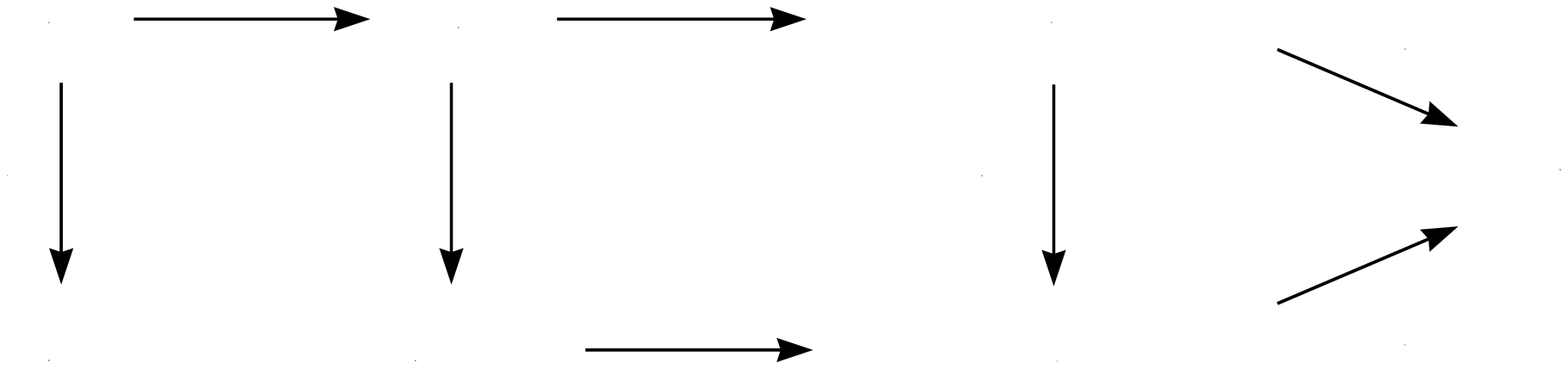}
\end{minipage}
\end{equation}

\noindent \rule{0pt}{12pt} Conditions for an $A$--colouring of $F$ to arise as a restriction of a knot colouring are given in Proposition \ref{P:HNN}, and conditions for an $A$--colouring of $C_m(K)$ to arise as a lift of a knot colouring are given in Proposition \ref{P:HNNlift}.\par

\begin{rem}\label{R:tubecomment}
Two Seifert surfaces of a knot are tube equivalent, \emph{i.e.} ambient isotopic up to addition or removal of tubes. See \textit{e.g.} \cite{BFK98,Lev65,Ric71}. However, two $A$--coloured Seifert surfaces of a $G$--coloured knot are only tube equivalent up to inner automorphism of the colouring as in Lemma \ref{L:inneriso}.
\end{rem}

\subsection{Definition of surface data}

\begin{defn}
A \emph{marked Seifert surface} for a knot $K$ is a Seifert surface $F$ for $K$, together with a choice of basis for $H_1(F)$.
\end{defn}

Let $(F,\bar\rho)$ be an $A$--coloured Seifert surface for a $G$--coloured knot $(K,\rho)$. A choice of basis $\set{x_1,\ldots,x_{2g}}$ for $H_1(F)$ induces an
\emph{associated basis} $\set{\xi_1,\ldots,\xi_{2g}}$ for $H_1\left(E(F)\right)$ which is uniquely characterized by the
condition that $\Link(x_i,\xi_j)=\delta_{ij}$ (see \textit{e.g.} \cite[Definition 13.2]{BZ03}). Let $\tau^{\pm}\co F\to
E(F)$ be the \emph{push-off maps} which take $x\in F$ to $(x,\pm1)\in F\times \{\pm 1\}\subset E(F)$. The group $A$ is abelian, and is therefore a $\mathds{Z}$--module in a unique way.

\begin{defn}
A pair $(\M,\V)$ is called \emph{surface data} for $(K,\rho)$ with respect to a marked Seifert surface $\left(F,\set{x_1,\ldots,x_{2g}}\right)$ for $K$ if:
\begin{itemize}
\item $\M=(\M_{ij})$ is the \emph{Seifert matrix} of $K$ defined by the equation
\begin{equation}\label{E:Seifertmatrix}\tau^-_\ast(x_i)=\sum_{j=1}^{2g}\M_{ij}\xi_j.\end{equation}
\item $\V$, called the \emph{colouring vector} of $(K,\rho)$ with respect to $\set{x_1,\ldots,x_{2g}}$, is defined by the equation
\begin{equation}
\V\ass \left(v_1;\,\ldots;v_{2g}\right)\ass \left(\bar\rho(\xi_1);\,\ldots;\bar\rho(\xi_{2g})\right)\in A^{2g}.
\end{equation}
\end{itemize}
Conversely, a pair $(\M,\V)$ is called \emph{surface data} if there exists a $G$--coloured knot $(K,\rho)$ and a marked Seifert surface $\left(F,\set{x_1,\ldots,x_{2g}}\right)$ for $K$ with respect to which $(\M,\V)$ is the surface data of $(K,\rho)$.
\end{defn}

The following is a direct generalization of \cite[Proposition 8]{KM09}.

\begin{prop}\label{P:HNN}[Proof in Section {\ref{SS:LinAlgProofs}}]
Let $K$ be an oriented knot with marked Seifert surface $\left(F,\set{x_1,\ldots,x_{2g}}\right)$. Corresponding to this data, there are bijections
between three sets:
\begin{enumerate}
\item The set of epimorphisms $\left\{\rho\co\knotgroup\twoheadrightarrow G\right\}$ with $\rho(\mu)=t$.
\item The set of epimorphisms $\left\{\psi\co H_1\left(E(F)\right)\twoheadrightarrow
A\right\}$ satisfying the condition that
$\psi\left(\tau^+_*(a)\right)=t\cdot\psi\left(\tau_*^-(a)\right)$ for
all $a\in H_1(F)$.
\item The set of vectors $\left\{\V\ass\left(v_1;\,\ldots;v_{2g}\right)\in A^{2g}\right\}$ satisfying:
    \begin{enumerate}
    \item\label{I:vigen}The elements of the set $\{v_1,\ldots,v_{2g}\}$ together generate $A$.
    \item The identity $\M^{\thinspace T}\thinspace\V=\M\thinspace t\cdot \V$ holds in $A^{2g}$.
    \end{enumerate}
\end{enumerate}
\end{prop}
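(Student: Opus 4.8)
The plan is to prove the two bijections $(1)\leftrightarrow(2)$ and $(2)\leftrightarrow(3)$ in turn, following the scheme of \cite[Proposition 8]{KM09} with $\mathds{Z}/n$ replaced by $A$ and multiplication by $-1$ replaced by $\phi$. The first is topological: by van Kampen's theorem applied to the decomposition $E(K)=E(F)\cup\bigl(F\times[-1,1]\bigr)$, the knot group $\pi$ is obtained from $\pi_1 E(F)$ by adjoining one generator $\mu$ (the meridian) subject to the relations $\mu\,\tau^+_\ast(a)\,\mu^{-1}=\tau^-_\ast(a)$ for $a\in\pi_1 F$, where $\tau^\pm_\ast\co\pi_1 F\to\pi_1 E(F)$ are the push-offs and the precise form of the relation is pinned down by the conventions of Sections \ref{S:Prelims} and \ref{SS:ASeif}. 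The second bijection is pure linear algebra over $\mathds{Z}$, using that $H_1(E(F))$ is free abelian on the associated basis $\xi_1,\dots,\xi_{2g}$.

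For $(1)\Rightarrow(2)$: given $\rho$ with $\rho(\mu)=t$, restrict along $\iota_\ast\co\pi_1 E(F)\to\pi$. By Section \ref{SS:ASeif} the composite $\rho\circ\iota_\ast$ lands in $A$ and is onto $A$; since $A$ is abelian it factors through the abelianization as an epimorphism $\psi=\bar\rho\co H_1(E(F))\twoheadrightarrow A$. Applying $\rho$ to the relation $\mu\,\tau^+_\ast(a)\,\mu^{-1}=\tau^-_\ast(a)$ and using $txt^{-1}=\phi^{-1}(x)$ for $x\in A$ and $t\cdot y=\phi(y)$, one obtains $\psi(\tau^+_\ast(a))=t\cdot\psi(\tau^-_\ast(a))$, so $\psi$ lies in set $(2)$. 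For $(2)\Rightarrow(1)$: given $\psi$ in set $(2)$, define $\rho$ on $\pi_1 E(F)$ as $\psi$ composed with abelianization and the inclusion $A\hookrightarrow G$, and set $\rho(\mu)=t$; the compatibility condition on $\psi$ is exactly what is needed for the relations $\mu\,\tau^+_\ast(a)\,\mu^{-1}=\tau^-_\ast(a)$ to be respected (the previous computation read backwards), so this extends to a well-defined homomorphism $\rho\co\pi\to G$. Its image contains $t$ and all of $A$, hence is $G$. These assignments are mutually inverse, giving $(1)\leftrightarrow(2)$.

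For $(2)\leftrightarrow(3)$: since $\xi_1,\dots,\xi_{2g}$ is a $\mathds{Z}$-basis of $H_1(E(F))$, specifying $\psi$ is the same as specifying $\V=(\psi(\xi_1);\dots;\psi(\xi_{2g}))\in A^{2g}$, and $\psi$ is onto iff the $v_i$ together generate $A$, which is $(3)(\mathrm{a})$. To translate the compatibility condition it suffices to test it on the basis $x_1,\dots,x_{2g}$ of $H_1(F)$. By \eqref{E:Seifertmatrix}, $\tau^-_\ast(x_i)=\sum_j\M_{ij}\xi_j$; and since the Seifert forms of the two push-offs are mutually transpose --- equivalently, $\Link(\tau^+_\ast(x_i),x_k)=\Link(x_i,\tau^-_\ast(x_k))$ together with $\Link(x_k,\xi_j)=\delta_{kj}$ --- one has $\tau^+_\ast(x_i)=\sum_j\M_{ji}\xi_j$. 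Therefore $\psi(\tau^+_\ast(x_i))=(\M^{\thinspace T}\V)_i$ and $t\cdot\psi(\tau^-_\ast(x_i))=(\M\,t\cdot\V)_i$, so the requirement that $\psi(\tau^+_\ast(a))=t\cdot\psi(\tau^-_\ast(a))$ for all $a$ is exactly $\M^{\thinspace T}\thinspace\V=\M\thinspace t\cdot\V$, i.e. $(3)(\mathrm{b})$. This gives $(2)\leftrightarrow(3)$ and completes the proof.

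The conceptual content is light, and the points needing care are bookkeeping. One must fix the orientation and push-off conventions so that the van Kampen presentation of $\pi$ takes exactly the form $\mu\,\tau^+_\ast(a)\,\mu^{-1}=\tau^-_\ast(a)$, since the placement of $\phi$ versus $\phi^{-1}$ in the $t$-conjugation must match the $\tau^+/\tau^-$ asymmetry in condition $(2)$. The one genuine subtlety is that surjectivity has to transfer correctly across $(1)\leftrightarrow(2)$: the $(2)\Rightarrow(1)$ direction is immediate once $\psi$ is onto, but $(1)\Rightarrow(2)$ relies on the fact, recorded in Section \ref{SS:ASeif}, that the restriction $\rho^{(1)}\co\pi_1 E(F)\to A$ of a surjective $\rho$ is \emph{already} onto $A$ --- not merely onto a subgroup whose $\phi$-orbit spans $A$ --- which is what makes generation by the $v_i$ alone the correct hypothesis $(3)(\mathrm{a})$. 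Finally, the case $m=0$, where $\mathcal{C}_m=\mathds{Z}$ and $\phi$ has infinite order, needs no change, since nothing in the argument uses the order of $t$.
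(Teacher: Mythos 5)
Your overall architecture --- the van Kampen/HNN presentation of $\knotgroup$ relative to $F$ for $(1)\leftrightarrow(2)$, and linear algebra over the basis $\xi_1,\ldots,\xi_{2g}$ for $(2)\leftrightarrow(3)$ --- matches the paper's, which delegates exactly these formal bijections to \cite[Proof of Proposition 8]{KM09}, and your translation of the compatibility condition into $\M^{\thinspace T}\V=\M\thinspace t\cdot\V$ is correct. The problem is the point you yourself single out as ``the one genuine subtlety'' and then dismiss. You claim the surjectivity of $\rho^{(1)}\co\pi_1 E(F)\to A$ is ``recorded in Section \ref{SS:ASeif}''; it is only \emph{asserted} there by writing a two-headed arrow. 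What that section actually proves is that $\rho(\knotgroup^\prime)=A$ and that $\mathrm{im}(\rho\circ\iota_\ast)\subseteq A$. Since $\iota_\ast\bigl(\pi_1 E(F)\bigr)$ is only one fundamental domain of the infinite cyclic cover, its $\rho$--image is a priori just a subgroup $B\leq A$ whose $\phi$--orbit generates $A$, and such a $B$ can be proper (e.g.\ $A=(\mathds{Z}/2\mathds{Z})^2$, $\phi$ of order $3$, $B=\langle s_1\rangle$). Ruling this out is precisely the content of condition $(3)(\mathrm{a})$ as stated, it genuinely requires the relation $\M^{\thinspace T}\V=\M\thinspace t\cdot\V$, and it is what the entire second half of the paper's proof is devoted to: reduction via the Chinese remainder theorem to $A\cong\mathcal{C}_q^r$, Proposition \ref{P:TrotterProp} (Trotter) to replace $\M$ by a nonsingular matrix, and the Frattini subgroup together with the Burnside Basis Theorem. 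Citing Section \ref{SS:ASeif} here is circular, since that section's surjectivity claim is itself justified only by this proposition.

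To close the gap (at least for $A$ finite) you need an argument such as the following: subtract $\M\V$ from both sides of $\M^{\thinspace T}\V=\M\thinspace t\cdot\V$ to get $S\V=\M\,(t-1)\cdot\V$ with $S=\M^{\thinspace T}-\M$ unimodular, hence $\V=S^{-1}\M\,(t-1)\cdot\V$ and so $B\subseteq(\phi-\mathrm{id})(B)$; comparing orders forces $(\phi-\mathrm{id})(B)=B$, hence $\phi(B)\subseteq B$, hence $B$ is $\phi$--stable and therefore contains the $\mathcal{C}_m$--module generated by the $v_i$, which is $A$. Some argument of this kind (or the paper's) must appear; without it your bijection from set $(1)$ only lands in the larger set of vectors for which $\set{t^k\cdot v_i}_{k\in\mathds{Z}}$ generates $A$, and your set $(2)$ should likewise only consist of maps whose image generates $A$ as a $\mathcal{C}_m$--module rather than of epimorphisms.
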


A corollary is a simple necessary condition, which appears to be new, for a knot to be $G$--colourable.

\begin{cor}\label{C:rankbound}
If twice the genus of a knot $K$ is less than $\Rank(A)$, then there cannot exist a
surjective homomorphism $\rho\co\pi\twoheadrightarrow G$.
\end{cor}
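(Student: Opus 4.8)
The plan is to derive Corollary~\ref{C:rankbound} directly from the bijection established in Proposition~\ref{P:HNN}, in particular from the equivalence between the existence of a surjection $\rho\co\pi\twoheadrightarrow G$ with $\rho(\mu)=t$ and the existence of a colouring vector $\V=(v_1;\ldots;v_{2g})\in A^{2g}$ whose entries generate $A$ (condition (3)(a)). By the convention fixed in Section~\ref{SS:ColKnots}, any surjection $\pi\twoheadrightarrow G$ may be normalized so that all Wirtinger generators — in particular $\mu$ — map into $tA$, and composing with an inner automorphism (Lemma~\ref{L:inneriso}) we may further arrange $\rho(\mu)=t$ without changing $G$--colourability; so it suffices to rule out the existence of such a $\V$.

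First I would observe that $2g$ elements $v_1,\ldots,v_{2g}$ of an abelian group $A$ generate a subgroup which is a quotient of $\mathds{Z}^{2g}$, namely the image of the map $\mathds{Z}^{2g}\to A$ sending the $i$-th standard generator to $v_i$. Hence the subgroup $\langle v_1,\ldots,v_{2g}\rangle$ of $A$ is generated by at most $2g$ elements, i.e. has rank at most $2g$ (here $\Rank$ denotes the minimal number of generators, equivalently $\dim_{\mathds{Q}}((A)\otimes\mathds{Q})$ plus the minimal number of generators of the torsion subgroup, whichever convention is in force — in either case it is monotone under passing to subgroups generated by a bounded number of elements, and subadditive). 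If $2g<\Rank(A)$, then $\langle v_1,\ldots,v_{2g}\rangle$ is a proper subgroup of $A$, so condition (3)(a) of Proposition~\ref{P:HNN} cannot be satisfied. Therefore the set in item (3) is empty, and by the bijection the set in item (1) is empty as well.

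The only genuinely delicate point is the reduction from an \emph{arbitrary} surjection $\rho\co\pi\twoheadrightarrow G$ to one satisfying the normalization $\rho(\mu)=t$ used in Proposition~\ref{P:HNN}; but this is exactly the content of the discussion following Lemma~\ref{L:inneriso} (all meridians land in a single coset $t^aA$ with $a\ne 0$ by surjectivity, and replacing $t$ by $t^a$ as the chosen generator of $\mathcal{C}_m$, together with an inner automorphism to move $\rho(\mu)$ to $t$ itself, changes neither $G$ nor the property of being $G$--colourable). Modulo that bookkeeping, the proof is the one-line observation above: a knot of genus $g$ has a Seifert surface with $H_1$ of rank $2g$, the colouring vector has $2g$ entries, and $2g$ elements cannot generate an abelian group of rank greater than $2g$. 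I do not expect any real obstacle here; the corollary is a formal consequence of the rank count in Proposition~\ref{P:HNN}(3)(a).
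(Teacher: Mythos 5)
Your proof is correct, and it takes a shorter route than the paper's. You deduce the corollary purely from condition (3)(a) of Proposition~\ref{P:HNN}: the colouring vector has $2g$ entries, those entries generate $A$, hence $\Rank(A)\leq 2g$. The paper instead invokes condition (3)(b): it rewrites $\M\,t\cdot\V=\M^{\thinspace T}\V$ as $S^{-1}\M\,(t-1)\cdot\V=\V$ using the invertibility of $S=\M^{\thinspace T}-\M$ (unimodular congruent to the standard symplectic form) and of $t-1$ on $A$, and concludes that $\Rank(\M)\geq\Rank(\V)=\Rank(A)$. The two arguments prove the stated corollary equally well, but they are not the same: yours is the minimal entry-count and buys brevity, while the paper's detour through the Seifert matrix actually yields the slightly stronger inequality $\Rank(\M)\geq\Rank(A)$ (the rank of the Seifert matrix itself, not merely its size, bounds the number of generators of $A$), which the corollary as stated does not require. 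Note that both arguments rest on the same nontrivial input, namely that the entries of $\V$ generate $A$ as a group rather than merely as a $\mathcal{C}_m$--module --- that is the substantive part of Proposition~\ref{P:HNN}, proved there via Trotter's nonsingular representative and the Burnside basis theorem --- so your proof is not more elementary in any essential sense, only more economical. Your handling of the normalization $\rho(\mu)=t$ (rechoosing the generator of $\mathcal{C}_m$ and composing with an inner automorphism) is the right bookkeeping and matches the first line of the paper's proof of Proposition~\ref{P:HNN}.
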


For $A$--coloured covering spaces we have:

\begin{prop}\label{P:HNNlift}
Let $K$ be an oriented knot equipped with a marked Seifert surface $\left(F,\set{x_1,\ldots,x_{2g}}\right)$. Corresponding to this data, there are bijections between three sets:
\begin{enumerate}
\item The set of epimorphisms $\left\{\rho\co\knotgroup\twoheadrightarrow G\right\}$ with $\rho(\mu)=t$.
\item The set of epimorphisms $\left\{\psi\co H_1\left(E(F)\right)\twoheadrightarrow
A\right\}$ satisfying the condition that
$\psi(\tau(z))=t\cdot \psi(z)$ for
all $a\in H_1(F)$.
\item The set of vectors $\left\{\V\ass\left(v_1;\,\ldots;v_{2g}\right)\in A^{2g}\right\}$
satisfying:
    \begin{enumerate}
    \item The elements of the set $\{v_1,\ldots,v_{2g}\}$ together generate $A$.
    \item The vector $P\thinspace\V$ vanishes in $A^{2g}$, where $P$ is a presentation matrix for $H_1(C_m(K))$ as a $\mathcal{C}_m$-module.
    \end{enumerate}
\end{enumerate}
\end{prop}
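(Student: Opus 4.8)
The plan is to deduce Proposition \ref{P:HNNlift} from Proposition \ref{P:HNN} together with the cut-and-paste description of $C_m(K)$ given in Section \ref{SS:ASeif}. Set (1) is literally the same set in both propositions, and the bijection between (1) and (2) in Proposition \ref{P:HNN} is already established there (note that statement (2) as printed above is a typo for the condition $\psi(\tau^+_*(a)) = t\cdot\psi(\tau^-_*(a))$ appearing in Proposition \ref{P:HNN}, since $\tau$ alone is not defined). So the real content is the bijection between (2) and the new statement (3), which is a purely linear-algebraic reformulation: I must show that an epimorphism $\psi\co H_1(E(F))\twoheadrightarrow A$ satisfying the $\tau^\pm$-compatibility condition is the same data as a vector $\V\in A^{2g}$ whose entries generate $A$ and which is killed by a presentation matrix $P$ for $H_1(C_m(K))$ as a $\mathcal{C}_m$-module.

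First I would recall that, fixing the associated basis $\set{\xi_1,\ldots,\xi_{2g}}$ for $H_1(E(F))$, any homomorphism $\psi\co H_1(E(F))\to A$ is recorded faithfully by the vector $\V = (\psi(\xi_1);\ldots;\psi(\xi_{2g}))$, and $\psi$ is surjective if and only if the entries of $\V$ generate $A$; this is exactly condition (3a). Next I would compute $H_1(C_m(K))$ as a $\mathcal{C}_m$-module from the Mayer--Vietoris / cut-and-paste presentation: gluing $R_0,\ldots,R_{m-1}$ copies of $E(F)$ along copies of $F$, one gets the standard presentation with generator set $\set{t^i\cdot x_j}$ and relations $t^{i+1}\cdot\tau^-_*(x_j) = t^i\cdot\tau^+_*(x_j)$, which in the Seifert-matrix language (Equation \eqref{E:Seifertmatrix}) becomes the familiar matrix $tM - M^{\thinspace T}$ (acting on the free $\mathcal{C}_m$-module on $x_1,\ldots,x_{2g}$, then reduced mod $t^m - 1$); one may take $P$ to be this presentation matrix, or any matrix equivalent to it over $\mathds{Z}[\mathcal{C}_m]$, since the conclusion $P\thinspace\V = 0$ is invariant under such equivalence. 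Then I would observe that, under the identification of Section \ref{SS:ASeif}, the lift $\tilde\rho$ sends $t^i\cdot x_j$ to $\phi^i\rho(x_j)$, so that $\tilde\rho$ descending to a well-defined map on $H_1(C_m(K))$ — equivalently, $\V$ being annihilated by $P$ — is precisely the statement that the relations defining $H_1(C_m(K))$ are respected, i.e. that $t\cdot M\thinspace\V = M^{\thinspace T}\thinspace\V$ holds in $A^{2g}$ with the $\mathcal{C}_m$-action by $\phi$. But that identity is exactly condition (3b) of Proposition \ref{P:HNN}, which is there shown equivalent to the $\tau^\pm$-compatibility condition (2). Chaining the two propositions' (1)$\leftrightarrow$(2) bijections with this translation of (3), all three sets in Proposition \ref{P:HNNlift} are in natural bijection.

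The one genuine subtlety — and the step I expect to be the main obstacle — is the bookkeeping that turns "$P\thinspace\V$ vanishes for \emph{a} presentation matrix $P$" into a statement that does not depend on the choice of presentation, and more precisely the verification that the $\mathcal{C}_m$-module presentation matrix of $H_1(C_m(K))$, when fed the colouring vector and the $\phi$-action on $A$, reduces exactly to the relation $M^{\thinspace T}\V = M\,t\cdot\V$ of Proposition \ref{P:HNN}(3b) after collapsing the $m$ layers. This requires being careful that the $\mathcal{C}_m$-action used to define "$P$ as a $\mathcal{C}_m$-module" matches the action $t^i\cdot x_j\mapsto \phi^i\rho(x_j)$ used to build $\tilde\rho$, and that passing from the $\mathds{Z}[\mathcal{C}_m]$-presentation of $H_1(C_m)$ to its specialization along $\V$ is well-defined — i.e. that the relations among the $t^i\cdot x_j$ are generated by the ``layer-gluing'' relations and nothing more, which is where the finite generation of $A$ (needed for $\bar\rho$ surjective) and the explicit cut-and-paste model from \cite[Section 4.1.1]{KM09} do the work. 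Once that compatibility is pinned down, the rest is formal. I would close by remarking that Corollary \ref{C:rankbound} has an analogue here as well, since $H_1(C_m(K))$ is generated by $2g$ elements over $\mathds{Z}[\mathcal{C}_m]$.
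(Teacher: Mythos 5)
Your proposal is correct and follows the route the paper itself intends: the paper's entire proof of Proposition \ref{P:HNNlift} is the single remark that it is proved ``in the same way \textit{mutatis mutandis}'' as Proposition \ref{P:HNN}, and your deduction --- identifying the $\mathcal{C}_m$-module presentation matrix of $H_1(C_m(K))$ as $t\M-\M^{\thinspace T}$ so that $P\thinspace\V=0$ collapses to the relation $\M\thinspace t\cdot\V=\M^{\thinspace T}\V$ of Proposition \ref{P:HNN}(3b) --- is exactly the translation that remark leaves implicit. Your reading of the garbled condition in set (2) as a misprint is also the only sensible one, and it does not affect the substantive new content, namely the equivalence of (1) and (3).
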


This is the analogue of Proposition \ref{P:HNN} for lifts of $G$--colourings and it is proved in the same way \textit{mutatis mutandis}.

\subsection{$S$--equivalence}\label{SS:S-equiv-matrix}

Recall that two Seifert surfaces are \emph{tube equivalent} if they are ambient isotopic up to addition and removal of tubes. Tube equivalence is weaker than ambient isotopy, because we allow only ambient isotopy which preserves a Seifert surface (although we don't care which one).

\begin{defn}\label{D:tubequiv}
 Two $G$--coloured knots $(K_{1,2},\rho_{1,2})$ are \emph{tube equivalent} if there exist tube equivalent $A$--coloured Seifert surfaces $(F_{1,2},\rho_{1,2})$ for $(K_{1,2},\rho_{1,2})$ correspondingly.
\end{defn}

In this section, two ambient isotopic knots are considered the same, and two tube equivalent $G$--coloured knots are considered the same.\par

Two matrices $\M_{1,2}$ are \emph{$S$--equivalent} if there exists a knot $K$ and a choice $\left(F_{1,2},\set{x^{1,2}_1,\ldots,x^{1,2}_{2g_{1,2}}}\right)$ of marked Seifert surfaces for $K$, such that the Seifert matrix of $K$ with respect to $\left(F_1,\set{x^1_1,\ldots,x^1_{2g_{1,2}}}\right)$ is $\M_1$, and the
Seifert matrix with respect to $\left(F_2,\set{x^2_1,\ldots,x^2_{2g_{1,2}}}\right)$ is $\M_2$ (this is equivalent to the more standard definition of
$S$--equivalence via moves on Seifert matrices \cite{Mur65,Ric71,Tro62}, as may be seen from \cite[Proposition 4.2]{GG08}). Two knots $K_{1,2}$ are \emph{$S$--equivalent} if they share the same Seifert matrix $\M$ with respect to some choice of marked Seifert surfaces $\left(F_{1,2},\set{x^{1,2}_1,\ldots,x^{1,2}_{2g_{1,2}}}\right)$ correspondingly \cite{GG08,NS03}. This is a well-defined equivalence relation on knots modulo ambient isotopy.\par

These definitions extend to the $G$--coloured context.

\begin{defn}\label{D:Sequiv}\hfill
\begin{itemize}
\item Two surface data $(\M_1,\V_1)$ and $(\M_2,\V_2)$ are said to be \emph{$S$--equivalent} if there exists a $G$--coloured knot
$(K,\rho)$ together with a choice of marked Seifert surfaces  $\left(F_{1,2},\set{x^{1,2}_1,\ldots,x^{1,2}_{2g_{1,2}}}\right)$ for $K$, such that the surface data of $(K,\rho)$ with respect to $(F_1,\set{x^1_1,\ldots,x^2_{2g_1}})$ is $(\M_1,\V_1)$, and the surface data with respect to  $(F_2,\set{x^2_1,\ldots,x^2_{2g_2}})$ is $(\M_2,\V_2)$.
\item Two $G$--coloured knots $(K_{1,2},\rho_{1,2})$ are \emph{$S$--equivalent} if there exist Seifert surfaces $F_{1,2}$ for $K_{1,2}$
correspondingly, and bases for their first homology, with respect to which the surface data of $(K_{1},\rho_{1})$ is $S$--equivalent to the surface data of
$(K_{2},\rho_{2})$.
\end{itemize}
\end{defn}

$S$--equivalence is a well-defined equivalence relation on $G$--coloured knots modulo tube equivalence, by Naik and Stanford's proof \cite{NS03}, which is fleshed out in \cite{GG08}.

\begin{rem}\label{R:tubecom2}
$S$--equivalence would not be well-defined on $G$--coloured knots modulo ambient isotopy, because $A$--coloured Seifert surfaces corresponding to ambient isotopic $G$--coloured knots might not be tube equivalent. See Remark \ref{R:tubecomment}.
\end{rem}

Our definition of $S$--equivalence on surface data coincides with a definition in terms of moves on matrices.

\begin{prop}\label{P:Sequiv}
Two surface data are $S$--equivalent if and only if they are related a finite
sequence of the following moves and their inverses:
\begin{description}
\item[$\Lambda_1$]\[
(\M,\V)\ \mapsto (U^{\thinspace T}\thinspace \M U,U^{-1}\V)
\]\noindent where $U$ is an integral square matrix such that $\det U=\pm1$ (such a matrix is said to be \emph{unimodular}).
\item[$\Lambda_2$]
\begin{multline*}
(\M,\V)\ \mapsto \left(\thinspace\begin{aligned}
\begin{bmatrix}
\ & \ & \ & c_1 & 0\\
\ & \M & \ & \vdots & \vdots\\
\ & \ & \ & c_{2g} & 0\\
c_1 & \cdots & c_{2g} & 0 & -1\\
0 & \cdots & 0 & 0 & 0
\end{bmatrix}
\end{aligned}\ \scalebox{1.5}{,}\ \left(\begin{matrix}v_1\\ \vdots\\ v_{2g}\\ 0\\ \frac{t-1}{t}\cdot\left(\sum_{i=1}^{2g}c_i v_i\right)
\end{matrix}\right)\right)\ \mathrm{or}\\
\left(\thinspace\begin{aligned}
\begin{bmatrix}
\ & \ & \ & c_1 & 0\\
\ & \M^{\thinspace T} & \ & \vdots & \vdots\\
\ & \ & \ & c_{2g} & 0\\
c_1 & \cdots & c_{2g} & 0 & 0\\
0 & \cdots & 0 & 1 & 0
\end{bmatrix}
\end{aligned}\ \scalebox{1.5}{,}\ \left(\begin{matrix} v_1\\ \vdots\\
v_{2g}\\ 0
\\ (t-1)\cdot\left(\sum_{i=1}^{2g}c_i v_i\right)
\end{matrix}\right)\right)\end{multline*}
\noindent with $c_1,\ldots,c_{2g}$ arbitrary integers.
\end{description}
\end{prop}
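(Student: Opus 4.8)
The plan is to reduce to the classical $S$--equivalence theorem for Seifert matrices and then to pin down the colouring vector using the bijection of Proposition \ref{P:HNN}. Concretely, for the forward implication I would argue as follows. Suppose $(\M_1,\V_1)$ and $(\M_2,\V_2)$ are $S$--equivalent, witnessed by a $G$--coloured knot $(K,\rho)$ together with marked Seifert surfaces $F_1$ and $F_2$ for $K$. By definition $F_1$ and $F_2$ are tube equivalent, so they are joined by a finite chain of elementary moves, each of which is an ambient isotopy, a change of basis of $H_1(F)$, or the addition or removal of a single tube; under these the underlying Seifert matrix changes respectively by nothing, by a congruence $\M\mapsto U^{\thinspace T}\M U$ with $U$ unimodular, and by one of the two elementary enlargements (or reductions) of classical $S$--equivalence \cite{Mur65,Ric71,Tro62,GG08}. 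Since everything takes place for the \emph{one} coloured knot $(K,\rho)$, with $\bar\rho$ at each stage the restriction of $\rho$ to the appropriate surface complement, the inner-automorphism ambiguity of Remark \ref{R:tubecomment} never arises. It is therefore enough to check that along each such elementary move the surface data $(\M,\V)$ changes by $\Lambda_1^{\pm1}$ or $\Lambda_2^{\pm1}$, and, conversely, that each move $\Lambda_1^{\pm1},\Lambda_2^{\pm1}$ is realised by an elementary move on a marked Seifert surface for a fixed coloured knot.

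An ambient isotopy carrying basis to basis leaves $(\M,\V)$ unchanged. Under a change of basis of $H_1(F)$ the associated dual basis $\xi_1,\ldots,\xi_{2g}$ of $H_1(E(F))$ — characterised by $\Link(x_i,\xi_j)=\delta_{ij}$ — must transform in the unique way that preserves these pairings, and applying $\bar\rho$ then produces exactly the transformation $\V\mapsto U^{-1}\V$ of $\Lambda_1$; the condition that the $v_i$ generate $A$ survives because $U$ is invertible over $\mathds{Z}$. For the enlargement I would consider adding a tube to $F$ along an embedded arc in the complement of $F$ whose linking numbers with $x_1,\ldots,x_{2g}$ are $c_1,\ldots,c_{2g}$: the enlarged surface $F'$ has two new homology generators, and the Seifert matrix changes by one of the two elementary enlargements appearing in $\Lambda_2$ (which of the two, and whether $\M$ or $\M^{\thinspace T}$ occurs, depends on the framing of the tube and on which sides of $F$ its feet lie). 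The enlarged colouring vector $\V'$ is simply the colouring vector of the \emph{same} coloured knot $(K,\rho)$ with respect to the enlarged marked surface, so by Proposition \ref{P:HNN} it is the \emph{unique} vector extending $\V$ on the old generators that satisfies $(\M')^{\thinspace T}\V'=\M'\thinspace t\cdot\V'$ and generates $A$. A short computation with the explicit matrix $\M'$ then shows that this constraint forces the two new entries to be $0$ and $\frac{t-1}{t}\cdot\bigl(\sum_i c_i v_i\bigr)$ respectively in the first variant, and $0$ and $(t-1)\cdot\bigl(\sum_i c_i v_i\bigr)$ in the second, which are precisely the colouring vectors named in $\Lambda_2$; reversing a tube addition gives $\Lambda_2^{-1}$. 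For the converse, starting from surface data realised by some $(K,\rho,F)$: a prescribed $\Lambda_1$ is realised by the corresponding change of basis, and a prescribed $\Lambda_2$ by adding a tube along an arc realising the prescribed linking vector $(c_1,\ldots,c_{2g})$ — such an arc always exists — the resulting colouring vector being forced, by the same appeal to Proposition \ref{P:HNN}, to agree with the one named in the move. Hence the equivalence relation generated by $\Lambda_1^{\pm1}$ and $\Lambda_2^{\pm1}$ is exactly $S$--equivalence of surface data.

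The step I expect to require the most care is the tube analysis underlying $\Lambda_2$: correctly accounting for the framing of the tube and the sides of $F$ on which its feet lie — this is what separates the two variants of $\Lambda_2$ and produces both the transpose $\M^{\thinspace T}$ and the difference between the multipliers $\frac{t-1}{t}\cdot$ and $(t-1)\cdot$ — together with invoking the classical fact that every integer linking vector is realised by some embedded arc in the surface complement. That tube equivalence of Seifert surfaces corresponds to these matrix moves is standard \cite{GG08,NS03}, the congruence case is immediate, and the linear-algebra computation that pins down the new colouring-vector entries, though it must be carried out over the (possibly infinite) cyclic group $\mathcal{C}_m$ acting on $A$, is routine.
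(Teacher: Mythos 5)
Your proposal is correct and follows essentially the same route as the paper: both directions reduce to the classical $S$--equivalence theorem for Seifert matrices ($\Lambda_1$ as change of basis, $\Lambda_2$ as $1$--handle attachment), and the new colouring-vector entries are pinned down by the identity $\M^{\thinspace T}\V=\M\thinspace t\cdot\V$ from Proposition \ref{P:HNN}, exactly as in the paper's computation forcing $x=0$ and $y=\frac{t-1}{t}\cdot\left(\sum_i c_i v_i\right)$. No gaps.
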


\begin{proof}
If $(\M_1,\V_1)$ and $(\M_2,\V_2)$ are related by a $\Lambda_1$-move, and if $(K,\rho)$ is a $G$--coloured knot with surface data $(\M_1,\V_1)$ with respect to a choice of Seifert surface $F$ for $K$ and some choice of basis $x_1,\ldots,x_{2g}$ for $H_1(F)$, then the action of $U$ on $H_1(F)$ induces a new basis
$y_1,\ldots,y_{2g}$ for $H_1(F)$, such that the surface data for $(K,\rho)$ with respect to $(F,\set{y_1,\ldots,y_{2g}})$ is $(\M_2,\V_2)$.\par

If $(\M_2,\V_2)$ is obtained from $(\M_1,\V_1)$ by a $\Lambda_2$-move, and if $(\M_1,\V_1)$ is surface data for a $G$--coloured knot $(K,\rho)$ with respect to a choice $(F,\set{x_1,\ldots,x_{2g}})$ of marked Seifert surface, then $(\M_2,\V_2)$ arises as surface data for $(K,\rho)$ with respect to a Seifert surface $F^{\prime}=F\cup\{\text{$1$--handle}\}$ and a basis $\set{x_1,\ldots,x_{2g},x_1^{\text{new}},x_2^{\text{new}}}$ for $H_1(F^{\prime})$ as
follows:

\begin{equation}\label{E:genincrease}
 \begin{minipage}{100pt}
    \includegraphics[width=100pt]{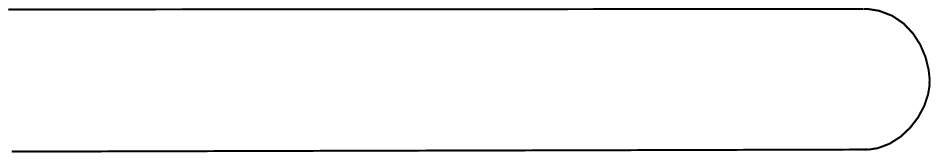}
    \end{minipage}\quad\ \ \ \overset{\text{stabilize}}{\begin{minipage}{18pt}\includegraphics[width=18pt]{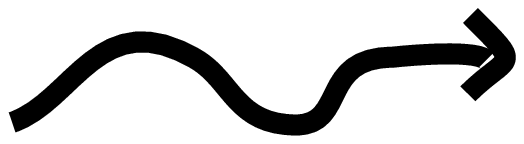}\end{minipage}}\quad
    \left\{\ \begin{array}{c}
     \raisebox{20pt}{\begin{minipage}{130pt}
     \psfrag{a}[c]{\tiny$x_1^{\text{new}}$}\psfrag{b}[c]{\tiny$x_2^{\text{new}}$}
    \includegraphics[width=130pt]{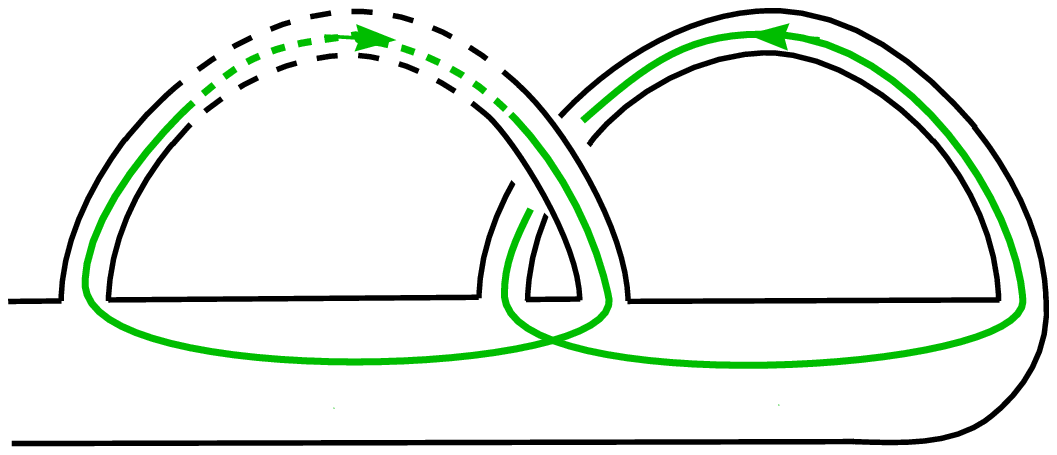}
    \end{minipage}}\\[0.6cm]
    \raisebox{20pt}{\begin{minipage}{130pt}
     \psfrag{a}[c]{\tiny$x_1^{\text{new}}$}\psfrag{b}[c]{\tiny$x_2^{\text{new}}$}
    \includegraphics[width=130pt]{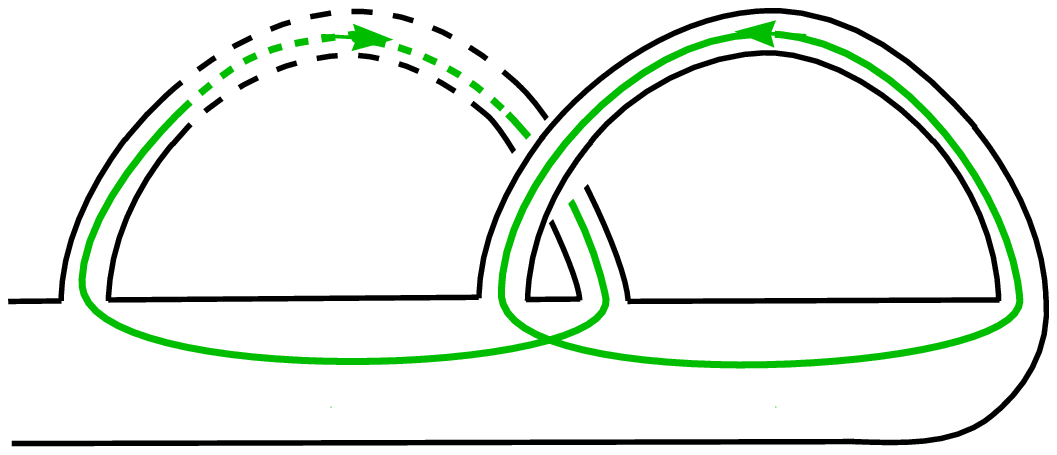}
    \end{minipage}}
    \end{array}\right.
\end{equation}

Conversely, let $(\M_1,\V_1)$ and $(\M_2,\V_2)$ be surface data for a $G$--coloured knot $(K,\rho)$ with respect to choices $(F_1,\set{x_1,\ldots,x_{2g}})$ and $(F_2,\set{y_1,\ldots,y_{2g}})$ of marked Seifert surfaces. Then, in particular, $\M_1$ and $\M_2$ are related by a finite sequence of the following moves and their inverses:

\begin{description}
\item[$\Lambda_1$]\[
\M\ \mapsto U^{\thinspace T}\thinspace \M U
\]\noindent for $U$ a unimodular matrix.
\item[$\Lambda_2$]
\begin{multline}
\M\ \mapsto
\begin{bmatrix}
\ & \ & \ & c_1 & 0\\
\ & \M & \ & \vdots & \vdots\\
\ & \ & \ & c_{2g} & 0\\
c_1 & \cdots & c_{2g} & 0 & -1\\
0 & \cdots & 0 & 0 & 0
\end{bmatrix}
\ \mathrm{or}\
\begin{bmatrix}
\ & \ & \ & c_1 & 0\\
\ & \M & \ & \vdots & \vdots\\
\ & \ & \ & c_{2g} & 0\\
c_1 & \cdots & c_{2g} & 0 & 0\\
0 & \cdots & 0 & 1 & 0
\end{bmatrix}
\end{multline}
\noindent with $c_1,\ldots,c_{2g}$ arbitrary integers.
\end{description}

For a proof, see \textit{e.g.} \cite[Theorem 5.4.1]{Mur96} or \cite[Theorem 2.3]{Ric71}. The $\Lambda_1$-move corresponds to a change of basis for $H_1(F)$, which induces the move $\V\mapsto U^{-1}\V$ on the colouring vector. The $\Lambda_2$-move corresponds to a $1$--handle attachment. Let $(v_1;\,\ldots;v_{2g};x;y)$ be the corresponding colouring vector. By the argument of \cite[Page 1371]{KM09}, for any colouring data $(\M,\V)$, the equation
$\M^{\thinspace T}\thinspace\V=\M\thinspace t\cdot \V\in A^{2g}$ holds.
Therefore:

\begin{multline}\label{E:colvecform}
\begin{bmatrix}
\ & \ & \ & c_1 & 0\\
\ & \M & \ & \vdots & \vdots\\
\ & \ & \ & c_{2g} & 0\\
c_1 & \cdots & c_{2g} & 0 & -1\\
0 & \cdots & 0 & 0 & 0
\end{bmatrix}\cdot  \left(\begin{matrix}t\cdot v_1\\ \vdots\\ t\cdot v_{2g}\\ t\cdot x\\
t\cdot y
\end{matrix}\right)-\begin{bmatrix}
\ & \ & \ & c_1 & 0\\
\ & \M^{\thinspace T} & \ & \vdots & \vdots\\
\ & \ & \ & c_{2g} & 0\\
c_1 & \cdots & c_{2g} & 0 & 0\\
0 & \cdots & 0 & -1 & 0
\end{bmatrix}\cdot
\left(
\begin{matrix}v_1\\ \vdots\\ v_{2g}\\ x\\ y
\end{matrix}
\right)\\%
\rule{0pt}{17pt}=
\left(
\begin{matrix}\M\thinspace t\cdot \V-\M^{\thinspace T}\V+\left(\sum_{i=1}^{2g}c_i\right)(t-1)\cdot x\\\rule{0pt}{16pt}
(t-1)\cdot\left(\sum_{i=1}^{2g}c_i\thinspace v_i\right)-t\cdot y\\ x
\end{matrix}
\right)\ =
\left(\begin{matrix}0\\\vdots\\0\\ 0\\ 0\end{matrix}\right).
\end{multline}

The bottom row tells us that $x= 0$, while the second lowest row tells us that $y=\frac{t-1}{t}\cdot\left(\sum_{i=1}^{2g}c_iv_i\right)$ as
required. The remaining case is proved in the same way,\textit{mutatis mutandis}.
\end{proof}

Over an integral domain, any Seifert matrix is $S$--equivalent to a non-singular matrix or to zero \cite{Lev70,Tro62}.

\begin{prop}\label{P:TrotterProp}
If $A$ is isomorphic to a vector space over an integral domain, then for any surface data $(\M,\V)$, there exists surface data
$(\M^{\prime},\V^{\prime})$ which is $S$--equivalent to $(\M,\V)$,such that the matrix $\M^{\prime}$ is non-singular.
\end{prop}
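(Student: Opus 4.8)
The plan is to lift the classical Trotter--Levine reduction of a Seifert matrix to a non-singular matrix up to the level of surface data, carrying the colouring vector along by means of Proposition \ref{P:Sequiv}.

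First I would produce the input chain. By \cite{Lev70,Tro62} the (integral) Seifert matrix $\M$ is $S$--equivalent to a non-singular matrix or to the empty matrix, and by \cite[Theorem 5.4.1]{Mur96} (equivalently \cite[Theorem 2.3]{Ric71}) $S$--equivalence of matrices is generated by the moves $\Lambda_1$ and $\Lambda_2$. Combining these, there is a finite chain $\M=\M_0,\M_1,\dots,\M_k=\M'$ in which consecutive entries differ by a $\Lambda_1$-- or a $\Lambda_2$--move or the inverse of one, every $\M_i$ is again a Seifert matrix (even size, $\M_i-\M_i^{\thinspace T}$ unimodular), and $\M'$ is non-singular. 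The empty-matrix alternative is harmless --- the empty matrix is vacuously non-singular --- and in fact it does not occur when $A\neq 0$, since then the Alexander module of $K$ surjects onto $A$, hence is non-trivial, so $\M$ is not $S$--equivalent to the empty matrix; the hypothesis that $A$ is a vector space over an integral domain is what makes ``non-singular'' the relevant notion in the first place.

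Next I would lift the chain inductively, beginning with $(\M_0,\V_0)=(\M,\V)$ and using Proposition \ref{P:Sequiv} at each step, so that every $(\M_i,\V_i)$ obtained is again genuine surface data and therefore satisfies the identity $\M_i^{\thinspace T}\V_i=\M_i\,t\cdot\V_i$ of Proposition \ref{P:HNN}. A $\Lambda_1$--move $\M_i\mapsto U^{\thinspace T}\M_i U$ lifts uniquely to $(\M_i,\V_i)\mapsto(U^{\thinspace T}\M_i U,\,U^{-1}\V_i)$; a $\Lambda_2$--stabilization lifts with the two new coordinates of the colouring vector prescribed by the formulae in Proposition \ref{P:Sequiv}. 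The only step that is not purely formal is a $\Lambda_2$--\emph{destabilization} (an inverse $\Lambda_2$--move): on surface data this is available only if the colouring vector already has the special shape displayed in the statement of $\Lambda_2$. But feeding the destabilizable shape of $\M_i$ into the identity $\M_i^{\thinspace T}\V_i=\M_i\,t\cdot\V_i$ --- which is precisely the computation \eqref{E:colvecform} carried out in the proof of Proposition \ref{P:Sequiv} --- forces the last two coordinates of $\V_i$ to be $0$ and $\frac{t-1}{t}$ times the relevant integer combination of the remaining coordinates, so the destabilization does lift. Chaining the lifts shows that $(\M,\V)$ is $S$--equivalent to $(\M',\V')$ for the resulting colouring vector $\V'$, with $\M'$ non-singular, as required.

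The main obstacle is exactly this destabilization lift; everything else is bookkeeping. It is reassuring that no new computation is needed --- the identity $\M^{\thinspace T}\V=\M\,t\cdot\V$ that does the work has already been verified in the proof of Proposition \ref{P:Sequiv} --- and the moral is that whenever the Seifert matrix presents a shape permitting a genus reduction, the colouring vector is \emph{forced} into the matching shape, not merely permitted to take it.
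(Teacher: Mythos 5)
Your argument runs on the same engine as the paper's: the entire content of the lift is that Equation \ref{E:colvecform} \emph{forces} the last two entries of the colouring vector into the shape required for an inverse $\Lambda_2$--move whenever the Seifert matrix has the destabilizable shape, so the classical genus reduction descends to surface data. Whether one phrases this as lifting a pre-existing chain of $\Lambda_{1}^{\pm1},\Lambda_2^{\pm1}$--moves (as you do) or as iterating Trotter's reduction directly on the surface data (as the paper does) is packaging; both versions also need the easy remark, which you leave implicit, that after discarding the last two entries (which are $0$ and an element of the $\mathcal{C}_m$--submodule generated by the rest) the truncated vector still generates $A$, so the destabilized pair is again admissible surface data.

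The one substantive divergence is the ring over which ``non-singular'' is measured. You invoke the integral Trotter--Levine theorem and therefore terminate with $\det\M'\neq 0$ in $\mathds{Z}$. But the proposition is applied in the proof of Proposition \ref{P:HNN} with the integral domain being the field $\mathds{Z}/p\mathds{Z}$ acting on $A/\Phi(A)$, where what is needed to solve Equation \ref{E:MWMV} uniquely is invertibility of $\M'$ \emph{modulo $p$}; a nonzero integer determinant does not give this, and since $\left|\det\M'\right|$ for any non-singular representative equals the leading coefficient of the Alexander polynomial, it is an $S$--equivalence invariant that cannot be improved by further moves. This is precisely why the paper's own proof runs Trotter's reduction ``over an integral domain,'' i.e.\ relative to singularity over the coefficient domain of $A$ rather than over $\mathds{Z}$. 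So your proof establishes the proposition in its characteristic-zero reading but leaves a genuine gap in the positive-characteristic reading for which the statement is actually used.
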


\begin{proof}
The argument of \cite[pages 484--485]{Tro62} shows that over an integral domain, any singular Seifert matrix is related by $\Lambda_1$-moves to a Seifert matrix of the form

\begin{equation}
\begin{bmatrix}
\ & \ & \ & c_1 & 0\\
\ & \M & \ & \vdots & \vdots\\
\ & \ & \ & c_{2g} & 0\\
c_1 & \cdots & c_{2g} & 0 & 0\\
0 & \cdots & 0 & 1 & 0
\end{bmatrix}.
\end{equation}

Corresponding to this Seifert matrix, by Equation \ref{E:colvecform}, the colouring vector is of the form $\left(v_1;\,\ldots;v_{2g-2};0;(t-1)\cdot\left(\sum_{i=1}^{2g}c_iv_i\right)\right)$. As $v_1,\ldots,v_{2g}$ generate $A$ as a $\mathcal{C}_m$-module, this
implies that $g>2$, and we may obtain a smaller matrix $\M^\prime$ such that $(\M^\prime,\left(v_1;\,\ldots;v_{2g-2}\right))$ is $S$--equivalent to $(\M,\V)$ by an inverse $\Lambda_2$-move. Continue until a nonsingular matrix is reached.
\end{proof}

\subsection{Proof of Proposition {\ref{P:HNN}} and of Corollary
{\ref{C:rankbound}}}\label{SS:LinAlgProofs}

\begin{proof}[Proof of Proposition {\ref{P:HNN}}]
Note first that $\nu$ normally generates $\pi$, therefore $\rho(\nu)$ normally generates $G$, and so by an inner automorphism we may set $\rho(\mu)=t$.\par

The argument of \cite[Proof of Proposition 8]{KM09} shows that there
is a bijective correspondence between three sets:

\begin{enumerate}
\item{The set of epimorphisms $\left\{\rho\co\knotgroup\twoheadrightarrow G\right\}$ with $\rho(\mu)=t$.}
\item{The set of maps $\left\{\psi\co H_1\left(E(F)\right)\twoheadrightarrow
A\right\}$ satisfying two conditions:
\begin{enumerate}
\item The image of $\psi$ generates $A$ as a $\mathcal{C}_m$-module.
\item For every $a\in H_1(F)$, we have
$\psi\left(\tau^+_*(a)\right)=\psi\left(\tau_*^-(a)\right)^{-1}$.
\end{enumerate}}
\item{\rule{0pt}{11pt} The set of vectors $\left\{\V\ass\,\left(v_1;\,\ldots;v_{2g}\right)\in
A^{2g}\right\}$ satisfying:
\begin{enumerate}
\item{The elements of the set $\left\{t^k\cdot v_1,\ldots,t^k\cdot v_{2g}\right\}_{k\in\mathds{Z}}$ together generate
$A$.}
\item{
\begin{equation*}
\M\thinspace t\cdot\V=\M^{\thinspace T}\V\in A^{2g}.
\end{equation*}}
\end{enumerate}}
\end{enumerate}

Note that our choice of distinguished meridian for $K$ means that we don't have to mod out the first set by an equivalence relation. Let $I_\V\subseteq A$ denote the ideal generated by $\left\{v_1,\ldots,v_{2g}\right\}$. It remains to prove that $I_\V$ equals $A$. Equation \ref{E:Seifertmatrix} implies that

\begin{multline}\label{E:8.8Thm}
\bar\rho\left([\mu]^{-i}\tau_\ast^+(x_1;\,\ldots;x_{2g})[\mu]^{i}\right)=\M^{\thinspace T}\thinspace t^i\cdot\V\\= \M\thinspace
t^{i+1}\cdot\V=\bar\rho\left([\mu]^{-i-1}\tau_\ast^-(x_1;\,\ldots;x_{2g})[\mu]^{i+1}\right).
\end{multline}
 Without the limitation of generality ,take $i=0\in\mathds{Z}$.\par

Because $A$ is finitely generated, it may be given the structure of a principal ideal ring. It then follows from the Chinese remainder
theorem that any solution to

\begin{equation}\label{E:MWMV}
\M\thinspace W=\M^{\thinspace T}\V
\end{equation}

\noindent must restrict to a solution of \ref{E:MWMV} over each Sylow subgroup of $A$, and if $A$ is infinite, over the integers (we would like $W$ to become $t\cdot\V$). We may therefore restrict to the case that $A$ is of the form $\mathcal{C}_{q}^{r}$ with $q$ prime or zero. The goal is to show that $W$ is unique. The ideal $I_\V$, defined as the ideal generated by the entries of $\V$, equals $A$ if and only if, for any surface date $(\M^{\prime},\V^{\prime})$ which is $S$--equivalent to $(\M,\V)$, we have $I_{\V^{\prime}}=A$. If $A$ is isomorphic to a vector space over the integers, by Proposition \ref{P:TrotterProp}, $\M$ must be $S$--equivalent to a non-singular Seifert matrix. This implies that $W$, which we know exists, is uniquely determined by Equation \ref{E:MWMV}.\par

Next, if $A$ is an abelian $p$--group, then the quotient $A/\Phi(A)$ is an elementary abelian group, where $\Phi(A)$ denotes the Frattini subgroup of $A$ (see \textit{e.g.} \cite[Section 10.4]{Hal76}). The group $A/\Phi(A)$ is isomorphic to a vector space over an integral domain (a field in fact), and we may uniquely solve Equation \ref{E:MWMV} over $A/\Phi(A)$ to give $W=M^{-1}\M^{\thinspace T}\V$.
The proposition is thus proven over an abelian $p$--group. We are finished, because by the Burnside Basis Theorem (see \textit{e.g.}
\cite[Theorem 12.2.1]{Hal76}), any lift of a solution to Equation \ref{E:MWMV} whose entries generate $A$ will be a vector in $A^{2g}$
whose entries generate $A$.
\end{proof}

Recall that a square integral matrix $P$ is said to be \emph{unimodular} if $\det P=\pm 1$, and two matrices $\M_{1,2} $ are said to be \emph{unimodular congruent} if $P^{\,T}\,\M_1 P=\M_2$ for some unimodular $P$.

\begin{proof}[Proof of Corollary {\ref{C:rankbound}}]

Because $S\ass \M^{\thinspace T}-\M$ is unimodular congruent to
$\left[\begin{smallmatrix}0 & 1\\ -1 &
0\end{smallmatrix}\right]^{\oplus g}$ (see e.g. \cite{BZ03}[Proposition 8.7]), it is invertible over any commutative ring.  Rewrite
\begin{equation}
\M\thinspace t\cdot \V=\M^{\thinspace T}\V
\end{equation}
\noindent as
\begin{equation}\label{E:MVSV}
\M \left(t-1\right)\cdot\thinspace \V= S\thinspace \V.
\end{equation}
\noindent by subtracting $\M\thinspace\V$ from both sides of the equation. Left multiply both sides by $S^{-1}$ to obtain
\begin{equation}\label{E:SMVV}
S^{-1} \thinspace \M \thinspace \left(t-1\right)\cdot\thinspace \V=\V.
\end{equation}

Because $S$ is invertible and because $(t-1)$ induces an automorphism of $A$ (see \textit{e.g.} \cite[Proposition 14.2]{BZ03}), it follows that $\Rank(\M)$ is bounded below by $\Rank(\V)$, which in turn equals the minimal number of elements in a generating set for $A$ by Proposition
\ref{P:HNN}.
\end{proof}

\section{Surgery equivalence relations between $G$--coloured knots}\label{S:rhoequiv}

In Section \ref{SS:rhoequiv}, we define equivalence relations on $G$--coloured knots whose study is the focus of this paper. The relationship between these was described in Section \ref{SS:Method}. The $\bar\rho$--equivalence relation is put into the context of a big construction (relative bordism) by Proposition \ref{P:bordbarrho}.

\subsection{The equivalence relations}\label{SS:rhoequiv}

Recall the twist move and the null-twist from Section \ref{SS:Results}, Figures \ref{F:FRMove} and \ref{F:HosteMove}, and recall tube equivalence of $G$--coloured knots from Definition \ref{D:tubequiv}. Recall also the restriction $\bar\rho$ and the lift $\tilde\rho$ of the $G$--colouring $\rho$. Consider the infinite cyclic covering

\begin{equation}\tilde{G}\ass \mathcal{C}_0\ltimes_{\tilde{\phi}} A\overset{p}{\twoheadrightarrow} \mathcal{C}_m\ltimes_\phi  A=G,\end{equation}

\noindent with $p(t^ia)\ass t^{i\bmod m}a$ for all $a\in A$. The $G$--colouring $\rho$ of $K$ pulls back to a $\tilde{G}$--colouring $\hat\rho$ of $K$, which we call the \emph{colift of $\rho$ to $\tilde{G}$}.\par

Define the following equivalence relations on the set of $G$--coloured knots.

\begin{defn}Two $G$--coloured knots $(K_{1,2},\rho_{1,2})$ are said to be:
\begin{itemize}
\item \emph{$\rho$--equivalent} if they are related up to ambient isotopy by twist moves.
\item \emph{$\hat\rho$--equivalent} if they are related up to ambient isotopy by null-twists.
\item \emph{$\bar\rho$--equivalent} if they are related up to tube equivalence by null-twists.
\item \emph{$\tilde\rho$--equivalent} if they are related up to tube equivalence by twist moves.
\end{itemize}
\end{defn}

The justification for these names is as follows. A null-twist respects a $\tilde{G}$--colouring such as $\hat\rho$, as does ambient isotopy. It may be realized as a twist moves between bands of some Seifert surface by the tubing construction, and therefore it respects an $A$--colouring of the complement of a Seifert surface, such as $\bar\rho$. A twist move respects an $A$--colouring of $C_m(K)$ such as $\tilde\rho$. Forgetting the $\mathcal{C}_m$-module structure on both sides, $\tilde\rho$ descends to a homomorphism from $H_1(C_m(K))$ onto $A$, which we call $\check{\tilde{\rho}}$, and which is preserved by tube equivalence but not by ambient isotopy of $K$. In fact $\tilde\rho$--equivalence is what we should be calling $\check{\tilde{\rho}}$--equivalence.

\subsection{Relative bordism}\label{SS:relbord}

In this section we work in the smooth category, and write the unit interval as $I\ass [0,1]$. References for this section are Conner--Floyd \cite{CF64} and Cochran--Gerges--Orr \cite{CGO01}.

\begin{defn}
Consider two compact oriented $n$--manifolds $M_{1,2}$, whose boundaries $\partial M_{1,2}$ are compact 
oriented $(n-1)$--manifolds. Fix a subgroup $H\subseteq G$, and let $f_{1,2}\co M_{1,2} \twoheadrightarrow K(G,1)$  be a pair of smooth maps which map $\partial M_{1,2}$ onto $K(H,1)$. The pairs $(M_1,f_1)$ and $(M_2, f_2)$ are said to be \emph{$(G,H)$--relative bordant} of there exists a compact oriented $n$--manifold $N$ called a \emph{connecting manifold}, a compact oriented $(n+1)$--manifold $W$, and a smooth map $F\co W\twoheadrightarrow G$ such that:
\begin{itemize}
\item $\partial N=\partial M_1\cup -\partial M_2$ and $N\cap M_{1,2}=\partial M_{1,2}$ and $\partial W=\left(M_1\amalg M_2\right)\bigcup_{\partial N} N$.
\item $F\!\mid_{M_{1,2}}=f_{1,2}$ and $F$ maps $N$ onto $K(H,1)$.
\end{itemize}
We call $(W,F)$ a relative bordism between $(M_1,f_1)$ and $(M_2, f_2)$. The $n$th $(G,H)$--relative bordism group is denoted
$\Omega_n(G,H)$. See Figure \ref{F:bordism}. 
\end{defn}

\begin{figure}
\psfrag{a}[c]{$f_1$}\psfrag{b}[c]{$F$}\psfrag{c}[c]{$f_2$}\psfrag{X}[l]{\small$((K(G,1),K(H,1))$}
\psfrag{M}{$M_1$}\psfrag{N}{$M_2$} \psfrag{O}{$N$}\psfrag{P}{$W$}
\includegraphics[width=5.5cm]{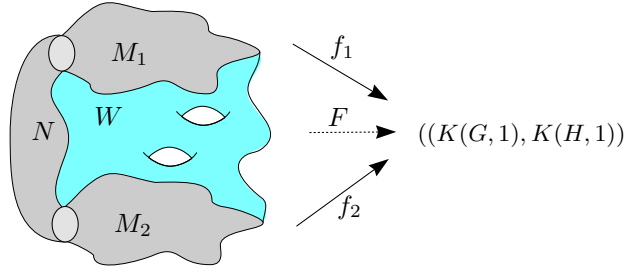}
\caption{\label{F:bordism}Relative bordism}
\end{figure}

Relative bordism of knots is defined as relative bordism of knot complements. Namely, a $G$--colouring $\rho\co \pi\twoheadrightarrow G$ induces a smooth map $f\co E(K)\to K(G,1)$ such that $\partial E(K)\subseteq K(H,1)$, where $H\ass \langle\rho(\mu),\rho(\ell)\rangle$ is the $\rho$--image of the peripheral subgroup of $\pi=\pi_1 E(K)$. For $G$ metabelian, the $\rho$--image of the longitude is trivial, and the $\rho$--image of the distinguished meridian is a generator of $\mathcal{C}_m\simeq\Ab\, G$. This motivates the following definition.

\begin{defn}\label{D:bordismrels}
Two $G$--coloured knots $(K_{1,2},\rho_{1,2})$ are:
\begin{itemize}
\item \emph{$\rho$--bordant} if there exists a $(G,\mathcal{C}_m)$--relative bordism $(W,F)$ between them, with $F\!\mid_{E(K_{1,2})}=f_{1,2}$ smooth maps induced by $\rho_{1,2}$ correspondingly.
\item \emph{$\hat\rho$--bordant} if there exists a $(\tilde{G},\mathcal{C}_0)$--relative bordism $(W,F)$ between them, with $F\!\mid_{E(K_{1,2})}=f_{1,2}$ smooth maps induced by $\hat\rho_{1,2}$ correspondingly.
\item \emph{$\bar\rho$--bordant} if there exists an $(G,\mathcal{C}_m)$--relative bordism $(W,F)$ between them, and Seifert surfaces $F_{1,2}$ for $K_{1,2}$ correspondingly, with $F\!\mid_{E(F_{1,2})}=f_{1,2}$ smooth maps induced by $\bar\rho_{1,2}$ correspondingly.
\item \emph{$\tilde\rho$--bordant} if there exists a $(G,\mathcal{C}_m)$--relative bordism $(W,F)$ between them, with $F\!\mid_{E(K_{1,2})}=f_{1,2}$ smooth maps induced by $\check{\tilde{\rho}}_{1,2}$ correspondingly.
\end{itemize}
\end{defn}

\begin{example}
Two $\mathcal{C}_n$-coloured knots are $\Link$-bordant if and only if they are bordant.
\end{example}

\subsection{Surgery}\label{SS:surgery}

Given an $n$--manifold $X$ and an embedding $\varphi\co S^{n-i}\times D^{i}\subset X$ with $1\leq i\leq n$, we may form a new $n$--manifold
\begin{equation}
X^{\prime}\ass\ \left(X-\mathrm{int}\,\mathrm{im}\varphi\right)\cup_{\varphi\mid_{S^{n-i}\times S^{i-1}}}\left(D^{n-i+1}\times S^{i-1}\right)\end{equation}
\noindent by cutting out $S^{n-i}\times D^{i}$ and gluing in $D^{n-i+1}\times S^{i-1}$. This process is called \emph{$i$-handle attachment}. In this paper, \emph{surgery} means $2$--handle attachment to a $3$--manifold (so by ``surgery'' we mean ``integral Dehn surgery''). The \emph{trace} of an $i$--handle attachment is the bordism
\begin{equation}
W^{\prime}\ass\ \left(X\times I\right)\cup_{S^{n-i}\times D^{i}\times\{1\}}\left(D^{n-i+1}\times D^{i}\right).
\end{equation}
\noindent Such a bordism is called \emph{elementary}. In the case of surgery, call $\varphi(S^1)$ with its induced framing a \emph{surgery component}, and call its image in the trace of the surgery the \emph{attaching curve} for the $2$--handle $D^2\times D^2\subset W^{\prime}$. By the Pontryagin construction, $X^\prime$ depends only on the attaching curve.\par
By the fundamental theorem of Morse theory every bordism has a handle decomposition, and therefore can be represented as a union of elementary bordisms. To remind the reader, given a bordism $W$ between $n$-manifolds $M_{1,2}$, a handle decomposition is a diffeomorphism from $W$ to a $4$--manifold obtained by attaching handles to the cylinder $M_{1}\times I$, where the handles may be assumed to be attached in disjoint \emph{times slices} of the form $M_{1}\times [h,h+\epsilon]$.\par

We pass to the relative setting.

\begin{defn}
A \emph{surgery description of $(M_2,f_2)$ in $(M_1,f_1)$} is a relative bordism $(W,F)$ between $(M_1,f_1)$ and $(M_2, f_2)$ such that $W$ is homeomorphic to the cylinder $M_1\times I$ with $2$--handles attached, and $F$ is an extension of $f_1$ over the cylinder and over the $2$--handles.
\end{defn}

\begin{example}
Any $\mathcal{C}_n$-coloured knot has a surgery description in the complement of the $\mathcal{C}_n$-coloured unknot. This is a special case of the Lickorish--Wallace Theorem, that every $3$--manifold has a surgery description, which in the bordism setting follows from the result of Rokhlin that the bordism group of $3$--manifolds is trivial (\cite{Rok51}, see also \cite{Rou85} for a pretty proof).
\end{example}

Each bordism equivalence relation in Definition \ref{D:bordismrels} has a corresponding surgery equivalence relation.

\begin{defn}\label{D:surgrels}
Let $\psi\in\{\rho,\hat\rho,\bar\rho,\tilde\rho\}$. Two $G$--coloured knots $(K_{1,2},\rho_{1,2})$ are \emph{$\psi$--surgery equivalent} if there is a $\psi$--bordism $(W,F)$ between them such that $W$ is homeomorphic to the cylinder $E(K_1)\times I$ with $2$--handles attached.
\end{defn}

\begin{rem}
In the language of \cite{KM09}, two $G$--coloured knots in $S^3$ are related by surgery in $\ker\rho$ if and only if they are $\rho$--surgery equivalent.
\end{rem}

\subsection{Relationships between equivalence relations}\label{SS:rhorels}

The following is the main proposition of Section \ref{S:rhoequiv}.

\begin{prop}\label{P:bordbarrho}
The following conditions are equivalent:

\begin{enumerate}
\item $\bar\rho$--bordism.
\item $\bar\rho$--surgery equivalence.
\item $\bar\rho$--equivalence.
\end{enumerate}
\end{prop}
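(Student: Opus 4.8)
The plan is to prove a cycle of implications $(1)\Rightarrow(3)\Rightarrow(2)\Rightarrow(1)$, with the genuinely topological content concentrated in $(1)\Rightarrow(3)$ and in $(3)\Rightarrow(2)$; the implication $(2)\Rightarrow(1)$ is essentially a definitional inclusion. For $(2)\Rightarrow(1)$: by Definition \ref{D:surgrels}, a $\bar\rho$--surgery equivalence is in particular a $\bar\rho$--bordism, since $E(K_1)\times I$ with $2$--handles attached is a compact oriented $4$--manifold with the required boundary structure and the extension $F$ restricts correctly on the $E(F_{1,2})$. So there is nothing to do beyond unwinding definitions.

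For $(3)\Rightarrow(2)$: starting from a $\bar\rho$--equivalence, I would first reduce to the case where the two $G$--coloured knots share a common Seifert surface (using tube equivalence to pass between $A$--coloured Seifert surfaces, invoking Remark \ref{R:tubecomment} to absorb the inner-automorphism ambiguity via Lemma \ref{L:inneriso}), and then realize each null-twist as $2$--handle attachment. The key geometric observation, already flagged in Section \ref{SS:rhoequiv}, is that a null-twist is surgery on a $\pm1$--framed unknot bounding a disk that meets the Seifert surface in a pattern with $g_1 g_2^{-1}\cdots g_{2r-1}g_{2r}^{-1}$ trivial in $G$; the trace of this surgery is an elementary bordism obtained by attaching a single $2$--handle to $E(K_1)\times I$, and because the surgery curve lies in $\ker\bar\rho$ (restricted to the Seifert surface complement) the colouring map $F$ extends over the $2$--handle. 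Tube stabilizations are likewise traces of $2$--handle attachments (the $\Lambda_2$ picture of Equation \ref{E:genincrease} is a $1$--handle attachment to the \emph{surface}, which at the level of the exterior is $2$--handle attachment). Concatenating finitely many such elementary bordisms gives a single $W\cong E(K_1)\times I$ with $2$--handles attached, as required.

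For $(1)\Rightarrow(3)$: this is the substantive direction and I expect it to be the main obstacle. Given a $\bar\rho$--bordism $(W,F)$ with chosen Seifert surfaces $F_{1,2}$, I would invoke Morse theory / handle cancellation to put $W$ into a form built from $E(F_1)\times I$ by attaching handles of index $1,2,3$ only (index $0$ and $4$ handles cancel against their complements, as in the Cochran--Gerges--Orr \cite{CGO01} framework). Index-$1$ and index-$3$ handles can be traded, at the cost of modifying the connecting manifold, for index-$2$ handles together with tube stabilizations of the Seifert surfaces — this is exactly the mechanism by which $\bar\rho$--equivalence is allowed to alter the surface up to tube equivalence. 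One is then reduced to the case where $W$ is a surgery description, i.e. $E(F_1)\times I$ with $2$--handles attached, with $F$ extending $\bar\rho_1$; the attaching curves of these $2$--handles are framed links in $E(F_1)$ lying in $\ker\bar\rho$. The final step is a relative Kirby-calculus argument: such a framed link in the kernel of the colouring can be converted, by handle slides and births/deaths of cancelling $\pm1$--framed unknots (all performed inside $\ker\bar\rho$, hence compatibly with $F$), into a disjoint union of $\pm1$--framed unknots each of which realizes a null-twist between bands of $F_1$ — this is precisely the content that the proof of the Fenn--Rourke / Kirby theorem provides, adapted to the coloured setting and to the fact that we only need to reach $K_2$ \emph{up to tube equivalence}.

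The hard part is this last Kirby-theoretic step: controlling the framed-link moves so that every intermediate link stays in $\ker\bar\rho$ (so that the map to $K(G,1)$ persists throughout), and verifying that the price one pays in handle slides and stabilizations is exactly captured by "null-twist plus tube equivalence" rather than something stronger. I would lean on the analogous argument in \cite[Section 4]{KM09} and on the bordism-theoretic bookkeeping of \cite{LiWal08,Wal08,CGO01} to carry this out, checking carefully that nothing in those arguments used abelianness of $G$ in a way that fails here — metabelianness enters only later, when one passes to colouring vectors.
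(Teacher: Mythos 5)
Your overall architecture (a cycle of implications, with the easy inclusions handled definitionally and the weight carried by ``bordism implies null-twist equivalence'') matches the paper's, which proves $1\Rightarrow2\Rightarrow3\Rightarrow1$; your $(1)\Rightarrow(3)$ amalgamates the paper's two hard steps. But as written your hard direction has two genuine gaps. First, a $\bar\rho$--bordism in the sense of Definition \ref{D:bordismrels} comes with an arbitrary connecting $3$--manifold $N$ with $\partial N=T^2\sqcup T^2$ mapped to $K(\mathcal{C}_m,1)$; no handle decomposition of the form $E(K_1)\times I$ with $2$--handles attached can exist until $N$ has been replaced by $T^2\times I$. The paper arranges this by capping $N$ off with $T^2\times I$ and invoking $\Omega_3(\mathcal{C}_0)\simeq\{1\}$, and it then corrects, via Lemma \ref{L:inneriso}, the inner-automorphism discrepancy between the induced colouring at the $K_2$ end and $\bar\rho_2$, absorbing that correction into a further surgery equivalence. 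You mention the inner-automorphism issue only in the easy direction and do not address the connecting manifold at all.

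Second, and more seriously: the step you single out as ``the hard part'' --- performing the Kirby calculus so that \emph{every intermediate} framed link stays in $\ker\bar\rho$ --- is a strictly harder problem than needs to be solved, and you supply no mechanism for solving it. The paper's argument avoids it entirely: once $W$ is a surgery description, fill in the connecting manifold to obtain a surgery description of $S^3$ and apply the ordinary, uncoloured Kirby theorem to convert the attaching link into a $\pm1$--framed unlink by blow-ups and handle slides. These moves change only the handle decomposition (blow-ups connect-sum with $\pm\mathbb{CP}^2$, over which the colouring map extends trivially), while the map $F\colon W\to K(G,1)$ persists unchanged; consequently the components of the \emph{final} unlink automatically lie in $\ker\rho$, because $F$ extends over their $2$--handles, and since each has zero linking number with the knot, surgery on each is a null-twist. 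No control over intermediate links is required. Without this observation your proof does not close, since a ``kernel-preserving'' Kirby theorem is not available in the sources you cite.
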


\begin{proof}
\hfill
\begin{description}
\item[$1\Rightarrow 2$]
We mimic the arguments of \cite[Section 4.3]{LiWal08} and \cite[Proof of Theorem 4.2]{CGO01} (see either source for details). Let $(W,F)$ be a $\bar\rho$--bordism between $(K_{1,2},\rho_{1,2})$.  Forgetting Seifert surfaces, in particular $(W,F)$ is a $\hat\rho$--equivalence. The boundary of the connecting manifold $N\subset W$ consists of two disjoint copies of $T^2$. The closed $3$--manifold $N\cup_{T^2\sqcup T^2}\left(T^{2}\times I\right)$ is an element of $\Omega_3(\mathcal{C}_0)\simeq \set{1}$. Therefore there exists a $\hat\rho$--bordism $W^\prime$ between $(K_{1,2},\rho_{1,2})$ with connecting manifold $T^2\times I$. Take a smooth handle decomposition of $W^\prime$ relative to the boundary as $\left(E(K_1)\times I\right)\cup\{\text{$2$--handles}\}$ by the standard argument (see \textit{e.g.} \cite[Section 5.4]{GS91}). This gives rise to a $\hat\rho$--surgery equivalence $(W^\prime,F^\prime)$. Choose Seifert surfaces $F_{1,2}$ for $K_{1,2}$ correspondingly. The induced restriction $\bar\rho_{2}^\prime$ of $\rho_2$ is related to $\bar\rho_2$ by an inner automorphism of $G$. Therefore $(K_{2},\bar\rho_2)$ and $(K_2,\bar\rho_2^\prime)$ are related by ambient isotopy (Lemma \ref{L:inneriso}), realized by a second $\hat\rho$--surgery equivalence $(W^{\prime\prime},F^{\prime\prime})$ with connecting manifold $T^2\times I$. Thus, \begin{equation}(W_{\text{srg}},F_{\text{srg}})\ass\left(W^\prime\cup_{E(K_2)}W^{\prime\prime},F^\prime\cup_{\bar\rho_2^\prime} F^{\prime\prime}\right)\end{equation}
\noindent becomes a $\bar\rho$--surgery equivalence between $(K_{1,2},\rho_{1,2})$.

\item[$2\Rightarrow 3$] We imitate the argument of \cite[Proof of Theorem 1.1]{LiWal08} and \cite[Proof of Theorem 4.2]{CGO01}.
``Filling in'' the connecting manifold $T^2\times I$ with a solid torus times an interval turns $W_{\text{srg}}$ into a surgery description of $S^3$. The Kirby Theorem implies that a surgery description of $S^3$ can be transformed to a $\pm1$--framed unlink by blow-ups and handle-slides, changing the handle decomposition of $W_{\text{srg}}$. Writing the unlink as $L\ass L_1\cup\cdots\cup L_\nu$, slide each $L_i$ (an attaching circle for a $2$--handle) to the time-slice $E(K_1)\times \left[\frac{i-1}{\nu},\frac{i}{\nu}\right]$. This induces a decomposition of $W_{\text{srg}}$ as a union of elementary $\bar\rho$--bordisms
    \begin{equation}W_{\text{srg}}=\bigcup_{i=1}^\nu E(K_i)\times \left[\frac{i-1}{\nu},\frac{i}{\nu}\right]\cup_{L_i} H_i.\end{equation}
    For $i=1$, the $G$--colouring $\rho_1$ induces $f_{1}\co E(K_1)\times \left[0,\frac{1}{\nu}\right]\to G$ which extends over the $2$--handle $H_1$. Therefore $L_1$ represents an element in $\ker\rho$. We may represent $L_1$ as an unknot which rings $2r$ strands in $K_1$ by pushing $L_1$ down to $E(K_1)\times\set{0}$ (note that $\Link(K_1,L_1)=0$). Thus, surgery around $L_1$ is a null-twist. The same argument show that surgeries around $L_2,\ldots,L_\nu$ are all null-twists.

\item[$3\Rightarrow 1$]
 Figure \ref{F:surg-real}, and tubing, shows how to realize a null-twist as an (elementary) $\bar\rho$--bordism.

\begin{figure}
\begin{minipage}{65pt}
\psfrag{a}[c]{\footnotesize$g_1$}\psfrag{b}[c]{\footnotesize$g_2$}\psfrag{c}[c]{}\psfrag{d}[c]{\footnotesize$g_{2r}$}
\psfrag{e}[c]{\footnotesize$g_1$}\psfrag{f}[c]{\footnotesize$g_2$}\psfrag{g}[c]{}\psfrag{h}[c]{\footnotesize$g_{2r}$}
\includegraphics[height=140pt]{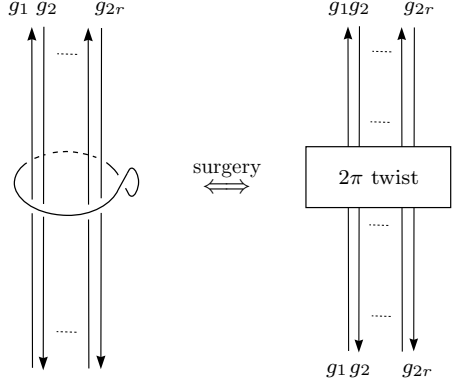}
\end{minipage}
 $\overset{\raisebox{2pt}{\scalebox{0.8}{\text{surgery}}}}{\Longleftrightarrow}$\quad \ \
\begin{minipage}{60pt}
\psfrag{a}[c]{\footnotesize$g_1$}\psfrag{b}[c]{\footnotesize$g_2$}\psfrag{c}[c]{}\psfrag{d}[c]{\footnotesize$g_{2r}$}
\psfrag{e}[c]{\footnotesize$g_1$}\psfrag{f}[c]{\footnotesize$g_2$}\psfrag{g}[c]{}\psfrag{h}[c]{\footnotesize$g_{2r}$}
\psfrag{p}[c]{\footnotesize$2\pi$\
twist}
\includegraphics[height=140pt]{HosteMove-3}
\end{minipage}
\caption{\label{F:surg-real}Realizing a null-twist by surgery.}
\end{figure}

\end{description}
\end{proof}

\begin{rem}\label{R:LW2}
Litherland and Wallace conjectured the analogue of Proposition \ref{P:bordbarrho}, replacing $\bar\rho$ by $\rho$.
\end{rem}

The above proposition helps us to understand $\bar\rho$--equivalence in two ways. First, it puts it in the framework of relative bordism, which is a ``bigger construction'', by showing that every $\bar\rho$--bordism can be `upgraded' to a surgery presentation. Relative bordism can be calculated homologically, because, for $i\leq 3$, the group $\Omega_i(G,H)$ is isomorphic to the relative homology group $H_i(G,H)$ (see {\textit{e.g} \cite[{Theorem \textrm{IV}.7.37}]{Rud08}}). This leads to an upper bound of $\abs{H_3(G,\mathds{Z})}$ for the number of $\bar\rho$--equivalence classes. We calculate $H_3(G,\mathds{Z})$ by first applying the Lyndon--Hochschild--Serre spectral sequence (\textit{e.g.} \cite[{Chapter \textrm{VII}, Section 6}]{Bro82}) to identify it with $H_0(\mathds{Z};H_3(A;\mathds{Z}))\simeq H_3(A;\mathds{Z})$ and calculate the latter following Cartan \cite{Car54}. Summarizing:

\begin{cor}\label{C:Wallacebound}
The number of $\bar\rho$--equivalence classes is bounded above by the order of $\abs{H_3(A;\mathds{Z})}$.
\end{cor}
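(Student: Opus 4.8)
The plan is to turn the counting problem into bordism, then into group homology. By Proposition \ref{P:bordbarrho}, $\bar\rho$--equivalence coincides with $\bar\rho$--bordism, so it suffices to bound the number of classes in the relevant relative bordism group. Concretely, a $G$--coloured knot $(K,\rho)$ determines the relative bordism class of its exterior $\bigl(E(K),f_\rho\bigr)$, where $\partial E(K)=T^2$ is sent to $K(\mathcal{C}_m,1)$ by $\mu\mapsto t$, $\ell\mapsto 1$; since $H_2(\mathcal{C}_m;\mathds{Z})=0$ this boundary datum is null, so these relative classes assemble into well--defined elements of $\Omega_3(G,\mathcal{C}_m)$. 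Two $G$--coloured knots are $\bar\rho$--bordant exactly when they represent the same element: one direction is immediate, and for the converse one uses that, given any $(G,\mathcal{C}_m)$--relative bordism between the exteriors, one may choose compatible Seifert surfaces, exactly as in \cite{CGO01, LiWal08} and in the proof of Proposition \ref{P:bordbarrho}. Hence the number of $\bar\rho$--equivalence classes is at most $\lvert\Omega_3(G,\mathcal{C}_m)\rvert$.

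The next step is to identify $\Omega_3(G,\mathcal{C}_m)$ homologically. Oriented bordism vanishes in positive degrees below $4$, so $\Omega_i(G,H)\cong H_i(G,H)$ for $i\le 3$ (the relative Atiyah--Hirzebruch statement, \cite[Theorem IV.7.37]{Rud08}), giving $\Omega_3(G,\mathcal{C}_m)\cong H_3(G,\mathcal{C}_m)$. The long exact sequence of the pair $(G,\mathcal{C}_m)$, using once more $H_2(\mathcal{C}_m;\mathds{Z})=0$, presents $H_3(G,\mathcal{C}_m)$ as the quotient $H_3(G;\mathds{Z})/\mathrm{im}\bigl(H_3(\mathcal{C}_m;\mathds{Z})\to H_3(G;\mathds{Z})\bigr)$. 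So it remains to bound the order of $H_3(G;\mathds{Z})$, after discarding the cyclic contribution from $\mathcal{C}_m$, by $\lvert H_3(A;\mathds{Z})\rvert$.

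For this I would run the Lyndon--Hochschild--Serre spectral sequence of $1\to A\to G\to\mathcal{C}_m\to 1$, with $E^2_{p,q}=H_p\bigl(\mathcal{C}_m;H_q(A;\mathds{Z})\bigr)$ and the $\mathcal{C}_m$--action on $H_q(A;\mathds{Z})$ induced by $\phi$ (hence by $\phi$ on $H_1$, by $\wedge^2\phi$ on $H_2$, by $\wedge^3\phi$ on $H_3$ in the torsion--free case, and via Cartan's description \cite{Car54} of $H_\ast(A;\mathds{Z})$ in general). On the total--degree--$3$ line, $E^2_{3,0}=H_3(\mathcal{C}_m;\mathds{Z})$ is precisely the cyclic piece already removed, the edge term $E^2_{0,3}=H_0\bigl(\mathcal{C}_m;H_3(A;\mathds{Z})\bigr)$ is a quotient of $H_3(A;\mathds{Z})$, and one checks that the two cross terms $E^2_{1,2}=H_1\bigl(\mathcal{C}_m;H_2(A;\mathds{Z})\bigr)$ and $E^2_{2,1}=H_2\bigl(\mathcal{C}_m;H_1(A;\mathds{Z})\bigr)$ do not enlarge the bound, either because the differentials $d^2,d^3$ cut them down or because their $E^\infty$--survivors are absorbed into lower total degree. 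Assembling the $E^\infty$--terms and resolving the extension problem on the $p+q=3$ line then gives $\lvert H_3(G,\mathcal{C}_m)\rvert\le\lvert H_3(A;\mathds{Z})\rvert$, which is the corollary.

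The step I expect to be the real obstacle is this control of the cross terms: one must use the $\mathcal{C}_m$--module structure on $H_1$ and $H_2$ of $A$ (coming from $\phi$ and $\wedge^2\phi$) to see that nothing outside a quotient of $H_3(A;\mathds{Z})$ survives to contribute to $H_3(G,\mathcal{C}_m)$. Everything before that --- the identification of $\bar\rho$--equivalence with $\bar\rho$--bordism (Proposition \ref{P:bordbarrho}) and the reduction of relative bordism to relative homology --- is either already proved or a cited black box, so the heart of the corollary is the homological algebra of $H_3$ of the semidirect product $\mathcal{C}_m\ltimes_\phi A$.
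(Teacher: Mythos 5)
Your route is the paper's route in outline --- Proposition \ref{P:bordbarrho} to replace $\bar\rho$--equivalence by $\bar\rho$--bordism, the low-degree identification $\Omega_i(G,H)\cong H_i(G,H)$, and a Lyndon--Hochschild--Serre computation with Cartan's description of $H_*(A;\mathds{Z})$ --- but the step you yourself flag as the obstacle is a genuine gap, and the two mechanisms you offer for it do not work. What actually kills $E^2_{2,1}=H_2\bigl(\mathcal{C}_m;H_1(A;\mathds{Z})\bigr)$ is not a differential: since $t-1$ acts invertibly on $A$ (\cite[Proposition 14.2]{BZ03}, already invoked in the paper), $H_p(\mathcal{C}_m;A)=0$ for \emph{every} $p$, so this term vanishes outright; you should say so. For $E^2_{1,2}=H_1\bigl(\mathcal{C}_m;H_2(A;\mathds{Z})\bigr)$ with $H_2(A;\mathds{Z})\cong A\wedge A$ there is no such module-theoretic reason ($\bigwedge^2\phi-\mathrm{id}$ need not be invertible on $A\wedge A$; if $\phi$ has determinant $1$ modulo the exponent of $A$ it is zero), and the incoming differentials are $d^2\co E^2_{3,1}\to E^2_{1,2}$ with $E^2_{3,1}=H_3(\mathcal{C}_m;A)=0$ and $d^3\co E^3_{4,0}\to E^3_{1,2}$ with $E^2_{4,0}=H_4(\mathcal{C}_m;\mathds{Z})=0$ for cyclic $\mathcal{C}_m$; so nothing ``cuts it down,'' and an $E^\infty$--term on the line $p+q=3$ contributes to $H_3$ by definition --- it cannot be ``absorbed into lower total degree.'' Whenever $H_1(\mathcal{C}_m;A\wedge A)\neq 0$, your argument only yields the weaker bound $\abs{H_3(A;\mathds{Z})}\cdot\abs{H_1(\mathcal{C}_m;A\wedge A)}$.

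The paper sidesteps this precisely by not running the spectral sequence over the finite quotient $\mathcal{C}_m$. In a $\bar\rho$--bordism the boundary pieces are the $A$--coloured Seifert surface exteriors, so the fundamental class one is tracking lives over $K(A,1)$; homologically the paper computes via the extension $1\to A\to\tilde{G}\to\mathds{Z}\to 1$ with infinite cyclic quotient, which is why its answer is written as $H_0\bigl(\mathds{Z};H_3(A;\mathds{Z})\bigr)$, a quotient of $H_3(A;\mathds{Z})$. With $\mathds{Z}$ in place of $\mathcal{C}_m$ all columns $p\geq 2$ vanish identically, so $E^2_{3,0}$ and $E^2_{2,1}$ never appear and your long-exact-sequence step discarding the image of $H_3(\mathcal{C}_m;\mathds{Z})$ is unnecessary. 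To repair your write-up you must either switch to this formulation (and justify why the $\bar\rho$--bordism class is detected in $H_3(A;\mathds{Z})$ rather than in $H_3(G,\mathcal{C}_m)$), or produce an actual argument that $E^\infty_{1,2}$ does not contribute over $\mathcal{C}_m$ --- which, as stated, you have not done and which does not follow from the spectral sequence mechanics you cite.
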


The local-move description of $\bar\rho$--equivalence is a ``small construction'' which is good for making explicit calculations.

\begin{rem}\label{R:LiWal}
The above argument, applied in the paper of Litherland and Wallace \cite{LiWal08}, would have led to a sharp upper bound of $n$ instead of $2n$ for the number of $\rho$--equivalence classes of $D_{2n}$--coloured knots. Two $\bar\rho$--equivalent $G$--coloured knots are $\rho$--equivalent, and $n$ is an upper bound for the number of $\bar\rho$--equivalence classes by the above homological calculation.
\end{rem}

\begin{rem}\label{R:tightenbordism}
The complex $(K(G,1),S^1)$ has a $\mathds{Z}$--action by conjugation by $t$, corresponding to ambient isotopy of the knot as in the proof of Lemma \ref{L:inneriso}. Equivariant bordisms with respect to this action would correspond to $\hat{\rho}$--equivalence, and so would lead to a tighter upper-bound on the number of $\rho$--equivalence classes.
\end{rem}

\section{An algebraic characterization of $\bar\rho$--equivalence}\label{S:ClasperProof}

The finitely generated abelian group $A$ is given the structure of a principle ideal ring, which by abuse of notation we also call $A$.

\subsection{Result statement}
 A celebrated result of Naik and Stanford states that the $\Delta$--move generates $S$--equivalence \cite{NS03}. Translated into the language of claspers (recalled in Section \ref{SS:clasper-review}), this is equivalent to saying that for any $S$--equivalent knots $K_{1,2}$ there exists a Seifert surface $F_1$ for $K_1$ and a set of $Y$--claspers $C=\set{Y_1,\ldots,Y_k}$ in the complement of $F_1$, such that surgery around $C$ gives $K_2$. In the $G$--coloured context, leaves $A^i_{1,2,3}$ of clasper $Y_i$ come equipped with colours $a^i_{1,2,3}\in A$ correspondingly, and we can associate to $(K_{1,2},\rho_{1,2})$ the sum of their triple wedge products in $\bigwedge^{ 3} A$--- the \emph{$Y$--obstruction}
$Y((K_1,\bar\rho_1),(K_{2},\bar\rho_2))$. The $Y$--obstruction is independent of the choices made in its construction. The goal of this section is to prove the following theorem.

\begin{thm}\label{T:clasperprop}
Two $S$--equivalent $G$--coloured knots $(K_{1,2},\rho_{1,2})$ are $\bar\rho$--equivalent if and only if their $Y$--obstruction vanishes.
\end{thm}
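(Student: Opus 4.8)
The plan is to establish both directions of the equivalence by interpreting the $Y$--obstruction as a complete invariant for the extra freedom one has when passing from an $S$--equivalence to a $\bar\rho$--equivalence. The overall strategy mimics the Naik--Stanford setup in the clasper language: by their theorem, if $(K_1,\rho_1)$ and $(K_2,\rho_2)$ are $S$--equivalent, then (after a tube equivalence, which is harmless for $\bar\rho$--equivalence) there is a fixed Seifert surface $F_1$ for $K_1$ and a family $C=\set{Y_1,\ldots,Y_k}$ of $Y$--claspers in $E(F_1)$ whose surgery yields $K_2$. Each leaf $A^i_j$ carries a colour $a^i_j=\bar\rho_1([A^i_j])\in A$ inherited from the $A$--colouring of $E(F_1)$, and $Y((K_1,\bar\rho_1),(K_2,\bar\rho_2))=\sum_i a^i_1\wedge a^i_2\wedge a^i_3\in\bigwedge^3 A$. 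I would first spend a paragraph checking that this is well defined: different choices of $F_1$, of clasper families, and of leaf representatives differ by moves (sliding leaves over bands, IHX-type clasper relations, adding trivial $Y$--claspers with a trivial leaf) whose effect on $\sum_i a^i_1\wedge a^i_2\wedge a^i_3$ is visibly zero in $\bigwedge^3 A$, using that a leaf bounding in $E(F_1)$ contributes a zero colour and that an $S$--equivalence move on $F_1$ only changes colours by the relation $\M^T\V=\M\, t\cdot\V$, which I can arrange not to disturb the wedge sum.

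For the \emph{only if} direction, suppose $(K_1,\rho_1)$ and $(K_2,\rho_2)$ are $\bar\rho$--equivalent. By Proposition~\ref{P:bordbarrho} this is the same as $\bar\rho$--surgery equivalence, so there is a surgery description $W_{\mathrm{srg}}$ built from null-twists, each of which tubes into a twist move between bands of a common Seifert surface. I would show that a single null-twist changes the $Y$--obstruction by zero: a null-twist is surgery on a $\pm1$--framed unknot $L$ with $[L]=0$ in the $A$--colouring (it rings $2r$ strands with pairing $g_1g_2^{-1}\cdots g_{2r-1}g_{2r}^{-1}$ trivial), and on the level of claspers such a surgery is generated by $Y$--claspers whose leaves come in cancelling pairs with equal colours, or one of whose leaves is null-homologous in $E(F)$; either way the contribution to $\bigwedge^3 A$ vanishes. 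Additivity of the $Y$--obstruction under stacking surgeries (a cocycle-type property) then gives $Y((K_1,\bar\rho_1),(K_2,\bar\rho_2))=0$.

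For the \emph{if} direction — which I expect to be the main obstacle — suppose the $Y$--obstruction vanishes. Starting from a Naik--Stanford clasper family $C$ realizing the $S$--equivalence, I want to modify $C$, at the cost of adding null-twists (which are allowed in a $\bar\rho$--equivalence but not in a bare $S$--equivalence), until every $Y$--clasper in the family can be removed, i.e.\ until $K_1$ and $K_2$ become tube equivalent as $G$--coloured knots and hence $\bar\rho$--equivalent. The key move is that a $Y$--clasper with a leaf colour that is trivial in $A$ is removable by a null-twist (this is exactly where the ``$\Lk=0$'' / null-twist freedom enters), and that one can slide and split leaves to consolidate colours. The vanishing hypothesis $\sum_i a^i_1\wedge a^i_2\wedge a^i_3=0$ in $\bigwedge^3 A$ should be precisely the obstruction to reducing the family $C$, by leaf-sliding and clasper IHX relations over $\mathds{Z}$, to one in which every $Y$--clasper has a leaf of trivial colour; algebraically this is the statement that an element of $\bigwedge^3(\mathds{Z}$--module$)$ written as a sum of decomposables is zero iff the sum can be trivialized by the Plücker/IHX relations, which matches the topological IHX relation on $Y$--claspers. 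The delicate point is to push this purely algebraic cancellation through geometrically: each Plücker relation used must be realizable by legitimate clasper moves together with null-twists in $E(F_1)$, keeping careful track of framings and of the fact that $A$ need not be free (so torsion relations $n\,a=0$ must also be realized, using that $n\,a=0$ in $A$ lets one cap off a leaf after $n$ parallel copies). Once every clasper carries a trivial-colour leaf, successive null-twists delete them, producing a tube equivalence between $(K_1,\rho_1)$ and $(K_2,\rho_2)$, which is the desired $\bar\rho$--equivalence. I would organize this as: (i) a lemma on realizing clasper IHX and leaf-slides in the coloured setting; (ii) a lemma that a trivial-colour $Y$--clasper is removable by a null-twist; (iii) the algebraic reduction in $\bigwedge^3 A$; (iv) assembling these into the geometric cancellation.
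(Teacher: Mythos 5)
Your outline has the same architecture as the paper's proof: Naik--Stanford supplies a $Y$--clasper family in $E(F_1)$ realizing the $S$--equivalence, the sum of triple wedges of leaf colours in $\bigwedge^3 A$ is the obstruction, null-twists are shown to preserve it, and when it vanishes the clasper family is reduced until every clasper acquires a removable leaf and is deleted by null-twists. However, the two steps you flag as ``delicate'' or pass over in one sentence are precisely where the entire proof lives, and as written they are assertions rather than arguments. In the \emph{only if} direction, the claim that a null-twist ``is generated by $Y$--claspers whose leaves come in cancelling pairs'' is the thing to be proved, not a usable starting point: the paper must first word-shorten the colouring vector so every band colour is $0$ or $\pm$ a basis element, gather and split the null-twists so each involves bands of at most two colour-classes, and only then invoke Murakami--Nakanishi (a braid with all pairwise linking numbers zero is trivialized by $\Delta$--moves) to express the effect as $Y$--surgeries, each checked to lie in $\ker\Phi$ by computations of the form $a_0\wedge a_1\wedge\sum_{i\ge 2}a_i=-a_0\wedge a_1\wedge a_1=0$.

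In the \emph{if} direction, the relations you propose to realize are mislabelled: IHX is a relation among higher-degree claspers and Pl\"{u}cker concerns decomposability, whereas what is actually needed is the presentation of $\bigwedge^3 A$ itself --- multilinearity (the class of a clasper coloured $(a+b,c,d)$ equals the sum of the classes coloured $(a,c,d)$ and $(b,c,d)$) together with the degenerate cases where a leaf is coloured $0$ or two leaves are coloured $\pm a$ and $a$. Realizing multilinearity by null-twists in the complement of an $A$--coloured Seifert surface is the bulk of the paper's Section~\ref{S:ClasperProof} (the leaf-shepherd procedure, Lemma~\ref{L:monogamousclasper}, Lemma~\ref{L:Ydistribute}), and it is also what absorbs the torsion of $A$; there is no separate ``cap off after $n$ parallel copies'' step. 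So the proposal identifies the correct route and the correct invariant, but defers exactly the two lemmas whose proofs constitute the theorem; as it stands it is a correct plan with the central technical content missing.
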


In the special case $\Rank(A)\leq 2$, the group $\bigwedge^{ 3} A$ vanishes, and Theorem \ref{T:clasperprop} becomes that $S$--equivalence implies $\bar\rho$--equivalence. We sketch a proof of this (simpler) claim for $\Rank(A)= 2$, as the rank $1$ case follows from analogous arguments. This offers a shortcut through this section for the reader interested only in such groups. Let $s_{1,2}$ be generators of $A$. Engineer a band projection for $F_1$ by Section \ref{SSS:Shorten} so that entries in the corresponding colouring vector are all elements of the set $\set{0,\pm s_{1},\pm s_{2}}$. Any $\Delta$--move between bands is then realized by null-twists, by the proofs of Lemmas \ref{L:(0,a,b)} and \ref{L:(a,a,b)}.

\subsection{Review of clasper calculus}\label{SS:clasper-review}
One use of clasper calculus is to provide a graphical language to prove theorems of the form ``two objects in class $C$ are related by a finite sequence of local moves $M$ if and only if they share homological information $I$''. Examples of such theorems are in \cite{GR04,Mas03,Mat87,MN89}. Theorem \ref{T:clasperprop} is of such form. Our definitions follow \cite[Section 2]{Hab00}, but are simplified because we require only a small segment of clasper calculus. Conventions which differ from those of Habiro are written in \textbf{bold} font.

A \emph{basic clasper} is defined to be a union of three oriented embedded objects $C\ass A_1\cup A_2\cup E \subset S^3$ with $A_{1,2}$ \textbf{zero-framed unknots bounding disjoint discs} and $E$ an oriented $\frac{1}{2}\mathds{Z}$--framed line segment such that  $E\cap A_{1,2}$ are a pair of points in $S^3$. Framing $\frac{1}{2}$ and $-\frac{1}{2}$ on $E$ are graphically represented as \begin{minipage}{30pt}\includegraphics[width=30pt]{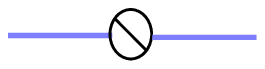}\end{minipage} and
\begin{minipage}{30pt}\includegraphics[width=30pt]{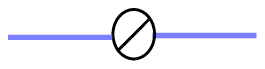}\end{minipage} correspondingly. Unknots $A_1$ and $A_2$ are called \emph{leaves} of $C$, while $E$ is called the \emph{edge} of $C$. Basic claspers provide a graphical notation for linkage as in Figure \ref{F:Habiro1}.

\begin{figure}
\begin{minipage}{75pt}
\psfrag{a}[l]{\small$A_1$}\psfrag{b}[l]{\small$E$}\psfrag{c}[l]{\small$A_2$}\psfrag{x}[c]{\small$X$}
\includegraphics[width=75pt]{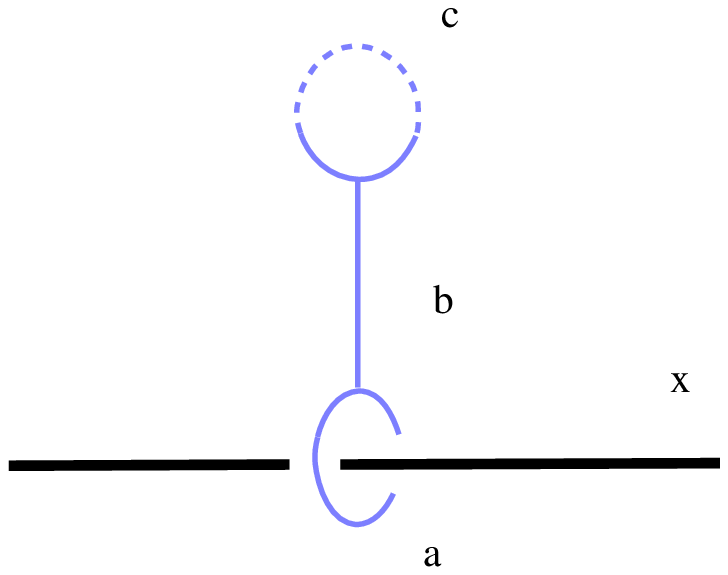}
\end{minipage}\quad\ $\overset{\raisebox{2pt}{\scalebox{0.8}{\text{surgery}}}}{\Longleftrightarrow}$
\raisebox{4pt}{\begin{minipage}{75pt} \psfrag{x}[c]{\small$X$}
\includegraphics[width=75pt]{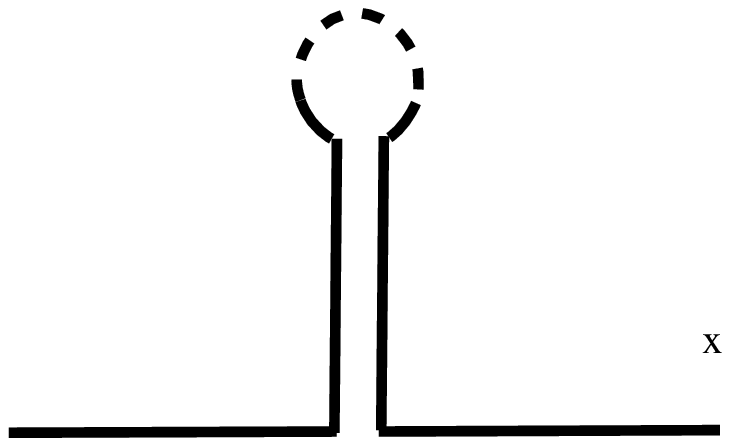}
\end{minipage}}
$\overset{\raisebox{2pt}{\scalebox{0.8}{\text{surgery}}}}{\Longleftrightarrow}$
\begin{minipage}{75pt}
\psfrag{a}[l]{\small$L_{1}$}\psfrag{b}[l]{\small$L_2$}\psfrag{x}[c]{\small$X$}
\includegraphics[width=75pt]{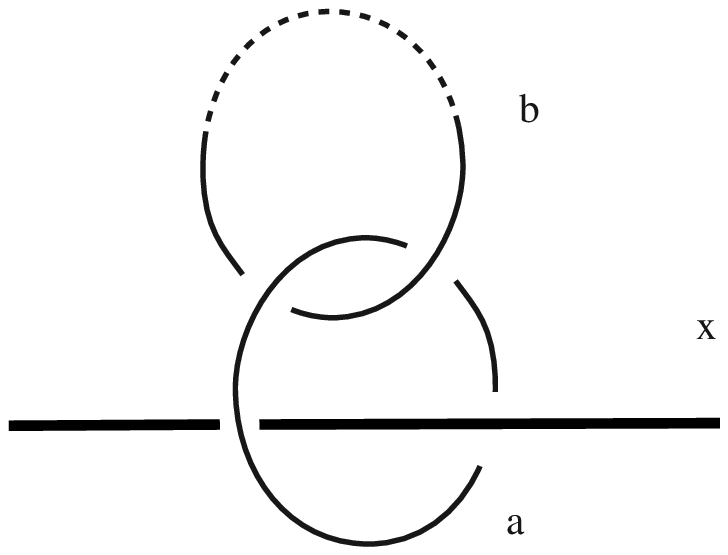}
\end{minipage}
\caption{\label{F:Habiro1}How to interpret a basic clasper, both directly on the diagram, and as surgery on a $0$--framed Hopf link.
 The thick line $X$ is some collection of arcs of a knot, and of clasper edges.}
\end{figure}

A \emph{clasper} $C\ass\mathbf{A}\cup G \subset S^3$ is a collection $\mathbf{A}\ass\ A_1\cup\ldots\cup A_k$ of \textbf{zero-framed unknots bounding disjoint discs} together with an oriented embedded uni-trivalent graph $G$ whose trivalent vertices are \textbf{oriented counterclockwise} and each of whose edges is half-integer framed, such that $\mathbf{A}\cap G$ equals the set of $1$--valent vertices of $G$ in $S^3$, and each leaf $A_i\subset \mathbf{A}$ meets $G$ at a single point $l_i\in \mathbf{A}\cap G$. Thus, a \emph{simple clasper} is a clasper with two leaves.\par

\begin{figure}
\begin{minipage}{75pt}
\includegraphics[width=75pt]{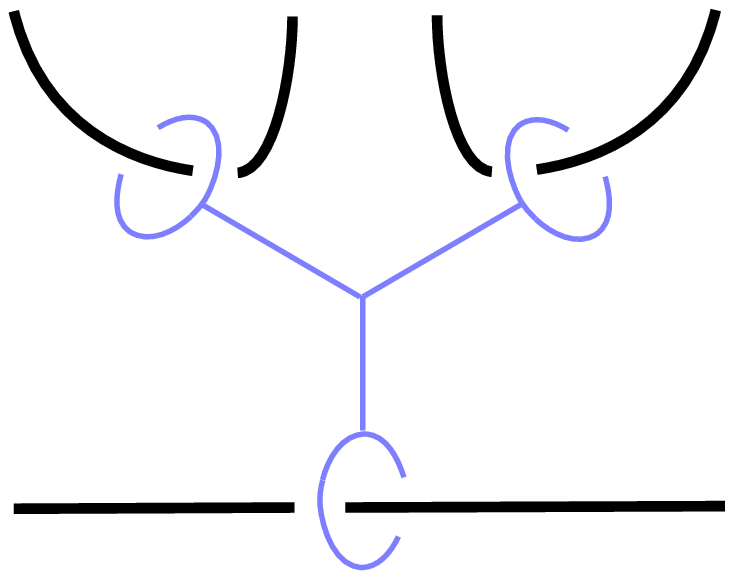}
\end{minipage}\quad\quad
$\overset{\raisebox{2pt}{\scalebox{0.8}{\text{surgery}}}}{\Longleftrightarrow}$\quad\
\begin{minipage}{75pt}
\includegraphics[width=75pt]{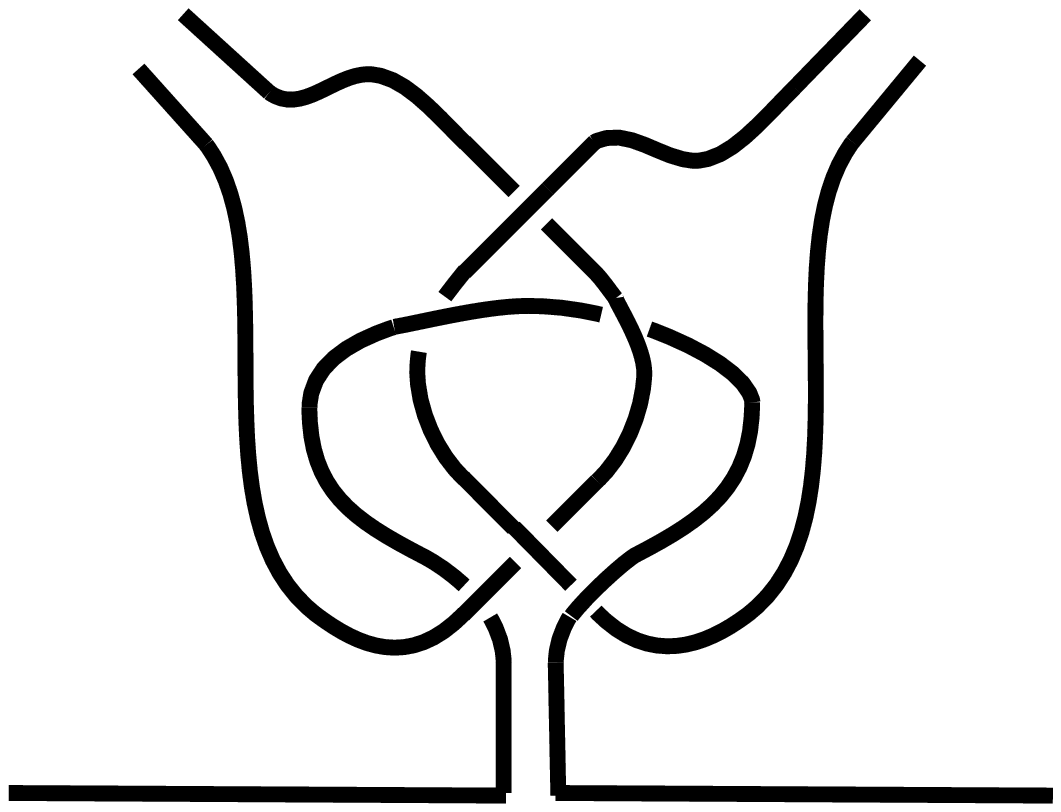}
\end{minipage}
\caption{\label{F:YClasper} How to interpret a $Y$--clasper.}
\end{figure}

Another useful class of claspers is \emph{$Y$--claspers}, interpreted in Figure \ref{F:YClasper}. Boxes are a useful graphical shorthand, as described in Figure \ref{F:Habiro2}.

\begin{figure}
\begin{minipage}{54.8pt}
\psfrag{a}[l]{\small$A_1$}\psfrag{b}[l]{\small$E$}\psfrag{c}[l]{\small$A_2$}\psfrag{x}[c]{\small$X$}
\includegraphics[height=50pt]{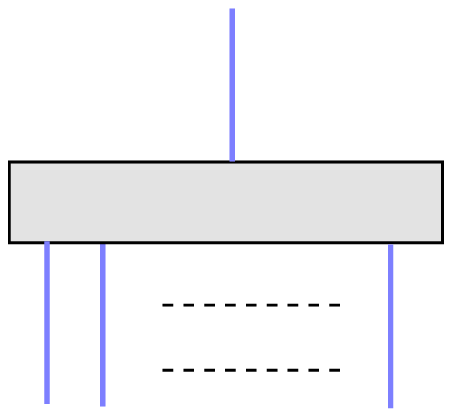}
\end{minipage}
\quad$\Longleftrightarrow$\quad
\begin{minipage}{75pt}
\psfrag{a}[l]{\small$A_1$}\psfrag{b}[l]{\small$E$}\psfrag{c}[l]{\small$A_2$}\psfrag{x}[c]{\small$X$}
\includegraphics[height=50pt]{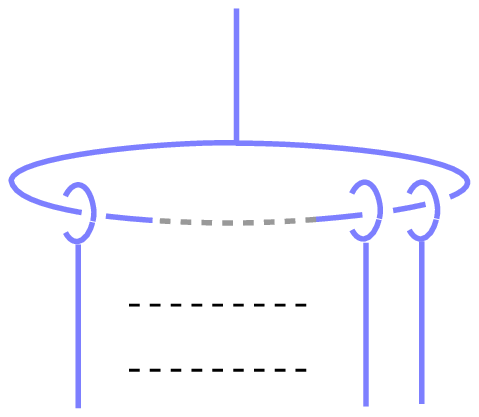}
\end{minipage}
\caption{\label{F:Habiro2}The box notation, as in \cite[Figure 44]{Hab00}.}
\end{figure}

\begin{figure}
\begin{minipage}{75pt}
\includegraphics[height=75pt]{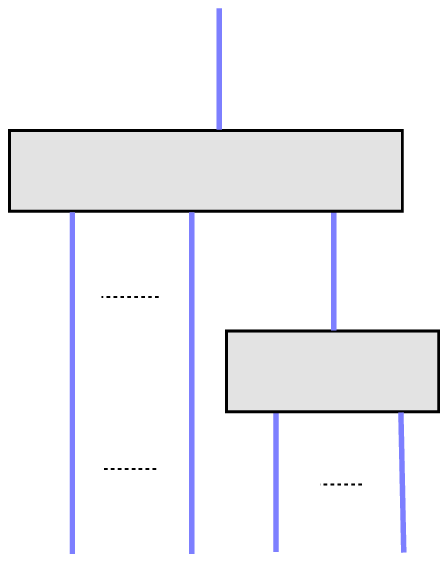}
\end{minipage}
\ \ $\Longleftrightarrow$\quad\ \
\begin{minipage}{75pt}
\includegraphics[height=75pt]{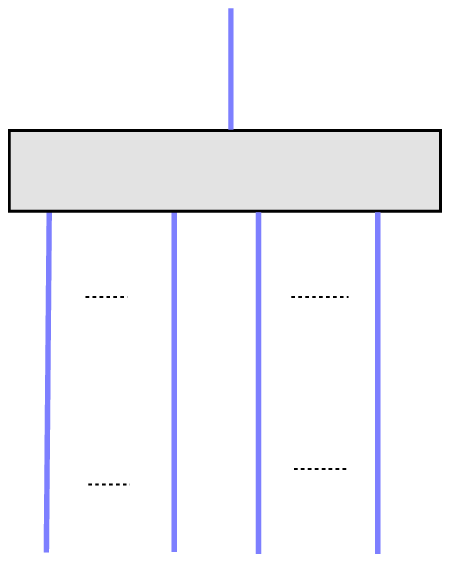}
\end{minipage}
\ \ $\Longleftrightarrow$\quad\ \
\begin{minipage}{75pt}
\includegraphics[height=75pt]{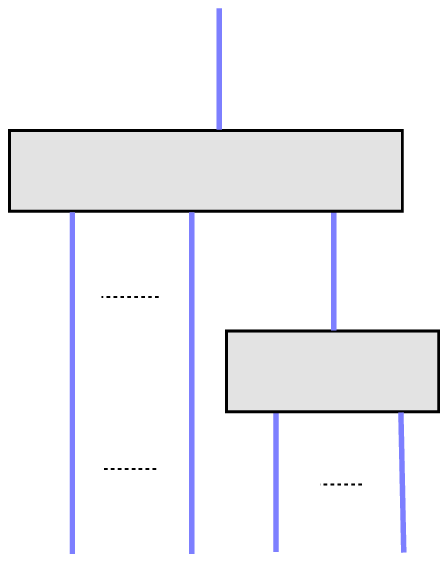}
\end{minipage}
 \caption{\label{F:unitebox}The unite-box move as in \cite[Figure 37]{Hab00}.}
\end{figure}

We make repeated use of Habiro's twelve moves \cite[Page 14--15]{Hab00}\footnote{For easy reference, the reader might want to print out \cite{MosXX}.}, to which we add an additional \emph{unite-box move} described in Figure \ref{F:unitebox}.

\subsection{Review of $\Delta$--Moves}

The following proposition describes four equivalent ways to define the $\Delta$--move. It is well-known, but the author could find no reference for it in the literature.

\begin{prop}
The following local moves are equivalent:
\begin{subequations}\label{E:Delta}
\begin{equation}
\begin{minipage}{80pt}
\includegraphics[width=70pt]{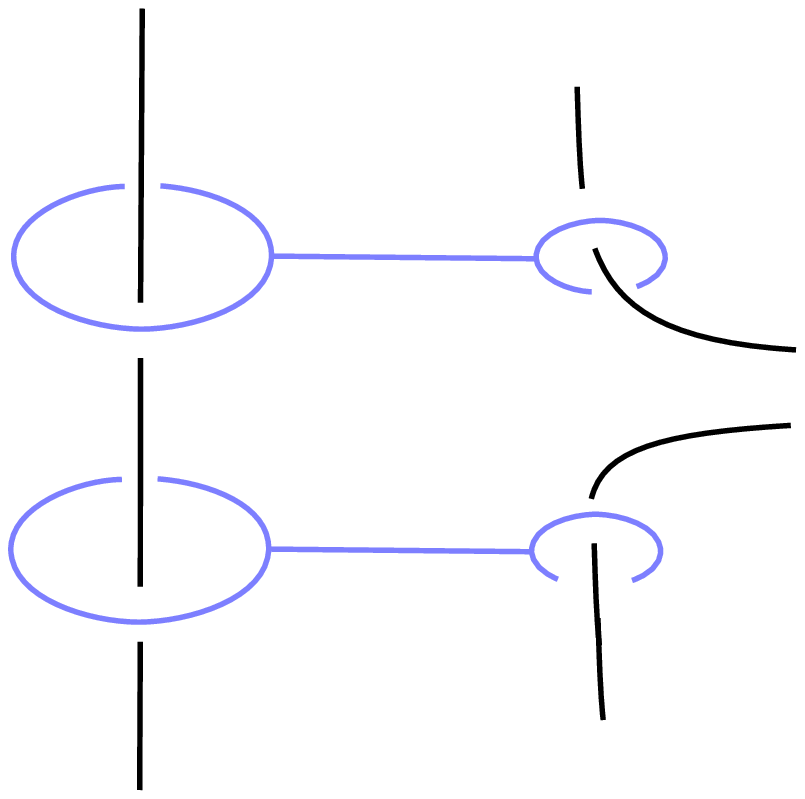}
\end{minipage}\quad\overset{\raisebox{2pt}{\scalebox{0.8}{$\Delta_1$}}}{\Longleftrightarrow}\quad
\begin{minipage}{80pt}
\includegraphics[width=70pt]{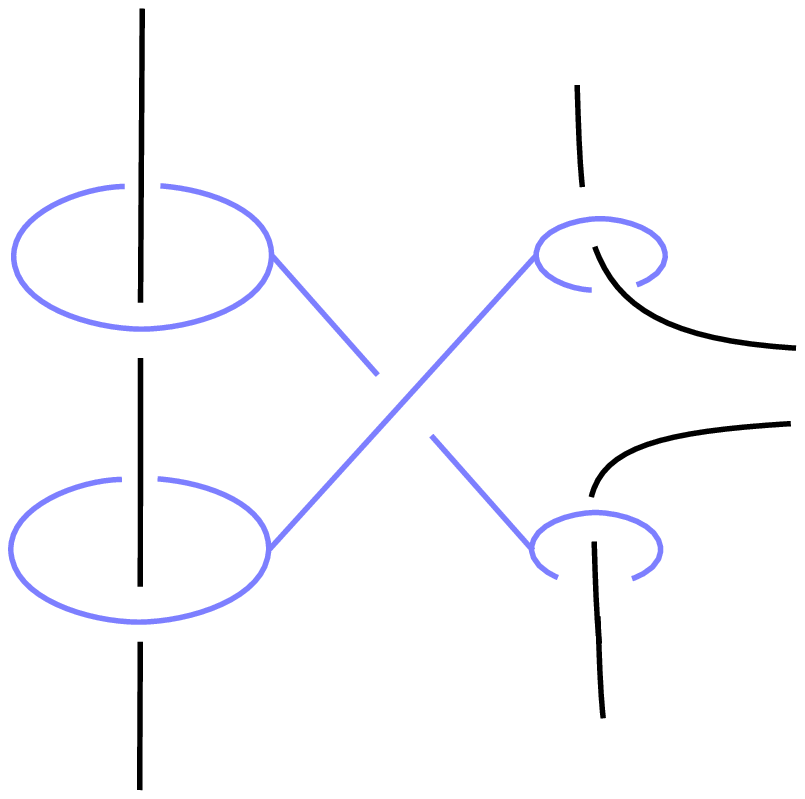}
\end{minipage}
\end{equation}
\begin{equation}
\psfrag{a}[c]{}\psfrag{b}[c]{}\psfrag{c}[c]{}
\begin{minipage}{80pt}
\includegraphics[width=75pt]{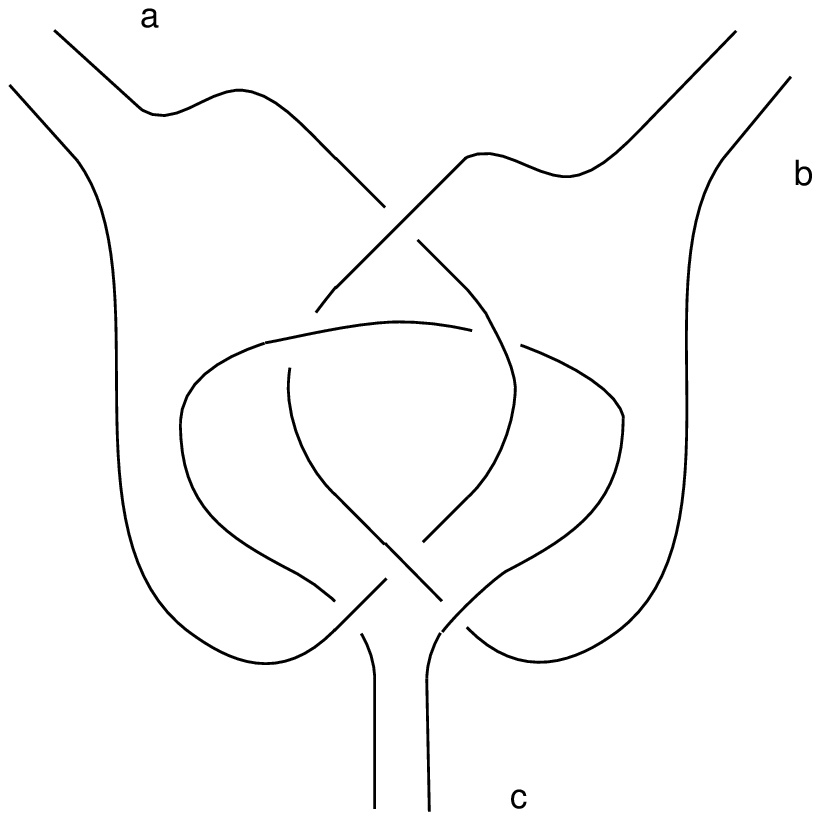}
\end{minipage}\quad\overset{\raisebox{2pt}{\scalebox{0.8}{$\Delta_2$}}}{\Longleftrightarrow}\quad
\begin{minipage}{80pt}
\includegraphics[width=75pt]{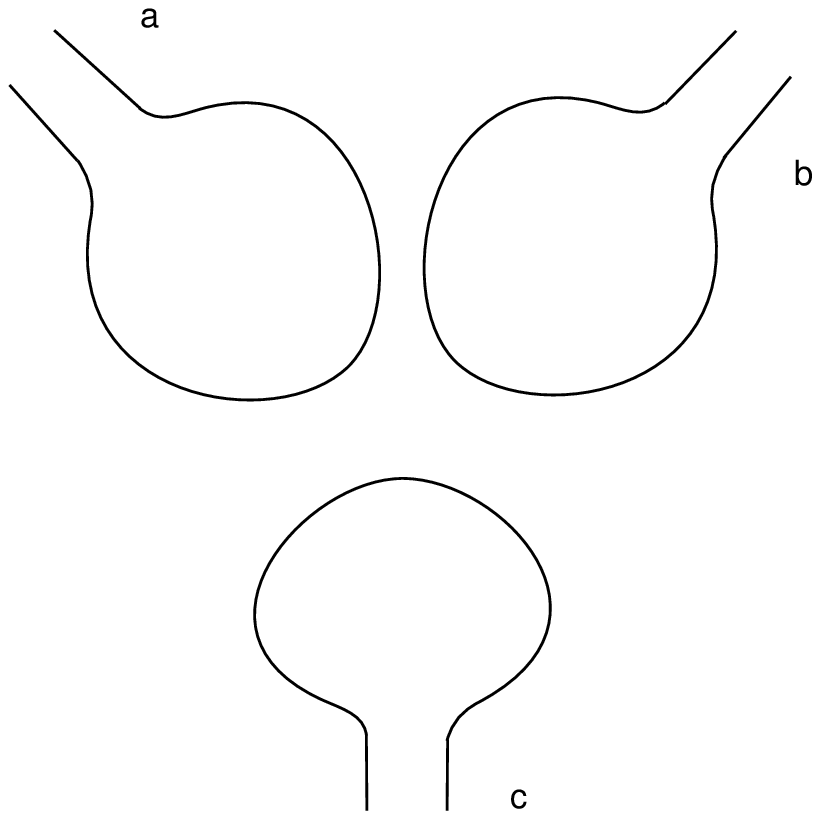}
\end{minipage}
\end{equation}
\begin{equation}
\psfrag{a}[c]{}\psfrag{b}[c]{}\psfrag{c}[c]{}
\begin{minipage}{80pt}
\includegraphics[width=75pt]{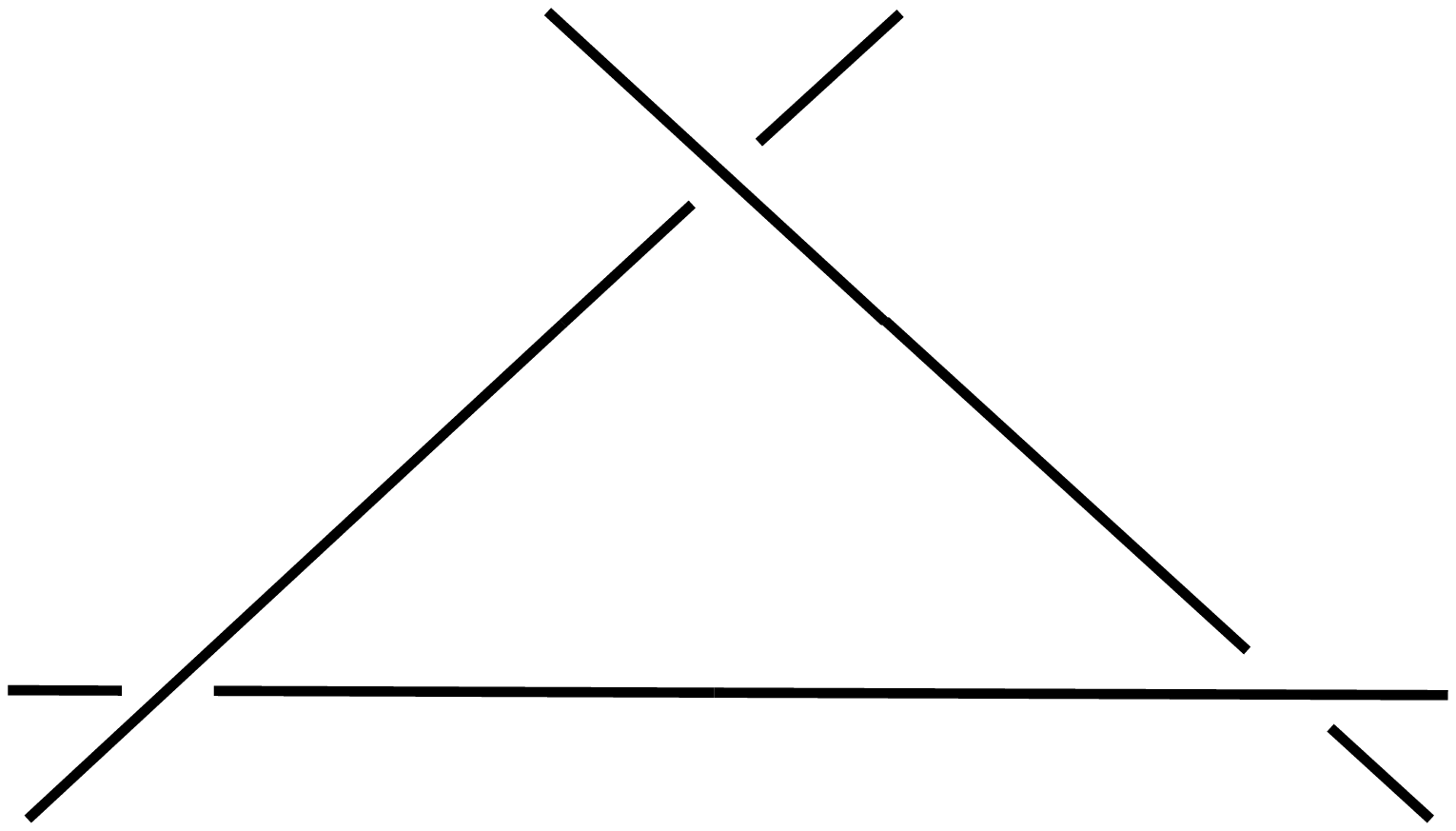}
\end{minipage}\quad\overset{\raisebox{2pt}{\scalebox{0.8}{$\Delta_3$}}}{\Longleftrightarrow}\quad
\begin{minipage}{80pt}
\includegraphics[width=75pt]{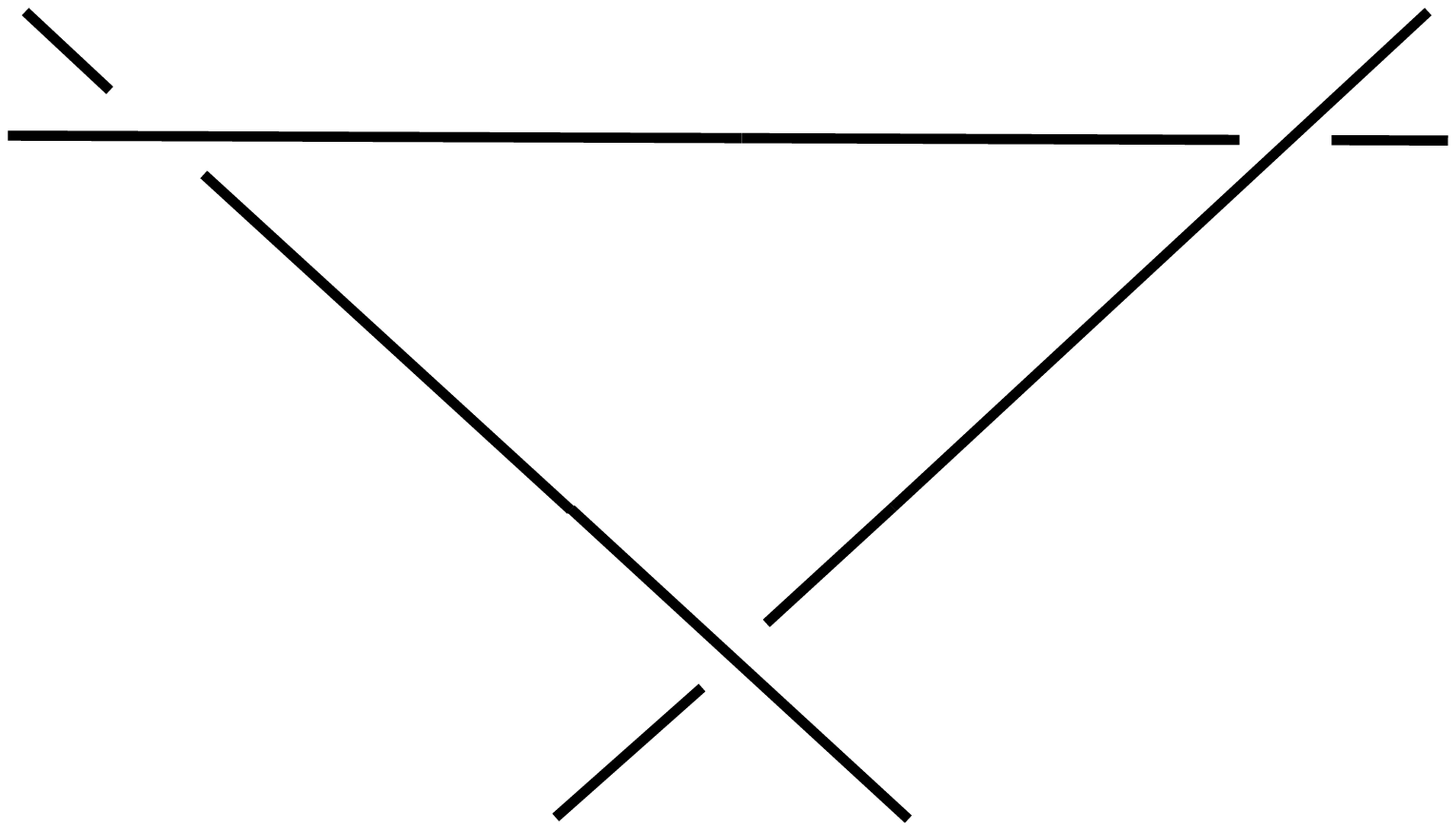}
\end{minipage}
\end{equation}
\begin{equation}
\begin{minipage}{80pt}
\includegraphics[width=80pt]{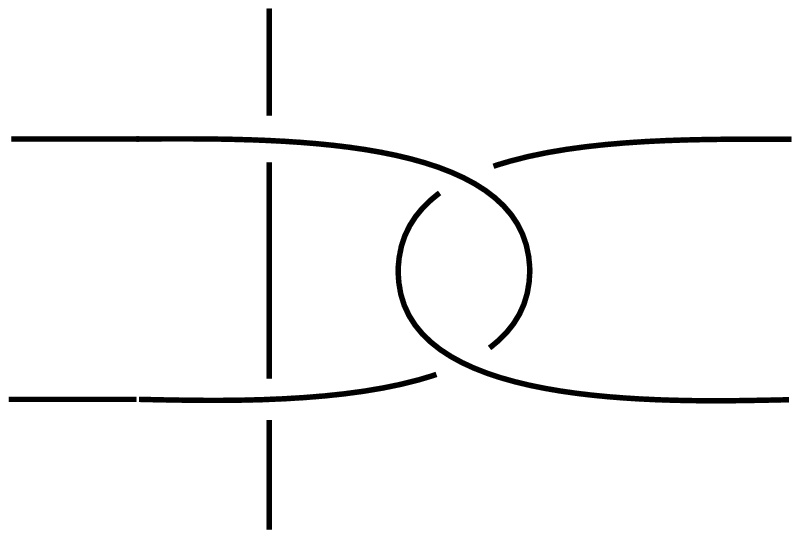}
\end{minipage}\quad\overset{\raisebox{2pt}{\scalebox{0.8}{$\Delta_4$}}}{\Longleftrightarrow}\quad
\begin{minipage}{80pt}
\includegraphics[width=80pt]{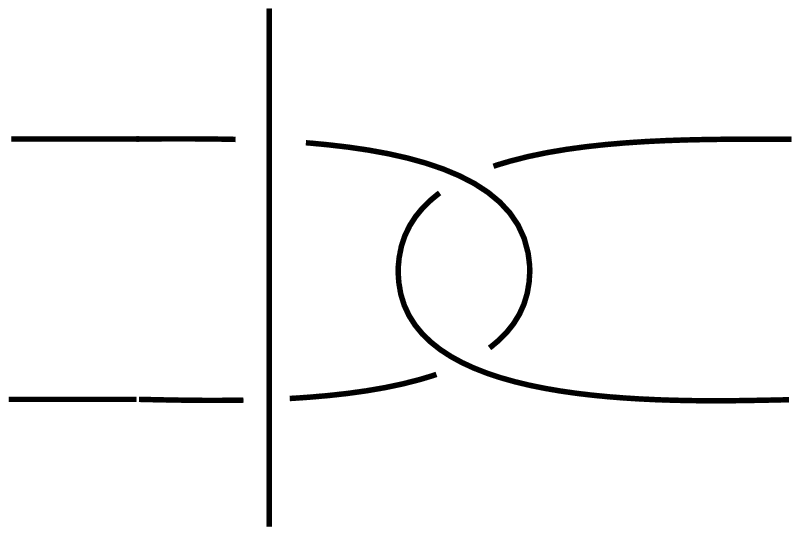}
\end{minipage}
\end{equation}
\end{subequations}
Define the \emph{$\Delta$--move} to be any of the above.
\end{prop}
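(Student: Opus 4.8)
The plan is to prove all four moves equivalent by realizing each of them as the result of surgery on a suitable clasper and then relating the resulting claspers to one another by ambient isotopy together with Habiro's twelve moves and the unite--box move of Figure \ref{F:unitebox}. Each of the four pictures is a modification supported in a standard ball, so throughout it suffices to work inside that ball, keeping the rest of the knot (and any ambient clasper edges) fixed. I would establish the equivalences along the chain $\Delta_2\Leftrightarrow\Delta_3\Leftrightarrow\Delta_1\Leftrightarrow\Delta_4$, each link of the chain being a finite sequence of pictures drawn with the conventions recalled in Section \ref{SS:clasper-review}.

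For $\Delta_2\Leftrightarrow\Delta_3$: the $Y$--clasper side of $\Delta_3$ is, by the very interpretation of a $Y$--clasper in Figure \ref{F:YClasper}, the Borromean exchange of the three bands encircled by its leaves; after isotoping the three leaves so that they encircle the three bands in the Borromean-bands picture, the two moves literally coincide. For $\Delta_3\Leftrightarrow\Delta_1$: use the clasper identity rewriting surgery on a $Y$--clasper (equivalently on the $\Delta$--shaped clasper) as surgery on the three basic claspers obtained by splitting the trivalent vertex, invoking the box conventions of Figure \ref{F:Habiro2}; after an isotopy of the edges, the three mutually linking basic claspers so obtained are exactly the configuration appearing in $\Delta_1$. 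The unite--box move is what absorbs the parallel push--offs produced in the splitting, and Figure \ref{F:Habiro1} pins down once and for all the half--integer edge framings and the counterclockwise vertex orientations, so that no spurious signs creep in.

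For $\Delta_1\Leftrightarrow\Delta_4$: translate $\Delta_1$ into surgery on the $0$--framed Hopf links of Figure \ref{F:Habiro1}, push the leaves down onto one of the strands involved, and slide that strand along the clasper edges. Dragging a strand across a leaf of a basic clasper is a band--pass, and carrying this out along the tripod twice over realizes exactly the double pass of $\Delta_4$; reversing the manipulation (reading the tripod off from the double--pass picture) gives the converse.

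The step I expect to be the main obstacle is $\Delta_1\Leftrightarrow\Delta_4$: unlike the other two links of the chain, the picture for $\Delta_4$ contains no visible clasper, so one must reconstruct the tripod from the bare double--pass picture and verify that the framings and vertex orientations it inherits are the correct ones --- an error there would silently replace the $\Delta$--move by its mirror image. The remaining equivalences are routine, if somewhat lengthy, diagram chases with the moves already in hand.
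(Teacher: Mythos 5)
Your overall strategy --- prove the four moves equivalent by chaining diagrammatic implications through the clasper--surgery dictionary --- is the same as the paper's, but the organisation and one key step differ. The paper proves the single cyclic chain $\Delta_1\Rightarrow\Delta_2\Rightarrow\Delta_3\Rightarrow\Delta_4\Rightarrow\Delta_1$: four one-way implications rather than your three bidirectional ones. Since the cycle closes, this already yields all the equivalences and roughly halves the diagrammatic work. Concretely, $\Delta_1\Rightarrow\Delta_2$ is done by zooming in on the triple clasp of the Borromean-bands picture, rewriting it as surgery on claspers, exchanging their order by $\Delta_1$, and zooming back out; $\Delta_2\Rightarrow\Delta_3$ and $\Delta_3\Rightarrow\Delta_4$ are each a single application of the preceding move sandwiched between isotopies; and $\Delta_4\Rightarrow\Delta_1$ converts the clasp-order exchange into an explicit double pass of strands after replacing the claspers by their surgery links.

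The place where your plan is in real danger is the link $\Delta_3\Leftrightarrow\Delta_1$. The move $\Delta_1$ is not a tripod of three mutually linking basic claspers obtained by splitting a trivalent vertex: it is the exchange of the order in which two clasper leaves clasp a common strand (a ``clasp-order'' move). Splitting the $Y$--vertex in $\Delta_3$ does not produce that configuration, so the identification you propose there does not go through as stated. The natural partner of $\Delta_1$ is in fact $\Delta_4$ --- exchanging two leaves along a strand is, once the claspers are replaced by $0$--framed Hopf pairs as in Figure \ref{F:Habiro1}, exactly a double pass of that strand --- and this is the implication the paper proves directly; it is also the step you correctly single out as the delicate one, including the issue of framings and vertex orientations. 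You should therefore route $\Delta_3$ to $\Delta_1$ through $\Delta_4$ (or reach $\Delta_1$ from $\Delta_2$ by the zoom-in argument, as the paper does in the reverse direction) rather than through vertex-splitting.
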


\begin{proof}\hfill
\begin{description}
\item[$\Delta_1\Rightarrow \Delta_2$]
\begin{multline*}
\psfrag{a}[c]{}\psfrag{b}[c]{}\psfrag{c}[c]{}
\begin{minipage}{80pt}
\includegraphics[width=75pt]{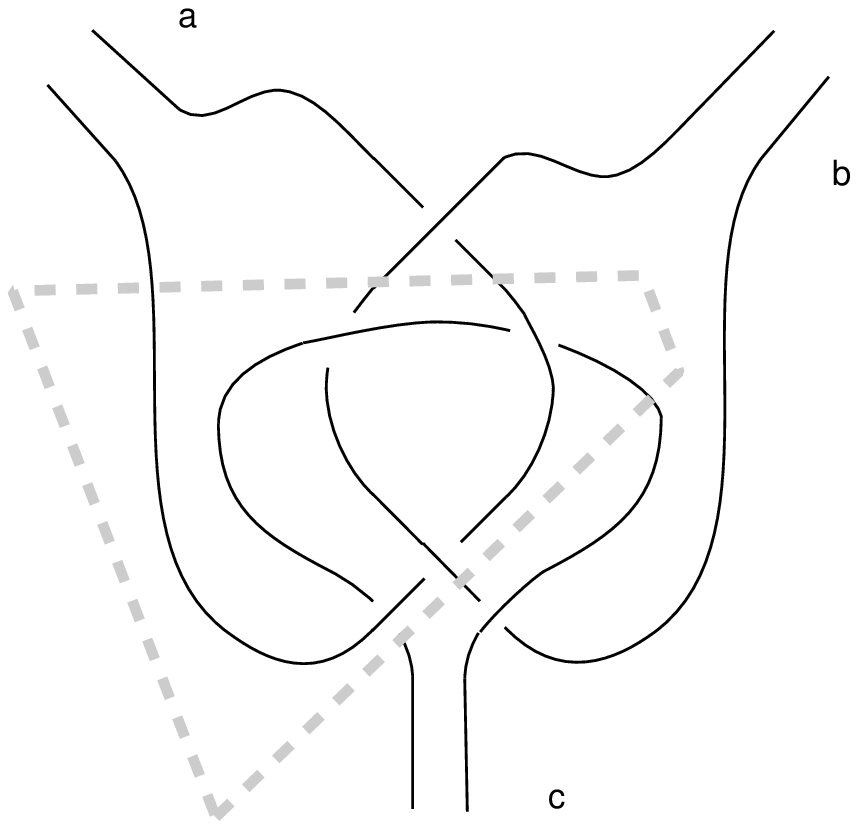}
\end{minipage}\ \ \ \overset{\text{zoom in}}{\begin{minipage}{30pt}\includegraphics[width=30pt]{fluffyarrow}\end{minipage}}\quad
\begin{minipage}{90pt}
\includegraphics[width=90pt]{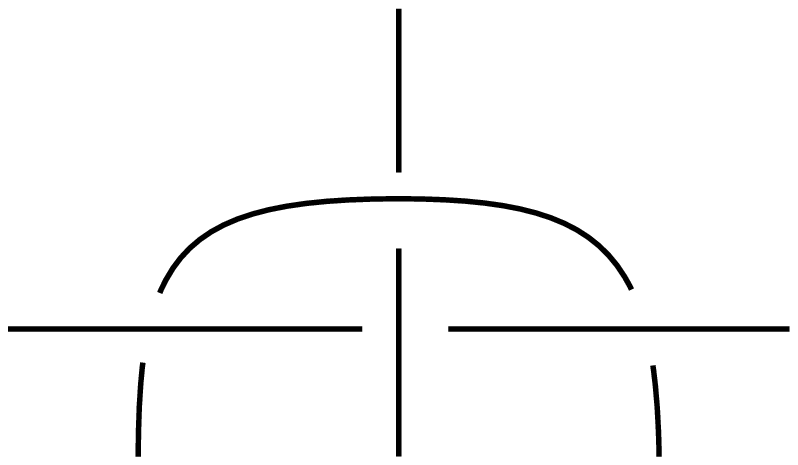}
\end{minipage}
\ \ \ \overset{\raisebox{2pt}{\scalebox{0.8}{\text{surgery}}}}{\Longleftrightarrow}\ \
\begin{minipage}{90pt}
\includegraphics[width=90pt]{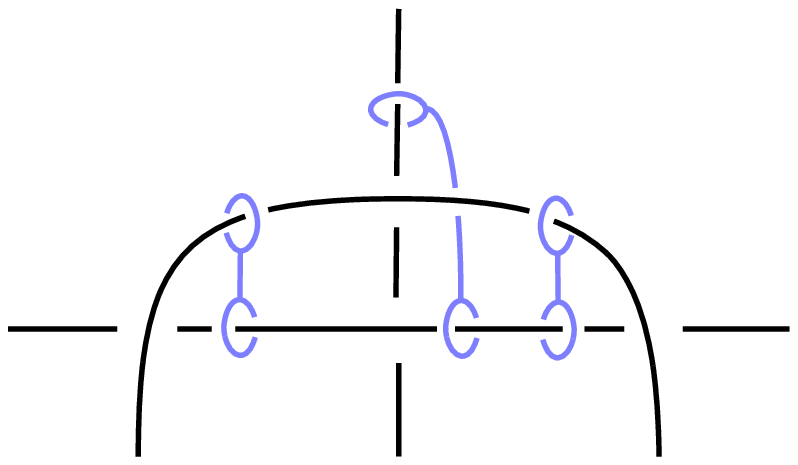}
\end{minipage}\\
\overset{\raisebox{2pt}{\scalebox{0.8}{$\Delta_1$}}}{\Longleftrightarrow}\
\begin{minipage}{85pt}
\includegraphics[width=85pt]{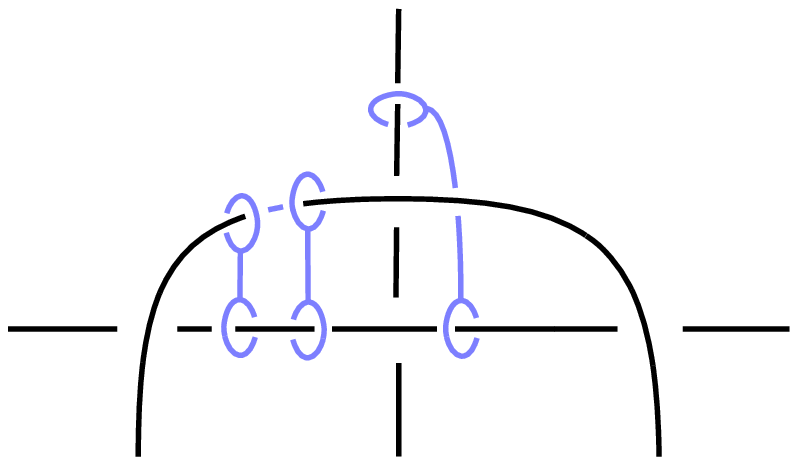}
\end{minipage}\ \ \overset{\raisebox{2pt}{\scalebox{0.8}{\text{surgery}}}}{\Longleftrightarrow}\
\begin{minipage}{85pt}
\includegraphics[width=85pt]{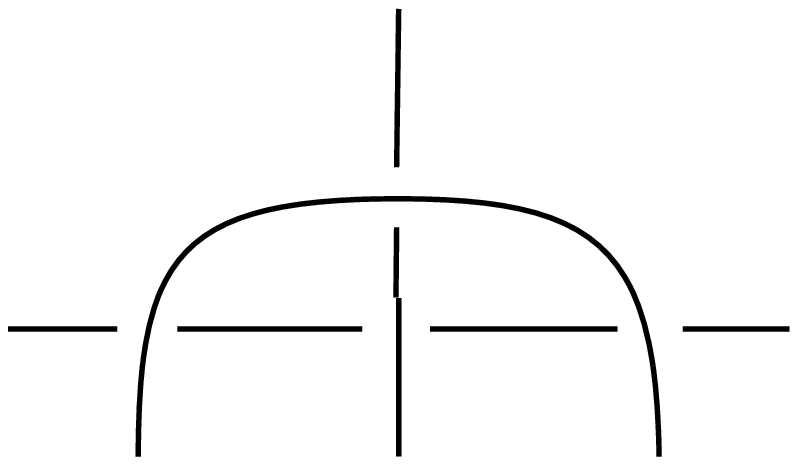}
\end{minipage} \ \ \overset{\text{zoom out}}{\begin{minipage}{30pt}\includegraphics[width=30pt]{fluffyarrow}\end{minipage}}\ \
\begin{minipage}{75pt}
\psfrag{a}[c]{}\psfrag{b}[c]{}\psfrag{c}[c]{}
\includegraphics[width=70.5pt]{BorrBands-4}
\end{minipage}
\end{multline*}
\item[$\Delta_2\Rightarrow \Delta_3$]
$$\psfrag{a}[c]{}\psfrag{b}[c]{}\psfrag{c}[c]{}
\begin{minipage}{80pt}
\includegraphics[width=75pt]{DeltaY-1}
\end{minipage}\quad\overset{\raisebox{2pt}{\scalebox{0.8}{$\Delta_2$}}}{\Longleftrightarrow}\quad
\begin{minipage}{80pt}
\includegraphics[width=75pt]{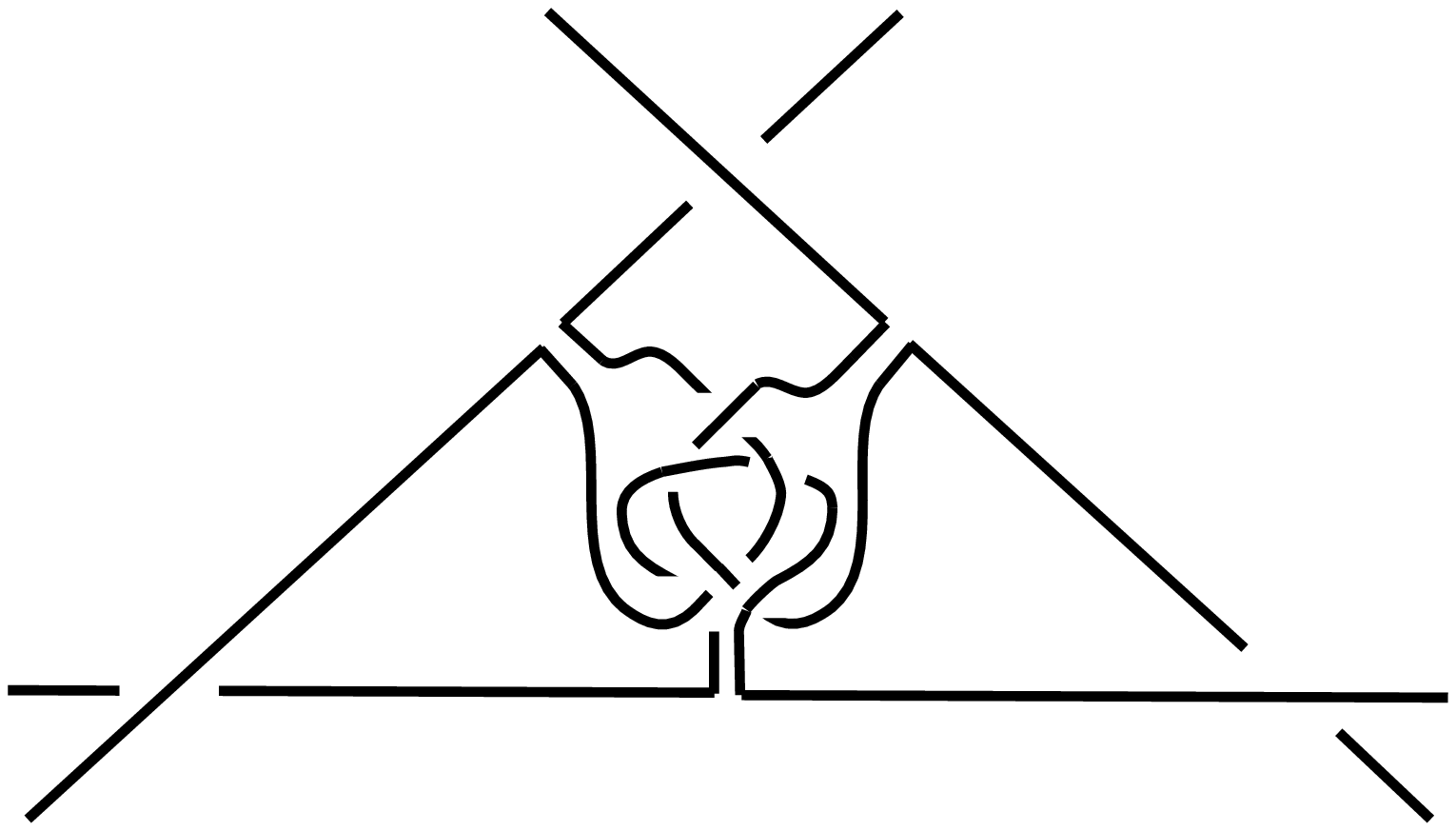}
\end{minipage}
\quad\overset{\raisebox{2pt}{\scalebox{0.8}{\text{isotopy}}}}{\Longleftrightarrow}\quad
\begin{minipage}{80pt}
\includegraphics[width=75pt]{DeltaY-3}
\end{minipage}$$
\item[$\Delta_3\Rightarrow \Delta_4$]
\begin{multline*}
\begin{minipage}{80pt}
\includegraphics[width=80pt]{dblpass-1}
\end{minipage}\quad\overset{\raisebox{2pt}{\scalebox{0.8}{\text{isotopy}}}}{\Longleftrightarrow}\quad
\begin{minipage}{80pt}
\includegraphics[width=80pt]{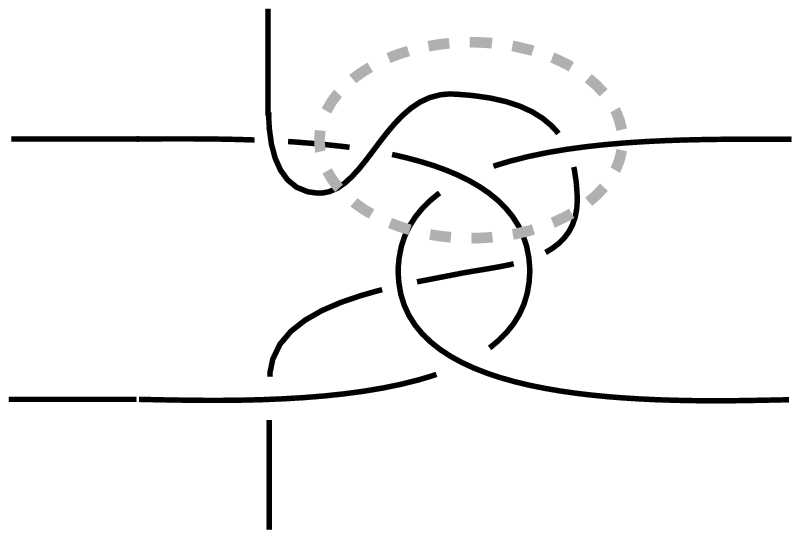}
\end{minipage}\\
\overset{\raisebox{2pt}{\scalebox{0.8}{$\Delta_3$}}}{\Longleftrightarrow}\quad
\begin{minipage}{80pt}
\includegraphics[width=80pt]{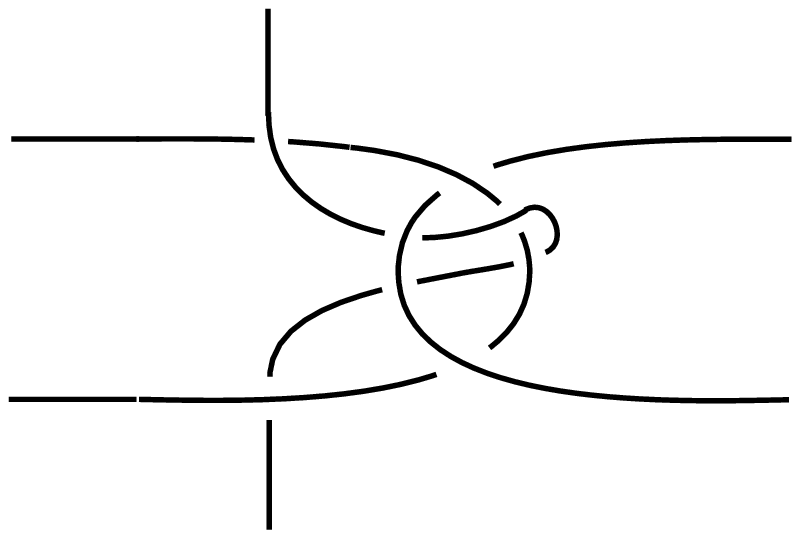}
\end{minipage}\quad\overset{\raisebox{2pt}{\scalebox{0.8}{\text{isotopy}}}}{\Longleftrightarrow}\quad
\begin{minipage}{80pt}
\includegraphics[width=80pt]{dblpass-f}
\end{minipage}
\end{multline*}
\item[$\Delta_4\Rightarrow \Delta_1$]\begin{multline*}
\begin{minipage}{80pt}
\includegraphics[width=70pt]{clasporder-1}
\end{minipage}\quad\overset{\raisebox{2pt}{\scalebox{0.8}{\text{surgery}}}}{\Longleftrightarrow}\quad
\begin{minipage}{80pt}
\includegraphics[width=70pt]{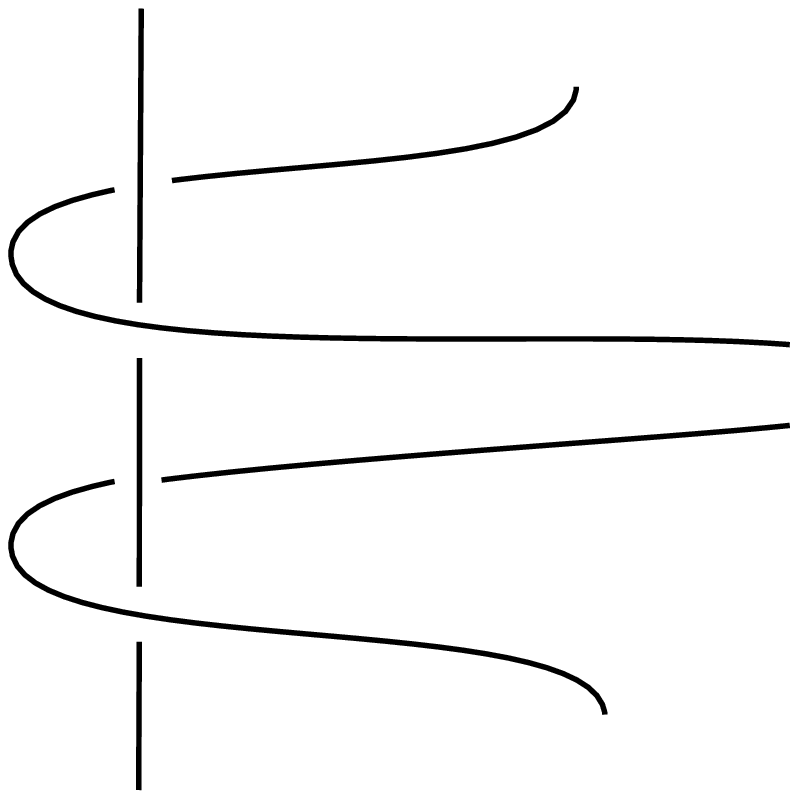}
\end{minipage}\quad\overset{\raisebox{2pt}{\scalebox{0.8}{\text{isotopy}}}}{\Longleftrightarrow}\quad
\begin{minipage}{80pt}
\includegraphics[width=70pt]{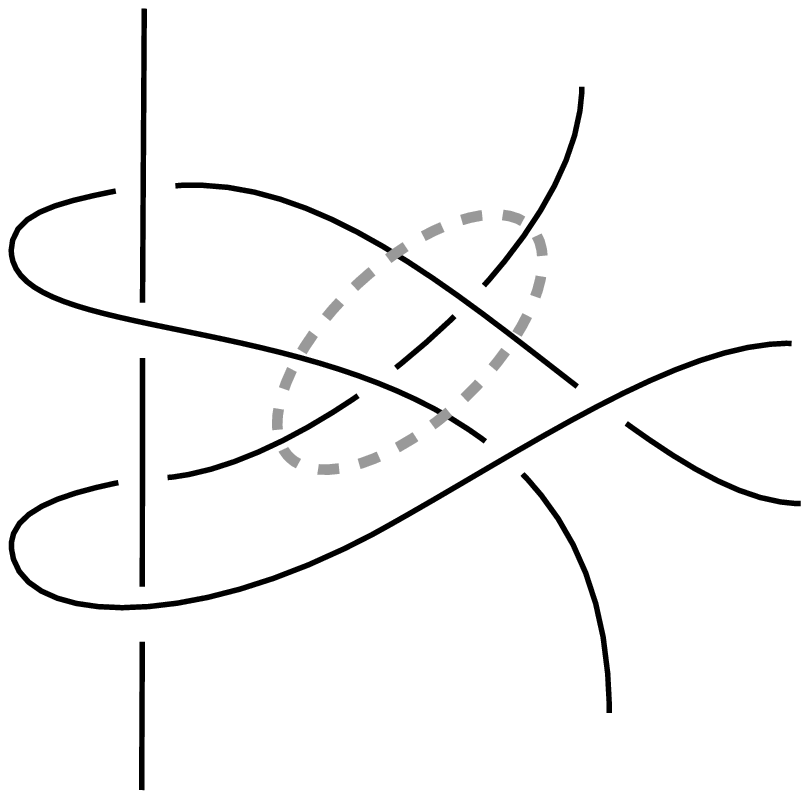}
\end{minipage}\\
\overset{\raisebox{2pt}{\scalebox{0.8}{$\Delta_4$}}}{\Longleftrightarrow}\quad
\begin{minipage}{80pt}
\includegraphics[width=70pt]{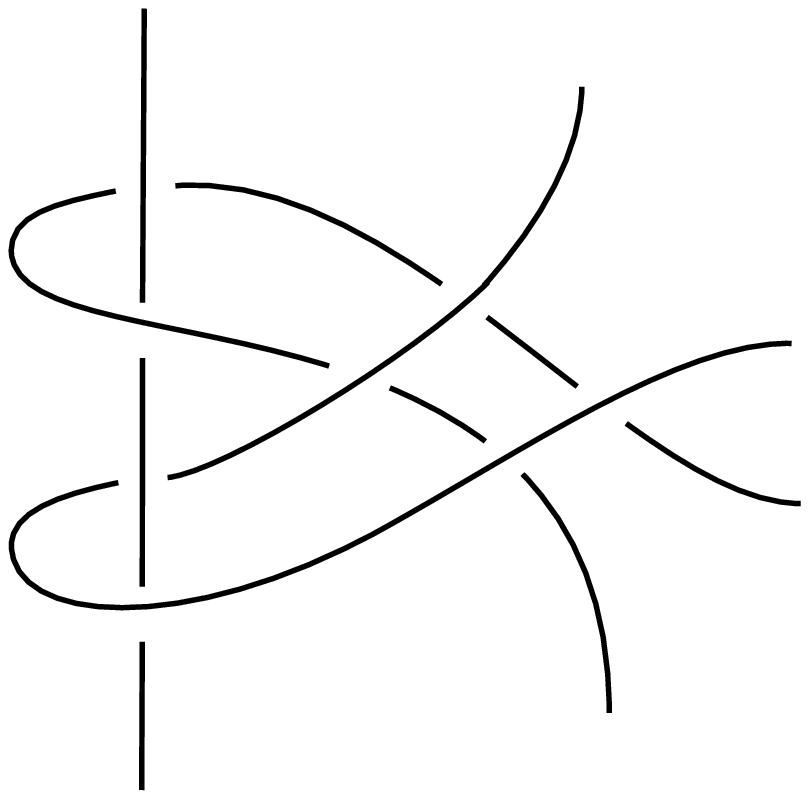}
\end{minipage}
\quad\overset{\raisebox{2pt}{\scalebox{0.8}{\text{surgery}}}}{\Longleftrightarrow}\quad
\begin{minipage}{80pt}
\includegraphics[width=70pt]{clasporder-2}
\end{minipage}
\end{multline*}
\end{description}
\end{proof}

\subsection{The space $\mathcal{C}$ of $A$--coloured $Y$--claspers}

A $Y$--clasper with leaves $A_{1,2,3}$ in the complement of an $A$--coloured Seifert surface is \emph{coloured $(a_1,a_2,a_3)\in A^3$} if $\bar\rho(A_{1,2,3})=a_{1,2,3}$ correspondingly (recall that the trivalent vertex and the leaves are oriented counterclockwise). Write the set of $(a_1,a_2,a_3)$--coloured $Y$--claspers in $A$--coloured Seifert surface complements as $\lblY{a_1}{a_2}{a_3}{32}$. Inserting a half-twist in an edge corresponds to inverting the colour of the leaf adjacent to that edge. We may formally add (sets of) coloured claspers over $\mathds{N}$ by taking their disjoint union: $\lblY{a_1}{a_2}{a_3}{32}+\lblY{b_1}{b_2}{b_3}{32}$ denotes the set of pairs of claspers in $A$--coloured Seifert surface complements, one of which is coloured $(a_1,a_2,a_3)$, and the other $(b_1,b_2,b_3)$.  The identity element is the empty $Y$--clasper, \textit{i.e.} nothing at all, written as $0\in\mathcal{C}$. This monoid of formal sums is denoted $\mathcal{C}$.\par

We write

\begin{equation}\sum_{i=1}^{N_1}n_i\lblY{a_i^1}{b^1_i}{c^1_i}{43}\,\sim_{\bar\rho}\,\sum_{i=1}^{N_2}n_i\lblY{a_i^2}{b^2_i}{c^2_i}{43}\end{equation}

\noindent if any $A$--coloured Seifert surface $(F,\bar\rho)$ is $\bar\rho$--equivalent to any $A$--coloured Seifert surface $(F^\prime,\bar\rho^\prime)$ obtained from $(F,\bar\rho)$ through a finite sequence of $Y$--clasper surgeries, deletion of an element in $\sum_{i=1}^{N_1}n_i\lblY{a_i^1}{b^1_i}{c^1_i}{43}$, and insertion of  an element in $\sum_{i=1}^{N_2}n_i\lblY{a_i^2}{b^2_i}{c^2_i}{43}$, and also the converse.

Define a homomorphism

\begin{equation}
\begin{aligned}
\Phi\co \mathcal{C}\qquad &\Too\qquad \bigwedge\nolimits^{ 3} A\\
\sum_{i=1}^{k}n_i\lblY{a_1^i}{a_2^i}{a_3^i}{43} &\mapsto\ \ \sum_{i=1}^k n_i\left(a_1^i\wedge a_2^i\wedge a_3^i\right).
\end{aligned}
\end{equation}

By abuse of terminology, $\Phi(C)$ means $\Phi$ of its class in $\mathcal{C}$.\par

\begin{prop}[Proof in Section {\ref{SS:ClasperProof}}]\label{P:Y-barrho}
The relation $\sim_{\bar\rho}$ is an equivalence relation, and $\mathcal{C}/\sim_{\bar\rho}$ is an abelian group.
The map $\Phi$ descends to an isomorphism of abelian groups
\begin{equation}\hat{\Phi}\co\ \mathcal{C}/\sim_{\bar\rho}\quad \Too\quad \bigwedge\nolimits^3 A.\end{equation}
\end{prop}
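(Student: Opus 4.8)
The plan is to split Proposition \ref{P:Y-barrho} into three parts: that $\sim_{\bar\rho}$ is an equivalence relation and $\mathcal{C}/\!\sim_{\bar\rho}$ is a group, that $\Phi$ descends to a well-defined map $\hat\Phi$, and finally that $\hat\Phi$ is bijective. The first part is essentially formal: $\sim_{\bar\rho}$ is reflexive, symmetric and transitive directly from its definition (it is built so that both directions hold), and since $\mathcal{C}$ is an abelian monoid under disjoint union with identity $0$, the quotient is an abelian monoid; to get a group I need to produce an inverse for each generator $\lblY{a_1}{a_2}{a_3}{32}$. Here I would exhibit the inverse as the $Y$--clasper with a half-twist inserted in one edge, i.e.\ coloured $(-a_1,a_2,a_3)$, and show that $\lblY{a_1}{a_2}{a_3}{32}+\lblY{-a_1}{a_2}{a_3}{32}\sim_{\bar\rho}0$; this is the ``annihilate a canceling pair of $Y$--claspers'' relation, which should follow from Habiro's moves (the unite-box move of Figure \ref{F:unitebox} together with the standard clasper identities) applied to the two claspers sharing appropriate leaves, since a $Y$--clasper and its mirror cancel.

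Next I would check that $\Phi$ respects $\sim_{\bar\rho}$, so that $\hat\Phi$ is well-defined. The point is that each elementary relation generating $\sim_{\bar\rho}$ must be sent to $0$ (or respected) by $\Phi$. The relations come in two flavours: (i) surgery on a $Y$--clasper changes the Seifert surface but when the $Y$--obstruction is to be trivial we need the move to be a $\bar\rho$--equivalence; more precisely, the defining relations of $\sim_{\bar\rho}$ are exactly those $Y$--clasper identities (multilinearity in the leaf colours, antisymmetry under edge half-twists, the IHX/Jacobi-type relation, and the vanishing of a $Y$--clasper with a repeated or trivial leaf) that hold because the corresponding clasper surgery can be realized by null-twists up to tube equivalence. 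For each such identity I would verify that its image under $a_1\wedge a_2\wedge a_3$ is zero: a leaf coloured $0$ kills the wedge, a repeated leaf colour kills the wedge by antisymmetry, an edge half-twist negates both sides, disjoint-union additivity matches the sum in $\bigwedge^3 A$, and the Jacobi relation among three overlapping $Y$--claspers maps to the corresponding $3$--term relation in $\bigwedge^3 A$ (which is trivially $0$ there since $\bigwedge^3$ is already the quotient by all such). So $\hat\Phi$ is a well-defined group homomorphism. Surjectivity is then immediate: every generator $a_1\wedge a_2\wedge a_3$ of $\bigwedge^3 A$ is hit by $\lblY{a_1}{a_2}{a_3}{32}$, which one can always embed in a Seifert surface complement because $\bar\rho$ is surjective and $A$ is finitely generated, so we may realize the colours $a_i$ as $\bar\rho$--images of curves bounding the leaves.

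The main obstacle, and the heart of the proof, is injectivity of $\hat\Phi$: I must show that if $\Phi\bigl(\sum n_i\lblY{a_1^i}{a_2^i}{a_3^i}{43}\bigr)=0$ in $\bigwedge^3 A$, then $\sum n_i\lblY{a_1^i}{a_2^i}{a_3^i}{43}\sim_{\bar\rho}0$, i.e.\ that the kernel of $\Phi$ is generated exactly by the clasper relations built into $\sim_{\bar\rho}$. The strategy is: (a) use the presentation of $\bigwedge^3 A$ as the free $\mathds{Z}$--module on ordered triples of generators of $A$ modulo the multilinearity and alternating relations; (b) for each generator relation of $\bigwedge^3 A$ (reordering/antisymmetry, bilinearity $a\wedge(b+b')\wedge c = a\wedge b\wedge c + a\wedge b'\wedge c$, and killing of $a\wedge a\wedge c$), exhibit the corresponding clasper identity in $\mathcal{C}/\!\sim_{\bar\rho}$ --- antisymmetry via edge half-twists plus leaf-swap isotopies, additivity of a leaf colour via the ``sum of two leaves'' / leaf-splitting move (this is where the $\Delta$--move equivalences and the box notation of Figures \ref{F:Habiro2}, \ref{F:unitebox} are used), and vanishing of a $Y$--clasper with two leaves bounding parallel curves (a $Y$--clasper with a ``doubled'' leaf is null-homotopic off the surface and its surgery is a null-twist); (c) conclude that the natural surjection $\mathbb{Z}\{\text{coloured }Y\text{-claspers}\}\twoheadrightarrow \mathcal{C}/\!\sim_{\bar\rho}$ factors through $\bigwedge^3 A$, giving a two-sided inverse to $\hat\Phi$. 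The delicate step is (b): I need to check that each of these clasper manipulations really is a $\bar\rho$--equivalence --- that is, realizable by null-twists up to tube equivalence and not merely by arbitrary clasper surgery --- which is precisely the content of Habiro's calculus combined with the observation, used throughout Section \ref{S:ClasperProof}, that a $Y$--clasper one of whose leaves bounds a disc disjoint from the surface (hence $\bar\rho$--trivial) can be removed by null-twists. Once these individual moves are certified, assembling them into a two-sided inverse of $\hat\Phi$ is bookkeeping.
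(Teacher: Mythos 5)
Your overall architecture --- equivalence relation and inverses first, then well-definedness of $\hat\Phi$, easy surjectivity, and injectivity via the defining relations of $\bigwedge^3 A$ --- is the same as the paper's, but the two steps you dismiss as ``formal'' or as following from ``Habiro's moves'' are exactly where the paper has to work, and as written they are gaps. First, transitivity of $\sim_{\bar\rho}$ is not treated as immediate in the paper: composing ``delete an element of $X$, insert an element of $Y$'' with ``delete an element of $Y$, insert an element of $Z$'' is not literally a move of the form ``delete $X$, insert $Z$'', since the inserted and deleted copies of $Y$ are different embedded claspers. The paper cancels the intermediate term by adjoining the inverse clasper to both sides and invoking the distributive law for coloured $Y$--claspers, $(a+b,c,d)\sim_{\bar\rho}(a,c,d)+(b,c,d)$ (Lemma \ref{L:Ydistribute}), together with Lemma \ref{L:(0,a,b)}. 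Transitivity, the existence of inverses, and the multilinearity you need in step (b) all hang on that one nontrivial lemma, which your proposal assumes rather than proves.

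Second, and more seriously, the clasper identities in your step (b) --- cancellation of a clasper with its half-twisted mirror, leaf-splitting, vanishing for a repeated or trivial leaf colour --- do not ``follow from Habiro's calculus''. Habiro's moves certify equality of the results of surgery; what must be shown here is that each manipulation is realized by \emph{null-twists} up to tube equivalence, i.e.\ stays inside $\bar\rho$--equivalence. Two claspers with inverse colours sitting on different handles do not cancel by a single move: one must first word-shorten the colouring vector (Section \ref{SSS:Shorten}), shepherd the leaves together (Section \ref{SSS:Shepherd}), and untangle and unframe the edges, and every intermediate step must itself be certified as a null-twist --- this is the content of Lemmas \ref{L:(0,a,b)}, \ref{L:(a,a,b)}, \ref{L:clasp-pass}, \ref{L:YPass}, \ref{L:monogamousclasper} and \ref{L:3wedge}. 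The colour hypotheses in those lemmas (a $0$--coloured leaf, or leaves with equal or inverse colours) are precisely what make a $\Delta$--move expressible by null-twists; for generic colours it is not, which is why the $Y$--obstruction is nonzero at all. Likewise, well-definedness of $\hat\Phi$ is not a matter of checking that $\Phi$ kills a list of clasper identities: the generating relations of $\sim_{\bar\rho}$ involve arbitrary null-twists, so one needs the separate geometric statement that null-twists do not change the $Y$--obstruction (Section \ref{SS:Y-nulltwist}), itself a three-step argument. You correctly locate the difficulty, but the lemmas you defer are the proof.
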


\subsection{The $Y$--obstruction}

If for two $G$--coloured knots $(K_{1,2},\rho_{1,2})$ there exists a Seifert surface $F_{1}$ for $K_{1}$ and a set of $Y$--claspers $C\in E(F_1)$ such that surgery on $C$ gives $(K_2,\rho_2)$, then the \emph{$Y$--obstruction of  $(K_{1,2},\rho_{1,2})$} is defined to be

\begin{equation}Y((K_1,\bar\rho_1),(K_{2},\bar\rho_2))\ass \Phi(C).\end{equation}

\begin{lem}
The $Y$--obstruction $Y((K_1,\bar\rho_1),(K_{2},\bar\rho_2))$ does not depend on the choice of $Y$--clasper $C$ in its definition.
\end{lem}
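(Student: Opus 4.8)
The statement to prove is that the $Y$--obstruction $Y((K_1,\bar\rho_1),(K_2,\bar\rho_2)) = \Phi(C)$ does not depend on the particular choice of $Y$--clasper collection $C$ realizing the passage from $(K_1,\rho_1)$ to $(K_2,\rho_2)$. The natural approach is to take two such collections $C$ and $C'$ — both living in (possibly different) Seifert surface exteriors for $K_1$, both giving $(K_2,\rho_2)$ after surgery — and show $\Phi(C) = \Phi(C')$ by exhibiting a chain of moves connecting the two configurations that $\Phi$ is insensitive to, then invoking the injectivity of $\hat\Phi$ from Proposition \ref{P:Y-barrho}.

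First I would normalize the setup: since any two Seifert surfaces of a knot are tube equivalent (Remark \ref{R:tubecomment}), and since $Y$--claspers may be slid freely across a tubing, I can assume $C$ and $C'$ both sit in the exterior of a single Seifert surface $F_1$ for $K_1$. Now surgery on $C$ and surgery on $C'$ each produce $(K_2,\rho_2)$ with a chosen Seifert surface; by Naik--Stanford / the clasper reformulation recalled at the start of Section \ref{S:ClasperProof}, and the fact that the ambient isotopy identifying the two copies of $K_2$ can be taken to respect Seifert surfaces up to tube equivalence, the two post-surgery colored surfaces are tube equivalent. This means that $F_1$ surgered along $C$ and $F_1$ surgered along $C'$ differ by tube moves and isotopy, i.e. they represent the \emph{same} class as colored Seifert surfaces. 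Running the argument backwards, the disjoint union $C \amalg \bar{C'}$ (where $\bar{C'}$ denotes $C'$ with reversed edge orientations, so that surgery on $\bar{C'}$ undoes surgery on $C'$) is a collection of $Y$--claspers in $E(F_1)$ whose surgery returns $(K_1,\rho_1)$ — possibly up to tube equivalence — hence is $\sim_{\bar\rho}$--trivial, i.e. $C \amalg \bar{C'} \sim_{\bar\rho} 0$ in $\mathcal{C}$.

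The heart of the matter is then purely algebraic: by Proposition \ref{P:Y-barrho}, $\hat\Phi$ is an isomorphism, so $C \amalg \bar{C'} \sim_{\bar\rho} 0$ forces $\Phi(C \amalg \bar{C'}) = 0$ in $\bigwedge^3 A$. Since $\Phi$ is a homomorphism of monoids into the abelian group $\bigwedge^3 A$, and since reversing the edge orientations on $C'$ negates each wedge $a_1^i \wedge a_2^i \wedge a_3^i$, we get $\Phi(C) - \Phi(C') = \Phi(C \amalg \bar{C'}) = 0$, which is exactly the desired independence. One subtlety worth stating carefully is that reversing a single leaf's orientation is what $\Phi$ sees as negation; undoing a $Y$--clasper surgery requires reversing the edge, and I would note (as the excerpt already does) that a half-twist in an edge inverts the adjacent leaf colour, so the ``reverse surgery'' clasper $\bar{C'}$ has colours $(-a_1^i, -a_2^i, -a_3^i)$ and hence $\Phi(\bar{C'}) = -\Phi(C')$ by trilinearity of the wedge.

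**Main obstacle.** The delicate point is the geometric step asserting that $C \amalg \bar{C'}$ surgers $(K_1,\rho_1)$ back to itself \emph{up to tube equivalence and $\bar\rho$--equivalence}, rather than literally — one must be careful that the ambient isotopy matching the two copies of $K_2$ (coming with its two Seifert surfaces) can be upgraded to respect the colored-Seifert-surface structure, which is where Remark \ref{R:tubecomment} (tube equivalence of colored Seifert surfaces only up to inner automorphism, as in Lemma \ref{L:inneriso}) and the definition of $\bar\rho$--equivalence via tube equivalence plus null-twists both get used. Once that bookkeeping is in place, the conclusion is immediate from the isomorphism $\hat\Phi$.
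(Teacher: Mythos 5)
Your geometric setup matches the paper's: reduce to a single Seifert surface exterior, form $C\cup\bar{C}'$ where $\bar{C}'$ inverts each clasper of $C'$ by a half-twist in an edge, observe that surgery on the union returns $(F_1,\bar\rho_1)$, and note that $\Phi(\bar{C}')=-\Phi(C')$. The gap is in the final step, where you deduce $\Phi(C\cup\bar{C}')=0$ from ``$C\cup\bar{C}'\sim_{\bar\rho}0$'' together with Proposition \ref{P:Y-barrho}. This is circular: Proposition \ref{P:Y-barrho} is proved only at the end of Section \ref{S:ClasperProof}, and the part of it you need --- that $\Phi$ descends to the quotient, so that $\sim_{\bar\rho}$-trivial classes have vanishing $\Phi$-image --- rests on Section \ref{SS:Y-nulltwist}, whose manipulations repeatedly invoke the well-definedness of the $Y$--obstruction, i.e.\ the very lemma being proved. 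Moreover, the intermediate claim $C\cup\bar{C}'\sim_{\bar\rho}0$ does not follow from the triviality of this one surgery: $\sim_{\bar\rho}$ is a relation on formal sums of \emph{colour classes}, quantified over all embeddings of claspers with those colours in all coloured surface exteriors. A single embedded collection whose surgery happens to be trivial says nothing about other embeddings carrying the same colours; whether a colour class acts trivially is governed precisely by its $\Phi$-image (Lemma \ref{L:3wedge}), so establishing $C\cup\bar{C}'\sim_{\bar\rho}0$ already presupposes the conclusion $\Phi(C\cup\bar{C}')=0$.

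The missing ingredient is the homological input the paper uses instead: by \cite[Lemma 3.2]{Mas03} (see also \cite[Section 4.3]{Tur84}), if surgery on a collection of $Y$--claspers in $E(F_1)$ returns $(F_1,\bar\rho_1)$, then $\sum_i [A_1^i]\wedge[A_2^i]\wedge[A_3^i]=0$ already in $\bigwedge^3 H_1(E(F_1))$, where $[A^i_j]$ denote the homology classes of the leaves. Pushing forward along $\bar\rho\co H_1(E(F_1))\twoheadrightarrow A$ then kills the sum in $\bigwedge^3 A$, so $\Phi(C\cup\bar{C}')=0$ and the lemma follows. This fact is independent of the rest of Section \ref{S:ClasperProof}, which is what allows the lemma to be established before, rather than after, Proposition \ref{P:Y-barrho}.
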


\begin{proof}
If surgery around $C_1\subset E(F_1)$ and surgery around $C_2\subset E(F_1)$ both give $(F_2,\bar\rho_2)$, then surgery around $C_1\cup \bar{C}_2$ gives back $(F_1,\bar\rho_1)$, where $\bar{C}_2$ is the result of inserting a half twist in one edge of each $Y$--clasper in $C_2$. But by \cite[Lemma 3.2]{Mas03} (see also \cite[Section 4.3]{Tur84}), $[A_1^i]\wedge [A_2^i]\wedge [A_3^i]=0\in \bigwedge^3 H_1(E(F))$, where $[A^i_{1,2,3}]$ are homology classes representing leaves of $Y$--claspers in $C_1\cup \bar{C}_2$. A-fortiori $\Phi(C_1\cup \bar{C}_2)=0$.
\end{proof}

In the remainder of this section we prove that the $Y$--obstruction is independent of the choice of Seifert surface used in its construction.\par

\begin{defn}
A \emph{weak band projection} of a knot $K$ is a Seifert surface $F$ for $K$ and a projection of an identification
\[D^2\cup B_1\cup\cdots \cup B_{2g}\to\ F\]
where $D^2$ and each $B_i$ is a disk. Moreover, we require $B_i\cap B_j=\emptyset$ for $i\neq j$. We write $\partial B_i\ssa\, \alpha_i\gamma_i\beta_i\gamma_i^{\prime -1}$ with $D^{2}\cap B_i\ssa\, \alpha_i\cup\beta_i$. A weak band projection is called a \emph{band projection} (see \textit{e.g.} \cite[Chapter 8B]{BZ03}) if
\[
\partial D^2\ssa\ \alpha_1\delta_1\beta_2^{-1}\delta_2\beta_1^{-1}\delta_3\alpha_2\delta_4\cdots\alpha_{2g-1}\delta_{4g-3}\beta_{2g}^{-1}\delta_{4g-2}\beta_{2g-1}^{-1}\delta_{4g-1}\alpha_{2g}\delta_{4g}.
\]
\noindent Note that the bands of a weak band projection are oriented, and that it induces a basis for $H_1(F)$, and therefore also for $H_1(E(F))$. See Figure \ref{F:bandproj}.
\end{defn}

\begin{figure}
\psfrag{1}[c]{$B_1$}\psfrag{2}[c]{$B_2$}\psfrag{3}[c]{$B_3$}\psfrag{4}[c]{$B_{4}$}\psfrag{5}[c]{$B_{2g-1}$}\psfrag{6}[c]{$B_{2g}$}
\psfrag{D}[c]{$D^2$}\psfrag{d}[c]{\Large$\cdots$}\psfrag{x}[c]{$x_1$}\psfrag{y}[c]{$x_2$}\psfrag{c}[c]{$\xi_1$}\psfrag{e}[c]{$\xi_2$}
\includegraphics[width=4.75in]{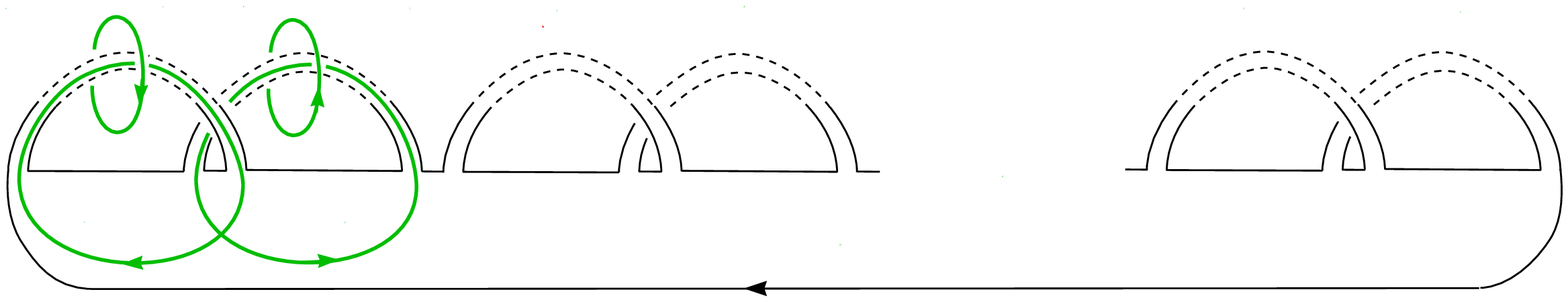}
\caption{\label{F:bandproj} A band projection of a knot.}
\end{figure}

Any ambient isotopy of $F$ can be realized by a sequence of band slides for any weak band projection of $F$ (see \textit{e.g.} \cite{MosXX}). A dual basis element $\xi_i\in H_1(E(F))$ is associated to each band, and to it an entry $v_i=\bar\rho(\xi_i)$ of the colouring vector. If all orientations are counterclockwise (other cases are analogous), the band-slide of $B_1$ over $B_2$ is realized by the following local picture.

         \begin{equation}\label{E:pileY}
                \begin{minipage}{100pt}
                \psfrag{A}[c]{\fs$a$}\psfrag{B}[c]{\fs$b$}
                \includegraphics[width=100pt]{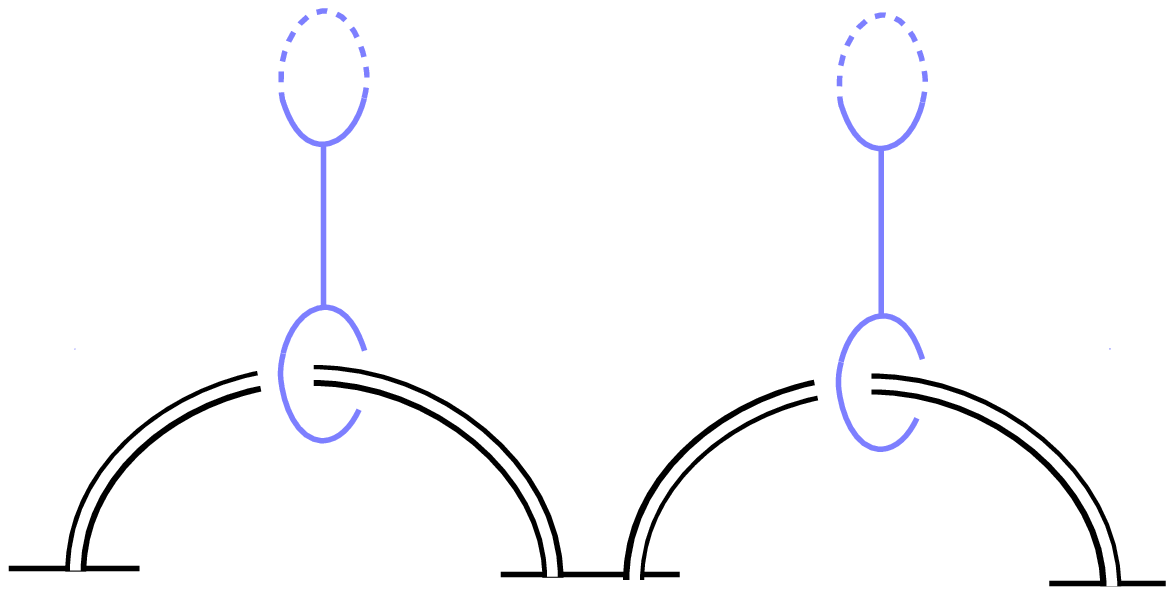}
                \end{minipage}\quad \overset{\raisebox{2pt}{\scalebox{0.8}{\text{isotopy}}}}{\Longleftrightarrow}\quad
                \begin{minipage}{130pt}
                \psfrag{A}[c]{\fs$a$}\psfrag{B}[l]{\fs$b-a$}
                \includegraphics[width=100pt]{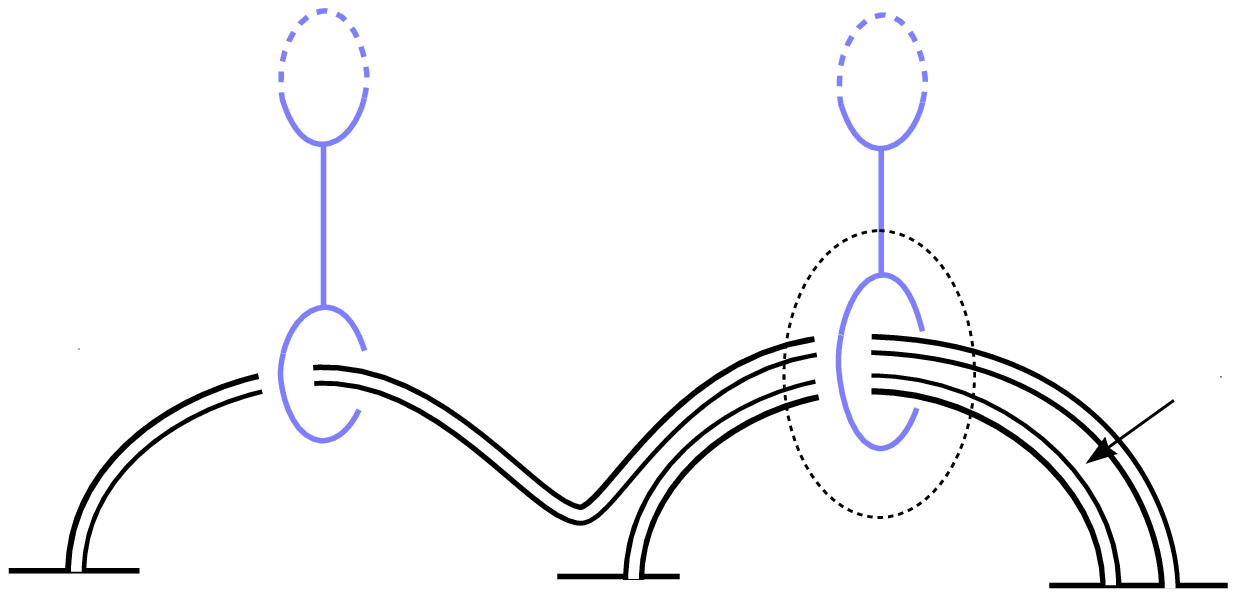}
                \end{minipage}.
            \end{equation}

Zoom in:

\begin{equation}\label{E:pile}
                \psfrag{B}[c]{$B_1$}\psfrag{C}[c]{$B_2$}
                \raisebox{10pt}{\begin{minipage}{60pt}
                \includegraphics[width=55pt]{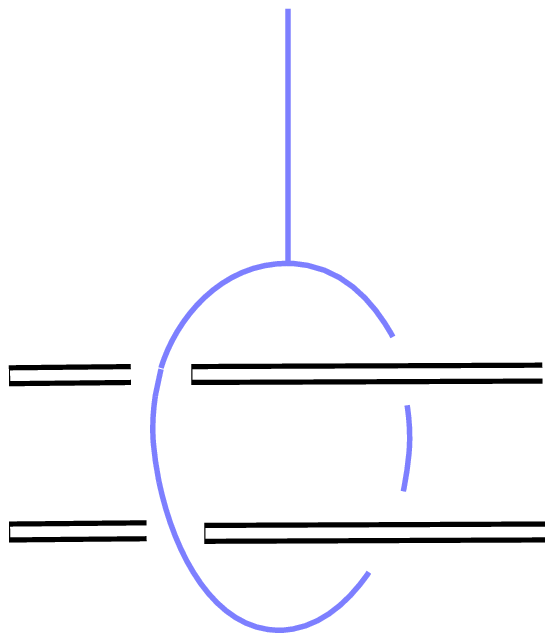}
                \end{minipage}}
                \quad
                \overset{\raisebox{2pt}{\scalebox{0.8}{\text{Move 8}}}}{\Longleftrightarrow}\quad
                \ \raisebox{10pt}{\begin{minipage}{60pt}
                \includegraphics[width=60pt]{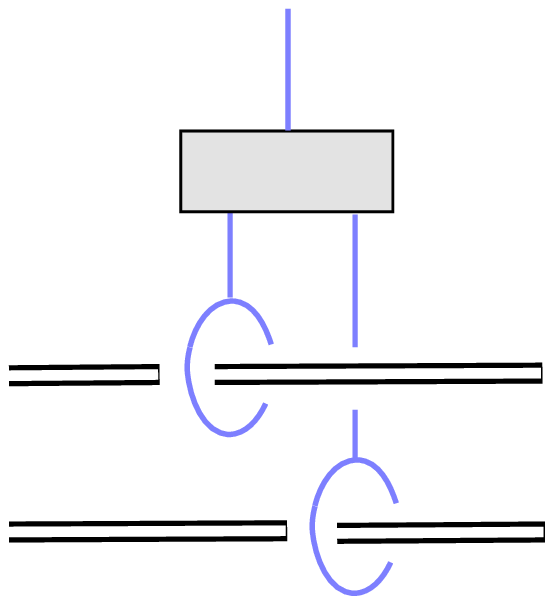}
                \end{minipage}}
                \quad\ \overset{\raisebox{2pt}{\scalebox{0.8}{\text{unzip}}}}{\Longleftrightarrow}\quad
                \ \raisebox{10pt}{\begin{minipage}{60pt}
                \includegraphics[width=60pt]{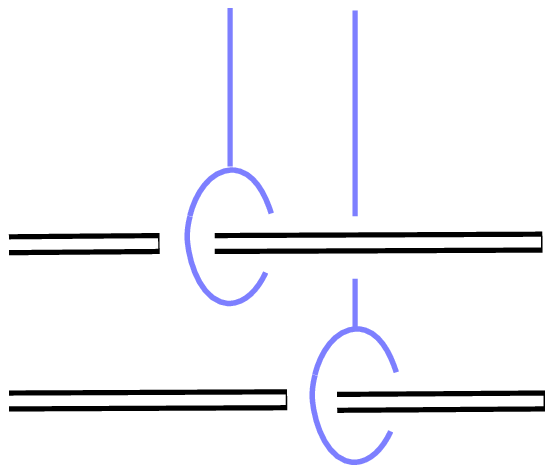}
                \end{minipage}}
\end{equation}
\noindent where `unzip' means \cite[Definition 3.12]{Hab00}.\par

For each $Y$--clasper in $\lblY{b}{c}{d}{27.5}$ whose leaf clasped $B_2$, we now have two $Y$--claspers in $\lblY{a}{c}{d}{27.5}$ and in
 $\begin{minipage}{30pt}\psfrag{a}[c]{\small$b-a$}\psfrag{b}[c]{\small$c$}\psfrag{c}[c]{\small$d$}\includegraphics[width=30pt]{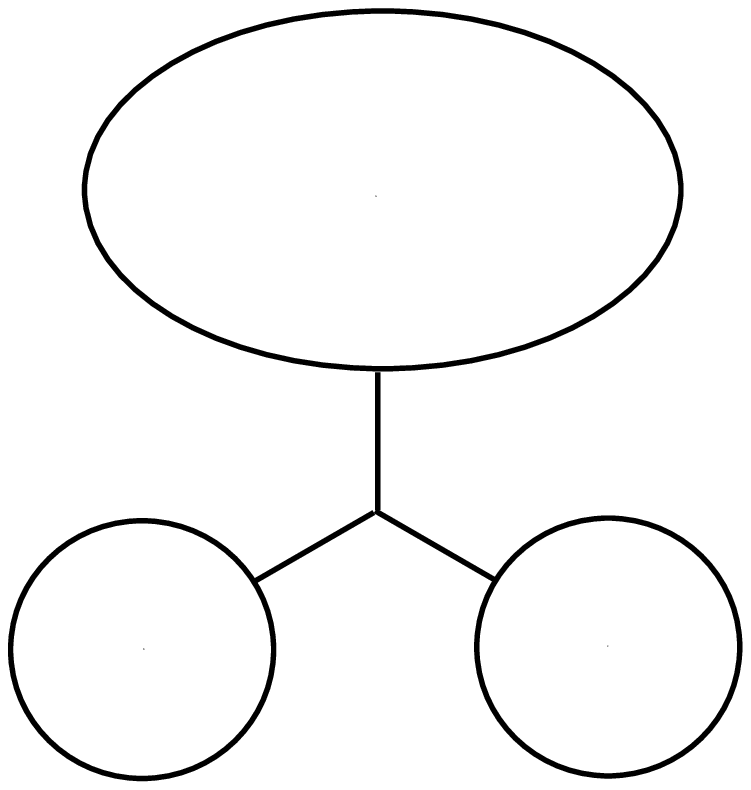}\end{minipage}$ correspondingly. The $\Phi$--image is unchanged.\par

We next show that the $Y$--obstruction is invariant under stabilization. A $1$--handle attachment to $F$ locally looks, up to reflection, as in Figure \ref{F:genincrease}. The only possible contributions to the $Y$--obstruction come from linkage with $B_1^{\text{new}}$. But the loop which rings around $B_1^{\text{new}}$ is in $\ker\bar\rho$, and so any $Y$ clasper which clasps  $B_1^{\text{new}}$ is in $\ker\Phi$.

\begin{figure}
\psfrag{a}[c]{\fs$B_1^{\text{new}}$}\psfrag{b}[c]{\fs$B_2^{\text{new}}$}\psfrag{y}[c]{$0$}
\begin{minipage}{115pt}
    \includegraphics[width=115pt]{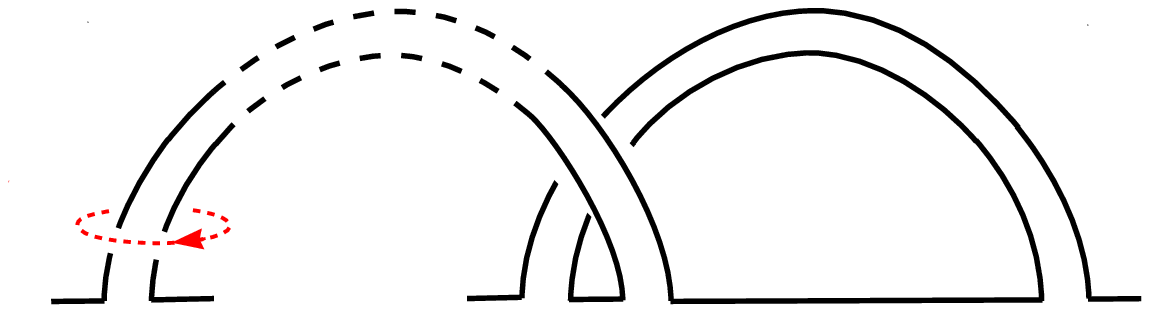}
\end{minipage}
\qquad\quad\quad
\begin{minipage}{115pt}
    \includegraphics[width=115pt]{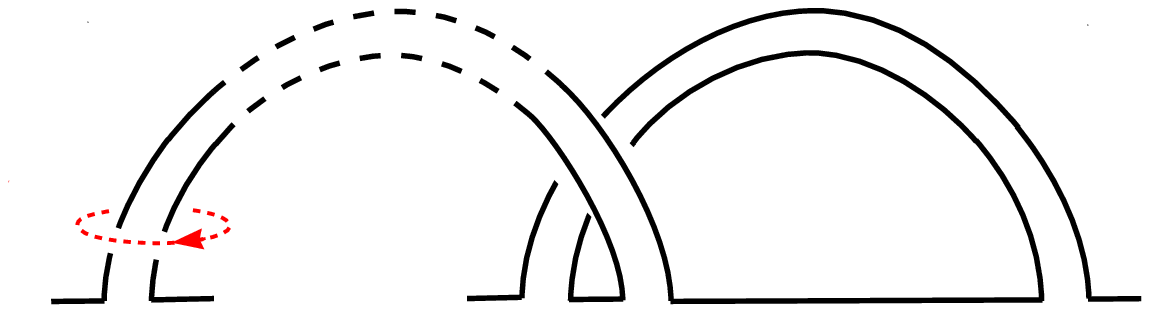}
    \end{minipage}
\caption{\label{F:genincrease} Local pictures of $1$--handle attachments to a Seifert surface.}
\end{figure}

\subsection{Null-twists don't change the $Y$--obstruction}\label{SS:Y-nulltwist}

Let $(K_{1,2},\rho_{1,2})$ be a pair of $S$--equivalent $G$--coloured knots which are related by a sequence of null-twists. The goal of this section is to show that $Y((K_1,\bar\rho_1),(K_2,\bar\rho_2))$ vanishes. Let $F_{1,2}$ be Seifert surfaces for $K_{1,2}$ correspondingly. By the tubing construction, we assume the null-twists to be between bands of $F_1$. As in Section \ref{SS:S-equiv-matrix}, we may assume without the limitation of generality that there exist bases $\set{x^{1,2}_1,\ldots,x^{1,2}_{2g}}$ for $H_1(F_{1,2})$ correspondingly, which give rise to identical Seifert matrices. In this section, each time we stabilize $F_1$ we automatically stabilize $F_2$ in the same way, and each time we change the basis of $H_1(F_1)$ we automatically change the basis of $H_1(F_2)$ in the same way. The colouring vectors with respect to $\left(F_{1,2},\set{x^{1,2}_1,\ldots,x^{1,2}_{2g}}\right)$ also coincide because null-twists don't change the colouring vector.

Define a \emph{$Y_0$-move} to be a set of $\Delta$--moves realized as surgery around a set of $Y$--claspers in $\ker\Phi$.\par

The proof consists of three steps. First, for a chosen basis $\mathcal{B}$ of $H_1(F_1)$, we arrange by tube-equivalence for all non-zero entries in the colouring vector to be elements of $\mathcal{B}$, up to sign. Next, gather the null-twists together into a local picture by $Y_0$-moves. Finally, trivialize this local picture by $Y_0$-moves.

\subsubsection{Step 1: Shorten Words}\label{SSS:Shorten}

The goal of this section is to present an algorithm to generate the following output from the following input.

\begin{description}
\item[Input] A band projection of a Seifert surface, together with an ordered basis $\mathcal{B}\ass \set{b_1,\ldots,b_r}$ for $A$.
\item[Output] A band projection of a Seifert surface, with every non-zero entry of the corresponding colouring vector in $\mathcal{B}$, up to sign.
\end{description}

Carry out the procedure as follows. Let $\bar{\mathcal{B}}$ denote $\set{\pm b_1,\ldots,\pm b_r}$. Write the word length of an element $a\in A$ with respect to $\bar{\mathcal{B}}$ as $w_{\bar{\mathcal{B}}}(a)$. Denote by $\mathcal{V}$ the set of colouring vectors coming from band projections. A colouring vector $\V\in\mathcal{V}$ has a partition into pairs $\set{(v_{2i-1},v_{2i})}_{1\leq i\leq g}$. Define a partial order $\prec$ on $\mathcal{\V}$ by ordering its elements first by the lexicographical partial order by word lengths of their entries $w_{\bar{\mathcal{B}}}(v_i)$, and then by the lexicographical partial order by total word-lengths of their pairs $w_{\bar{\mathcal{B}}}(v_{2i-1})+w_{\bar{\mathcal{B}}}(v_{2i})$ .
If $w_{\bar{\mathcal{B}}}(v_i)\leq 1$ for $i=1,\ldots,2g$ then we are done. Otherwise there exists an entry in the colouring vector, which we assume without limitation of generality is $v_{2g}$, such that $w_{\bar{\mathcal{B}}}(v_{2g})\geq w_{\bar{\mathcal{B}}}(v_j)$ for $j=1,\ldots,2g$, and $w_{\bar{\mathcal{B}}}(v_{2g})> 1$.
Choose an element $b\in\bar{\mathcal{B}}$ such that $w_{\bar{\mathcal{B}}}(v_{2g}+b)<w_{\bar{\mathcal{B}}}(v_{2g})$.\par

Recall that $ta$ (as opposed to $t\cdot a$) simply means ``left-multiply $a$ by $t$''. Because $\bar\rho$ is surjective, there exists an oriented based loop $C\in \pi$ bounding a disc $D$ with $\bar\rho(C)= \frac{t}{t-1}\cdot b$. Form a cylinder $Z\in E(F)$ with $\partial Z=(C\times [0,1])\cup (D\times\{0,1\})$. One may imagine a bunch of bands passing through a pipe $C\times [0,1]$. Stabilize $F$ by adding bands $B_{1,2}^{\text{new}}$ where $B_1^{\text{new}}$ links $Z$, immediately to the right of $B_{2g}$ the band corresponding to $v_{2g}$.

\begin{equation}\label{E:stabilizecol}
\psfrag{e}[c]{\fs$ta$}
\begin{minipage}{70pt}
\psfrag{Z}[c]{$Z$}\psfrag{C}[c]{$C$}
\includegraphics[height=45pt]{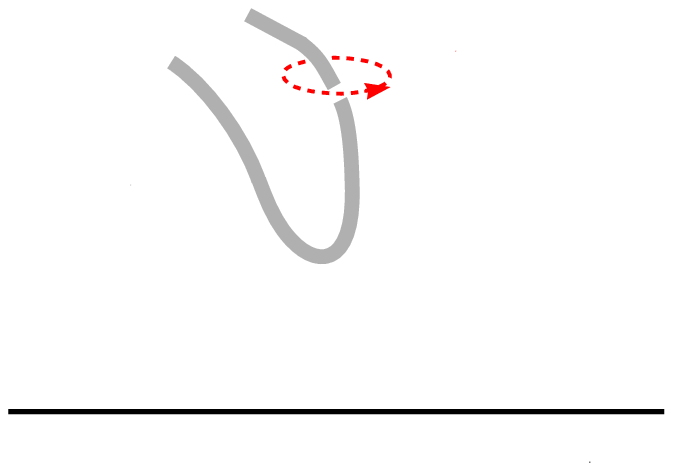}
\end{minipage} \overset{\raisebox{2pt}{\scalebox{0.8}{\text{isotopy}}}}{\Longleftrightarrow}\
\begin{minipage}{105pt}
\psfrag{a}[c]{\fs$ta$}\psfrag{d}[c]{\fs$tatbt^{-1}$}
\includegraphics[height=45pt]{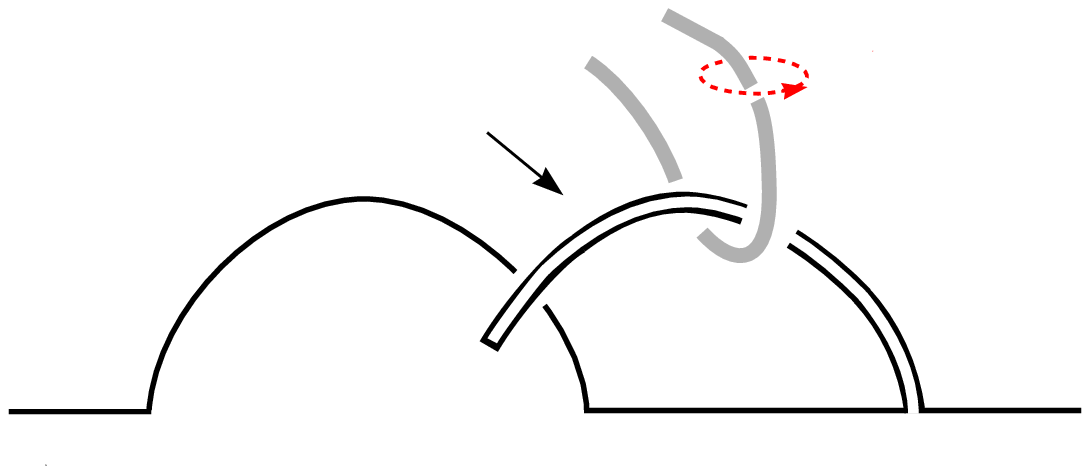}
\end{minipage}\overset{\raisebox{2pt}{\scalebox{0.8}{\text{isotopy}}}}{\Longleftrightarrow}\ \ \ \begin{minipage}{114pt}
\psfrag{y}[c]{\fs\phantom{x}$b$}
\psfrag{B}[c]{$B_1^{\text{new}}$}\psfrag{A}[c]{$B_2^{\text{new}}$}
\includegraphics[height=45pt]{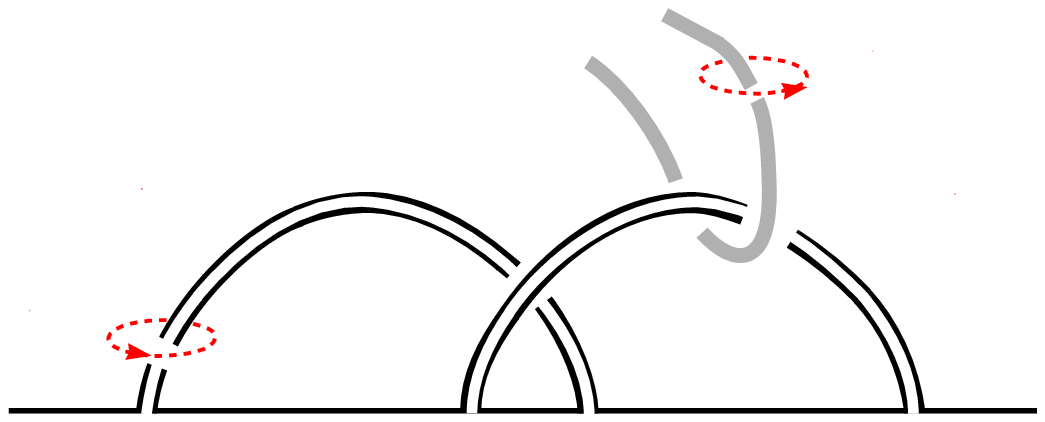}
\end{minipage}
\end{equation}

Now slide bands as follows (compare with \cite[Section 4.2.2]{KM09})

\begin{gather*}
\begin{minipage}{160pt}
\scriptsize \psfrag{a}[r]{$v_{2g-1}$}\psfrag{b}{$v_{2g}$}\psfrag{c}{$b$}\psfrag{d}{$0$}
\includegraphics[width=160pt]{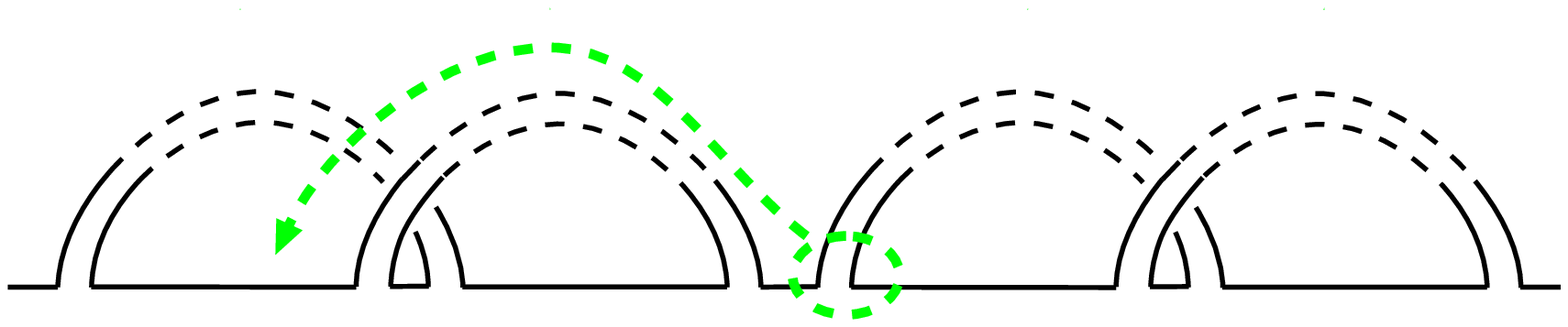}
\end{minipage}\quad \Longleftrightarrow\ \ \
\begin{minipage}{160pt}
\scriptsize \psfrag{a}[r]{$v_{2g-1}$}\psfrag{b}[c]{$b$}\psfrag{c}[c]{$v_{2g}+b$}\psfrag{d}{$0$}
\includegraphics[width=160pt]{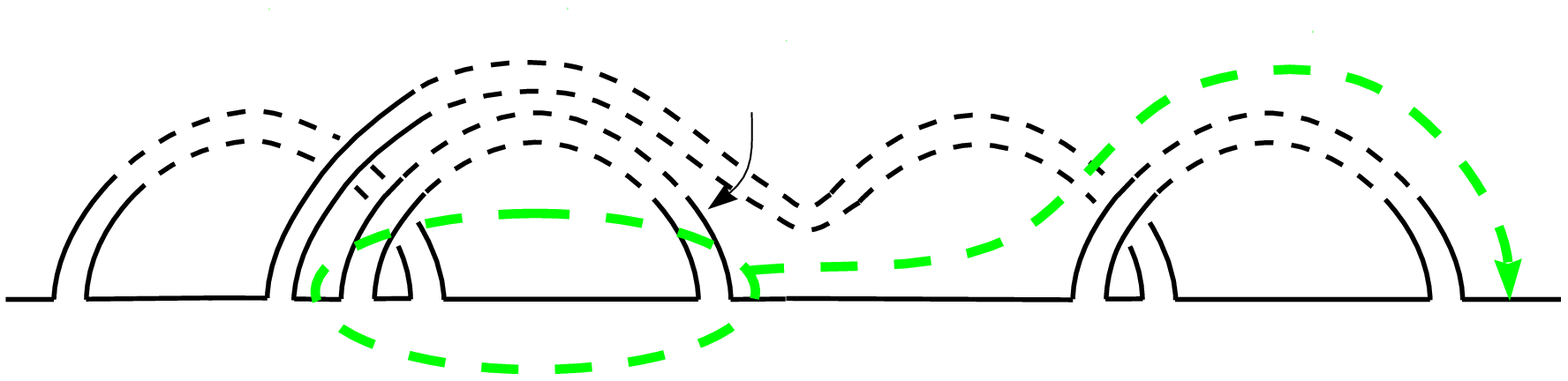}
\end{minipage}\\
\Longleftrightarrow\quad
\begin{minipage}{145pt}\scriptsize
\scriptsize
\psfrag{b}[c]{$v_{2g-1}$}\psfrag{a}{$v_{2g}+b$}
\includegraphics[width=145pt]{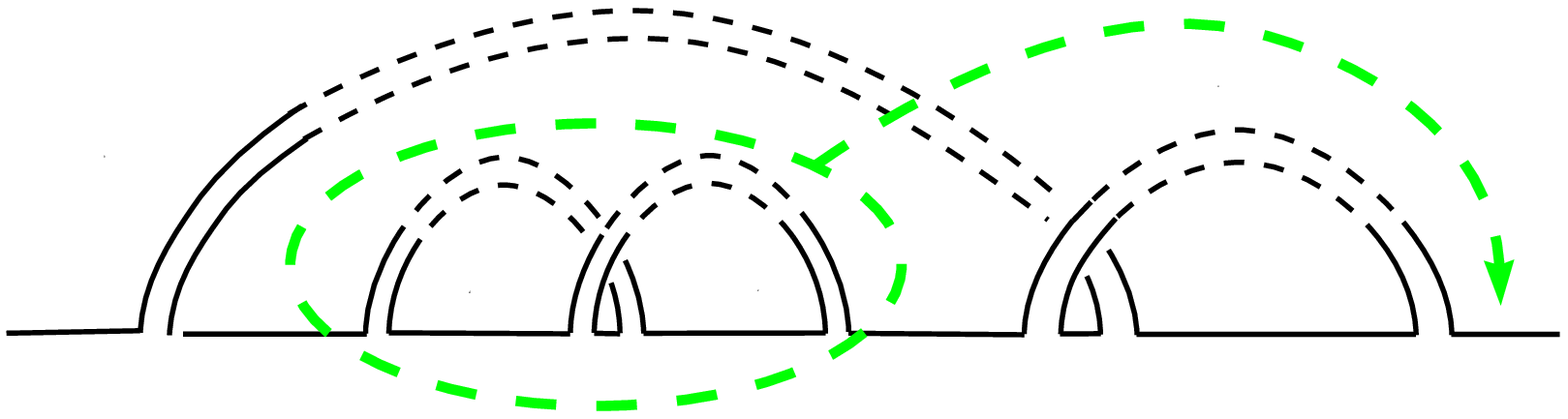}
\end{minipage}\quad\Longleftrightarrow\quad
\begin{minipage}{145pt}\scriptsize
\psfrag{a}[r]{$v_{2g-1}$}\psfrag{b}{$v_{2g}+b$}\psfrag{c}{$b$}\psfrag{d}{$v_{2g-1}$}
\includegraphics[width=145pt]{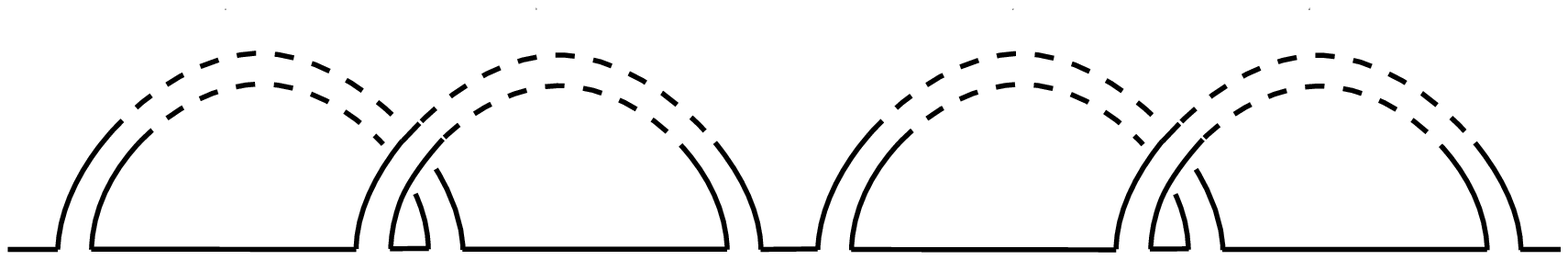}
\end{minipage}
\end{gather*}

We obtain a colouring vector $\V^{\text{new}}$ which satisfies $\V^{\text{new}}\prec \V$. By Zorn's Lemma we are finished.

\subsubsection{Step 2: Bring null-twists together}

Parameterize each band $B_i$ in a band projection of $F$ as $I\times I$. We show that for each band, up to $Y_0$-moves, all null-twists may be assumed to take place in $I\times [\frac{1}{2},1]$, while everything else (linkage, twisting, and knotting) takes place in $I\times [0,\frac{1}{2})$.

\begin{lem}\label{L:clasprho}
A leaf may be moved past a null-twist by a $Y_0$-move. See Figure \ref{F:clasprho}.
\end{lem}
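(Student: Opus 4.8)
The plan is to localise everything near the null-twist and reduce the statement to a single wedge-product computation, the key input being that the loop encircling the twisted bundle is $\bar\rho$--trivial, which is precisely what ``null-twist'' means. Fix the local picture of Figure \ref{F:clasprho}: a $Y$--clasper $Y=A_1\cup A_2\cup A_3\subset E(F)$ with leaf colours $a_1,a_2,a_3\in A$, whose leaf $A_3$ is to be transported across the null-twist. The null-twist is a $2\pi$ twist applied to a bundle of $2r$ strands with meridians $m_1,\ldots,m_{2r}$ and signs $\epsilon_i=(-1)^{i+1}$, and ``null'' means $\rho\big(\prod_i m_i^{\epsilon_i}\big)=g_1g_2^{-1}\cdots g_{2r-1}g_{2r}^{-1}=1$ in $G$. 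The alternating word $\prod_i m_i^{\epsilon_i}$ is carried by the loop $\lambda$ that encircles the whole bundle once; $\lambda$ is an unknot bounding a disc in $S^3$ (hence a legitimate clasper leaf), it lies in $\pi'$, and $\rho(\lambda)=1$ gives $\bar\rho([\lambda])=\sum_i\epsilon_i\,\bar\rho([m_i])=0\in A$.

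Next I would transport $A_3$ through the twisting region. As drawn in Figure \ref{F:clasprho}, pushing $A_3$ past the $2\pi$ twist drags it once around the bundle, so after the move the leaf represents $[A_3]\cdot[\lambda]^{\pm1}$ (in particular it still has colour $a_3$, consistent with the fact that null-twists do not change the colouring vector). Resolving the wrapped leaf by Habiro's leaf--splitting move together with the unite-box move (Figure \ref{F:unitebox}) and the relevant moves of \cite{Hab00}, surgery on the transported $Y$--clasper becomes surgery on the original clasper $Y$ together with one correction $Y$--clasper per strand of the bundle: the $i$th correction $Y_i$ has leaves $A_1$, $A_2$ and a meridian of the $i$th strand, taken with sign $\epsilon_i$, and no higher-degree claspers survive. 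By multilinearity of the wedge, the correction $\sum_i\epsilon_i Y_i$ has $\Phi$--image $\sum_i\epsilon_i\big(a_1\wedge a_2\wedge\bar\rho([m_i])\big)=a_1\wedge a_2\wedge\big(\sum_i\epsilon_i\bar\rho([m_i])\big)=a_1\wedge a_2\wedge\bar\rho([\lambda])=a_1\wedge a_2\wedge 0=0$. Thus the transported configuration differs from the original by a set of $Y$--claspers in $\ker\Phi$, i.e.\ by a $Y_0$-move, which is the assertion of the lemma. (When $\Rank(A)\le 2$ the group $\bigwedge^3 A$ vanishes and the statement is automatic; the content is the general-rank case.)

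The main obstacle I expect is the clasper bookkeeping in the middle step: turning the pictorial ``drag the leaf past the twist'' of Figure \ref{F:clasprho} into an explicit sequence of elementary clasper moves, keeping track of the half-integer edge framings and of the signs $\epsilon_i$, and in particular checking that the leaf--splitting through the parallel bundle really produces exactly one $Y$--clasper per strand, with no residual degree-$2$ tree claspers that would escape the group $\mathcal{C}$. The conceptual core --- that $\bar\rho([\lambda])=0$ precisely because we cross a \emph{null}-twist rather than a general twist, so that multilinearity of the wedge kills the correction --- is immediate from the definitions; everything that makes the proof long is the verification of the underlying clasper identity.
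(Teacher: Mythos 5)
Your central mechanism is the right one, and it is the same one the paper uses: the corrections picked up in carrying the leaf across the twist region assemble, by multilinearity of the wedge, into an expression containing the factor $\sum_i\epsilon_i\,\bar\rho([m_i])$, which vanishes precisely because the twist is a \emph{null}-twist. Two differences from the paper's proof are worth noting, one a genuinely different (and valid) cancellation and one a gap. First, the paper's cancellation is not the full null condition: it takes $C=A_1\cup A_2\cup E$ to be a \emph{basic} clasper with $A_2$ clasping one of the twisted bands $B_1$ (colour $a_1$), writes the null-twist as surgery on basic claspers, and realizes the transport by one $\Delta_1$-move per basic clasper of the null-twist clasping $B_1$; each such move inserts an honest $Y$--clasper with leaves coloured $a_0$, $a_1$ and $a_j$ for $j=2,\ldots,k$, so the total contribution is $a_0\wedge a_1\wedge\sum_{j\ge 2}a_j=-a_0\wedge a_1\wedge a_1=0$. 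Here the repeated factor $a_1$ (the colour of the moved leaf itself) does the work; your computation instead sums over all strands and uses $\bar\rho([\lambda])=0$. The two differ by the term $a_0\wedge a_1\wedge a_1$, which vanishes anyway, so both are correct.

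The gap is in your middle step, and you have flagged it yourself. You move a leaf of a three-leaved $Y$--clasper and assert that each correction is a $Y$--clasper with leaves $A_1$, $A_2$ and a meridian $m_i$. But pushing a leaf (or edge) of a $Y$--clasper through a band produces a degree-two tree clasper with \emph{four} leaves (copies of $A_1$, $A_2$, $A_3$ together with the meridian), which does not lie in the monoid $\mathcal{C}$ and has no $\Phi$--image, so the wedge computation cannot be applied to it directly. The paper sidesteps this entirely by proving the lemma for basic claspers, for which the correction of a single crossing change \emph{is} a $Y$--clasper; that is all that is needed where the lemma is applied (gathering the basic claspers that realize the null-twists). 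To make your version work you would either restrict to basic claspers in the same way, or carry out the leaf-splitting of the transported leaf into $A_3$ and meridian pieces via Move 8 and the box calculus and verify that no four-leaved remainder survives --- which is exactly the verification you defer, and which is the entire content of the proof once the wedge identity is in hand.
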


\begin{proof}
Write the null-twist, between bands $B_1,\ldots,B_k$ coloured $a_1,\ldots,a_k$ correspondingly, in terms of surgery on basic claspers.
Let $C\ass A_1\cup A_2\cup E$  be a basic clasper such that $A_2$ clasps $B_1$ and $A_1$ clasps a band $B_0$ with colour $a_0$.
Moving $A_2$ past a null-twist entails performing one $\Delta_1$-move for each clasper coming from the null-twist which clasps $B_1$. Each $\Delta$-move is realized by inserting a $Y$--clasper. The collective contribution of these $Y$--claspers to $\Phi$ is
\begin{equation}a_0\wedge a_1\wedge \sum_{i=2}^k a_i= -a_0\wedge a_1\wedge a_1 =0.\end{equation}
\end{proof}

\begin{figure}
\begin{minipage}{115pt}
\psfrag{c}[c]{\footnotesize$a_0$}\psfrag{d}[c]{\footnotesize$a_1$}\psfrag{e}[c]{\footnotesize$a_2$}\psfrag{f}[c]{\footnotesize$a_k$}
\psfrag{p}[c]{\fs$2\pi$ twist}
\includegraphics[width=100pt]{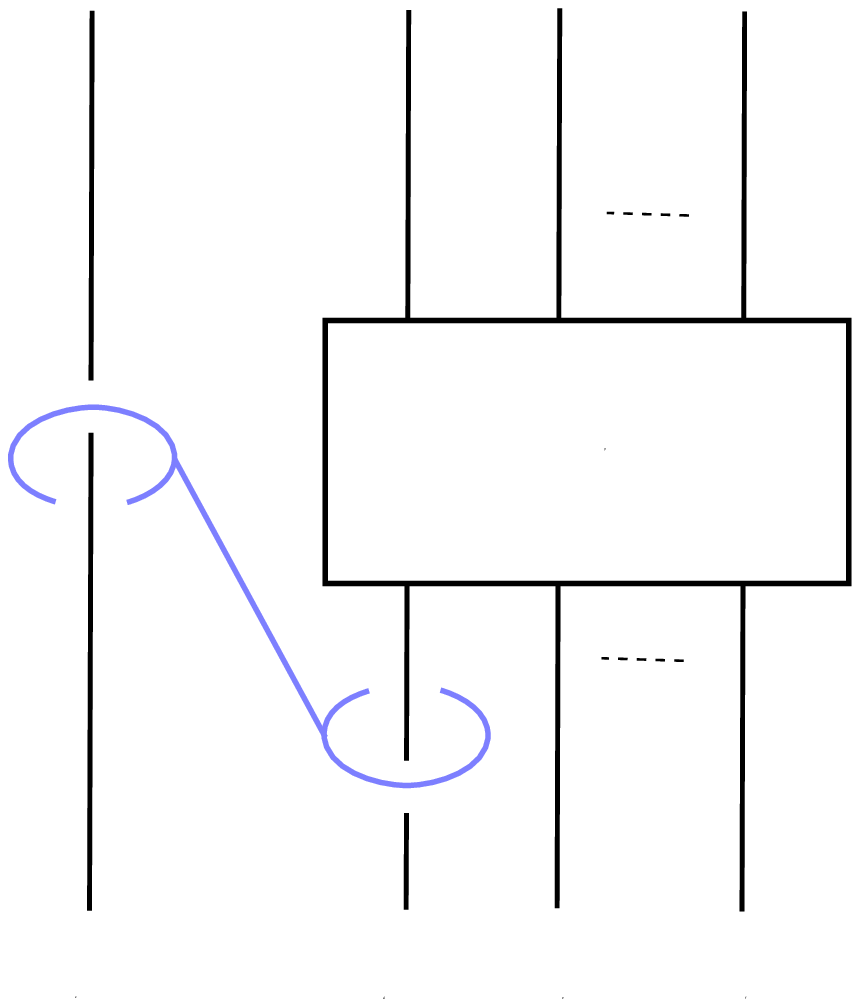}
\end{minipage}
 $\overset{\raisebox{2pt}{\scalebox{0.8}{\text{surgery}}}}{\Longleftrightarrow}$\quad \ \
\begin{minipage}{115pt}
\psfrag{c}[c]{\footnotesize$a_0$}\psfrag{d}[c]{\footnotesize$a_1$}\psfrag{e}[c]{\footnotesize$a_2$}\psfrag{f}[c]{\footnotesize$a_k$}
\psfrag{p}[c]{\fs$2\pi$ twist}
\includegraphics[width=100pt]{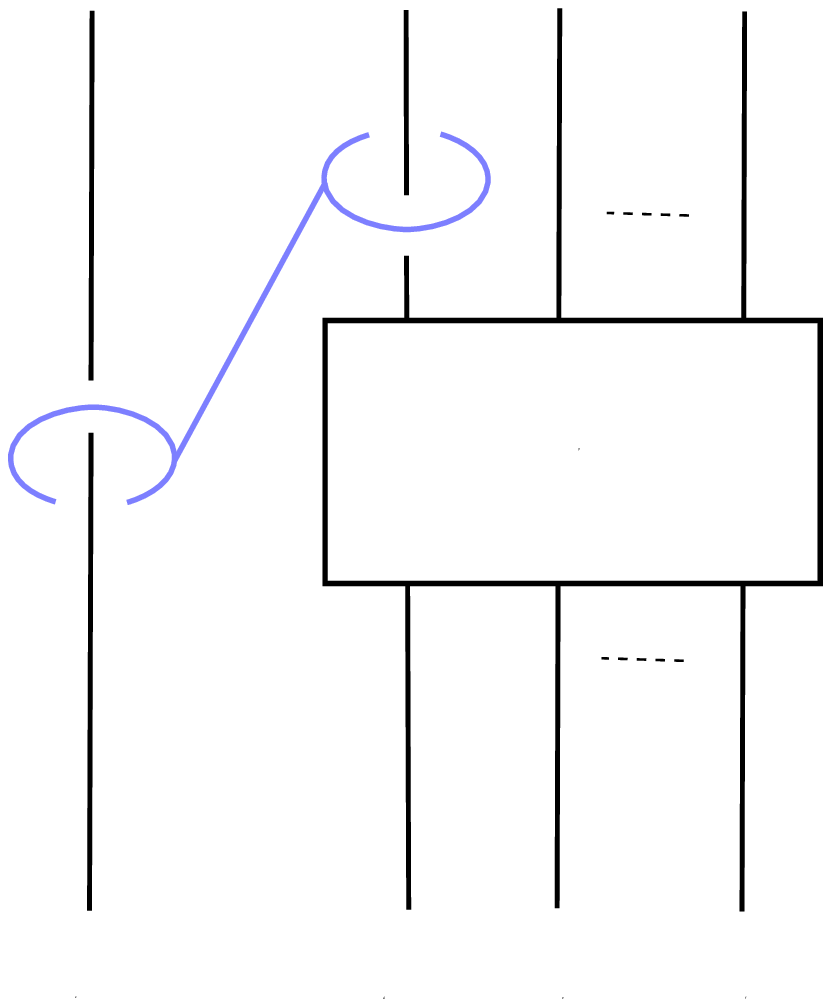}
\end{minipage}
\caption{\label{F:clasprho} A move is realized by inserting $Y$--claspers in $\ker\Phi$.}
\end{figure}

\subsubsection{Eliminate null-twists}

Having carried out the preceding steps, we arrive at a presentation of $(F_2,\bar\rho_2)$ in $(F_1,\bar\rho_1)$ by a collection of null-twists in a local picture which is a trivial braid between bands coloured by elements of $\bar{\mathcal{B}}\subset A$. The result of these null-twists is a braid in which every pair of bands has linking number zero. Our goal is to show that this braid is trivialized by $Y_0$-moves.\par

For a null-twist between bands $B_1,\ldots,B_k$ coloured $a_1,\ldots,a_k\subset \bar{\mathcal{B}}$ correspondingly, there exists a partition $P$ of  $\set{1,\ldots,k}$ such that for each $S\subseteq P$, both the sum $\sum_{i\in S}a_i$ vanishes, and also $a_i=\pm a_j$ for all $a_i,a_j\in S$. If for some null-twist $T$ this partition has more than $2$ parts, separate $T$ into smaller null-twists whose corresponding partitions have fewer parts, as in Figure \ref{F:threadsep}.

\begin{figure}
\psfrag{d}[c]{\footnotesize$S_1$}\psfrag{e}[c]{\footnotesize$S_2$}\psfrag{f}[c]{\footnotesize$S_3$}
\begin{minipage}{70pt}
\psfrag{p}[c]{\fs$2\pi$ twist}
\includegraphics[height=130pt]{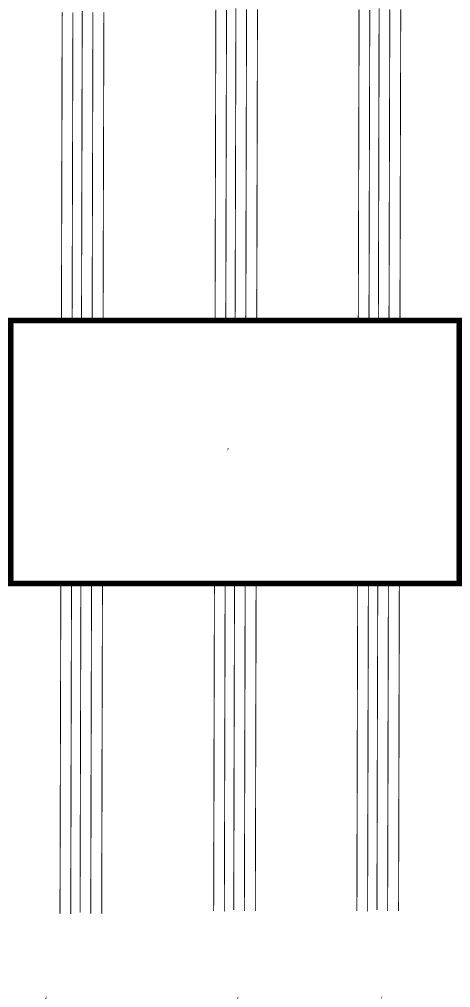}
\end{minipage}
 $\overset{\raisebox{2pt}{\scalebox{0.8}{\text{isotopy}}}}{\Longleftrightarrow}$\quad
\begin{minipage}{70pt}
\psfrag{1}[c]{\tiny$2\pi$}\psfrag{2}[c]{\tiny$2\pi$}\psfrag{3}[c]{\tiny$2\pi$}
\psfrag{a}[c]{\footnotesize$2\pi$}\psfrag{b}[c]{\footnotesize$2\pi$}\psfrag{c}[c]{\footnotesize$2\pi$}
\psfrag{p}[c]{\fs$2\pi$ twist}
\includegraphics[height=130pt]{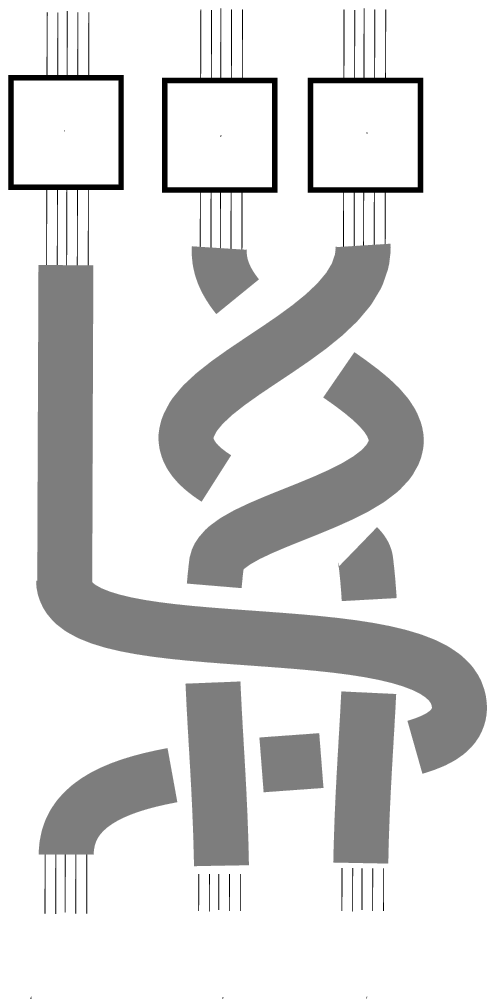}
\end{minipage}
 $\overset{\raisebox{2pt}{\scalebox{0.8}{\text{surgery}}}}{\Longleftrightarrow}$\quad\ \
\begin{minipage}{70pt}
\psfrag{1}[c]{\tiny$-2\pi$}\psfrag{2}[c]{\tiny$-2\pi$}\psfrag{3}[c]{\tiny$-2\pi$}
\psfrag{a}[c]{\footnotesize$2\pi$}\psfrag{b}[c]{\footnotesize$2\pi$}\psfrag{c}[c]{\footnotesize$2\pi$}
\psfrag{p}[c]{\fs$2\pi$ twist}
\includegraphics[height=130pt]{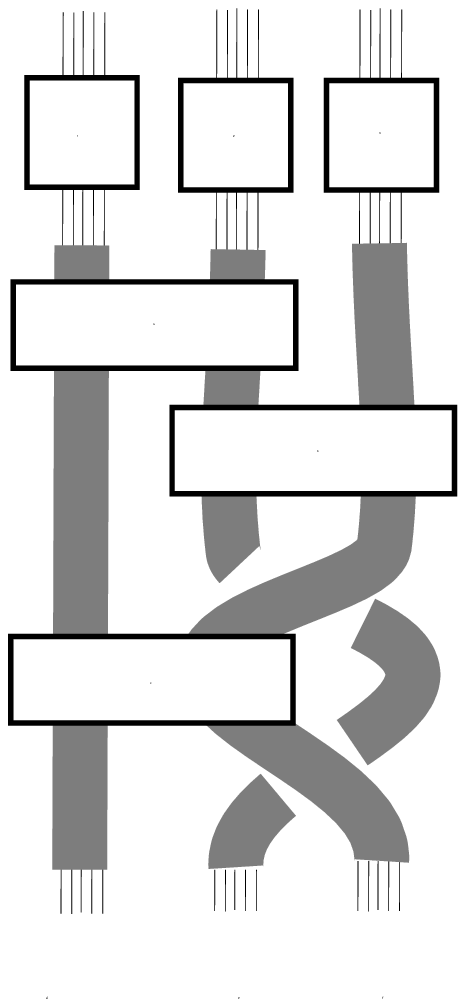}
\end{minipage}
\caption{\label{F:threadsep} Gather bands going through $T$ into bunches according to the partition $P$. Perform $Y_0$-moves between the bunches.}
\end{figure}

Choose a pair of basis elements $a,b\in\mathcal{B}\cup\{0\}$. Using Lemma \ref{L:clasprho} and the fact that partitions corresponding to null-twists now have at most two parts, perform $Y_0$-moves to create a smaller local picture in which all null-twists are between bands labeled $\pm a$ and bands labeled $\pm b$. Because the wedge of any triple in $\set{\pm a,\pm b}$ vanishes, any $Y$--clasper which we insert in this local picture will be in $\ker\Phi$. Due to the vanishing of all linking numbers between bands, by Murakami--Nakanishi all crossings between such bands cancel up to $Y_0$-moves \cite{MN89}. Repeat for each pair of basis elements, until all null-twists are between bands which share the same colour. These cancel up to $Y_0$-moves, because the wedge of any triple in $\set{a,-a}$ is zero.

\subsection{Local Moves realized by null-twists}

To prove Theorem \ref{T:clasperprop}, it remains to show that any $Y_0$-move is realized by a null-twist. We adopt the typical clasper strategy of first identifying moves between $Y$--claspers which are realized by null-twists, and then proving that these suffice to realize any $Y_0$-move.\par

\begin{lem}\label{L:(0,a,b)}
\[\lblY{0}{a}{b}{27.5}\sim_{\bar\rho}\ 0.\]
\end{lem}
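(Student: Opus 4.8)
The plan is to realise surgery on a $Y$--clasper coloured $(0,a,b)$ as a finite sequence of null--twists, isotopies, and tube moves on the underlying Seifert surface, so that inserting or deleting such a clasper carries an $A$--coloured Seifert surface to a $\bar\rho$--equivalent one. First I would reduce to a band--projection picture: interpreting the clasper surgery via Figure \ref{F:YClasper} and the equivalent descriptions of the $\Delta$--move, and using Habiro's twelve moves together with the unite--box move, I may assume the three leaves hook three bands $B_1,B_2,B_3$ of a band projection of $F$, with corresponding colouring--vector entries $0,a,b$, and (after sliding the leaf adjacent to it) that the leaf of colour $0$ hooks a single band $B_1$ with $\bar\rho(\xi_1)=0$. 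Surgery on the clasper is then precisely the $\Delta$--move, i.e.\ the Borromean move, among $B_1,B_2,B_3$.

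The heart of the argument is to realise this $\Delta$--move by null--twists, and this is exactly where the vanishing colour of $B_1$ enters. In the tubed picture a band enters and leaves any twisting region in an oppositely oriented pair, so a twist move whose encircled strands consist of even, cancelling passes of $B_2$ and of $B_3$ together with arbitrarily many passes of $B_1$ has vanishing reading word (the passes of $B_2$ and of $B_3$ contribute $a-a$ and $b-b$, while each pass of $B_1$ contributes $0$), hence is a genuine null--twist. I would therefore rewrite the Borromean surgery among $B_1,B_2,B_3$, using Habiro's moves and handle--slides in the trace of the surgery, as surgery on a family of $\pm1$--framed unknots each ringing exactly such a collection of strands, weaving the weightless band $B_1$ through freely at the places where a non--cancelling colour would otherwise obstruct the vanishing condition. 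Carrying out these null--twists and tidying the clasper edges with the box and unite--box moves produces the required $\bar\rho$--equivalence.

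The step I expect to be the main obstacle is the explicit clasper--calculus choreography behind the second paragraph: one must massage the $(0,a,b)$--clasper into a presentation built entirely out of encircling unknots with vanishing reading words, and then check that every residual clasper surgery thrown off along the way either has two leaves hooking $B_1$ --- so that it is again coloured with a repeated $0$ and is removable by the very same device --- or involves strictly fewer bands, so that the argument terminates by induction and makes no circular appeal to Lemma \ref{L:(a,a,b)}. By contrast, checking that each encircling unknot is a null--twist, and the concluding isotopies and tube moves, should be routine.
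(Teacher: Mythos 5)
Your proposal is correct and is essentially the paper's argument: the proof in the text realizes the $\Delta_2$ (Borromean band) move directly by an explicit sequence of ambient isotopies and null-twists, the whole point being that the arc coloured $0\in A$ can be encircled with arbitrary multiplicity (together with algebraically cancelling passes of the other two bands) while keeping the encircling curve in $\ker\bar\rho$ --- exactly the ``weightless band'' observation at the heart of your second paragraph. The only difference is that the paper's picture sequence is direct and throws off no residual claspers, so the termination/induction worries you flag in your last paragraph, and the preliminary reduction to single-band leaves, do not arise.
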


\begin{proof}
Realize the $\Delta_2$ move by the following sequence of ambient isotopy and null-twists (the dotted arc is labeled $0\in A$).
\begin{equation}
\psfrag{a}[c]{$a$}\psfrag{b}[c]{$b$}\psfrag{c}[c]{$0$}
\begin{minipage}{61pt}
\includegraphics[width=61pt]{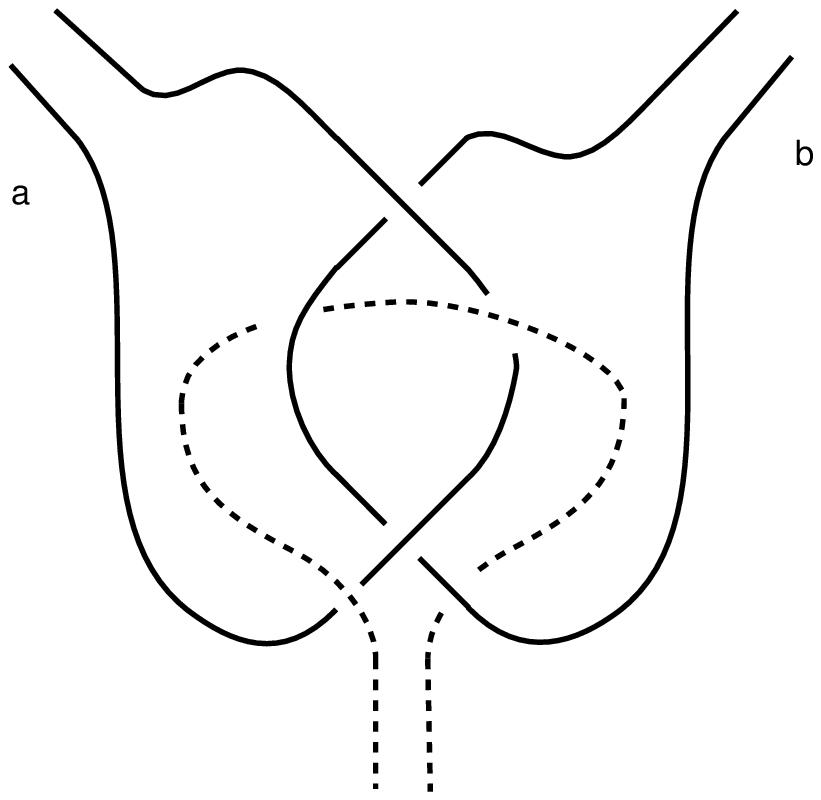}
\end{minipage}\ \ \Leftrightarrow\ \
\begin{minipage}{61pt}
\includegraphics[width=61pt]{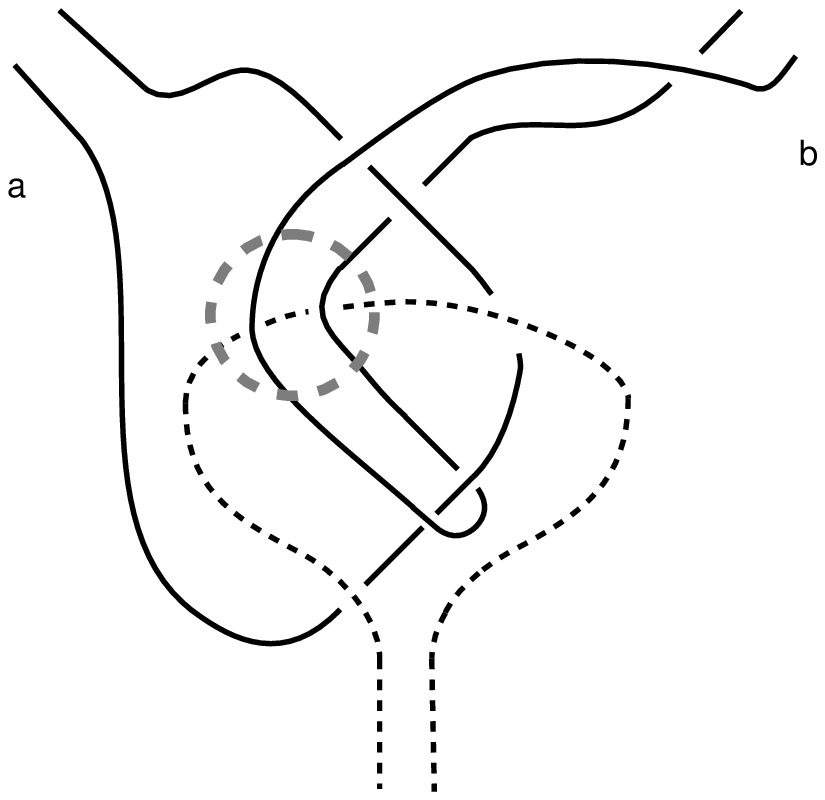}
\end{minipage}\ \ \Leftrightarrow\ \ \
\begin{minipage}{61pt}
\includegraphics[width=61pt]{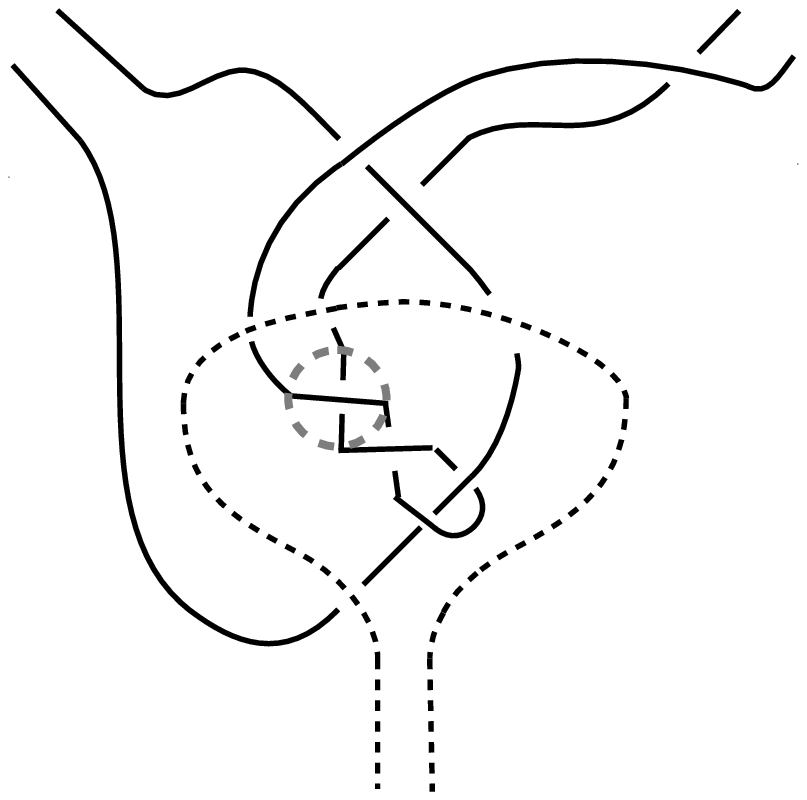}
\end{minipage}\ \ \Leftrightarrow\ \ \
\begin{minipage}{61pt}
\includegraphics[width=61pt]{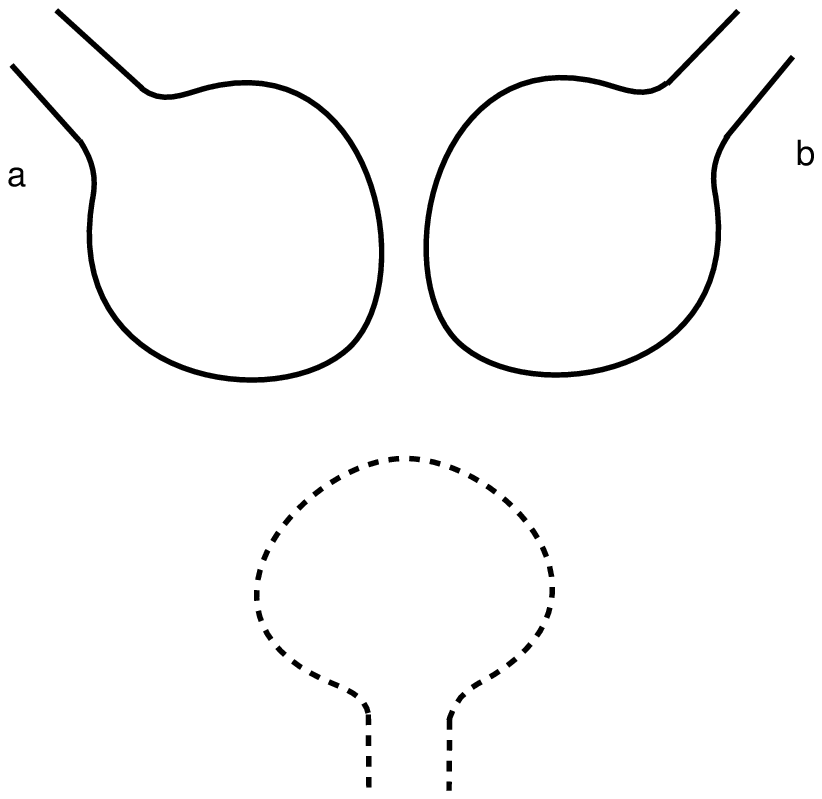}
\end{minipage}
\end{equation}
\end{proof}

\begin{lem}\label{L:(a,a,b)}
Setting $\bar{a}\ass a^{-1}$, we have $\lblY{\bar a}{a}{b}{30}\sim_{\bar\rho}\ 0$ and also $\lblY{a}{a}{b}{27.5}\sim_{\bar\rho}\ 0$.
\end{lem}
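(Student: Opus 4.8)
The plan is to derive both relations from Lemma \ref{L:(0,a,b)} together with the basic fusion/linearity moves available in the clasper calculus of Section \ref{SS:clasper-review}. The key structural fact I will exploit is that, in $\mathcal{C}/\!\sim_{\bar\rho}$, a leaf coloured by a sum $a+b$ can be split into two parallel leaves coloured $a$ and $b$ respectively (this is the standard leaf-splitting move, realized by a null-twist because the pushed-off parallel copy rings the same bands; it corresponds on the $\Phi$-side to the identity $(a+b)\wedge x\wedge y = a\wedge x\wedge y + b\wedge x\wedge y$). Also I will use that inserting a half-twist in an edge inverts the adjacent leaf colour, so $\lblY{\bar a}{a}{b}{30}$ and $\lblY{a}{a}{b}{27.5}$ differ only by such a half-twist and it suffices to handle one of them.

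First I would prove $\lblY{a}{a}{b}{27.5}\sim_{\bar\rho} 0$. Write the first two leaves, both coloured $a$, and use leaf-splitting in reverse: a $Y$--clasper with leaves coloured $a$, $a$, $b$ is $\bar\rho$--equivalent (after the half-twist normalization) to one whose first leaf is coloured $a + (-a) = 0$ fused appropriately — more precisely, take the clasper $\lblY{a}{a}{b}{27.5}$, split the second leaf into two parallel leaves coloured $a$ and (after a half twist on a short edge) $-a$, producing a tree clasper which by Habiro's moves reorganizes into a sum $\lblY{a}{a}{b}{27.5} + \lblY{a}{-a}{b}{27.5}$ versus $\lblY{a}{0}{b}{27.5}$. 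Since $\lblY{a}{0}{b}{27.5}\sim_{\bar\rho} 0$ by Lemma \ref{L:(0,a,b)}, we get $\lblY{a}{a}{b}{27.5}\sim_{\bar\rho} -\lblY{a}{-a}{b}{27.5} = -\lblY{\bar a}{a}{b}{30}$. A parallel argument splitting the \emph{first} leaf instead gives $\lblY{\bar a}{a}{b}{30}\sim_{\bar\rho} -\lblY{a}{a}{b}{27.5}$; but also inserting a half-twist directly shows $\lblY{\bar a}{a}{b}{30}\sim_{\bar\rho}$ the image of $\lblY{a}{a}{b}{27.5}$ under that half-twist, which flips the sign. Combining these two relations forces $2\,\lblY{a}{a}{b}{27.5}\sim_{\bar\rho} 0$; to kill the remaining $2$-torsion I would instead argue directly on $\Phi$: since $\hat\Phi$ is an isomorphism by Proposition \ref{P:Y-barrho}, it suffices to check $\Phi\bigl(\lblY{a}{a}{b}{27.5}\bigr)= a\wedge a\wedge b = 0$ in $\bigwedge^3 A$, which is immediate, and then lift back. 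Actually the cleanest route, now that Proposition \ref{P:Y-barrho} is available, is to observe that $\hat\Phi$ is injective, so \emph{any} $Y$--clasper with two equal (or opposite) leaf-colours lies in $\ker\Phi$ and hence is $\sim_{\bar\rho} 0$; the leaf-splitting manipulation above is only needed to confirm that the relevant clasper indeed defines a class in $\mathcal{C}/\!\sim_{\bar\rho}$ to which $\hat\Phi$ applies, which it does since $\sim_{\bar\rho}$ is by then known to be an equivalence relation making $\mathcal{C}/\!\sim_{\bar\rho}$ a group.

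The main obstacle I anticipate is bookkeeping the signs and orientations correctly when splitting leaves and inserting half-twists: the counterclockwise orientation convention on trivalent vertices and leaves means each reorganization via Habiro's twelve moves (plus the unite-box move) must be tracked carefully, and it is easy to drop or duplicate a half-twist. A secondary subtlety is making sure that the parallel push-off used in leaf-splitting genuinely has the same $\bar\rho$--image — this is where surjectivity of $\bar\rho$ and the fact that the relevant linking loop lies in $\ker\bar\rho$ (as used already in the stabilization argument of Section \ref{SS:Y-nulltwist}) get invoked. Once Proposition \ref{P:Y-barrho} is in hand, though, the conceptual content collapses to $a\wedge a\wedge b = 0$, and the lemma is really a corollary of the isomorphism $\hat\Phi$; I would present it that way to keep the argument short, relegating the explicit null-twist realization to a remark or to the figures accompanying Lemma \ref{L:(0,a,b)}.
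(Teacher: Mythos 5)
Your proposal is circular. Both tools you lean on are established only later in Section \ref{S:ClasperProof}, and their proofs depend on the very lemma at hand. The leaf-splitting/linearity relation you invoke is Lemma \ref{L:Ydistribute}, whose proof goes through Lemma \ref{L:3wedge}; and the injectivity of $\hat\Phi$ in Proposition \ref{P:Y-barrho} is likewise proved via Lemma \ref{L:3wedge}. But the final step of the proof of Lemma \ref{L:3wedge} explicitly cites Lemma \ref{L:(a,a,b)} to cancel the remaining claspers whose leaf colours all lie in an independent set of cardinality at most $2$. So you cannot use either Lemma \ref{L:Ydistribute} or Proposition \ref{P:Y-barrho} here: this lemma is one of the base cases on which that machinery is built, and ``the conceptual content collapses to $a\wedge a\wedge b=0$'' is the motivation for the lemma, not a proof of it at this stage. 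A secondary problem is that even if the linearity move were available, your manipulation only yields $2\,\lblY{a}{a}{b}{27.5}\sim_{\bar\rho}0$, which is inconclusive precisely in the cases of interest (e.g.\ $A=(\mathds{Z}/2\mathds{Z})^2$ for $A_4$, where everything is $2$-torsion), and your proposed fix for the $2$-torsion is again the circular appeal to $\hat\Phi$.

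The paper's proof is instead direct and self-contained, exactly parallel to that of Lemma \ref{L:(0,a,b)}: one realizes the $\Delta_2$-move between the three bands by an explicit sequence of ambient isotopies and null-twists. The point is that an unknotted circle ringing the two bands coloured $a$ and $\bar a$ (or $a$ and $a$, with the appropriate orientations) has vanishing colour word $g_1g_2^{-1}=1$, so the $2\pi$-twist needed to create or undo the Borromean clasping of those two bands with the third is a legitimate null-twist; in Lemma \ref{L:(0,a,b)} the band coloured $0$ played that role on its own. If you want to salvage your outline, you must replace the appeals to linearity and to $\hat\Phi$ by such a hands-on null-twist realization.
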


\begin{proof}
Realize the $\Delta_2$ move by the following sequence of ambient isotopy and null-twists.
\begin{equation}
\psfrag{a}[r]{$\pm a$}\psfrag{b}[c]{$a$}\psfrag{c}[c]{$b$}
\begin{minipage}{61pt}
\includegraphics[width=61pt]{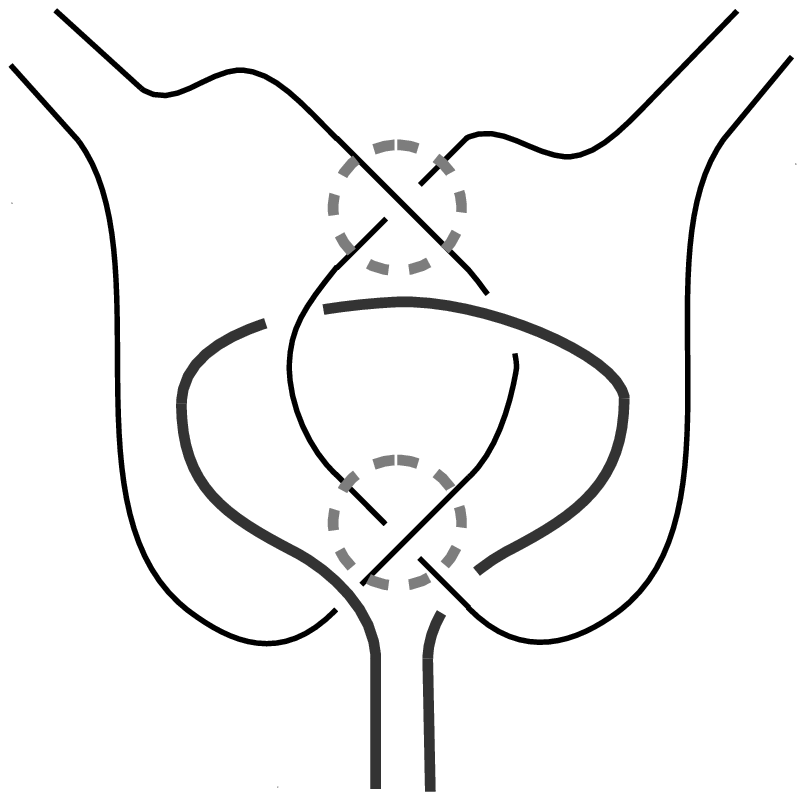}
\end{minipage}\ \ \Leftrightarrow\quad\quad
\begin{minipage}{61pt}
\includegraphics[width=61pt]{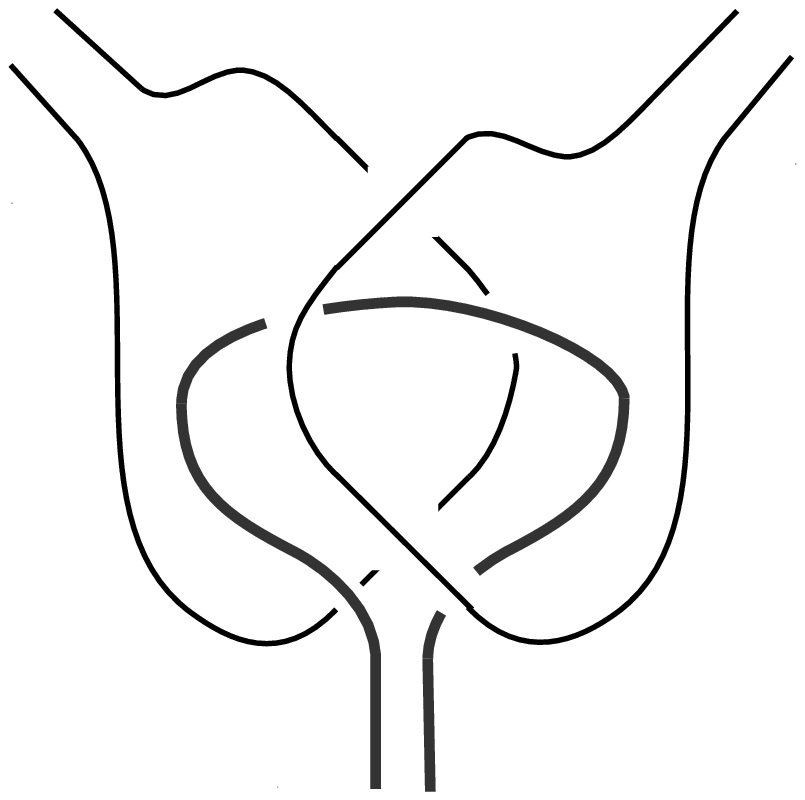}
\end{minipage}\ \ \Leftrightarrow\quad\quad\ \begin{minipage}{61pt}
\includegraphics[width=61pt]{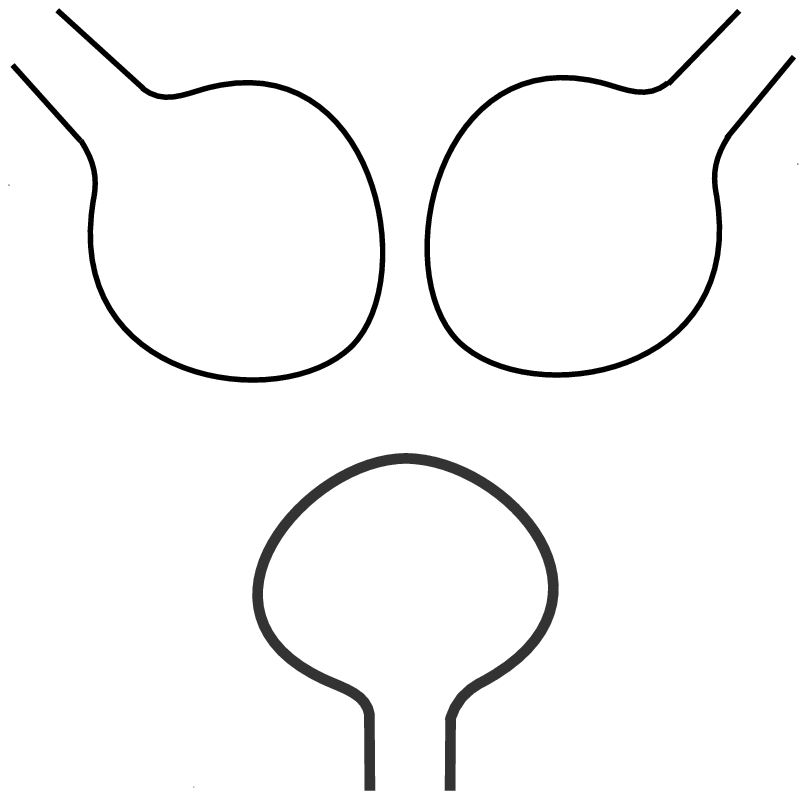}
\end{minipage}
\end{equation}
\end{proof}

\begin{lem}\label{L:clasp-pass}
The results of surgeries around the following two claspers in the complement of an $A$--coloured Seifert surface are $\bar\rho$--equivalent.

\begin{equation}
\begin{minipage}{130pt}
\includegraphics[width=130pt]{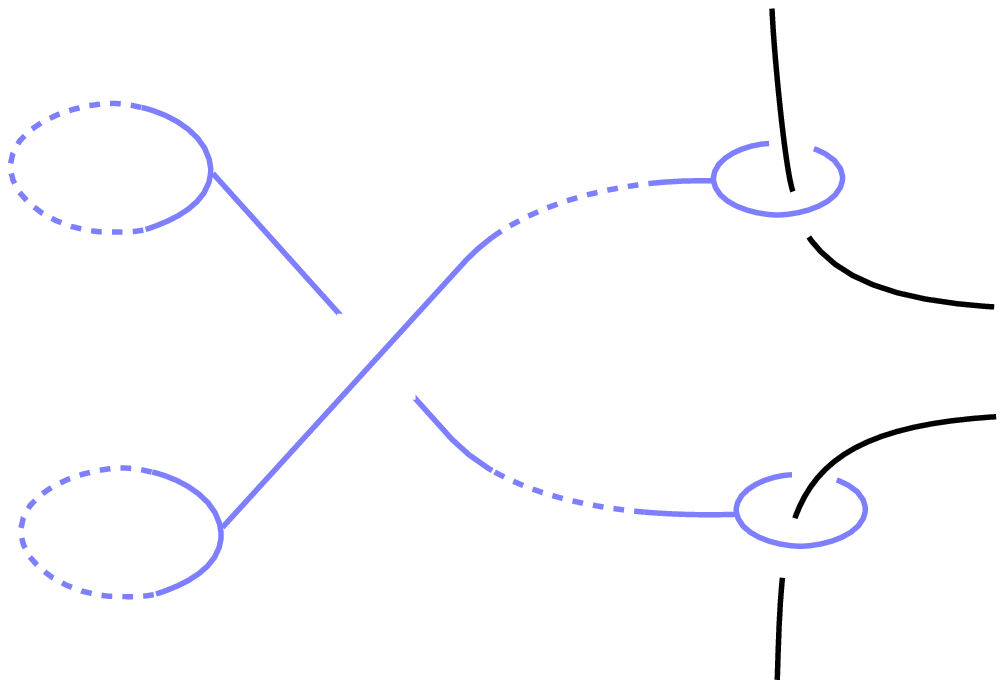}
\end{minipage}\quad\overset{\raisebox{2pt}{\scalebox{0.8}{\text{clasp-pass}}}}{\Longleftrightarrow}\quad
\begin{minipage}{130pt}
\includegraphics[width=130pt]{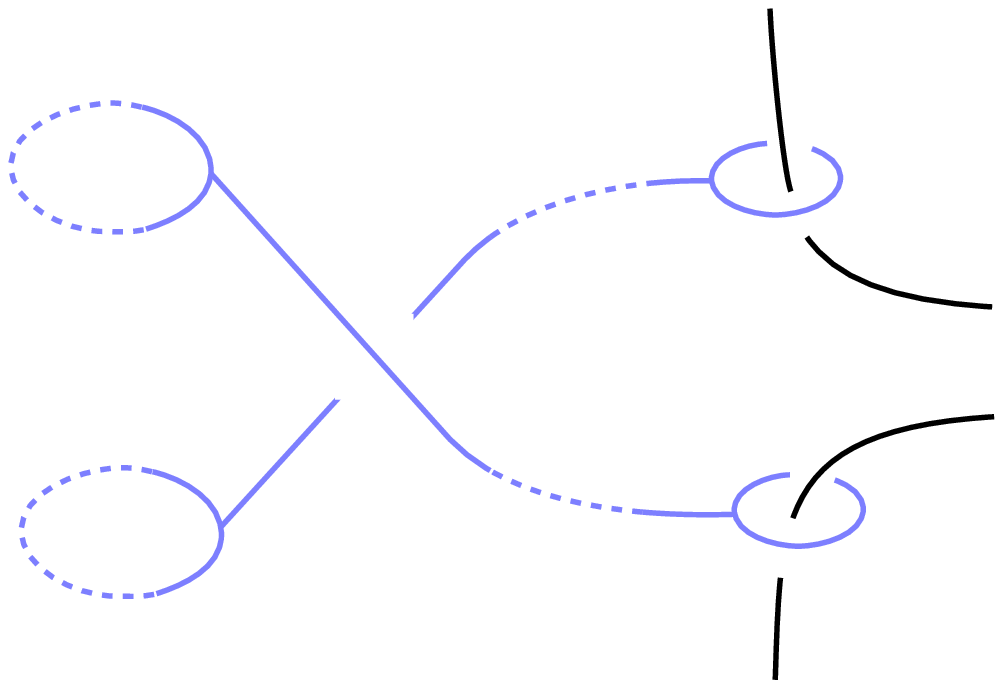}
\end{minipage}
\end{equation}
This is Habiro's \emph{clasp-pass move}.
\end{lem}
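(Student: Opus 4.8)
The plan is to derive the clasp-pass move from Lemma \ref{L:(a,a,b)} by showing that the two surgeries in the statement differ by surgery on a $Y$-clasper (or a finite collection of such) whose leaf colours are degenerate in the sense of that lemma. First I would put the two configurations into a standard form: expand the clasp in each picture into a union of basic claspers by the recipe of Figure \ref{F:Habiro1}, so that both configurations become surgeries on zero-framed unknots linked in Hopf pairs, together with clasper edges. The two configurations then differ only by the relative position of one leaf, which is exactly the input data of a clasp-pass move. Sliding that leaf past the obstructing strand using Habiro's twelve moves and the unite-box move of Figure \ref{F:unitebox}, the discrepancy is concentrated --- up to ambient isotopy and changes of edge framing, neither of which affects $\bar\rho$ --- in surgery on a $Y$-clasper; this is the standard clasper-calculus derivation of the clasp-pass move (cf. \cite{MN89,Hab00}).

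The second step is to read off the colours of the leaves of that $Y$-clasper. The feature distinguishing a clasp-pass move from a general $\Delta$-move is that two of its three leaves are parallel meridians of one and the same strand. If that strand is a strand of a band $B$ of the chosen band projection of $F$, with $\bar\rho$-colour $a$, then those two leaves carry colours $a,a$ (or $a,\bar a$, depending on how the clasp is resolved, since a half-twist in the adjacent edge negates a leaf's colour), while the third leaf carries some colour $b\in A$; if instead the clasped strand meets $F$ trivially then $a=0$. Hence the defect $Y$-clasper lies in $\lblY{a}{a}{b}{27.5}$ or $\lblY{\bar a}{a}{b}{30}$, which are $\sim_{\bar\rho}0$ by Lemma \ref{L:(a,a,b)}, and in the degenerate case in $\lblY{0}{a}{b}{27.5}$, which is $\sim_{\bar\rho}0$ by Lemma \ref{L:(0,a,b)}. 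Therefore the two surgeries give $\bar\rho$-equivalent $A$-coloured Seifert surfaces, which is the assertion.

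The step I expect to be the main obstacle is the clasper bookkeeping. The whole derivation has to be carried out inside $E(F)$ so that every intermediate clasper has well-defined $\bar\rho$-colours; one has to track half-twists --- equivalently the signs of the leaf colours --- through each Habiro move; and, most importantly, one must verify that the defect really does stay in $Y$-claspers of the special forms above, i.e. that two of the three leaves remain parallel meridians of a common strand throughout, rather than degenerating into a general $Y$-clasper with three independent colours, which the previous lemmas would not control. Tracking the homology classes of the leaves in $H_1(E(F))$ throughout the derivation is a convenient way to keep this under control.
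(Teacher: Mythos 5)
Your argument is sound, but it takes a genuinely different and more roundabout route than the paper. The paper's proof is three pictures: expand both claspers into their surgery presentations as in Figure \ref{F:Habiro1}; observe that the two surgery links then differ by passing one clasp through the other, i.e. by passing a pair of antiparallel strands (coloured $g$ and $g^{-1}$) through another such pair; realize that pass directly by a single null-twist, since the word read off the encircled strands has the form $g_1g_1^{-1}g_2g_2^{-1}$ and so vanishes no matter what the colours are; and reassemble the claspers. You instead factor the defect through $\Delta$--moves, identify the resulting $Y$--claspers as having two leaves that are meridians of the two antiparallel halves of a clasp (hence coloured $a$ and $\bar a$, or $0$ in the degenerate case), and invoke Lemmas \ref{L:(0,a,b)} and \ref{L:(a,a,b)}. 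This is legitimate --- those lemmas precede this one and are proved independently of it --- and your identification of the degenerate colours is correct: passing a single strand through a clasp is a $\Delta_4$--move whose associated $Y$--clasper has leaves on that strand and on the two halves of the clasp, so its colour triple is $(c,a,\bar a)$. What the detour costs you is exactly the bookkeeping burden you flag; what the paper's route buys is that the colour-independence is visible immediately, with no tracking of leaf classes in $H_1(E(F))$ required.

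One caution: do not discharge ``changes of edge framing'' casually. A full twist in an edge genuinely changes the result of clasper surgery, and the statement that this change is realized by a null-twist is Corollary \ref{C:ClasperFramingReduce}, which the paper deduces \emph{from} the present lemma; appealing to it here would be circular. (Half-twists are harmless, as they merely invert the colour of the adjacent leaf.) The safe way to run your argument is to do the decomposition entirely at the level of the surgery link --- where the clasp-pass defect is a band-pass between two antiparallel pairs and no clasper edges appear --- rather than by shuffling leaves with Habiro's moves.
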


\begin{proof}
\begin{multline}
\begin{minipage}{100pt}
\includegraphics[width=100pt]{clasppass-1rn}
\end{minipage}\quad\overset{\raisebox{2pt}{\scalebox{0.8}{\text{surgery}}}}{\Longleftrightarrow}\quad
\begin{minipage}{100pt}
\includegraphics[width=100pt]{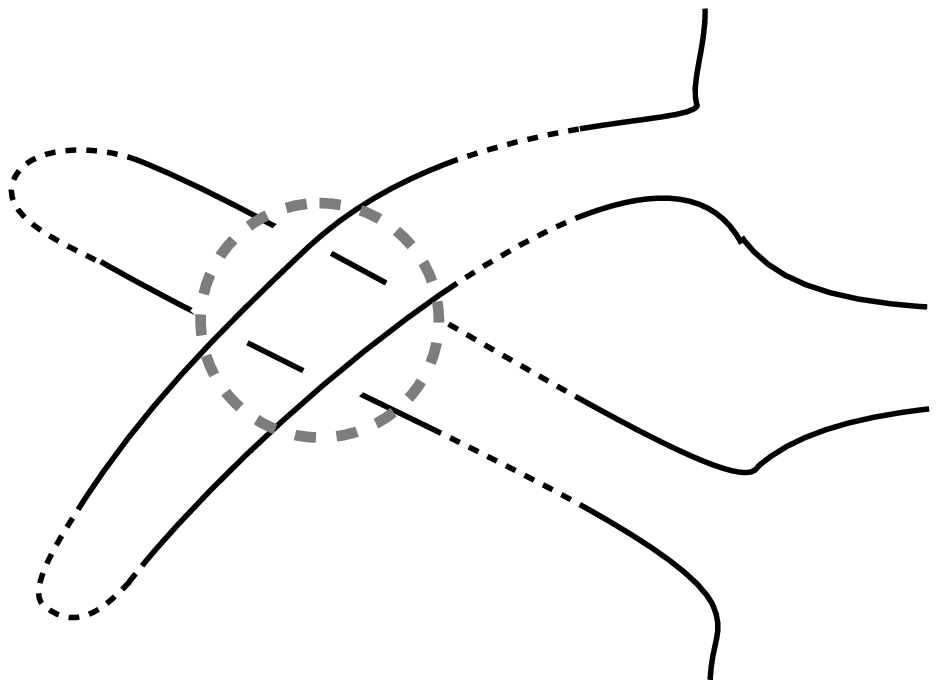}
\end{minipage}\\
\overset{\raisebox{2pt}{\scalebox{0.8}{\text{null-twist}}}}{\Longleftrightarrow}\quad
\begin{minipage}{100pt}
\includegraphics[width=100pt]{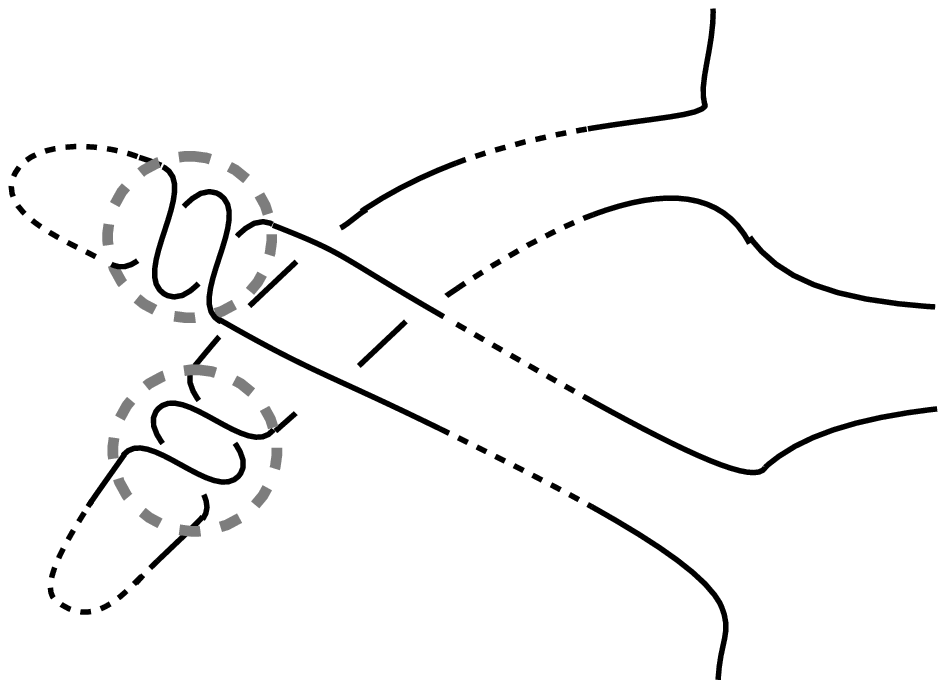}
\end{minipage}\quad
\overset{\raisebox{2pt}{\scalebox{0.8}{\text{surgery}}}}{\Longleftrightarrow}\quad
\begin{minipage}{100pt}
\includegraphics[width=100pt]{clasppass-fn}
\end{minipage}
\end{multline}
\end{proof}

\begin{cor}\label{C:ClasperFramingReduce}
A full-twist in an edge of a clasper is realized by a null-twist.
\end{cor}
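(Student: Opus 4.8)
The plan is to deduce Corollary \ref{C:ClasperFramingReduce} directly from the clasp-pass move of Lemma \ref{L:clasp-pass}. Let $C$ be a clasper in the complement of an $A$--coloured Seifert surface, let $E$ be a chosen edge of $C$, and let $C'$ be obtained from $C$ by inserting one full twist into $E$ (note that since edges carry half-integer framings, a full twist leaves us in the allowed class, merely changing the edge framing by $\pm 1$). I want to show that surgery around $C$ and surgery around $C'$ yield $\bar\rho$--equivalent $A$--coloured Seifert surface complements, with the equivalence realized by null-twists, so it suffices to treat one full twist on one edge.

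First I would pass to a local picture of the full twist. Using Habiro's surgery interpretation of a basic clasper (Figure \ref{F:Habiro1}) and the box notation (Figure \ref{F:Habiro2}), I would expand a regular neighbourhood of the twist in $E$: surgering the clasper structure on either side of $E$ turns $E$ into a band joining two families of strands, and a full twist in that band is precisely a clasp between two parallel push-offs of the band. Up to ambient isotopy this is exactly the left-hand side of the clasp-pass move displayed in Lemma \ref{L:clasp-pass}, while the corresponding picture for $C$ (no twist in $E$) is its right-hand side. Applying Lemma \ref{L:clasp-pass} then gives that the two are $\bar\rho$--equivalent, and inspecting its proof shows the equivalence is realized by a single null-twist together with ambient isotopy. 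Reassembling the local pictures yields the corollary.

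The hard part will be the bookkeeping in the reduction to the standard clasp-pass configuration. The band obtained from $E$ generally carries several strands and may be knotted, linked, or twisted elsewhere, and $E$ may be an interior edge of $C$ or adjacent to a leaf, so I would need to apply Habiro's moves (surgery, box, unzip, unite-box) to bring a neighbourhood of the twist into the \emph{exact} local form appearing in Lemma \ref{L:clasp-pass}, and to check that the null-twist realizing the clasp-pass is supported in a ball meeting the rest of the diagram only along that standard configuration. Once this is arranged, the argument is insensitive to the global picture, the colours of the leaves play no role (the clasp-pass move in Lemma \ref{L:clasp-pass} carries no colour hypothesis), and the opposite sign of twist is handled by the mirror-image version of the same argument.
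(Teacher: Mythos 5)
Your proposal is correct and is exactly the argument the paper intends: the paper offers no written proof, stating the result as an immediate consequence of Lemma \ref{L:clasp-pass}, and the intended reading is precisely yours --- after surgering the adjacent clasper structure the twisted edge becomes a band whose full twist is a self-clasp of its two parallel strands, which is removed by the clasp-pass move (hence by a null-twist), with colours and twist sign playing no role. The bookkeeping you flag is real but routine, and the paper itself suppresses it.
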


\begin{lem}\label{L:YPass}
The following local move is realized by null-twists.
$$
\begin{minipage}{40pt}
\includegraphics[height=35pt]{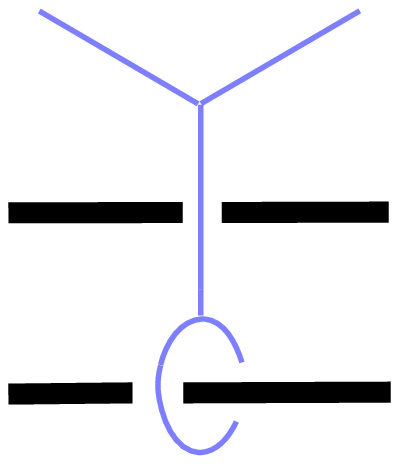}
\end{minipage}\Leftrightarrow\ \
\begin{minipage}{35pt}
\includegraphics[height=35pt]{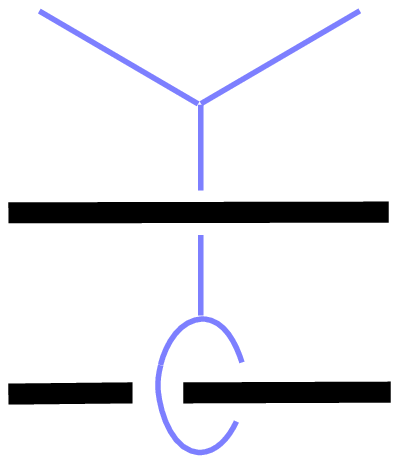}
\end{minipage}
$$
\end{lem}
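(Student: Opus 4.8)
The plan is to unfold the two $Y$--claspers into basic claspers, transform one local picture into the other by a sequence of Habiro's moves together with clasp--pass moves (Lemma \ref{L:clasp-pass}) and edge full--twists (Corollary \ref{C:ClasperFramingReduce}), all of which have already been shown to be realized by null-twists, and discard the $Y$--claspers created along the way by appealing to Lemmas \ref{L:(0,a,b)} and \ref{L:(a,a,b)}. Concretely: first I would replace each $Y$--clasper appearing in the two sides of the asserted equivalence by its equivalent collection of basic claspers via Figure \ref{F:YClasper}, and rewrite everything in box notation (Figure \ref{F:Habiro2}). The two sides then differ only by the position of a single leaf (resp. edge) relative to one strand, so it suffices to prove a local identity inside a ball.

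Next I would push that leaf across the strand using the unite--box move (Figure \ref{F:unitebox}) together with the standard leaf--sliding and box--splitting moves of \cite[Pages 14--15]{Hab00}. Each elementary crossing change of a leaf past an edge is paid for, by the clasp--pass move (Lemma \ref{L:clasp-pass}), by a correction $Y$--clasper, and any full twist thereby produced in an edge is eliminated by Corollary \ref{C:ClasperFramingReduce}. It then remains to check that every correction $Y$--clasper is removable by null-twists. Here I would track colours: a correction leaf produced in this process either rings the same band as one of the leaves of the $Y$--clasper being moved, so that the correction has two colours equal up to sign and hence lies in $\ker\Phi$ by Lemma \ref{L:(a,a,b)}, or else it rings a null-homologous loop, so that one of its colours is $0$ and it lies in $\ker\Phi$ by Lemma \ref{L:(0,a,b)}; in either case the correction is realized by null-twists. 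Assembling these steps exhibits the claimed move as a composite of ambient isotopies, Habiro moves, clasp--pass moves, framing reductions, and removals of $\ker\Phi$--claspers, each of which is realized by null-twists.

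The main obstacle I anticipate is precisely this colour bookkeeping: as the leaf is pushed across the strand I must verify, case by case, that no correction $Y$--clasper with three pairwise distinct nonzero colours — which would contribute nontrivially to $\Phi$ and so could not be removed by Lemmas \ref{L:(0,a,b)} and \ref{L:(a,a,b)} — is ever produced. This is a finite check once the sequence of Habiro moves has been fixed, and it is the analogue here of the argument in Lemma \ref{L:clasprho} that the collective $\Phi$--contribution of the new $Y$--claspers vanishes. Once it is carried out the lemma follows immediately.
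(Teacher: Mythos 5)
Your proposal is correct and follows essentially the same strategy as the paper: realize the move by Habiro moves at the cost of correction $Y$--claspers, then observe that each correction has a $0$--coloured leaf (a meridian of a clasper edge, null-homologous in $E(F)$) or two colours equal up to sign, so it is removable by Lemmas \ref{L:(0,a,b)} and \ref{L:(a,a,b)}. The paper's version is simply more economical — two applications of Habiro's Move 9 sandwiching a single deletion via Lemma \ref{L:(0,a,b)} — so the finite colour check you defer collapses to the one observation that the new leaf produced by Move 9 is a meridian of an edge and hence coloured $0$.
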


\begin{proof}
\begin{equation}
\begin{minipage}{30pt}
\includegraphics[height=30pt]{YPass-1}
\end{minipage}\overset{\raisebox{0.3pt}{\scalebox{0.6}{\text{Move 9}}}}{\Leftrightarrow}
\begin{minipage}{65pt}
\includegraphics[height=30pt]{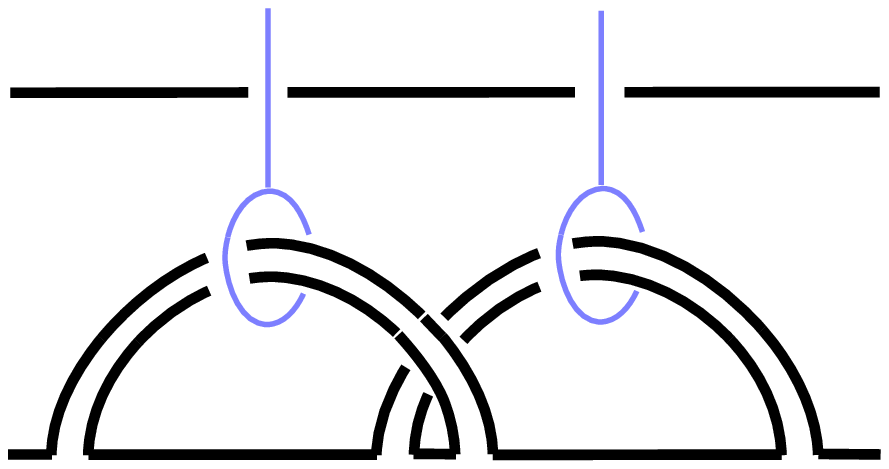}
\end{minipage}\overset{\raisebox{0.3pt}{\scalebox{0.6}{\text{Lemma {\ref{L:(0,a,b)}}}}}}{\Leftrightarrow}\
\begin{minipage}{65pt}
\includegraphics[height=30pt]{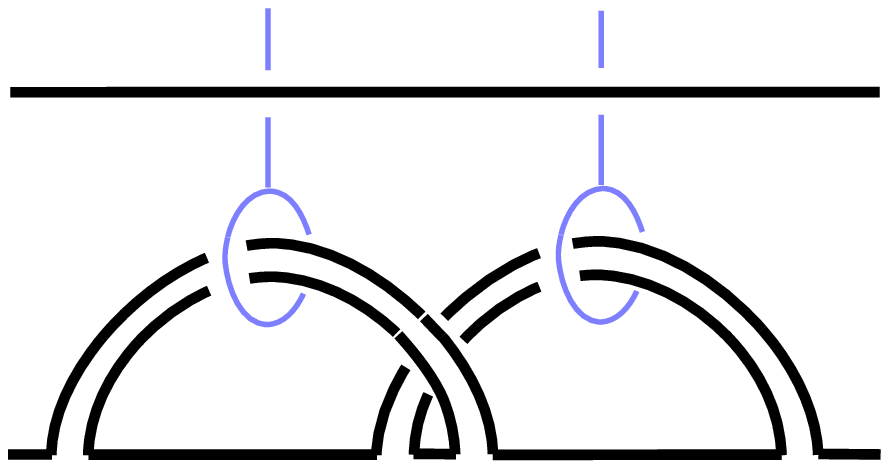}
\end{minipage}\overset{\raisebox{0.3pt}{\scalebox{0.6}{\text{Move 9}}}}{\Leftrightarrow}\
\begin{minipage}{30pt}
\includegraphics[height=30pt]{YPass-f}
\end{minipage}
\end{equation}
\end{proof}

\subsection{Leaves clasping single bands}

\subsubsection{The leaf-shepherd procedure}\label{SSS:Shepherd}

The goal of this section is to present an algorithm to generate the following output from the following input.

\begin{description}
\item[Input] A band projection of an $A$--coloured Seifert surface $(F,\bar\rho)$, together with a pair of claspers $C_{1,2}\subset E(F)$ with distinguished leaves $A^{1,2}$ each of which ring a single band, such that the colours of the bands which $C_{1,2}$ clasp are either mutually inverse or the same.
\item[Output] A clasper $C^\prime$ with distinguished leaf $A^\prime$ which clasps a single band in the same band projection of $(F,\bar\rho)$, related to $C_{1,2}$ as in Figure \ref{F:shepherd}.
\end{description}

\begin{figure}
\psfrag{1}[c]{$C_1$}\psfrag{2}[c]{$C_2$}
\begin{minipage}{75pt}
\psfrag{a}[c]{$A^1$}\psfrag{b}[c]{$A^2$}
\includegraphics[width=75pt]{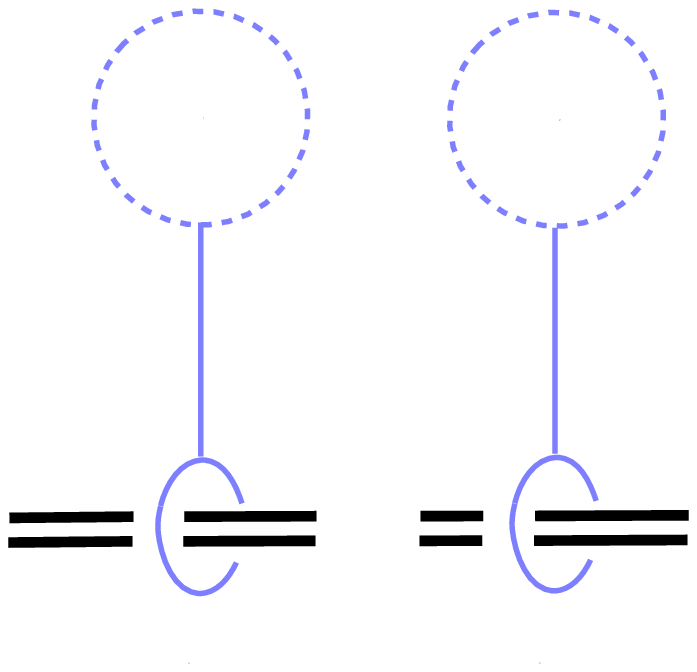}
\end{minipage}
\quad\quad\raisebox{10pt}{\begin{minipage}{23pt}\includegraphics[width=23pt]{fluffyarrow}\end{minipage}}\quad
\begin{minipage}{75pt}
\psfrag{a}[c]{$A^\prime$}
\includegraphics[width=75pt]{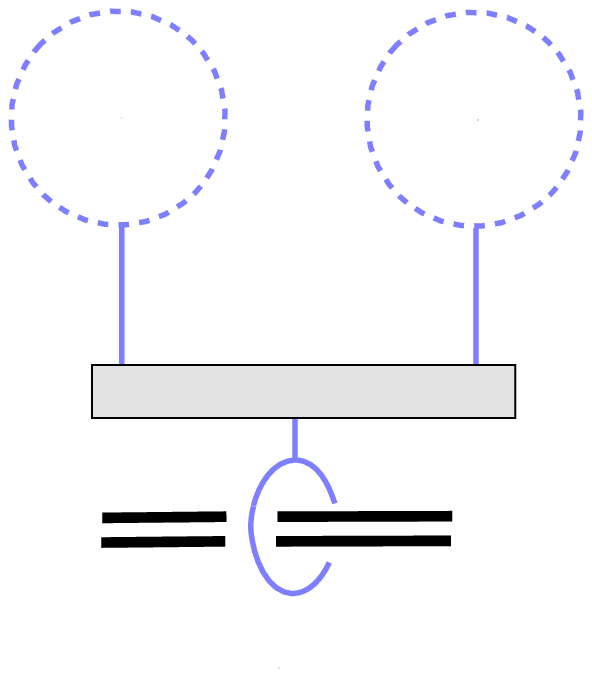}
\end{minipage}
\caption{\label{F:shepherd} The leaf shepherd procedure.}
\end{figure}

Let $B_{1,2}$ denote the bands clasped by $A^{1,2}$ correspondingly, coloured $a_{1,2}\in A$ correspondingly. Assume without the limitation of generality that $B_2$ is the left band of a $1$--handle. If $B_{1,2}$ are different, and if they are not adjacent along $D^2$, then the first step is to bring them close together. Our graphical convention above in what follows is to write the name of the leaf above its adjacent edge.

 \newcounter{stepnum}
\begin{list}{\textbf{Step \arabic{stepnum}:}}
        {\setlength{\leftmargin}{.5in}
        \setlength{\rightmargin}{.1in}
        \usecounter{stepnum}}
\item If $B_{1,2}$ are different, we find a mapping class $\tau\in\mathrm{MCG}(F)$ whose action gives a band projection for $F$ in which $C_{1,2}$ clasp the same band. Lemma \ref{L:(0,a,b)} is used to kill the excess $Y$--claspers we create along the way.

    \newcounter{casenum}[stepnum]
    \begin{list}{\textbf{Case (\roman{casenum}):}}
        {\setlength{\leftmargin}{.5in}
        \setlength{\rightmargin}{.1in}
        \usecounter{casenum}}
        \item If $B_1$ and $B_2$ are the two bands of the same handle, choose $\tau$ to be the following Dehn twist:

           \begin{equation}
                \psfrag{A}[c]{\fs$B_1$}\psfrag{B}[c]{\fs$\phantom{B}B_2$}\psfrag{a}[cb]{\fs$A^1$}\psfrag{b}[cb]{\fs$A^2$}
                \begin{minipage}{100pt}
                \includegraphics[width=100pt]{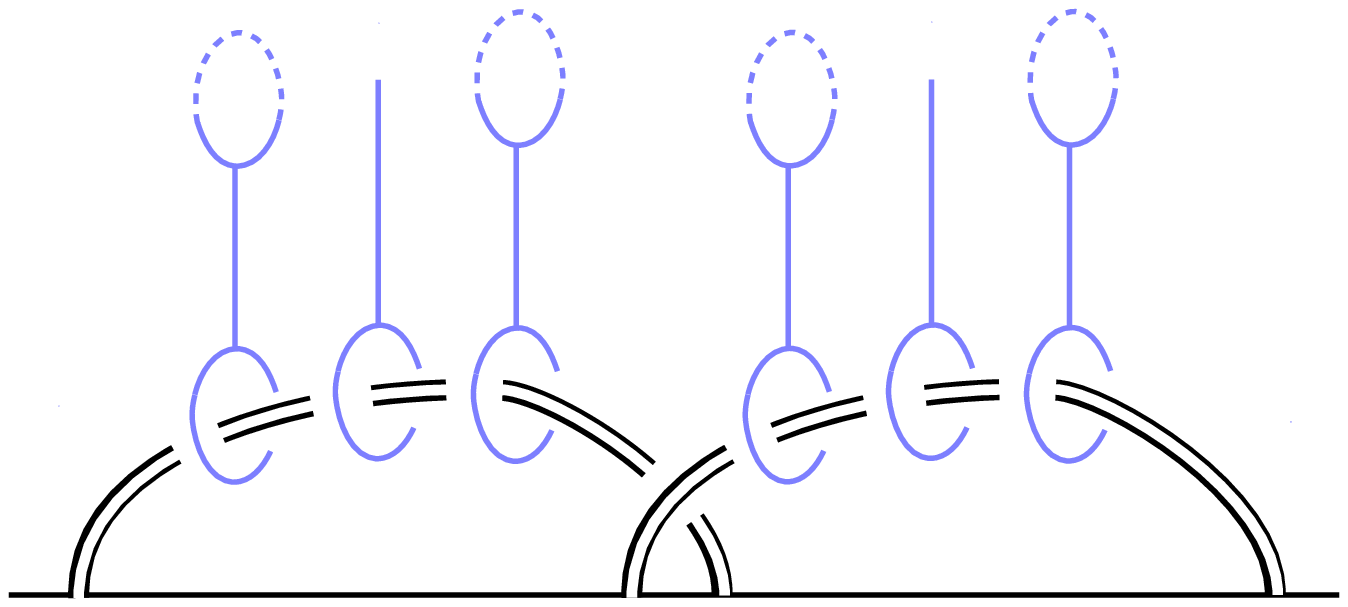}
                \end{minipage}\ \ \ \overset{\raisebox{2pt}{\scalebox{0.8}{\text{isotopy}}}}{\Longleftrightarrow}\ \ \
                \left\{\, \begin{array}{c}
                \raisebox{20pt}{\begin{minipage}{130pt}
                \includegraphics[width=100pt]{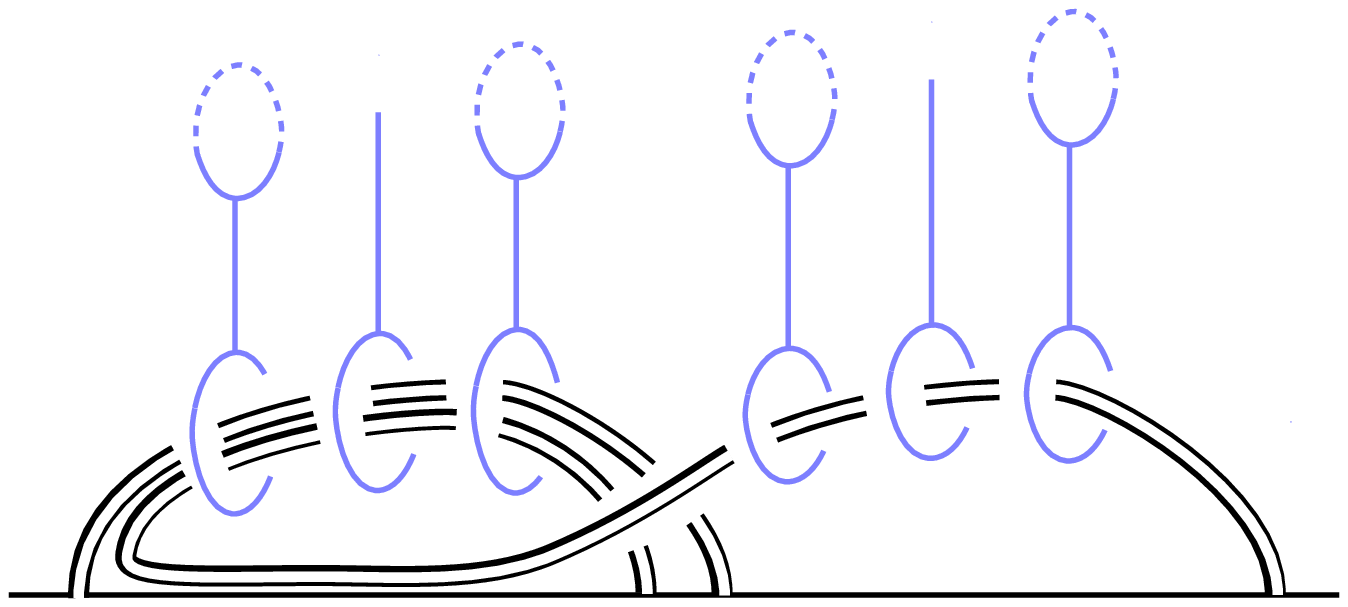}
                \end{minipage}}\raisebox{1cm}{\text{\Small if $a_1=a_2$;}}\\[0.6cm]
                \raisebox{20pt}{\begin{minipage}{130pt}
                \includegraphics[width=130pt]{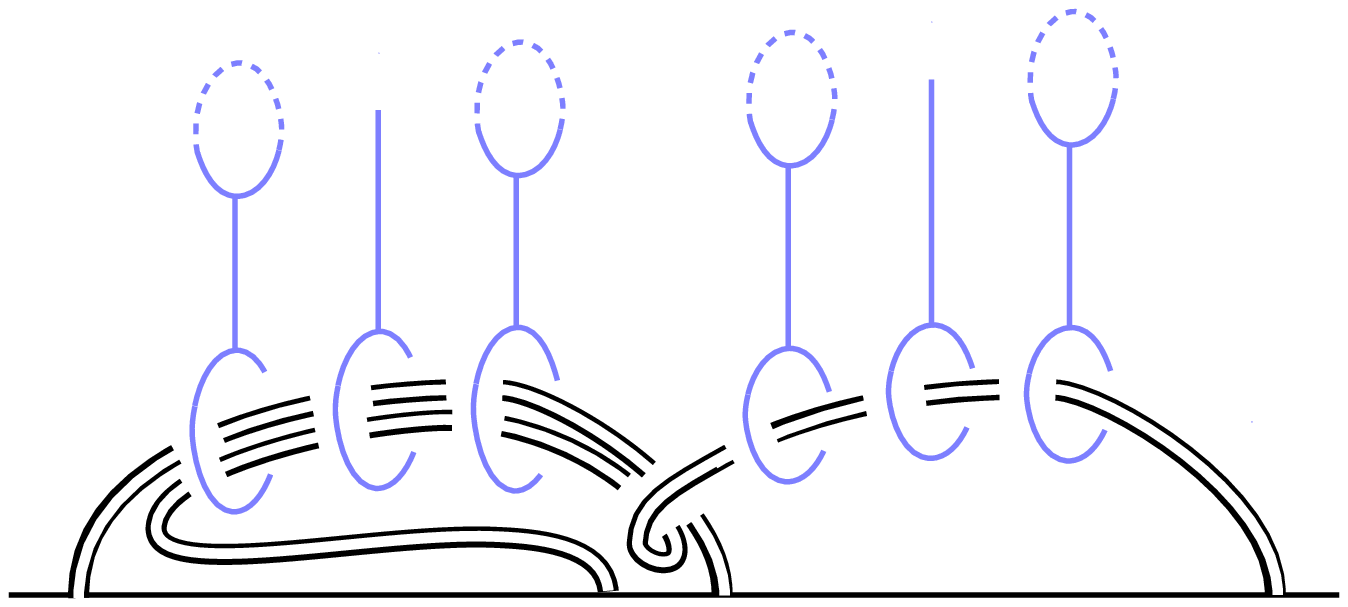}
                \end{minipage}}\ \ \raisebox{1cm}{\text{\Small if $a_1+a_2=0$.}}
                \end{array}\right.
            \end{equation}

        \item Otherwise, if $B_{1,2}$ belong to different $1$--handles, let $B$ denote the band left adjacent to the band $B_2$ clasped by $A^2$. Explicitly, if we write $D^2\cap B=\alpha\cup\beta$ and $D^2\cap B_{1,2}=\alpha_{1,2}\cap\beta_{1,2}$, then $\partial D^2$ contains a line segment of the form $x\delta\alpha_2\beta^\prime$ with $x=\alpha$ or $x=\beta^{-1}$. Repeat the following step, until $x=\alpha_1$ if $a_1=a_2$, or until $x=\beta_1^{-1}$ if $a_1=a_2^{-1}$. Slide $B$ over the $B_2$'s $1$--handle as follows:

            \scalebox{0.76}{\parbox{\textwidth}{%
            \begin{multline}
                \psfrag{A}[c]{$B$}\psfrag{B}[c]{$B_2$}\psfrag{a}[c]{\phantom{a}$A^2$}
                \begin{minipage}{138pt}
                \includegraphics[height=65pt]{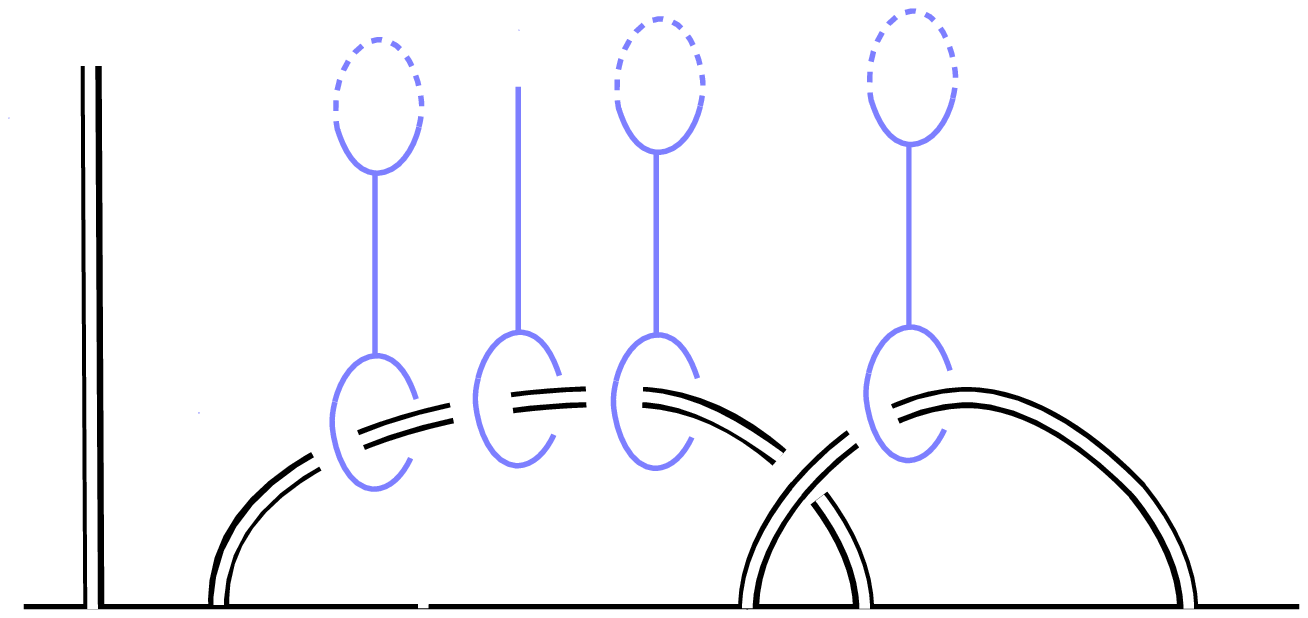}
                \end{minipage}
                \hspace{-9pt}\overset{\raisebox{2pt}{\scalebox{0.8}{\text{isotopy}}}}{\Longleftrightarrow}\
                \begin{minipage}{155pt}
                \includegraphics[height=65pt]{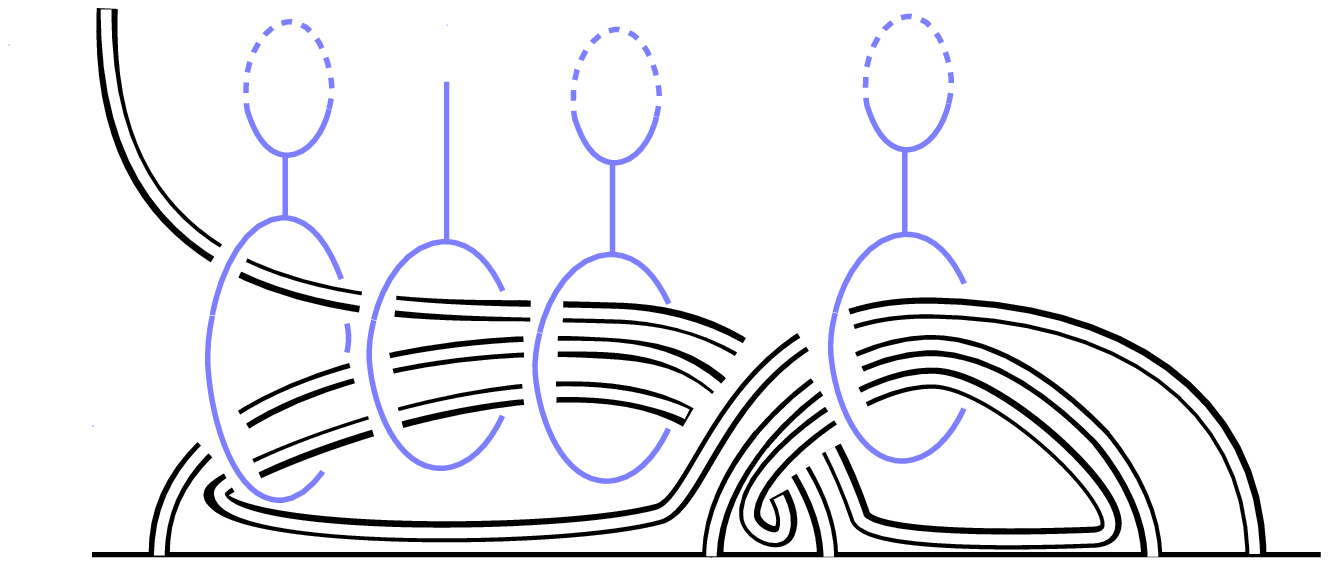}
                \end{minipage}\\[0.3cm]
                \psfrag{A}[c]{$B$}\psfrag{B}[c]{$B_2$}\psfrag{a}[c]{$\phantom{a}A^2$}
                \ \overset{\raisebox{2pt}{\scalebox{0.8}{\text{Move 8}}}}{\Longleftrightarrow}\quad
                \begin{minipage}{155pt}
                \includegraphics[height=65pt]{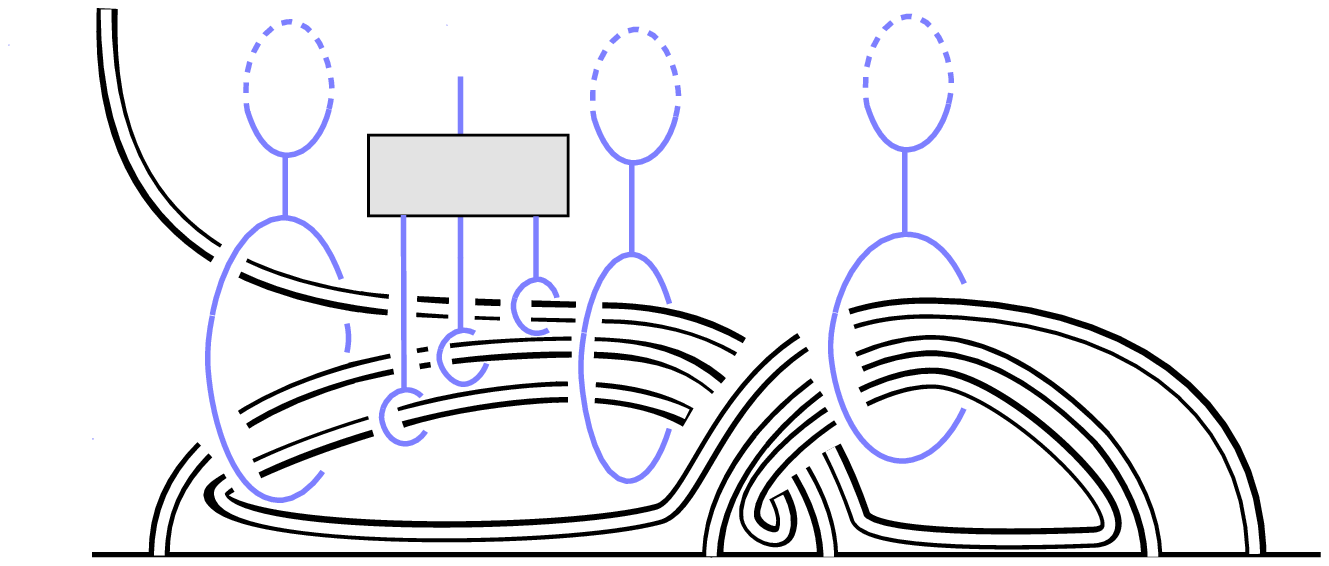}
                \end{minipage}
            \end{multline}}}

            \noindent Unzip the resulting clasper \cite[Definition 3.12]{Hab00}. Finally, when $B_1$ becomes adjacent to $B_2$ in the prescribed fashion, slide $B_2$ over $B_1$ (the diagram is of one possible configuration of the ends of the bands--- other possible configurations are handled analogously):

            \begin{equation}
                \psfrag{A}[c]{\fs$B_1$}\psfrag{B}[c]{\fs$B_2$}\psfrag{a}[cb]{\fs$A^1$}\psfrag{b}[cb]{\fs$A^2$}
                \begin{minipage}{100pt}
                \includegraphics[width=100pt]{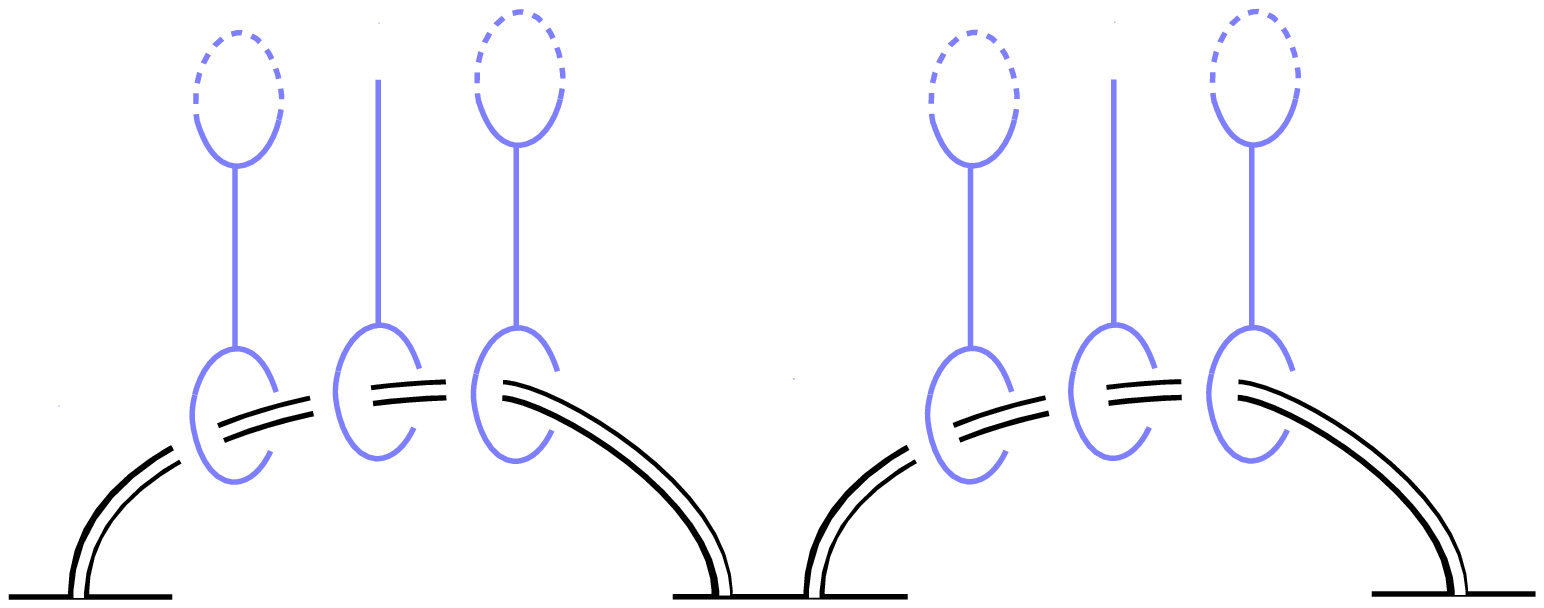}
                \end{minipage}\quad \overset{\raisebox{2pt}{\scalebox{0.8}{\text{isotopy}}}}{\Longleftrightarrow}\quad
                \begin{minipage}{130pt}
                \includegraphics[width=100pt]{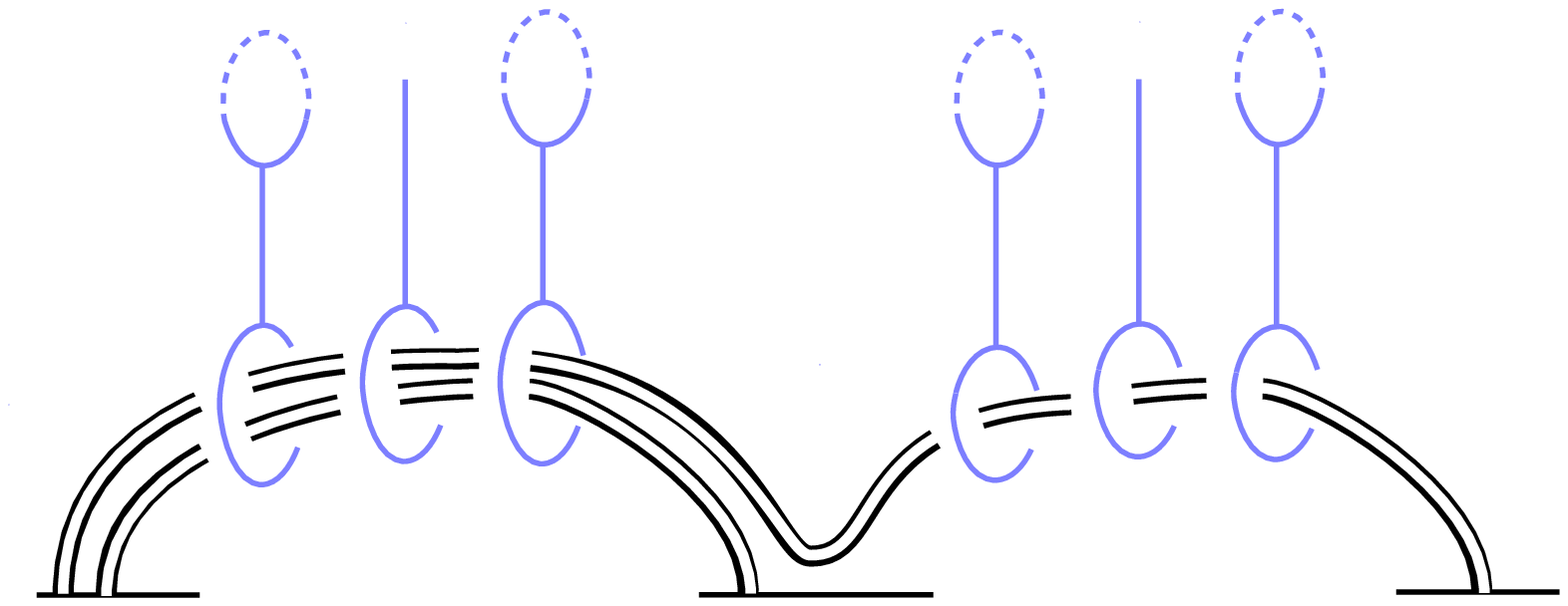}
                \end{minipage}.
            \end{equation}
    \end{list}

This slide sets the colour of $B_1$ to $0\in A$. We are left with the following local picture:

\begin{equation}
                \raisebox{10pt}{\begin{minipage}{60pt}
                \psfrag{A}[c]{\phantom{a}$A^1$}\psfrag{B}[c]{$B_1$}\psfrag{C}[c]{$B_2$}
                \includegraphics[width=55pt]{pile8-1}
                \end{minipage}}
                \quad
                \overset{\raisebox{2pt}{\scalebox{0.8}{\text{Move 8}}}}{\Longleftrightarrow}\quad
                \psfrag{A}[c]{\phantom{a}$A^{1\prime}$}\psfrag{B}[c]{$B_1$}\psfrag{C}[c]{$B_2$}\psfrag{D}[c]{\phantom{a}$A^{1\prime\prime}$}
                \ \raisebox{10pt}{\begin{minipage}{60pt}
                \includegraphics[width=60pt]{pile8-2}
                \end{minipage}}
                \quad\ \overset{\raisebox{2pt}{\scalebox{0.8}{\text{unzip}}}}{\Longleftrightarrow}\quad
                \ \raisebox{10pt}{\begin{minipage}{60pt}
                \includegraphics[width=60pt]{pile8-3}
                \end{minipage}}
\end{equation}

\noindent Delete the clasper which contains $A^{1\prime}$ using Lemma \ref{L:(0,a,b)} and rename $A^1$ as $A^\prime$. We take $\tau$ to be the mapping class corresponding to this step.

\item Now that $A^{1,2}$ clasp a common band, shepherd them together:

\begin{equation}
\begin{minipage}{61pt}
\includegraphics[width=61pt]{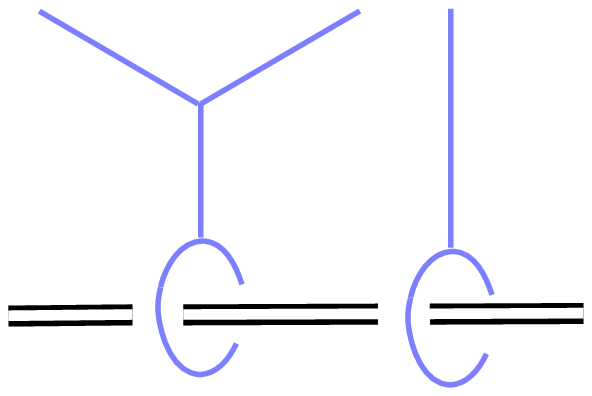}
\end{minipage}\overset{\raisebox{2pt}{\scalebox{0.6}{\text{isotopy}}}}{\Leftrightarrow}
\begin{minipage}{61pt}
\includegraphics[width=61pt]{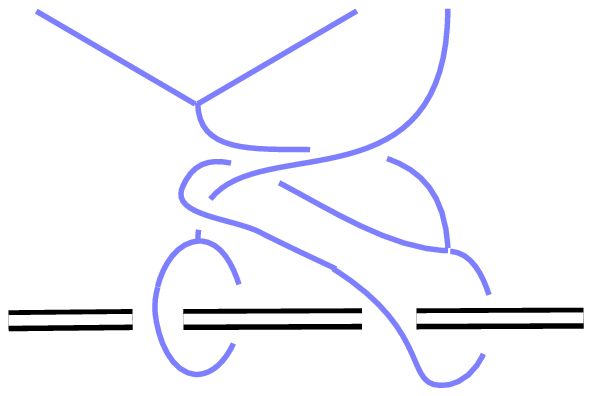}
\end{minipage}\overset{\raisebox{2pt}{\scalebox{0.6}{\text{Move 8}}}}{\Leftrightarrow}
\begin{minipage}{61pt}
\includegraphics[width=61pt]{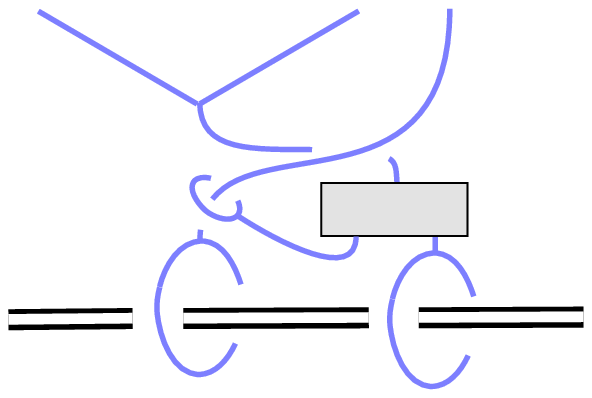}
\end{minipage}\overset{\raisebox{2pt}{\scalebox{0.6}{\text{Lemma {\ref{L:(0,a,b)}}}}}}{\Leftrightarrow}
\begin{minipage}{61pt}
\includegraphics[width=61pt]{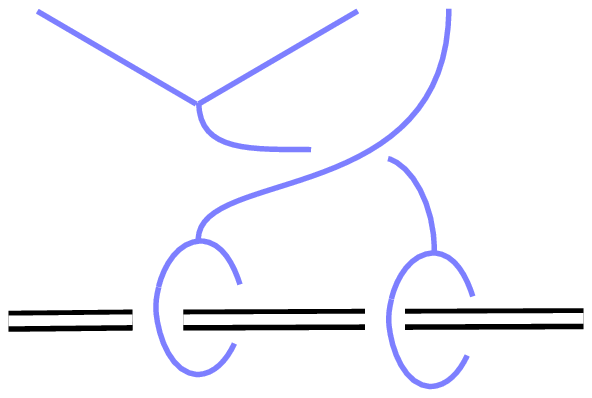}
\end{minipage}.
\end{equation}

\noindent Once $A^{1,2}$ are adjacent, create a box using Move 8.

\item Act by $\tau^{-1}$ to return to the band projection with which we started.
\end{list}

\subsubsection{Adding claspers geometrically}

\begin{lem}\label{L:monogamousclasper}
 Let $C_{1,2}\ass\, A^{1,2}_{1,2,3}\cup E^{1,2}_{1,2,3}$ be a pair of $Y$--claspers in the complement of an $A$--coloured Seifert surface $(F,\bar\rho)$ in band projection, whose leaves $A_{1,2,3}^{1,2}$ clasp single bands $B^{1,2}_{1,2,3}$ correspondingly, with $(B_1^1,B_1^2,B_2^{1,2},B_3^{1,2})$ coloured $(a,b,c,d)$ correspondingly. There exists a $Y$--clasper $C_{3}\ass\,A^{3}_{1,2,3}\cup E^3_{1,2,3}$ in the complement of $(F,\bar\rho)$ whose leaves $A_{1,2,3}^3$ clasp bands $B^3_{1,2,3}$ coloured $(a+b)$, $c$, and $d$ correspondingly.
\end{lem}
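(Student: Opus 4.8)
The plan is to produce a single $Y$--clasper $C_3$ of the stated form such that surgery around it yields an $A$--coloured Seifert surface $\bar\rho$--equivalent to the one obtained by surgery around $C_1\cup C_2$; this is exactly what the lemma asserts. On the level of $\Phi$ there is nothing to check, since $\Phi(C_1)+\Phi(C_2)=a\wedge c\wedge d+b\wedge c\wedge d=(a+b)\wedge c\wedge d$, so the content is the geometric realization of this identity, and the only difficulty is one of positioning the leaves of $C_1$ and $C_2$ relative to one another.

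First I would bring the two claspers into contact along the bands they share. The leaves $A^1_2$ and $A^2_2$ both ring the band $B_2$, so Step~2 of the leaf-shepherd procedure of Section~\ref{SSS:Shepherd} (the ``sweep'', followed by Move~8) carries them adjacent along $B_2$ and boxes them together, at the cost only of ambient isotopy, band slides, and the deletion of surplus $0$--coloured $Y$--claspers via Lemma~\ref{L:(0,a,b)}; the effect is to fuse $C_1$ and $C_2$ into a single clasper $C'$. Running the same sweep on the pair $A^1_3,A^2_3$, which ring $B_3$, boxes those leaves as well. After this we are looking at a local picture in which $C'$ meets the two bands $B_2$ and $B_3$ through a pair of boxes, with the trivalent vertices of $C_1$ and $C_2$ joined through them, and with the only free leaves being $A^1_1$ on $B^1_1$ (colour $a$) and $A^2_1$ on $B^2_1$ (colour $b$); everything else has been slid out of the picture.

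In this configuration the two halves of $C'$ may be amalgamated. The unite-box move of Figure~\ref{F:unitebox}, together with Habiro's calculus \cite{Hab00}, merges the boxes on $B_2$ and $B_3$ and fuses the two trivalent vertices, replacing $C'$ by a single $Y$--clasper whose leaves ring $B_2$ (colour $c$), $B_3$ (colour $d$), and a single leaf encircling both $B^1_1$ and $B^2_1$, whose $\bar\rho$--image is therefore $a+b$. It remains to replace this last leaf by one clasping a single band of colour $a+b$: stabilize $F$ by adjoining a band $B^3_1$ of colour $a+b$ exactly as in Equation~\ref{E:stabilizecol} of Section~\ref{SSS:Shorten}, slide the leaf onto $B^3_1$, and delete the $Y$--claspers thrown off by those band slides --- each of which has a repeated colour or a $0$ colour, hence is $\sim_{\bar\rho}0$ by Lemmas~\ref{L:(0,a,b)} and \ref{L:(a,a,b)}. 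The resulting $Y$--clasper is the desired $C_3$. The main obstacle is the second paragraph: one must check that the leaf-shepherd sweeps really leave $C'$ in precisely the configuration to which the unite-box move applies, and in particular that carrying the box on $B_2$ past the box on $B_3$ and past the remaining clasper structure costs only $Y_0$--moves, which have no effect on $\bar\rho$--equivalence.
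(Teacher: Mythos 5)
Your outline tracks the paper's own: shepherd the $c$--leaves together and the $d$--leaves together, fuse the two claspers into a single tree, then convert the $a/b$--side into one leaf on one band of colour $a+b$. The first stage is fine. The last stage, which is where you genuinely depart from the paper, contains the real gap. After the fusion you have a single $Y$--clasper whose first leaf encircles $B_1^1\cup B_1^2$; you propose to stabilize, creating a fresh band of colour $a+b$, and to ``slide the leaf onto'' it, killing the debris by Lemmas \ref{L:(0,a,b)} and \ref{L:(a,a,b)}. Transferring a leaf from $B_1^1\cup B_1^2$ to a new band is not a band slide: it requires changing crossings of that leaf with bands coloured $a$, $b$ and $a+b$, and by the clasp-pass mechanism each such crossing change throws off a $Y$--clasper coloured $(\pm a,c,d)$, $(\pm b,c,d)$ or $(\mp(a+b),c,d)$. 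None of these has a repeated or zero colour in general, so Lemmas \ref{L:(0,a,b)} and \ref{L:(a,a,b)} do not apply to them; their sum lies in $\ker\Phi$, but cancelling a sum in $\ker\Phi$ is precisely what Lemmas \ref{L:3wedge} and \ref{L:Ydistribute} provide, and those are proved \emph{using} the present lemma, so invoking them here is circular. The paper avoids this by never leaving the surface: it brings $B_1^1$ and $B_1^2$ adjacent and performs a band slide so that an honest band of the surface acquires colour $a+b$, carrying the two $A_1$--leaves along, with the debris of that slide explicitly of the $(0,\cdot,\cdot)$ type (killed by Lemma \ref{L:(0,a,b)}) or handled by Lemma \ref{L:YPass}; it then shepherds the two leaves onto that band and unites the resulting box.

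Secondarily, the fusion step itself --- ``the unite-box move \dots fuses the two trivalent vertices'' --- is asserted rather than carried out, and the unite-box move is not the right tool for it: it merges two adjacent boxes inside one clasper, it does not merge two trivalent vertices belonging to different claspers. The paper's mechanism is Habiro's Move 11 (pushing a box past a node), applied once on the $c$--side and once on the $d$--side, after first normalizing the edges with Lemmas \ref{L:clasp-pass} and \ref{L:YPass} and Corollary \ref{C:ClasperFramingReduce}. You correctly flag this as the main obstacle, but it is exactly the content that has to be proved rather than assumed.
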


\begin{proof}
Shepherd leaves to bring together $A_{2,3}^{1,2}$ coloured $c$ and $d$ (Section \ref{SSS:Shepherd}). If any one of the edges $E_{1,2,3}^{1,2}$ crosses under an edge of another clasper, or under a band, use Lemma \ref{L:clasp-pass} or \ref{L:YPass} to change that crossing, to make  $E_{1,2,3}^{1,2}$ cross over all edges and all over bands. Untie $E_{1,2,3}^{1,2}$ (Lemma \ref{L:clasp-pass}), and remove all full twists in them (Corollary \ref{C:ClasperFramingReduce}). Push the two boxes past the trivalent vertex as follows:

\begin{equation}
\psfrag{a}[c]{$a$}\psfrag{b}[c]{$b$}\psfrag{c}[c]{$c$}\psfrag{d}[c]{$d$}
\begin{minipage}{90pt}
\includegraphics[height=61pt]{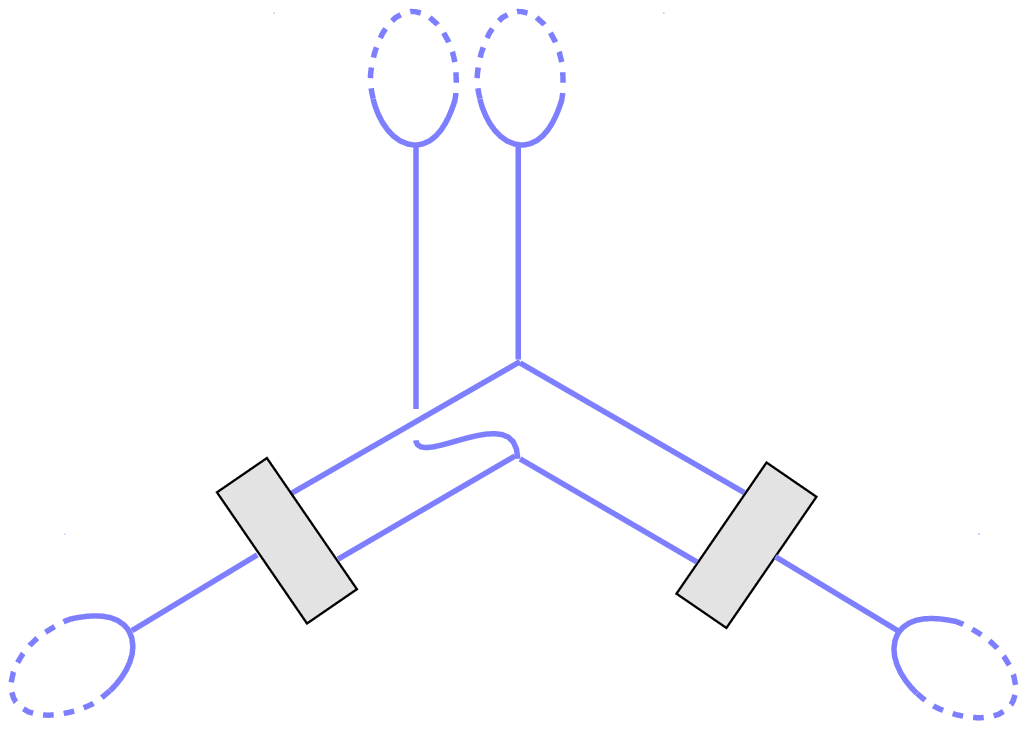}
\end{minipage}\overset{\raisebox{2pt}{\scalebox{0.6}{\text{Lemma \ref{L:(0,a,b)}}}}}{\Longleftrightarrow}
\begin{minipage}{90pt}
\includegraphics[height=61pt]{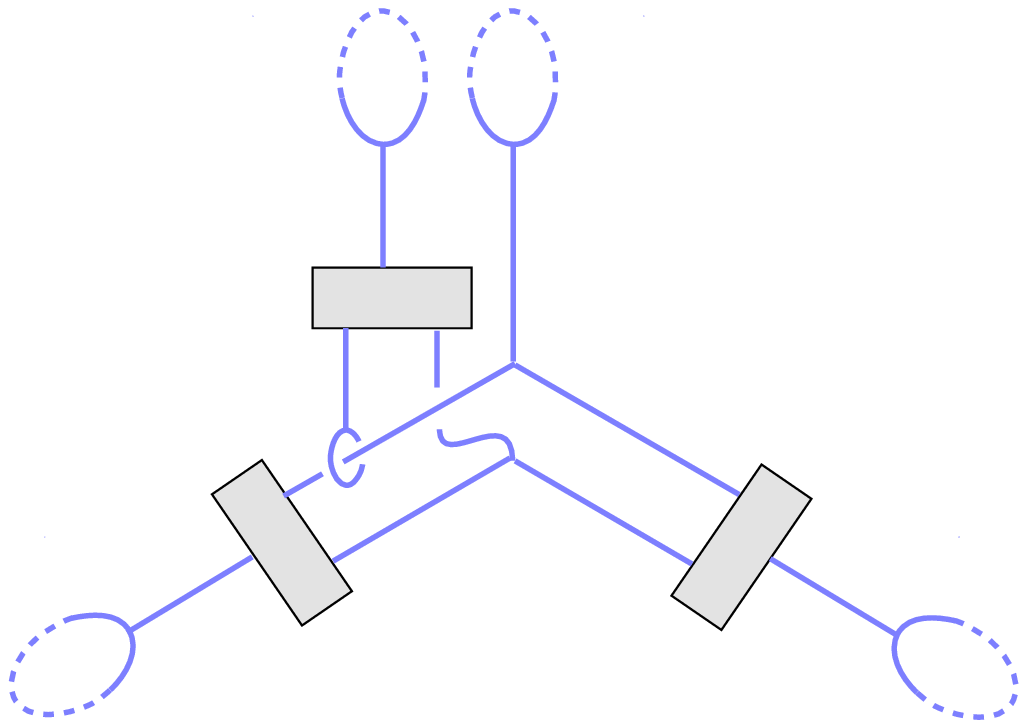}
\end{minipage}\overset{\raisebox{2pt}{\scalebox{0.6}{\text{Move 11}}}}{\Longleftrightarrow}
\begin{minipage}{61pt}
\includegraphics[height=61pt]{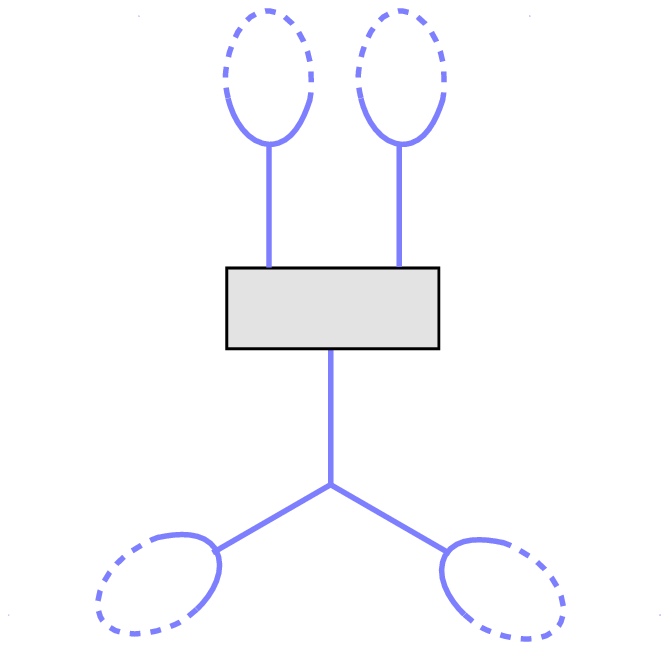}
\end{minipage}.
\end{equation}

This unites the two pairs of leaves $A_2^{1,2}$ and $A_3^{1,2}$ into single leaves which we suggestively call $A^3_2$ and $A^3_3$ correspondingly, and the two pairs of edges $E_{2}^{1,2}$ and $E_3^{1,2}$ into single edges which we suggestively call $E^3_2$ and $E^3_3$ correspondingly.\par

As in Step 1 of Section \ref{SSS:Shepherd}, bring $A_1^{1,2}$ to adjacent positions along $D^2$. Slide $A_1^2$ over $A_2^2$, and resolve as follows (we draw the procedure in the case that $A_1^{1,2}$ belong to the same handle. The remaining case is analogous):

\begin{multline}
\psfrag{A}[c]{\fs$B^1_1$}\psfrag{B}[c]{\phantom{a}\fs$B^2_1$}\psfrag{a}[cb]{\fs$A^1_1$}\psfrag{b}[cb]{\fs$A^2_1$}
                \begin{minipage}{100pt}
                \includegraphics[width=100pt]{twinY-1}
                \end{minipage}\quad\ \overset{\raisebox{2pt}{\scalebox{0.8}{\text{isotopy}}}}{\Longleftrightarrow}\quad
                \begin{minipage}{100pt}
                \includegraphics[width=100pt]{twinY-2}
                \end{minipage}\\[0.4cm]
                \psfrag{A}[c]{\fs$B^1_1$}\psfrag{B}[c]{\fs$\,B^2_1$}\psfrag{a}[cb]{\fs$A^1_1$}\psfrag{b}[cb]{\fs$A^2_1$}
                \overset{\raisebox{2pt}{\scalebox{0.8}{\text{Move 8}}}}{\Longleftrightarrow}\quad
                \begin{minipage}{100pt}
                \includegraphics[width=100pt]{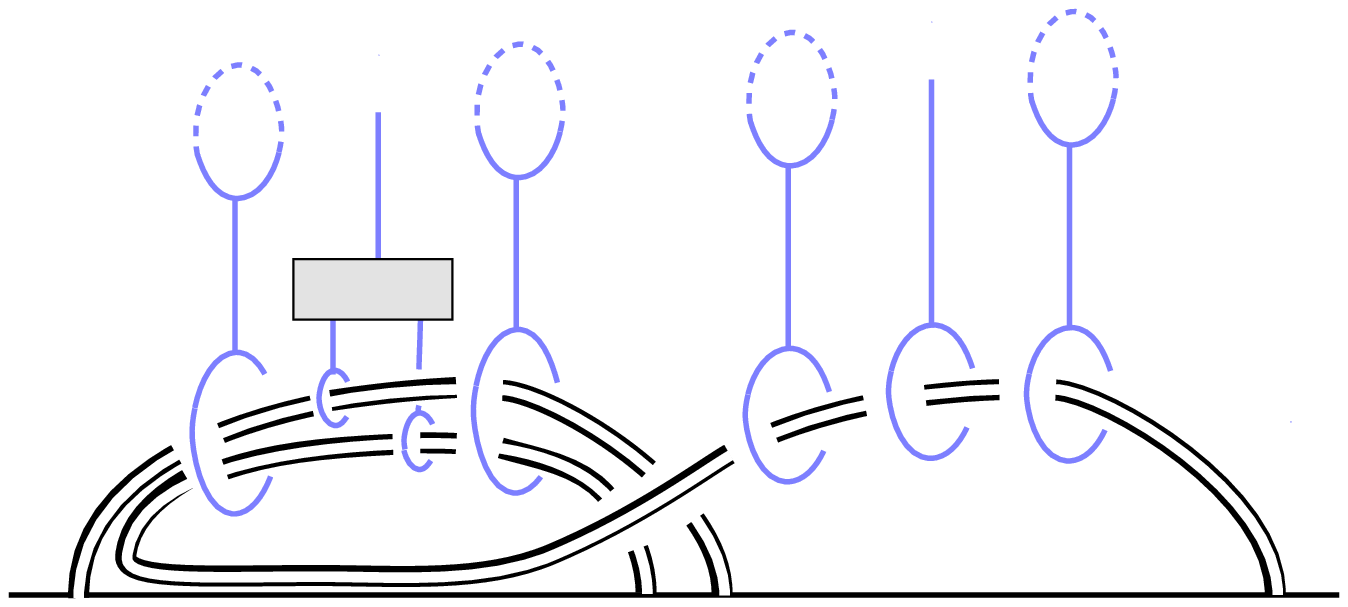}
                \end{minipage}
                \ \ \ \overset{\raisebox{2pt}{\scalebox{0.8}{\text{Lemma \ref{L:YPass}}}}}{\Longleftrightarrow}\quad
                \begin{minipage}{110pt}
                \includegraphics[width=110pt]{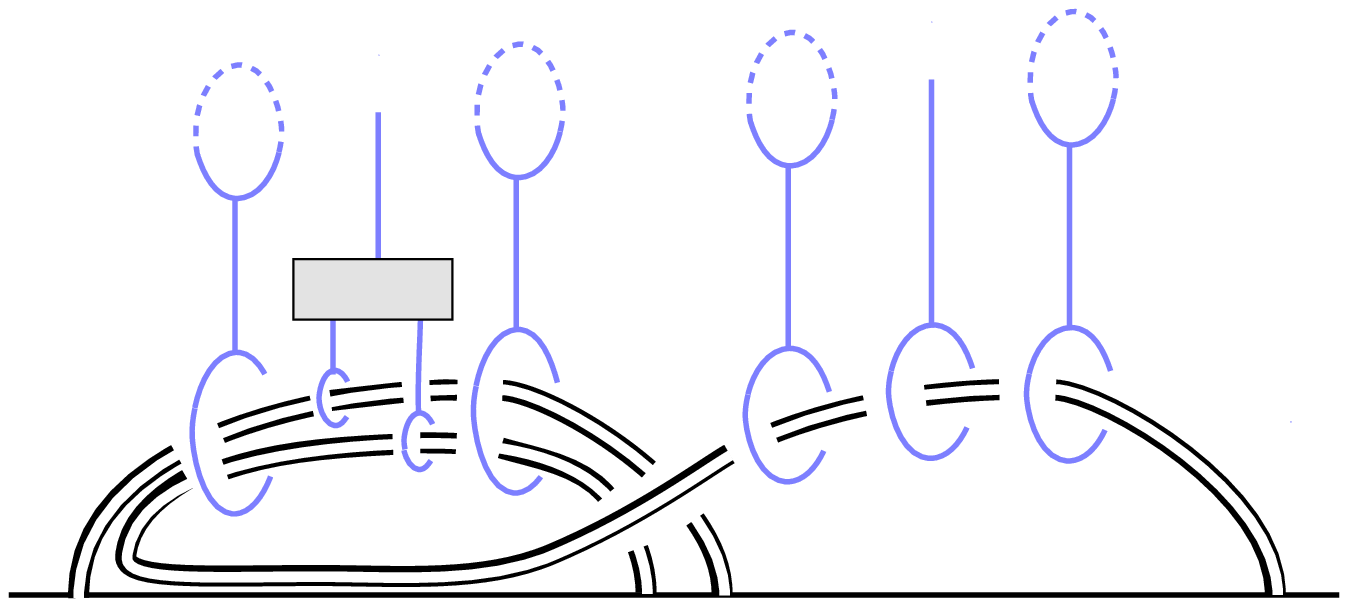}
                \end{minipage}\\[0.4cm]
                \overset{\raisebox{2pt}{\scalebox{0.8}{\text{Lemma \ref{L:(0,a,b)}}}}}{\Longleftrightarrow}\quad
                 \begin{minipage}{110pt}
                \psfrag{A}[c]{\fs$B^1_1$}\psfrag{B}[c]{\fs$\,B^2_1$}\psfrag{a}[cb]{\fs$A^1_1$}\psfrag{b}[cb]{\fs$A^2_1$}
                \includegraphics[width=110pt]{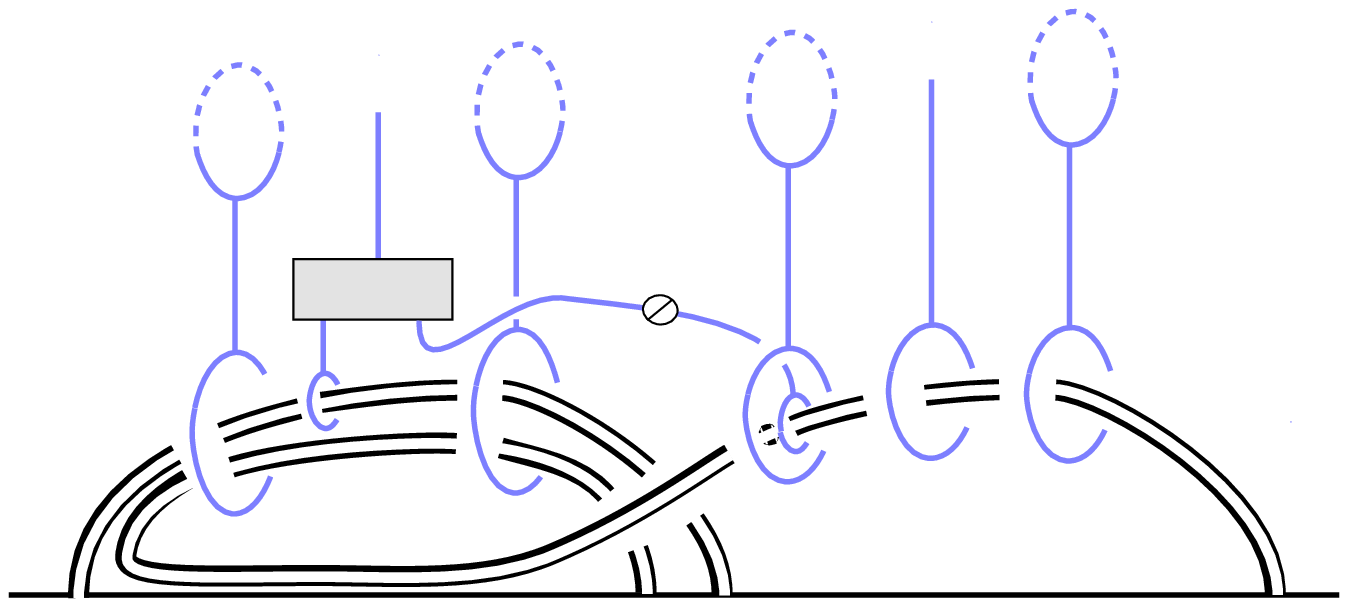}
                \end{minipage}
\end{multline}

In the above sequence, $A^1_1$ was broken up into two leaves, which we sloppily collectively called $A_1^1$. This sloppiness causes no harm because of the next step.

Shepherd $A_{1}^1$ and $A_{1}^2$ together as in Step 2 of the procedure in Section \ref{SSS:Shepherd}, and manipulate the resulting local picture as follows:

\begin{equation}
\psfrag{E}[c]{\fs$E^3_2$}\psfrag{F}[c]{\fs$E^3_3$}\psfrag{A}[c]{\fs$A_1^1$} \psfrag{B}[c]{\fs$A_1^2$}
\begin{minipage}{61pt}
\includegraphics[width=61pt]{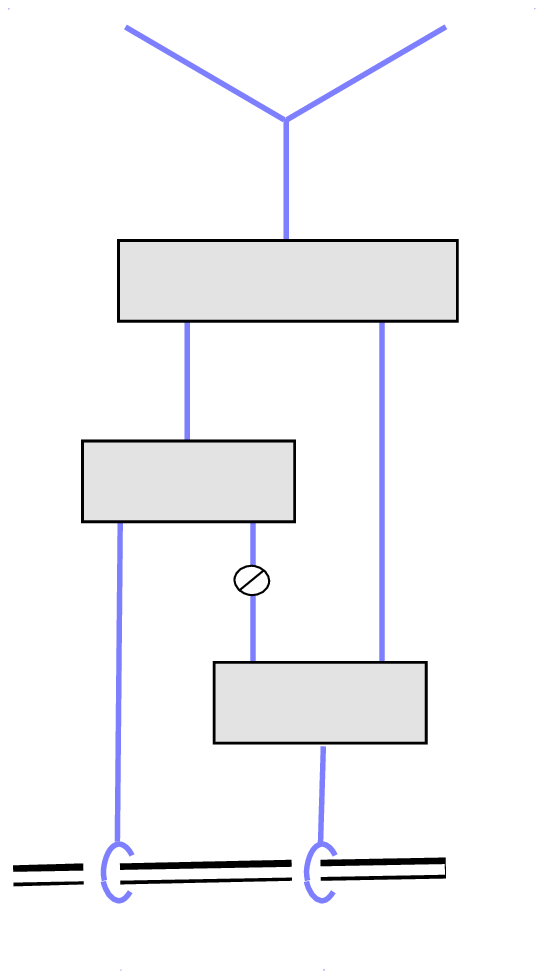}
\end{minipage}\overset{\raisebox{2pt}{\scalebox{0.6}{\text{unite-box}}}}{\Longleftrightarrow}
\begin{minipage}{61pt}
\includegraphics[width=61pt]{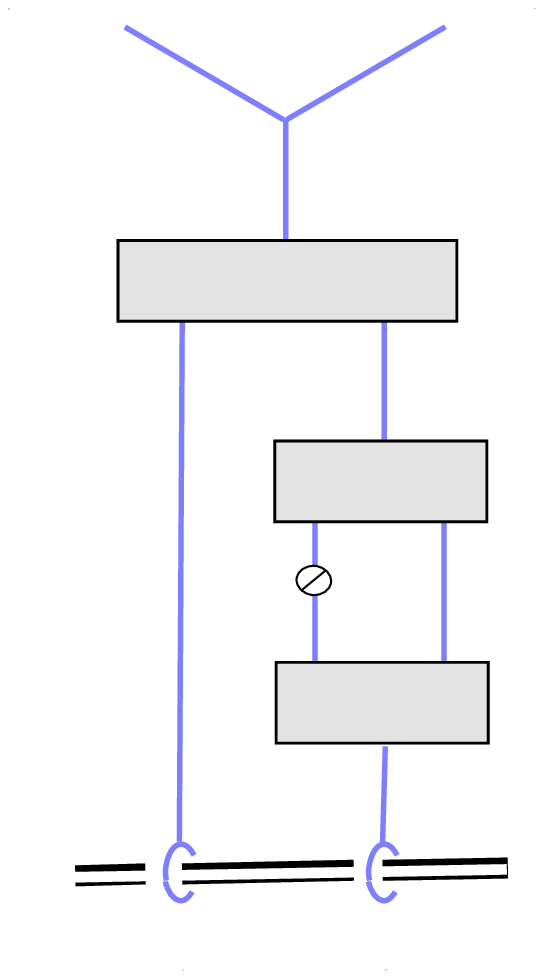}
\end{minipage}\ \ \overset{\raisebox{2pt}{\scalebox{0.6}{\text{Move 4}}}}{\Longleftrightarrow}
\begin{minipage}{61pt}
\includegraphics[width=61pt]{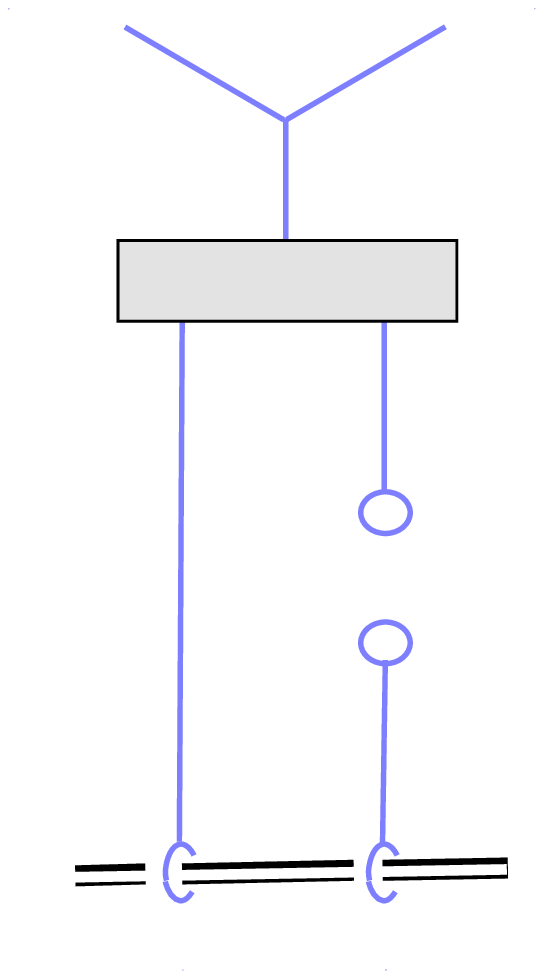}
\end{minipage}\ \ \overset{\raisebox{2pt}{\scalebox{0.6}{\text{Moves 1,3}}}}{\Longleftrightarrow}\!
\hspace{-0.6cm}\begin{minipage}{61pt}
\includegraphics[width=61pt]{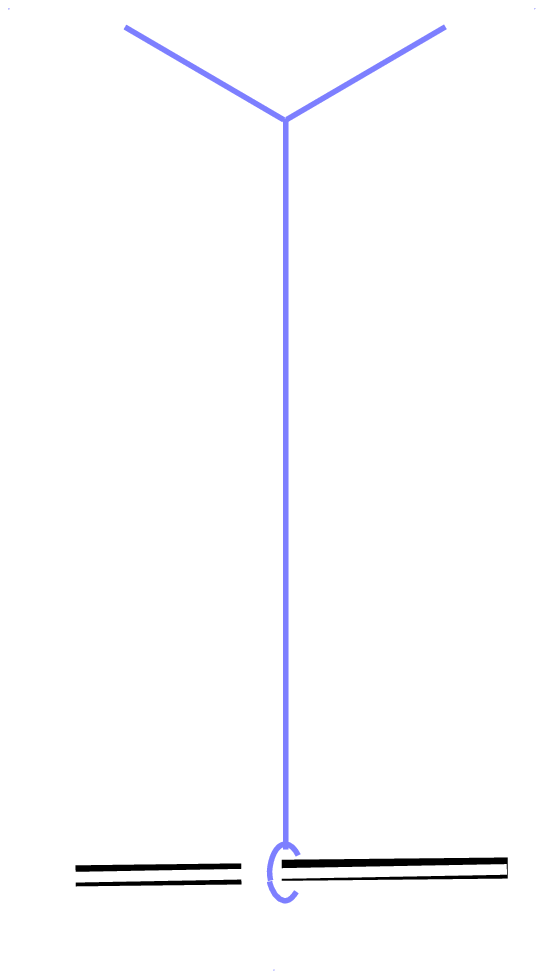}
\end{minipage}
\end{equation}

\noindent Finally, we are left with a single $Y$--clasper with three leaves: $A_1^1$, which we relabel $A^3_1$, which clasps a band coloured $a+b$, and $A_{2,3}^3$ which clasp bands coloured $c$ and $d$ respectively.
\end{proof}

\subsection{Leaves clasping multiple bands}

\begin{lem}\label{L:3wedge}
If $a\wedge b\wedge c=0\in\bigwedge^3 A$, then
\[\lblY{a}{b}{c}{27.5}\sim_{\bar\rho}\,0.\]
\end{lem}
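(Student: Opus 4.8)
The plan is to show that, modulo $\sim_{\bar\rho}$, the class of an $(a,b,c)$--coloured $Y$--clasper is a $\mathds{Z}$--trilinear alternating function of $(a,b,c)$ and therefore factors through $\bigwedge^3 A$; the hypothesis $a\wedge b\wedge c=0$ then makes it vanish. First I would reduce to the situation of Lemma \ref{L:monogamousclasper}, in which every leaf of the $Y$--clasper clasps a \emph{single} band of a band projection. Given an arbitrary $Y$--clasper in the complement of an $A$--coloured Seifert surface $(F,\bar\rho)$, put $F$ in band projection, run the stabilization-and-slide procedure of Section \ref{SSS:Shorten} to bring the colouring vector under control, and then apply the leaf-shepherd procedure of Section \ref{SSS:Shepherd} together with ordinary band slides to guide each of the three leaves onto a single band, of colour $a$, $b$ and $c$ respectively; each auxiliary $Y$--clasper created along the way has a leaf of colour $0$ and so is deleted by Lemma \ref{L:(0,a,b)}. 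I expect this to be the main obstacle: it is the one genuinely topological step, and one must check that the shepherding moves can always isolate a leaf on a band of its own colour.

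After the reduction, only algebra remains. The set of finite formal disjoint unions of such $Y$--claspers, modulo $\sim_{\bar\rho}$, is an abelian group: disjoint union makes it a commutative monoid, and inverses exist because $\lblY{a}{b}{c}{27.5}\,\oplus\,\lblY{-a}{b}{c}{30}\,\sim_{\bar\rho}\,\lblY{0}{b}{c}{27.5}\,\sim_{\bar\rho}\,0$ by Lemma \ref{L:monogamousclasper} and then Lemma \ref{L:(0,a,b)} (inserting a half-twist in an edge, which inverts the colour of the adjacent leaf, supplies the negative). Lemma \ref{L:monogamousclasper} says exactly that varying the colour of one leaf while fixing the other two is additive, and as it does not privilege any one of the three leaves the assignment $(a,b,c)\mapsto\lblY{a}{b}{c}{27.5}$ is $\mathds{Z}$--trilinear. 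By Lemma \ref{L:(a,a,b)} it vanishes whenever two of its three arguments are equal; combined with trilinearity this is precisely the condition that it be alternating.

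A $\mathds{Z}$--trilinear alternating map from $A^3$ to an abelian group factors uniquely through the exterior power $\bigwedge^3 A$, by its universal property; in particular all the torsion relations (for instance $d_i\cdot\lblY{e_i}{e_j}{e_k}{30}=\lblY{0}{e_j}{e_k}{30}=0$ forced by $d_ie_i=0$ in a cyclic decomposition $A=\bigoplus_i\mathds{Z}/d_i$) are automatic. Hence the $\sim_{\bar\rho}$--class of $\lblY{a}{b}{c}{27.5}$ is the image of $a\wedge b\wedge c$ under this factorization, and $a\wedge b\wedge c=0$ yields $\lblY{a}{b}{c}{27.5}\sim_{\bar\rho}0$. (Well-definedness of the function $(a,b,c)\mapsto\lblY{a}{b}{c}{27.5}$ comes out as a byproduct; alternatively one sidesteps it by expanding $a,b,c$ in the generators $e_i$, running the additivity and vanishing-on-repeats moves directly on the given clasper to reduce it to $\sum_{i<j<k}c_{ijk}\,\lblY{e_i}{e_j}{e_k}{30}$ with $c_{ijk}$ the corresponding $3\times3$ minor, and observing that each summand is $\sim_{\bar\rho}0$ because the hypothesis forces $\gcd(d_i,d_j,d_k)\mid c_{ijk}$.)
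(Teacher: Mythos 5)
Your overall architecture --- exhibit the class of an $(a,b,c)$--coloured $Y$--clasper in $\mathcal{C}/\sim_{\bar\rho}$ as a $\mathds{Z}$--trilinear alternating function of $(a,b,c)$ and invoke the universal property of $\bigwedge^3 A$ --- is exactly the content of Proposition \ref{P:Y-barrho}, and your closing parenthetical (expand $a,b,c$ in a cyclic decomposition, reduce to $\sum_{i<j<k}c_{ijk}$ copies of basis-coloured claspers, and kill each term by the torsion relations) is essentially a viable proof, close in substance to the paper's. But the main route as you describe it has two problems. First, the reduction step is not what you say: after the word-shortening of Section \ref{SSS:Shorten} every band is coloured by an element of $\bar{\mathcal{B}}\cup\{0\}$, so for generic $a$ there is no single band coloured $a$ for a leaf to be ``guided onto''. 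What actually happens is the opposite of consolidation: Move 8 and unzip \emph{split} the leaf coloured $a$ into many leaves, each clasping a single band coloured by a signed basis element, and the resulting collection of single-band-leaf claspers is what must then be cancelled; Lemma \ref{L:monogamousclasper} (re-merging) and Lemma \ref{L:(0,a,b)} are used to cancel these in pairs, not to rebuild one clasper with leaves coloured $a$, $b$, $c$.

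Second, and more seriously, full trilinearity of $(a,b,c)\mapsto\lblY{a}{b}{c}{27.5}$ on all of $A^3$ is Lemma \ref{L:Ydistribute}, which the paper proves \emph{after} Lemma \ref{L:3wedge} and whose proof cites it; Lemma \ref{L:monogamousclasper} by itself only gives additivity for claspers whose leaves already clasp single bands. Upgrading that to trilinearity for arbitrary claspers requires precisely the word-shorten/split/cancel argument that \emph{is} the paper's proof of Lemma \ref{L:3wedge}, so deriving the lemma from trilinearity begs the question unless you commit to the basis-expansion version from the start. Note also that the paper's endgame differs from yours: rather than using the torsion relations $d_i\cdot(e_i\wedge e_j\wedge e_k)=0$, it chooses the basis to contain a maximal independent subset $S\subseteq\{a,b,c\}$ of cardinality at most $2$ (this is where the hypothesis $a\wedge b\wedge c=0$ enters), cancels every single-band-leaf clasper having a leaf coloured outside $S$ against a $\pm$ partner via Lemmas \ref{L:monogamousclasper} and \ref{L:(0,a,b)}, and kills the survivors by Lemma \ref{L:(a,a,b)} because only two colours remain.
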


\begin{proof}
 Consider a $Y$--clasper $C\in \lblY{a}{b}{c}{27.5}$. Shorten words (Section \ref{SSS:Shorten}) with respect to an ordered basis $\mathcal{B}$ for $A$ which contains a maximal independent subset of $S\subset \set{a,b,c}$. Use Move 8 and unzip to split $C$ into a collection of claspers, each of whose leaves clasps a single band. Each clasper $C^\prime$ in this collection which has a leaf which clasps a band coloured $d\notin S$ has a counterpart $C^{\prime\prime}$ whose corresponding leaf clasps a band coloured $-d$, and these cancel by Lemma \ref{L:monogamousclasper} combined with Lemma \ref{L:(0,a,b)}. We are left with claspers whose leaves clasp bands all of whose colours are in $S$, which cancel by Lemma \ref{L:(a,a,b)} because $S$ has cardinality at most $2$.
\end{proof}

\begin{lem}\label{L:Ydistribute}
\[\begin{minipage}{30pt}
\psfrag{a}[c]{\small$a+b$}\psfrag{b}[c]{\small$c$}\psfrag{c}[c]{\small$d$}\includegraphics[width=30pt]{labeledYz}\end{minipage}\
\sim_{\bar\rho}\lblY{a}{c}{d}{27.5}+ \lblY{b}{c}{d}{27.5}\]
\end{lem}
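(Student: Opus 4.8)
The plan is to deduce Lemma \ref{L:Ydistribute} from the geometric merging Lemma \ref{L:monogamousclasper} once both sides have been brought into a standard form in which every leaf clasps a single band. Since the relation $\sim_{\bar\rho}$ is symmetric by its very definition, it suffices to prove that in an arbitrary $A$--coloured Seifert surface complement, surgery on a $Y$--clasper $C'\in\lblY{a}{c}{d}{27.5}$ together with a $Y$--clasper $C''\in\lblY{b}{c}{d}{27.5}$ is $\bar\rho$--equivalent to surgery on a single $Y$--clasper in $\lblY{a+b}{c}{d}{27.5}$, and conversely that every $Y$--clasper in $\lblY{a+b}{c}{d}{27.5}$ can be put in such a form.

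First I would reduce to the case of single-band leaves. Given any representative $Y$--clasper, apply Move 8 and then unzip (exactly as in the proof of Lemma \ref{L:3wedge}), leaf by leaf, to replace it by a collection of $Y$--claspers each of whose three leaves clasps a single band; by linearity of $\bar\rho$ the single-band colours in each slot sum with signs to the original leaf colour, and the second and third leaves produced at a splitting are parallel copies clasping a common band. Then repeatedly apply Lemma \ref{L:monogamousclasper} to merge, within this collection, the single-band first-slot leaves that share their second- and third-slot bands, discarding the $0$--coloured by-products via Lemma \ref{L:(0,a,b)}; by the symmetry of the $Y$--clasper under Habiro's moves the same can be done in the second and third slots. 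The outcome is a single $Y$--clasper whose leaves clasp single bands coloured exactly $(a,c,d)$, respectively $(b,c,d)$, respectively $(a+b,c,d)$.

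With $C'$ and $C''$ in this standard single-band form, I would invoke Step~1 of the leaf-shepherd procedure (Section \ref{SSS:Shepherd}) to arrange that the $c$--coloured leaf of $C'$ clasps the same band as the $c$--coloured leaf of $C''$, and likewise for the $d$--coloured leaves; this is precisely the configuration $(B_1^1,B_1^2,B_2^{1,2},B_3^{1,2})$ required by Lemma \ref{L:monogamousclasper}, and the shepherding hypotheses hold since the bands being identified share the colour $c$ (resp.\ $d$). Lemma \ref{L:monogamousclasper} then yields a single $Y$--clasper with leaves coloured $(a+b,c,d)$ whose surgery is $\bar\rho$--equivalent to surgery on $C'\cup C''$, so that $\lblY{a}{c}{d}{27.5}+\lblY{b}{c}{d}{27.5}\sim_{\bar\rho}\lblY{a+b}{c}{d}{27.5}$, and symmetry of $\sim_{\bar\rho}$ finishes the argument. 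The main obstacle is the bookkeeping in the reduction step: one must verify that the iterated splitting and re-merging really does collapse the collection back to a single $Y$--clasper with the correct leaf colours, that every auxiliary $Y$--clasper created along the way lies in $\ker\Phi$ and is therefore deletable by Lemmas \ref{L:(0,a,b)}--\ref{L:(a,a,b)}, and that the shepherd moves used to align the $c$-- and $d$--slots do not disturb the single-band form already achieved in the remaining slot.
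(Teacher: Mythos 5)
Your proposal is correct and follows essentially the same route as the paper's proof: split leaves into single-band form via word-shortening, Move 8 and unzip, cancel the $\pm x$ counterpart claspers using Lemma \ref{L:monogamousclasper} together with Lemma \ref{L:(0,a,b)}, shepherd the surviving $c$-- and $d$--coloured leaves together, and merge with Lemma \ref{L:monogamousclasper} to produce a single clasper coloured $(a+b,c,d)$. The only cosmetic difference is that the paper first disposes of the degenerate case by assuming $c\wedge d\neq 0$ (via Lemma \ref{L:3wedge}) and processes the three slots one at a time rather than splitting everything at the outset.
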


\begin{proof}
We show that any $A$--coloured Seifert surface $(F,\bar\rho)$ is $\bar\rho$--equivalent to any $A$--coloured Seifert surface $(F^\prime,\bar\rho^\prime)$ obtained from $(F,\bar\rho)$ through a finite sequence of $Y$--clasper surgeries, deletion of an element in $\begin{minipage}{30pt}
\psfrag{a}[c]{\small$a+b$}\psfrag{b}[c]{\small$c$}\psfrag{c}[c]{\small$d$}\includegraphics[width=30pt]{labeledYz}\end{minipage}$, and insertion of  an element in $\lblY{a}{c}{d}{27.5}+ \lblY{b}{c}{d}{27.5}$. The converse follows analogously.\par

Consider $Y$--claspers $C_{1,2}\ass\, A^{1,2}_{1,2,3}\cup E^{1,2}_{1,2,3}$  in $\lblY{a}{c}{d}{27.5}$ and in $\lblY{b}{c}{d}{27.5}$ correspondingly, such that the colours of $A^{1,2}_{2,3}$ are $c$ and $d$ correspondingly. By Lemma \ref{L:3wedge} we may assume that $c\wedge d\neq 0$. As in the proof of Lemma \ref{L:3wedge}, word shorten with respect to an ordered basis $\mathcal{B}$ for $A$ which contains a maximal independent subset of $S\subseteq \set{a,b,c,d}$, and then use Move 8 and unzip to split $A^1_2$, splitting $C_1$ into a collection of claspers each of whom has a distinguished leaf which clasps a single band. The distinguished leaf of each clasper $C^\prime$ in this collection which clasps a band coloured $x\neq c$ has a counterpart $C^{\prime\prime}$ whose corresponding leaf clasps a band coloured $-x$, and these cancel by Lemma \ref{L:monogamousclasper} combined with Lemma \ref{L:(0,a,b)}. Only one clasper $C_1^\prime$ survives, whose distinguished leaf clasps a band labeled $c$. Repeat the above procedure to replace $C_2$ by a corresponding $Y$--clasper $C_2^\prime$, and combine $C_{1,2}^\prime$ using Section \ref{SSS:Shepherd}. Repeat for $A_{3}^{1,2}$. Repeat again for $A_1^{1,2}$, except that this time $C_1$ turns into a clasper whose distinguished leaf clasps a band coloured $a$, while $C_2$ turns into either $n$ claspers whose distinguished leaf clasp single bands labeled $\pm a$ if $b=\pm na$ for $n\in\mathds{N}$, or into a single clasper whose distinguished leaf clasps a band coloured $b$ otherwise. Combine these using Lemma \ref{L:monogamousclasper} to obtain $C\in \begin{minipage}{30pt}
\psfrag{a}[c]{\small$a+b$}\psfrag{b}[c]{\small$c$}\psfrag{c}[c]{\small$d$}\includegraphics[width=30pt]{labeledYz}\end{minipage}$.
\end{proof}

\subsection{Proof of Theorem {\ref{T:clasperprop}}}\label{SS:ClasperProof}

Theorem \ref{T:clasperprop} is equivalent to the statement that two $A$--coloured Seifert surfaces sharing the same Seifert matrix are $\bar\rho$--equivalent if and only if they are related by inserting a $Y$--clasper in $\ker\Phi$ (a $Y_0$-move). This is implied by Proposition \ref{P:Y-barrho}, which we now prove.

\begin{proof}[Proof of Proposition \ref{P:Y-barrho}]
To prove that $\sim_{\bar\rho}$ is an equivalence relation we must prove that it is transitive. $\lblY{a}{b}{c}{27.5}\sim_{\bar\rho}\lblY{d}{e}{f}{28.5}\sim_{\bar\rho}\lblY{g}{h}{i}{27.5}$ implies that
\begin{equation}\lblY{d}{e}{f}{28.5}+\lblY{\bar a}{b}{c}{30}\sim_{\bar\rho} \lblY{g}{h}{i}{27.5}+\lblY{\bar a}{b}{c}{30}.\end{equation}
Adding $\lblY{a}{b}{c}{27.5}$ to both sides implies, by Lemma \ref{L:Ydistribute}, that $\lblY{d}{e}{f}{28.5}\sim_{\bar\rho}\lblY{g}{h}{i}{27.5}$ as required. By Lemma \ref{L:Ydistribute}, $\lblY{\bar a}{b}{c}{30}$ is the inverse of $\lblY{a}{b}{c}{27.5}$, making $C/\sim_{\bar\rho}$ into an abelian group. The map $\hat{\Phi}$ is surjective by Section \ref{SS:Y-nulltwist} and is injective by Lemma \ref{L:3wedge}, therefore it is an isomorphism.
\end{proof}

\section{Coloured untying invariants}\label{S:untyinginvariants}

 We construct invariants of $\rho$--equivalence classes and of $\bar\rho$--equivalence classes. In Sections \ref{S:r12} and \ref{S:A4} these will be used to bound from below the number of such classes, and to determine whether or not two given $G$--coloured knots $(K_{1,2},\rho_{1,2})$ are $\rho$--equivalent or $\bar\rho$--equivalent. In Section \ref{SS:su} we identify an analogue for $A$--coloured surfaces of the coloured untying invariant \cite[Section 6]{Mos06b}, and in Section \ref{SS:cu} we generalize the definition of the coloured untying invariant for covering spaces. The homological algebra parallels the treatment of Lannes and Latour \cite{LaLa75}, using methods in Hatcher \cite{Hat02}, and is condensed. The finitely generated abelian group $A$ is given the structure of a principal ideal ring, which by abuse of notation we also call $A$.

\subsection{An untying invariant for surfaces}\label{SS:su}

Let $(K,\rho)$ be a $G$--coloured knot. Choose a marked Seifert surface $\left(F,\set{x_1,\ldots,x_{2g}}\right)$ for $K$. By the Universal Coefficient Theorem, the colouring $\bar\rho\co H_1(E(F))\twoheadrightarrow A$ corresponds to a cohomology class $\bar\alpha\in H^1(E(F); A)$. Let $r$ be the rank of $A$ as a $\mathds{Z}$--module with presentation

\begin{equation}\label{E:split}
0\Too\mathds{Z}^r\overset{\iota}{\Too}\mathds{Z}^r  \overset{\mathrm{p}}{\Too} A \Too 0.
\end{equation}

\noindent If it happens to be the case that $A$ is of the form $\left(\mathds{Z}/n\mathds{Z}\right)^r$, then $\iota$ is represented by the matrix $nI_r$, and $p$ is the `modulo $n$' map. For $k\in\{1,2,\ldots\}$, the above maps extend by linearity:

\begin{equation}
0\Too\left(\mathds{Z}^r\right)^k\overset{\iota}{\Too}\left(\mathds{Z}^r\right)^k  \overset{\mathrm{p}}{\Too} A^k \Too 0.
\end{equation}

Short exact sequence \ref{E:split} gives rise to a long exact sequence on homology

\[
\cdots\rightarrow H_2(E(F);A))\overset{\raisebox{2pt}{\scalebox{0.8}{$\beta_2$}}}{\rightarrow}  H_1(E(F);\mathds{Z}^r)\overset{\raisebox{2pt}{\scalebox{0.8}{$\iota_\ast$}}}{\rightarrow} H_1(E(F);\mathds{Z}^r) \overset{\raisebox{2pt}{\scalebox{0.8}{$\mathrm{p}_\ast$}}}{\rightarrow} H_1(E(F); A)\rightarrow\cdots
\]

\noindent where $\beta_\ast$ is the Bockstein homomorphism on homology; and to the long exact sequence on cohomology

\[
\cdots\rightarrow H^1(E(F);\mathds{Z}^r)\overset{\iota^\ast}{\rightarrow} H^1(E(F);\mathds{Z}^r)\overset{\mathrm{p}^\ast}{\rightarrow} H^1(E(F);A)\overset{\beta^1}{\rightarrow} H^2(E(F); \mathds{Z}^r)\rightarrow\cdots
\]
\noindent where $\beta^\ast$ is the Bockstein homomorphism on cohomology. We write $[E(F)]$ for the fundamental class of $E(F)$. Define the \emph{surface untying invariant} as

\begin{equation}\label{E:su}
\mathrm{su}(F,\bar\rho)\ass \left\langle\rule{0pt}{11.5pt}  \bar\alpha\smile \beta^1\bar\alpha,[E(F)]\right\rangle\in A.
\end{equation}

\begin{prop}
The surface untying invariant is an invariant of $\bar\rho$--equivalence classes of $G$--coloured knots.
\end{prop}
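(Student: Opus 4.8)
The plan is to show that $\mathrm{su}(F,\bar\rho)$ depends only on the $\bar\rho$--equivalence class, that is, that it is unchanged by (i) tube equivalence of the $A$--coloured Seifert surface and (ii) null-twists. Since $\bar\rho$--equivalence is generated by these two operations (Definition in Section \ref{SS:rhoequiv}), invariance under each suffices. First I would verify well-definedness at the level of a fixed $A$--coloured surface: the cohomology class $\bar\alpha\in H^1(E(F);A)$ is canonically associated to $\bar\rho$ by the Universal Coefficient Theorem and does not depend on the choice of basis for $H_1(F)$, the Bockstein $\beta^1$ is natural, and $[E(F)]$ is the fundamental class; so the cup-product pairing $\langle\bar\alpha\smile\beta^1\bar\alpha,[E(F)]\rangle$ is intrinsic to $(F,\bar\rho)$. (One should note $E(F)$ is a compact oriented $3$--manifold with torus boundary on which $\bar\alpha$ and $\beta^1\bar\alpha$ restrict compatibly, so the evaluation makes sense; a remark that the relevant relative/absolute pairing is the usual one for Seifert surface exteriors, as in the treatment of Lannes--Latour \cite{LaLa75}, will do.)

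Next I would handle tube equivalence. Adding a tube to $F$ changes $E(F)$ by a controlled operation — concretely, by the stabilization move $\Lambda_2$ of Proposition \ref{P:Sequiv}, which attaches a $1$--handle to $F$ and correspondingly alters $H_1(E(F))$ by introducing a hyperbolic pair, one of whose new generators lies in $\ker\bar\rho$ (this is exactly the point used in Section \ref{SS:Y-nulltwist}: the loop ringing $B_1^{\text{new}}$ is $\bar\rho$--trivial). I would show that the new cohomology class $\bar\alpha'$ pulls back from $\bar\alpha$ under the natural map on exteriors in a way compatible with $\beta^1$, and that the fundamental classes correspond; hence the evaluation is unchanged. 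The key computational input is that the extra homology introduced by the tube contributes nothing to the cup square of $\bar\alpha$ because one of the two new classes is killed by $\bar\rho$ and the cup product of a class supported on the tube with $\beta^1\bar\alpha$ restricted to the tube vanishes. Remark \ref{R:tubecomment} reminds us that tube-equivalent $A$--coloured surfaces of a $G$--coloured knot agree only up to inner automorphism of the colouring, but inner automorphisms of $G$ fix $A$ pointwise up to the $\mathcal{C}_m$--conjugation already absorbed into $\bar\rho$, so this introduces no ambiguity in the value of $\mathrm{su}$ — this point should be checked but is routine.

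Then I would handle null-twists. By the tubing construction a null-twist between bands of $F$ is realized by surgery inside $E(F)$ on a $\pm1$--framed unknot lying in $\ker\rho$, i.e. (via Figure \ref{F:HosteMove} and Proposition \ref{P:bordbarrho}) by an elementary $\bar\rho$--bordism $(W,F)$ with $W$ a cylinder plus a single $2$--handle. I would argue that $\bar\rho$ and its Bockstein, being defined via maps to Eilenberg--MacLane spaces, extend over $W$: the map $E(F_1)\to K(A,1)$ classifying $\bar\alpha_1$ extends over $W$ because the attaching curve is $\bar\rho$--null, giving a class $\bar{A}\in H^1(W;A)$ restricting to $\bar\alpha_i$ on the two ends. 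Since $\partial W = E(F_1)\sqcup -E(F_2)\cup(\text{boundary cylinder})$, Stokes' theorem for the cup-product pairing (i.e. $\langle\bar{A}\smile\beta^1\bar{A},\partial[W]\rangle=0$) yields $\mathrm{su}(F_1,\bar\rho_1)=\mathrm{su}(F_2,\bar\rho_2)$, provided the boundary-cylinder piece contributes zero, which it does because the two torus ends cancel in the oriented boundary. This bordism-invariance argument is the conceptual heart, and it is also where I expect the main obstacle to lie: one must be careful that the pairing $\langle-\smile\beta^1(-),[\,\cdot\,]\rangle$ on a $4$--manifold with corners is genuinely a bordism invariant for the relative structure at hand — in particular that $\beta^1\bar{A}$ restricted to the boundary agrees with $\beta^1$ of the restriction, and that no signed contribution from $T^2\times I$ survives. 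Once this is set up, the computation is short. An alternative, if the bordism bookkeeping proves delicate, is to argue directly on surface data: express $\mathrm{su}$ by the explicit linear-algebraic formula (a linking number of push-offs, as advertised in Section \ref{SS:su}) in terms of $(\M,\V)$, observe that a null-twist leaves $(\M,\V)$ unchanged (as noted in Section \ref{SS:Y-nulltwist}) and that $\Lambda_1,\Lambda_2$ preserve the formula by Proposition \ref{P:Sequiv}, and conclude. I would present the bordism argument as the main proof and mention the surface-data computation as a cross-check.
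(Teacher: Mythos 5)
Your decomposition into invariance under tube equivalence and invariance under null-twists is the same as the paper's, and your tube-equivalence argument is essentially the paper's (which disposes of it in one line: a loop around a tube is contractible in $E(K)$, so the extra hyperbolic pair --- one of whose dual classes is $\bar\rho$--trivial --- contributes nothing). For null-twists you take a genuinely different route. The paper stays inside the $3$--manifold: by Poincar\'{e} duality $\mathrm{su}(F,\bar\rho)$ is the algebraic intersection number of $D\bar\alpha$ with $D\beta^1\bar\alpha$ (a self-linking number of the characteristic curve), and since a surgery curve $L\in\ker\bar\rho$ vanishes in $H_1(E(F);A)$ it may be pushed off $D\bar\alpha$, so the surgery is supported away from where the invariant is computed. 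Your Stokes-type argument over the trace $W$ of the surgery is sound in outline --- $\bar{A}$ does extend over the $2$--handle precisely because $\bar\rho([L])=0$, and naturality of the Bockstein gives the restriction statements --- but it forces you to confront the relative-pairing bookkeeping on a $4$--manifold with corners (the contribution of the connecting piece $\partial E(F)\times I$, and the fact that $[E(F)]$ is a relative class) that the paper's $3$--dimensional argument sidesteps. What your route buys is conceptual clarity: it exhibits $\mathrm{su}$ directly as an invariant of elementary $\bar\rho$--bordisms, consistent with Proposition \ref{P:bordbarrho} and Corollary \ref{C:Wallacebound}.

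One correction to your proposed cross-check: a null-twist preserves the colouring vector $\V$ but \emph{not} the Seifert matrix $\M$; Section \ref{SS:Y-nulltwist} asserts only the former, and the equality of Seifert matrices there is a hypothesis, not a consequence. Surgery on the $\pm1$--framed curve $L$ changes $\M$ by $\mp W W^{\thinspace T}$, where $W_i=\Lk(x_i,L)$. The value of Formula \ref{E:su-formula1} is nevertheless unchanged, because $W^{\thinspace T}\V=\bar\rho(L)=0$ kills the correction term --- but that is an additional computation one must carry out, not an observation to be quoted.
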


\begin{proof}
Two $A$--coloured Seifert surfaces of a $G$--coloured knot $(K,\rho)$ have the same surface untying invariant, because there are related by tube equivalence, and a loop around a tube is contractible in $E(K)$.\par

The proof that the surface untying invariant is invariant under null-twists follows \cite[Proposition 17]{Mos06b}. Denote the Poincar\'{e} duality isomorphism by $D$. The Poincar\'{e} dual of $\mathrm{su}(F,\bar\rho)$ is the algebraic intersection number of $D\bar\alpha$ with $D\beta^1\bar\alpha$. A curve $L$ in $\ker\bar\rho$ vanishes in $H_1(E(F);A)$ because it vanishes in $H_1(E(F);aA)$ for each principal ideal $aA$ of $A$ (note that $aA$ is a cyclic ring). Therefore $L$ may be taken to be disjoint from $D\bar\alpha$ as an element of $H_1(E(F);A)$, and surgery on $L$ does not change $\mathrm{su}(F,\bar\rho)$.
\end{proof}

 By Alexander duality, $(\tau^+-\tau^-)$ gives rise to an isomorphism from $H_1(F;A)$ to $H_1(E(F);A)$. We denote by $\bar{a}\in H_1(F;A)$ the Alexander dual of $\bar\alpha$, which satisfies
 \[\widehat{\bar{a}}\ass\,(\tau^+-\tau^-) \bar{a} = D\,\mathrm{p}\iota\beta^1\bar\alpha.\]
 \noindent In the dihedral case, a curve representing $\bar{a}$ was called a \emph{mod $p$ characteristic knot} in \cite{CS75,CS84}. Recall the homological definition for the self-linking number, as in \cite[Chapter 77]{ST34} or in \cite[Page 18]{LaLa75}. For $g\in A$, let $\tilde{g}$ denote $\mathrm{p}^\ast(g)$, the smallest element of $\mathds{Z}^r$ for which $\mathrm{p}(\tilde{g})=g$. The surface untying invariant is seen to be the self-linking number of $\widehat{\bar{a}}$ as follows:

\begin{equation}
\left\langle\rule{0pt}{11.5pt} \widehat{\bar\alpha}\smile\beta^1\widehat{\bar\alpha}\,,[E(F)]\right\rangle= \left\langle\rule{0pt}{11.5pt} (D\mathrm{p}\iota \beta^1)^\ast\,\widehat{\bar{a}}\smile D\,\widehat{\bar{a}} \,,[E(F)]\right\rangle
= \left\langle\rule{0pt}{11.5pt} (\mathrm{p}\iota\beta^1)^\ast D\,\widehat{\bar{a}}\,,\widehat{\bar{a}}\right\rangle.
\end{equation}

 Let us calculate an explicit formula for the surface untying invariant of a $G$--coloured knot $(K,\rho)$ with surface data $(\M,\V)$ with respect to a marked Seifert surface $\left(F,\set{x_1,\ldots,x_{2g}}\right)$. Unraveling the definitions gives

\begin{equation}\label{E:su-formula1}
\mathrm{su}(F,\bar\rho)=\ \rule{0pt}{11pt}\epsilon\,\V^{\,T}(\mathrm{p}\iota\beta^1)^\ast\left(\rule{0pt}{10pt}\M\,t\cdot\V-\M^{\,T}\,\V\right),
\end{equation}

\noindent where $\epsilon\co A^{2g}\to \mathds{Z}^{2g}$ is the augmentation map. For $g\in A$, write $\tilde{g}$ for the smallest element of $\mathds{N}^r\subset\mathds{Z}^r$ for which $\mathrm{p}(\tilde{g})=g$. 
In the special case  $A\approx\left(\mathds{Z}/n\mathds{Z}\right)^r$, Formula \ref{E:su-formula1} simplifies to

\begin{equation}\label{E:su-formula2}
\mathrm{su}(F,\bar\rho)=\ \epsilon\,\V^{\,T}\frac{\M\,\widetilde{t\cdot\V}-\M^{\,T}\,\widetilde{\V}}{n}\bmod n.
\end{equation}

\subsection{An untying invariant for covering spaces}\label{SS:cu}

We set up a parallel construction to the one in Section \ref{SS:su}. Set $\Lambda\ass \mathds{Z}[\mathcal{C}_{m}]$. Denote by $l$ the rank of $A$ as a $\Lambda$-module with presentation

\begin{equation}\label{E:cplit}
0\Too\Lambda^{l}\overset{\iota}{\Too}\Lambda^l  \overset{\mathrm{p}}{\Too} A \Too 0.
\end{equation}

\noindent If it happens to be the case that $A$ is of the form $\left(\mathds{Z}/n\mathds{Z}\right)^r$, then $\iota$ is represented by the matrix $nI_l$, and $p$ assigns to each element of $\Lambda$ its $\mathcal{C}_m$ orbit, modulo $n$. For $k\in\{1,2,\ldots\}$, the above maps extend by linearity:

\begin{equation}
0\Too\left(\Lambda^l\right)^k\overset{\iota}{\Too}\left(\Lambda^l\right)^k  \overset{\mathrm{p}}{\Too} A^k \Too 0.
\end{equation}

A $G$--colouring $\rho\co \pi\twoheadrightarrow G$ of $K$ lifts to an  $A$--colouring $\tilde\rho\co H_1(C_m(K))\twoheadrightarrow A$ of its $m$--fold branched cyclic covering space $C_m(K)$, which corresponds to a cocycle $\alpha\in H^1(C_m(K);A)$ by the Universal Coefficient Theorem. The long exact sequences

\scalebox{0.92}{\parbox{\textwidth}{%
\[
\cdots\rightarrow H_2(C_m(K);A))\overset{\raisebox{2pt}{\scalebox{0.8}{$\beta_2$}}}{\rightarrow} H_1(C_m(K);\Lambda^l)\overset{\raisebox{2pt}{\scalebox{0.8}{$\iota_\ast$}}}{\rightarrow} H_1(C_m(K);\Lambda^l)\overset{\raisebox{2pt}{\scalebox{0.8}{$\mathrm{p}_\ast$}}}{\rightarrow} H_1(C_m(K); A)\rightarrow\cdots
\]}}

\noindent and

\scalebox{0.92}{\parbox{\textwidth}{%
\[
\cdots\rightarrow H^1(C_m(K);\Lambda^l)\overset{\iota^\ast}{\rightarrow} H^1(C_m(K);\Lambda^l)\overset{\mathrm{p}^\ast}{\rightarrow} H^1(C_m(K);A)\overset{\beta^1}{\rightarrow} H^2(C_m(K); \Lambda^l)\rightarrow\cdots
\]}}

\noindent are induced by short exact sequence \ref{E:cplit}. The \emph{coloured untying invariant} is defined by the formula

\begin{equation}\label{E:cu-defn}
\mathrm{cu}(K,\rho)\ass \left\langle\rule{0pt}{11.5pt}  \alpha\smile \beta^1\alpha,[C_m(K)]\right\rangle\in A.
\end{equation}

The argument of \cite[Proof of Proposition 17]{Mos06b} shows the following:

\begin{prop}
The coloured untying invariant is an invariant of $\rho$--equivalence classes of $G$--coloured knots.
\end{prop}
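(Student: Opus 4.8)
The plan is to reduce this statement to the already-established invariance of the surface untying invariant (Section \ref{SS:su}) by exploiting the relationship between $\tilde\rho$ on $C_m(K)$ and $\bar\rho$ on $E(F)$ described in Section \ref{SS:ASeif}. First I would observe that a twist move on $(K,\rho)$ comes from integral Dehn surgery around a $\pm1$--framed unknot $L$ in $\ker\rho$; by the branched covering construction, $L$ lifts to a link $\tilde L = L_0 \cup \cdots \cup L_{m-1}$ in $C_m(K)$, and the effect of the twist move upstairs is surgery around this equivariant lifted link. So the real content is that $\mathrm{cu}(K,\rho)$, defined via the cup product $\langle \alpha \smile \beta^1\alpha, [C_m(K)]\rangle$, is unchanged by such surgery.

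The key step, mirroring \cite[Proof of Proposition 17]{Mos06b} and the proof given for $\mathrm{su}$ above, is a Poincar\'e-duality argument: the dual of $\mathrm{cu}(K,\rho)$ is the self-linking number of a cycle $\widehat{a}\in H_1(C_m(K);A)$ Alexander/Poincar\'e dual to $\alpha$, and it suffices to show that each component $L_i$ of $\tilde L$ can be isotoped off a cycle representing $\widehat{a}$. This follows because each $L_i$ lies in $\ker\tilde\rho$ — it is a lift of a curve in $\ker\rho$ — hence vanishes in $H_1(C_m(K);aA)$ for every principal ideal $aA\subseteq A$ (each $aA$ being a cyclic ring), and therefore vanishes in $H_1(C_m(K);A)$ by the primary decomposition plus the integral case handled via Proposition \ref{P:TrotterProp}-type nonsingularity. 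Since $[L_i]=0$ in $H_1(C_m(K);A)$, it represents $0$ in the relevant homology with $A$--coefficients and may be pushed off the support of $\widehat a$; surgery on a framed curve disjoint from $\widehat a$ does not change the self-linking number, and one checks the framing contribution also lies in $\ker$ of the relevant pairing. One also needs that $\mathrm{cu}$ is well-defined under the choice of lift and under ambient isotopy of $K$; the ambient-isotopy invariance together with Lemma \ref{L:inneriso} handles the inner-automorphism ambiguity, since conjugating $\rho$ by $t$ corresponds to cyclically permuting the sheets $R_0,\ldots,R_{m-1}$, which acts on $H_*(C_m(K))$ by an isomorphism preserving the cup-product pairing.

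The main obstacle, I expect, is the homological bookkeeping needed to justify that $L_i$ is nullhomologous in $H_1(C_m(K);A)$: unlike the $E(F)$ case where $H_1(E(F))$ is free abelian, $H_1(C_m(K))$ is a $\Lambda$--module that can have torsion, so one must argue separately over each primary component of $A$ and over the torsion-free part, reducing in each case to coefficients in a cyclic ring (a field or $\mathbb{Z}/p^k$) where the vanishing of $[L_i]$ follows from $L_i\in\ker\tilde\rho$ together with the surjectivity of $\tilde\rho$. A secondary technical point is verifying that the framing ($\pm1$) contribution to the self-linking number is absorbed; here one uses that the twist move is a null-twist/Fenn--Rourke move between strands whose $\bar\rho$--labels multiply to the identity, so the relevant linking data projects to $0$ in $A$, exactly as in the $\mathrm{su}$ argument. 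Once these vanishing statements are in hand, the cup-product formula for $\mathrm{cu}$ is manifestly unchanged, completing the proof.
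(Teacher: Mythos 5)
Your proposal is correct and follows essentially the same route as the paper, which simply invokes the argument of \cite[Proof of Proposition 17]{Mos06b}: lift the $\pm1$--framed surgery curve (which links $K$ trivially mod $m$ since it lies in $\ker\rho$) to $C_m(K)$, use Poincar\'e duality to read $\mathrm{cu}$ as a self-linking number, and push the lifted components off the dual cycle because $\tilde\rho$ kills them. Your extra remarks on the inner-automorphism/sheet-permutation ambiguity and the framing contribution are consistent with the paper's treatment (Lemma \ref{L:inneriso} and the parallel $\mathrm{su}$ argument), so nothing essential is missing.
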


The coloured untying invariant is the self-linking number of $a\ass D\,\mathrm{p}\iota\beta^1\alpha$ as is seen via

\begin{equation}\label{E:cu-is-sl}
\left\langle\rule{0pt}{11.5pt} \alpha\smile\beta^1\alpha\,,[C_m(K)]\right\rangle= \left\langle\rule{0pt}{11.5pt} (D\mathrm{p}\iota \beta^1)^\ast\, a\smile D a \,,[C_m(K)]\right\rangle
= \left\langle\rule{0pt}{11.5pt} (\mathrm{p}\iota \beta^1)^\ast\, Da\,,a\right\rangle.
\end{equation}

 We would next like an explicit formula for the coloured untying invariant of a $G$--coloured knot $(K,\rho)$ with surface data $(\M,\V)$ with respect to a marked Seifert surface $\left(F,\set{x_1,\ldots,x_{2g}}\right)$. We work this out for $m>0$. It turns out that the easiest way to do this is in two stages, first by regarding the coloured untying invariant as a $\tilde\rho$--equivalence invariant by forgetting the action of $\mathcal{C}_m$ on $A$, then by obtaining an explicit formula for this invariant, and then by adding this $\mathcal{C}_m$ action back `by hand'. Note that the analogues of Equations \ref{E:cu-defn} and \ref{E:cu-is-sl} will continue to hold (with analogous proofs). Thus, having forgotten the covering transformations, $\tilde\rho$ corresponds to a cocycle $\tilde\alpha\in H^1(C_m(K);A)$, and for $\tilde{a}\ass D\,\mathrm{p}\iota\beta^1\tilde\alpha$ we have

\begin{equation}\label{E:cu-defn-2}
\widetilde{\mathrm{cu}}(C_m(K),\tilde\rho)\ass \left\langle\rule{0pt}{11.5pt} \tilde\alpha\smile \beta^1\tilde\alpha,[C_m(K)]\right\rangle=\left\langle\rule{0pt}{11.5pt} (\mathrm{p}\iota \beta^1)^\ast\, D\tilde{a}\,,\tilde{a}\right\rangle\in A,
\end{equation}

\noindent which is an invariant of $\tilde{\rho}$--equivalence classes of $G$--coloured knots.\par

 For $m>0$, push a Seifert surface $F$ for $K$ into $D^4$. The intersection form of the $m$--fold branched cyclic cover of this manifold represents the linking form of its boundary, which is $C_m(K)$. Kauffman in \cite[Proposition 5.6]{Kau74} gives the matrix representing this linking form with respect to the basis $\set{t^jx_i}_{\substack{1\leq j\leq m-2;\\1\leq i\leq 2g.}}$ as
\[
L(\M)\ass\ \left[
\begin{matrix}
\M+\M^{\,T} & \M^{\,T} & 0 & 0 &\cdots & 0 & 0 & 0\\
\M & \M+\M^{\,T} & \M^{\,T} & 0 & \cdots & 0 & 0 & 0\\
0 & \M & \M+\M^{\,T} & \M^{\,T} & \cdots & 0 & 0 & 0\\
\vdots & \vdots & \vdots & \vdots & \ddots & \vdots & \vdots & \vdots \\
0 & 0 & 0 & 0 & \cdots & 0 & \M & \M+\M^{\,T}
\end{matrix}\right],
\]

\noindent where the sign and transpose differences are due to differences between our orientation conventions and the ones used by Kauffman.
Set

\[\V_{(m)}\ass\left(\V; t\cdot \V;\ldots; t^{m-2}\cdot \V\right).\]

We obtain the explicit formula

\begin{equation}\label{E:L-Equation-1}
\widetilde{\mathrm{cu}}(C_m(K),\tilde\rho)=\ \epsilon\,\V_{(m)}^{\,T}(\iota\beta^1)^\ast
\left(L(\M)\widetilde{\V}_{(m)}\right).
\end{equation}

In the special case  $A\approx\left(\mathds{Z}/n\mathds{Z}\right)^r$, this simplifies to

\begin{equation}\label{E:L-equation-2}
\widetilde{\mathrm{cu}}(C_m(K),\tilde\rho)= \ \epsilon\,\V_{(m)}^{\,T}\frac{L(\M)\widetilde{\V}_{(m)}}{n}\bmod n.
\end{equation}

Finally, notice that the action of $\mathcal{C}_m$ on $A$ sends $\V_{(m)}$ to $t\cdot\V_{(m)}$, leaving invariant the right hand side of Equation \ref{E:L-equation-2}. Thus,

\begin{equation}\label{E:L-equation-3}
\widetilde{\mathrm{cu}}(C_m(K),\tilde\rho)= \mathrm{cu}(K,\rho).
\end{equation}

\subsection{The $S$--equivalence class of the colouring}\label{SS:sequiv}

Let $(K,\rho)$ be a $G$--coloured knot, with surface data $(\M,\V)$ with respect to a choice $\left(F,\set{x_1,\ldots,x_{2g}}\right)$ of marked Seifert surface for $K$. Let $P$ be a unimodular matrix such that
\begin{equation}
P^{\thinspace T}\thinspace \left(\M-\M^{\thinspace T}\right)\thinspace P= \left[\begin{matrix}0 & -1\\
1 & 0\end{matrix}\right]^{\oplus g}.
\end{equation}
Write $P^{-1}\V\ass \left(v_1;\,\ldots;v_{2g}\right)$, and define the \emph{$S$--equivalence class of the colouring}

\begin{equation}\label{E:s-equiv-formula}
\mathrm{s}(K,\rho)= \sum_{j=1}^{g_2}v_{2j-1}\wedge v_{2j}\in A\wedge A.
\end{equation}


As we defined $S$--equivalence for surface data, it can be defined for vectors. Two vectors $\V_{1,2}$ are said to be
\emph{$S$--equivalent} if there exist matrices $\M_{1,2}$ such that $(\M_1,\V_1)$ and $(\M_2,\V_2)$ are $S$--equivalent. The following proposition shows that the $S$--equivalence class of the colouring is a well-defined invariant of $\bar\rho$--equivalence classes of $G$--coloured knots, and it explains what it measures.

\begin{prop}\label{P:ColClass}
 Given a pair of surface data $(\M_{1,2},\V_{1,2})$, colouring vectors $\V_{1,2}$ are $S$--equivalent if and only if, for any $G$--coloured knots $(K_{1,2},\rho_{1,2})$ with surface data $(\M_{1,2},\V_{1,2})$ correspondingly, corresponding to a choice of marked Seifert surfaces for each, we have
\[
\mathrm{s}(K_{1},\rho_1)=\ \mathrm{s}(K_{2},\rho_2).
\]
\end{prop}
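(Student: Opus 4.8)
The plan is to prove that $\mathrm{s}$ is an invariant of $S$-equivalence of surface data, and then that, as such, it is a complete invariant on the level of colouring vectors. It is convenient to regard $\V$ as the matrix of the homomorphism $\bar\rho\co\mathds{Z}^{2g}\to A$ sending $e_i$ to $v_i$; unwinding the definition, $\mathrm{s}(\M,\V)=\bigl(\bigwedge^{2}\bar\rho\bigr)(\beta_N)$, where $N=\M-\M^{T}$ and $\beta_N\in\bigwedge^{2}\mathds{Z}^{2g}$ is the bivector canonically dual to the unimodular alternating form $N$. Since $\mathrm{s}(K,\rho)$ is computed from surface data of $(K,\rho)$, and any two marked Seifert surfaces for $(K,\rho)$ yield $S$-equivalent surface data (Definition \ref{D:Sequiv}), once the first point is in hand the proposition reduces to comparing $\mathrm{s}(\M_1,\V_1)$ with $\mathrm{s}(\M_2,\V_2)$.

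For the forward implication it suffices, by Proposition \ref{P:Sequiv}, to check invariance of $\mathrm{s}$ under the choice of $P$ and under $\Lambda_1$ and $\Lambda_2$. Two admissible $P$'s differ by right multiplication by an element of $\mathrm{Sp}(2g,\mathds{Z})$, which fixes the canonical bivector; the move $\Lambda_1$ sends $(\M,\V)$ to $(U^{T}\M U,U^{-1}\V)$ and one verifies that $U^{-1}P$ serves as the new $P$, so that the entries $v_i$ are literally unchanged; the move $\Lambda_2$ adjoins to $N$ a standard hyperbolic block and to $\V$ a pair one of whose entries is $0$, so the new summand of $\mathrm{s}$ is $0\wedge w=0$. (Because the hypothesis refers to the companion matrices $\M_i$ as actually given, one also records that among \emph{valid} surface data $\mathrm{s}(\M,\V)$ depends only on $\V$; this is part of the same well-definedness, read off from the identity $\M^{T}\V=\M(t\cdot\V)$.) Thus $S$-equivalent surface data have equal $\mathrm{s}$, and $\V_1\sim_S\V_2$ forces $\mathrm{s}(K_1,\rho_1)=\mathrm{s}(K_2,\rho_2)$.

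For the converse, assume $\mathrm{s}(\M_1,\V_1)=\mathrm{s}(\M_2,\V_2)=\sigma$ and reduce each $(\M_i,\V_i)$ by $\Lambda_1,\Lambda_2$-moves to a common normal form. \emph{First}, a $\Lambda_1$-move puts $N_i$ in the standard form $\bigl[\begin{smallmatrix}0&1\\-1&0\end{smallmatrix}\bigr]^{\oplus g_i}$, so that $\sigma=\sum_j (v_i)_{2j-1}\wedge(v_i)_{2j}$ with the literal entries. \emph{Second}, one enlarges the integer span $M_{\V_i}$ of the entries of $\V_i$ up to all of $A$: a $\Lambda_2$-move adjoins a hyperbolic pair coloured $(0,\,u\cdot\sum_j c_jv_j)$ with $u$ a unit ($t-1$ or $\tfrac{t-1}{t}$) and arbitrary integers $c_j$, which leaves $\sigma$ unchanged and replaces $M_{\V_i}$ by $M_{\V_i}+u\cdot M_{\V_i}$; since $\{v_j\}$ generates $A$ as a $\mathcal{C}_m$-module (Proposition \ref{P:HNN}) and the elements $(t-1)^{k}$ and $(1-t^{-1})^{k}$ generate the ring $\mathds{Z}[\mathcal{C}_m]$, finitely many such moves achieve $M_{\V_i}=A$, i.e. $\bar\rho_i$ becomes surjective. \emph{Third}, pad the shorter vector with trivial $\Lambda_2$-stabilisations coloured $(0,0)$ until $g_1=g_2=g$ is as large as desired; $\sigma$ and surjectivity persist.

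The remaining step is the crux, and the part I expect to be hardest. After the reductions, $\V_1$ and $\V_2$ represent surjections $\bar\rho_{1,2}\co\mathds{Z}^{2g}\twoheadrightarrow A$ with $\bigl(\bigwedge^{2}\bar\rho_1\bigr)(\beta)=\sigma=\bigl(\bigwedge^{2}\bar\rho_2\bigr)(\beta)$ for the standard bivector $\beta$, and one must produce $U\in\mathrm{Sp}(2g,\mathds{Z})$ with $\bar\rho_2=\bar\rho_1\circ U$, after which a single $\Lambda_1$-move finishes the proof. This is a stable-range transitivity statement: the pushed-forward symplectic bivector is a complete invariant of a surjection onto $A$ modulo $\mathrm{Sp}(2g,\mathds{Z})$ once $g$ is large compared with the rank and the minimal number of generators of $A$. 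I would prove it by explicit symplectic reduction, using symplectic transvections across hyperbolic planes together with the spare hyperbolic pairs from the third step to bring each $\V_i$ to a fixed shape $(a_1,b_1,\dots,a_p,b_p,c_1,0,c_2,0,\dots,c_q,0,0,\dots)$ with $\sum_l a_l\wedge b_l$ a chosen presentation of $\sigma$ and $\{c_l\}$ a chosen generating set of $A$, then absorbing the dependence on these choices by further $\mathrm{Sp}$-moves available in the large-$g$ range, and checking at each stage that every move remains in the locus $\{\,\mathrm{s}=\sigma,\ \bar\rho\text{ onto}\,\}$. The bookkeeping of how the symplectic generators act on the entries of $\V$ is routine but is where the real work lies.
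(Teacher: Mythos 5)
Your forward direction is sound and matches the paper's: invariance of $\mathrm{s}$ under $\Lambda_1$ (via generators of $\mathrm{Sp}(2g,\mathds{Z})$, or equivalently your bivector formulation) and under $\Lambda_2$ (the new hyperbolic pair contributes $0\wedge w=0$). Your ``second'' step in the converse is unnecessary -- Proposition \ref{P:HNN}(3)(a) already requires the entries of a colouring vector to generate $A$ as an abelian group, so $\bar\rho$ is surjective from the start -- but that is harmless. The problem is the last step.

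The crux of the converse is exactly the claim you defer as ``routine bookkeeping'': that if $\sum_j v^1_{2j-1}\wedge v^1_{2j}=\sum_j v^2_{2j-1}\wedge v^2_{2j}$ in $A\wedge A$, then after stabilisation the two vectors lie in one $\mathrm{Sp}(2g,\mathds{Z})$-orbit. Your plan is to bring each vector to a normal form built from ``a chosen presentation of $\sigma$'', but two presentations of the same element of $A\wedge A$ differ by the defining relations $a\wedge(b+c)=a\wedge b+a\wedge c$ and $a\wedge a=0$ (plus antisymmetry), and the entire mathematical content of the completeness direction is showing that each of these relations is realised by explicit $\Lambda_1$/$\Lambda_2$ moves on vectors. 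You assert this can be ``absorbed by further $\mathrm{Sp}$-moves in the large-$g$ range'' but exhibit none of them; without them the normal form is not well defined and the argument does not close. The paper factors this work cleanly: it constructs an explicit section $\psi\co A\wedge A\to\bigcup_g A^{2g}_{\text{full}}/\Lambda_{1,2}$, presents $A\wedge A$ as a free commutative monoid on $A^2$ modulo the moves $S_{1,2}$ above, and checks that $S_1$ is realised by a $\Lambda_2$-move followed by a $\Lambda_1$-move with the concrete symplectic matrix $I_{2g-2}\oplus\left[\begin{smallmatrix}1&0&0&0\\ 0&1&0&1\\ -1&0&1&0\\ 0&0&0&1\end{smallmatrix}\right]$, that $S_2$ is realised by a transvection followed by an inverse $\Lambda_2$-move, and that commutativity is a block permutation; well-definedness of $\psi$ together with $\varphi\circ\psi=\mathrm{id}$ then yields the proposition. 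Until you supply the analogous explicit matrices (or an actual proof of your stable-transitivity claim), your converse is an outline of the theorem rather than a proof of it.
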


\begin{proof}[Proof of Proposition {\ref{P:ColClass}}]
Identify the symplectic group  $\mathrm{Sp}(2g,\mathds{Z})$ with the group of integral square matrices $P$ satisfying

\begin{equation}
P^{\,T}\left[\begin{matrix}0 & -1\\
1 & 0\end{matrix}\right]^{\oplus g} P= \left[\begin{matrix}0 & -1\\
1 & 0\end{matrix}\right]^{\oplus g}.
\end{equation}

By an argument of Rice \cite{Ric71}, two Seifert matrices $\M_{1,2}$ are $S$--equivalent if and only if there exist Seifert matrices $\M_{3,4}$ which are $S$--equivalent to $\M_{1,2}$ correspondingly such that $\M_{3,4}-\M_{3,4}^{\,T}=\left[\begin{smallmatrix}0 & -1\\ 1 & 0\end{smallmatrix}\right]^{\oplus g}$ and $\M_{3}$ is $S$--equivalent to $\M_{4}$ via a finite sequence of $\Lambda_2$-moves, and $\Lambda_1$-moves of the form $\M\mapsto P^{\,T}\,\M\thinspace P$, with $P \in \mathrm{Sp}(2g,\mathds{Z})$. We may therefore assume that $\M$ satisfies $\M-\M^{\,T}=\left[\begin{smallmatrix}0 & -1\\ 1 & 0\end{smallmatrix}\right]^{\oplus g}$ without loss of generality.

The $S$--equivalence relation on symplectic matrices induces an equivalence relation on the corresponding colouring vectors. Let $A^{2g}_{\text{full}}$ denote the set of vectors in $A^{2g}$ whose entries together generate $A$. A $\Lambda_1$-move on surface data sends a colouring vector $\V\in A^{2g}_{\text{full}}$ to a vector $P^{-1}\V$, for $P\in  \mathrm{Sp}(2g,\mathds{Z})$. A $\Lambda_2$-move sends a colouring vector $(v_{1};\,\ldots;v_{2g})\in A^{2g}_{\text{full}}$ to a colouring vector $(v_{1};\,\ldots;v_{2g};0;y)\in A^{2g+2}_{\text{full}}$ for any $y\in A$.\par

Define a map

\begin{equation}
\begin{aligned}
\varphi\co \V\in \bigcup_{g\in\mathds{N}^\ast}A^{2g}_{\text{full}}\left/\rule{0pt}{11pt}\Lambda_{1,2}\right.&\Too A\wedge A\\
(v_1;\,\ldots;v_{2g}) &\mapsto\ \ \sum_{j=1}^{g_2}v^{2}_{2j-1}\wedge
v^{2}_{2j}
\end{aligned}.
\end{equation}

\noindent We next show that $\varphi$ is well-defined. Because $a\wedge 0=0$ for any $a\in A$, the $\varphi$--image of a vector $\V\in A^{2g}_{\text{full}}$ is not changed by a $\Lambda_2$-move. To see that it is not changed by a $\Lambda_1$ either, use the fact that $\mathrm{Sp}(2g,\mathds{Z})$ is generated by

\begin{equation}
R^{\,T}\left[\begin{matrix} 0 & I_g\\
-I_g & 0
\end{matrix}\right]R,\ \ R^{\,T}\left[\begin{matrix} A & 0\\
0 & (A^{\,T})^{-1}\end{matrix}\right]R,\ \ R^{\,T}\left[\begin{matrix} I_g & 0\\
B & I_g
\end{matrix}\right]R,
\end{equation}

\noindent with $A\in \mathrm{GL}(g,\mathds{Z})$, and $B$ a symmetric integral matrix (see \textit{e.g.} \cite[Proposition A5]{Mum83}). Above, $R$ denotes the integral matrix satisfying

\begin{equation}
R^{\,T} \left[\begin{matrix}0 & -I_g\\ I_g & 0\end{matrix}\right] R=  \left[\begin{matrix}0 & -1\\ 1 & 0\end{matrix}\right]^{\oplus g}.
\end{equation}

The reader may verify directly that the $\varphi$--image of a vector $\V\in A^{2g}_{\text{full}}$ is not changed by left multiplication by any of the above basis elements.\par

Next, we construct the inverse map

\begin{equation}
\psi\co A\wedge A\Too \bigcup_{g\in\mathds{N}^\ast}A^{2g}_{\text{full}}\left/\rule{0pt}{11pt}\{\Lambda_{1,2}\}\right.
\end{equation}

Let $b_1,\ldots,b_r$ be a fixed basis for $A$, and let $X\ass
\sum_{1\leq i<j\leq r}c_{i,j}{b_{i}\wedge b_j}$ be some element of
$A\wedge A$. If $c_{1,2}>0$, we
set $v_1,\ldots,v_{2c_{1,2}-1}$ to $s_1$ and we set
$v_{2},\ldots,v_{2c_{1,2}}$ to $s_2$. If $c_{1,3}>0$, we set
$v_{2c_{1,2}+1},\ldots,v_{2c_{1,2}+2c_{1,3}-1}$ to $s_1$ and
$v_{2c_{1,2}+2},\ldots,v_{2c_{1,2}+2c_{1,3}}$ to $s_3$, and so on
lexicographically, until we finish with $c_{r-1,r}$. We conclude by
setting $v_{2C+2k-1}$ to $0$ and setting $v_{2C+2k}$ to $s_k$ for
$k=1,\ldots,r$, where $C$ denotes $\sum_{1\leq i<j\leq r}c_{i,j}$. By construction, entries in this colouring vector, whose length is $g\ass 2C+2r$, together generate $A$. For example, for $A$ generated by $s_1,s_2,s_3$, we would have $\psi(2s_1\wedge s_2)=(s_1;s_2;s_1;s_2;0;s_1;0;s_2;0;s_3)$.\par

To prove that $\psi$ is well-defined, identify $A\wedge A$ with the free commutative monoid over $A^2$ modulo moves $S_{1,2}$, where $S_1$ takes elements of the form $a\wedge (b+c)$ to elements of the form $a\wedge b + a\wedge c$, and $S_2$ takes elements of the form $a\wedge a$ to zero. We call this monoid $\mathcal{M}$.

First, for $X\in\mathcal{M}$, commutativity of $\mathcal{M}$ corresponds to a $\Lambda_1$-move on $\psi(X)$ with matrix $P=I_{2i}\oplus \left[\begin{smallmatrix}0 & 1 & 0 & 0\\
1 & 0 & 0 & 0\\ 0 & 0 & 0 & 1\\ 0 & 0 & 1 & 0\end{smallmatrix}\right]$. The effect of an $S_1$-move is replicated in $A^{2g}_{\text{full}}\left/\rule{0pt}{11pt}\Lambda_{1,2}\right.$ by first applying a $\Lambda_2$-move
\begin{equation}
\psi(X)=(v_1;\,\ldots\,;v_{2g-2}\,; a\,; (b+c))\mapsto (v_1;\,\ldots\,;v_{2g-2}\,; a\,; (b+c)\, ; 0\, ; c),
\end{equation}
\noindent and then applying a $\Lambda_1$-move with matrix
\begin{equation}
P= I_{2g-2}\oplus \left[\begin{matrix}1 & 0 & 0 & 0\\ 0 & 1 & 0 & 1\\ -1 & 0 & 1 & 0\\ 0 & 0 & 0 & 1\end{matrix}\right].
\end{equation}
\noindent The result is the vector $(v_1;\,\ldots\,;v_{2g-2}\,; a\,; b\, ; a\, ; c)$, as desired. The effect of an $S_2$-move is replicated by a $\Lambda_1$-move with matrix $P= I_{2g}+E_{2g,g-1}$ to get
\begin{equation}\psi(X)=(v_1;\,\ldots\,;v_{2g-2}\,; a\,; a)\mapsto (v_1;\,\ldots\,;v_{2g-2}\,; 0\,; a),\end{equation}
\noindent after which a $\Lambda_2$-move erases the last two entries, and we obtain $(v_1;\,\ldots\,;v_{2g-2})$ as desired.\par

We have shown that both $\varphi$ and $\psi$ are well-defined, and following through the definitions shows that $\varphi(\psi(X))=X$ for any $X\in A\wedge A$. So $\varphi$ is invertible, and is therefore an isomorphism.
\end{proof}

Because null-twists don't change the colouring vector, Proposition \ref{P:ColClass} implies the following.

\begin{cor}
The element $\mathrm{s}(K,\rho)\in A\wedge A$ is an invariant of $\bar\rho$--equivalence classes of $G$--coloured knots.
\end{cor}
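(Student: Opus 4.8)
The plan is to reduce the statement to Proposition \ref{P:ColClass}. By definition, $\bar\rho$--equivalence is generated by two elementary operations: null-twists, and tube equivalence of $G$--coloured knots. It therefore suffices to check that each of these, performed with respect to a suitable marked Seifert surface, replaces the colouring vector of $(K,\rho)$ by an $S$--equivalent colouring vector (in the sense of the definition preceding Proposition \ref{P:ColClass}); Proposition \ref{P:ColClass} then forces the value of $\mathrm{s}$ to be unchanged.

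For tube equivalence, I would fix marked Seifert surfaces for the two $G$--coloured knots and appeal to Definition \ref{D:Sequiv} together with Proposition \ref{P:Sequiv}: adding or removing a tube, together with the accompanying change of basis for $H_1(F)$, carries surface data $(\M,\V)$ to surface data related to it by $\Lambda_1$-- and $\Lambda_2$--moves, so in particular the two colouring vectors are $S$--equivalent. For a null-twist, I would use the tubing construction to realise it as a twist move between bands of a Seifert surface $F$ for $K$. The associated $\pm1$--framed surgery curve lies in $\ker\rho$, hence its homology class lies in $\ker\bar\rho$; it is disjoint from $F$ and it leaves the chosen basis of $H_1(F)$ untouched. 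Consequently the restriction $\bar\rho\co H_1(E(F))\twoheadrightarrow A$, and with it the colouring vector $\V$, are unchanged --- this is exactly the observation recorded in Section \ref{SS:Y-nulltwist}. Chaining these facts along an arbitrary $\bar\rho$--equivalence $(K_1,\rho_1)\sim_{\bar\rho}(K_2,\rho_2)$ yields an $S$--equivalence between the associated colouring vectors, and Proposition \ref{P:ColClass} gives $\mathrm{s}(K_1,\rho_1)=\mathrm{s}(K_2,\rho_2)$.

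Two small points would complete the argument. First, $\mathrm{s}(K,\rho)$ must be well defined for a fixed marked Seifert surface, \emph{i.e.}\ independent of the unimodular matrix $P$ normalising $\M-\M^{\thinspace T}$; since two such choices differ by an element of $\mathrm{Sp}(2g,\mathds{Z})$, this is precisely the $\mathrm{Sp}(2g,\mathds{Z})$--invariance (and stabilisation-invariance) of the map $\varphi$ verified inside the proof of Proposition \ref{P:ColClass}, so nothing new is needed. Second, the claim that a null-twist leaves $\V$ \emph{literally} unchanged --- rather than merely $S$--equivalent --- is the one place where one has to look a little carefully: it amounts to tracking the dual homology basis $\set{\xi_i}$ through the Rolfsen twist that realises the surgery. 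But this too is already contained in Section \ref{SS:Y-nulltwist}. Beyond this bookkeeping I do not expect any genuine obstacle; the corollary is a formal consequence of Proposition \ref{P:ColClass}, where all the real content resides.
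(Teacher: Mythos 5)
Your argument is correct and is essentially the paper's own: the paper derives the corollary in one line from Proposition \ref{P:ColClass} together with the observation that null-twists do not change the colouring vector, with tube equivalence handled implicitly because $\mathrm{s}$ is already defined on $S$--equivalence classes of colouring vectors. Your write-up just makes these two checks explicit.
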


\begin{rem}
The proof that $\psi$ is well-defined is an algebraic version of the band sliding arguments of \cite[Section 4.2]{KM09}.
\end{rem}

\begin{rem}
If it were necessary, we could upgrade $\mathrm{s}(K,\rho)\in A\wedge A$ to a $\hat\rho$--invariant by considering $A\wedge A$ as a $\mathcal{C}_m$-module with respect to the diagonal action of $t$.
\end{rem}

\section{Groups whose commutator subgroup has small rank}\label{S:r12}

Armed with the tools of Section \ref{S:untyinginvariants}, we are now in a position to find complete sets of base-knots for some metabelian groups $G$ of the particularly simple form $G=\mathcal{C}_m\ltimes_\phi (\mathds{Z}/n\mathds{Z})^r$ for $r\leq 2$, where the order of $\phi$ is $m$. Then $\phi$ is represented by an integer matrix $N$. In Section \ref{SS:r1} we consider the case $r=1$, and we find a complete set of base-knots for metacyclic groups for which $2(\phi^{-3}-\mathrm{id})$ is invertible. This generalizes \cite[Sections 4.2, 4.3 and 5.1]{KM09}, where the $m=2$ case is treated. In Section \ref{SS:r2}  we find complete sets of base knots for certain families of groups $G$ of the form $\mathcal{C}_m\ltimes_{\phi}(\mathds{Z}/n_1\mathds{Z}\times\mathds{Z}/n_2\mathds{Z})$.\par
The strategy is always the same. Relative bordism gives an upper bound on the number of $\bar\rho$--equivalence classes via the K\"{u}nneth Formula. To find a lower bound, choose a colouring vector to represent each $S$--equivalence class, and solve $\M\V N=\M^{\,T}\V$ (Proposition \ref{P:HNN}) for $\M$ over $A$. If an entry of $\M$ is not determined, set it to zero, if it is determined then set it to that value, and if the equation for that entry admits no solutions, then there are no $G$--coloured knots in that equivalence class. Finally, to get different values for the surface untying invariant (Equation \ref{E:su-formula2}), add `$A$--torsion' elements to $\M$. This gives a list of surface data representing non-$\bar\rho$--equivalent $G$--coloured knots, and if the length of the list equals the upper bound then we are finished. For $\rho$--equivalence, check that these $G$--coloured knots all have different coloured untying invariants using Equation \ref{E:L-equation-2}.\par

Throughout this section, for $a\in \mathds{Z}/n\mathds{Z}$, let $\tilde{a}\in\mathds{N}=\{0,1,2,\ldots\}$ denote the smallest natural number such that $a=\tilde{a}\bmod n$ unless otherwise specified.

\subsection{The $r=1$ case}\label{SS:r1}

\subsubsection{$\bar\rho$--equivalence}\label{SSS:R1M0}
 Groups of the form $\mathcal{C}_m\ltimes_{\phi}(\mathds{Z}/n\mathds{Z})$ are called \emph{metacyclic groups}. Note that, because both $\phi$ and $\phi-\mathrm{id}$ are invertible (\textit{e.g} \cite[Proposition 14.2]{BZ03}), it follows that $m,n>0$. The automorphism $\phi$ takes the form $\phi(s)=\xi s$ with respect to a fixed generator $s$ for $\mathds{Z}/n\mathds{Z}$, where $\xi^m=1\bmod n$. Both $\xi$ and $\xi-1$ are units.\par

 The relative bordism upper bound $\abs{H_3(\mathds{Z}/n\mathds{Z};\mathds{Z})}=n$ for $\bar\rho$--equivalence classes coming from Corollary \ref{C:Wallacebound} coincides with the lower bound coming from Section \ref{S:untyinginvariants}, which is given as $\abs{\mathds{Z}/n\mathds{Z}\wedge \mathds{Z}/n\mathds{Z}}\abs{\mathds{Z}/n\mathds{Z}}=1\cdot n=n$. A complete set of base-knots with respect to $\bar\rho$--equivalence are the twist knots $(T_k,\rho_{k})$ of Figure \ref{F:metacycbase}, with surface data

\begin{figure}
\psfrag{l}[c]{\Huge$\vdots$}
\begin{minipage}{2in}\centering
\psfrag{t}[c]{$t$}\psfrag{s}[c]{$ts$}\psfrag{k}[r]{\parbox{0.5in}{\tiny$kn+a$\\[0.1cm]
\ \ \
$\frac{1}{2}$--twists}\normalsize$\left\{\rule{0pt}{0.55in}\right.$}
\includegraphics[width=1.1in]{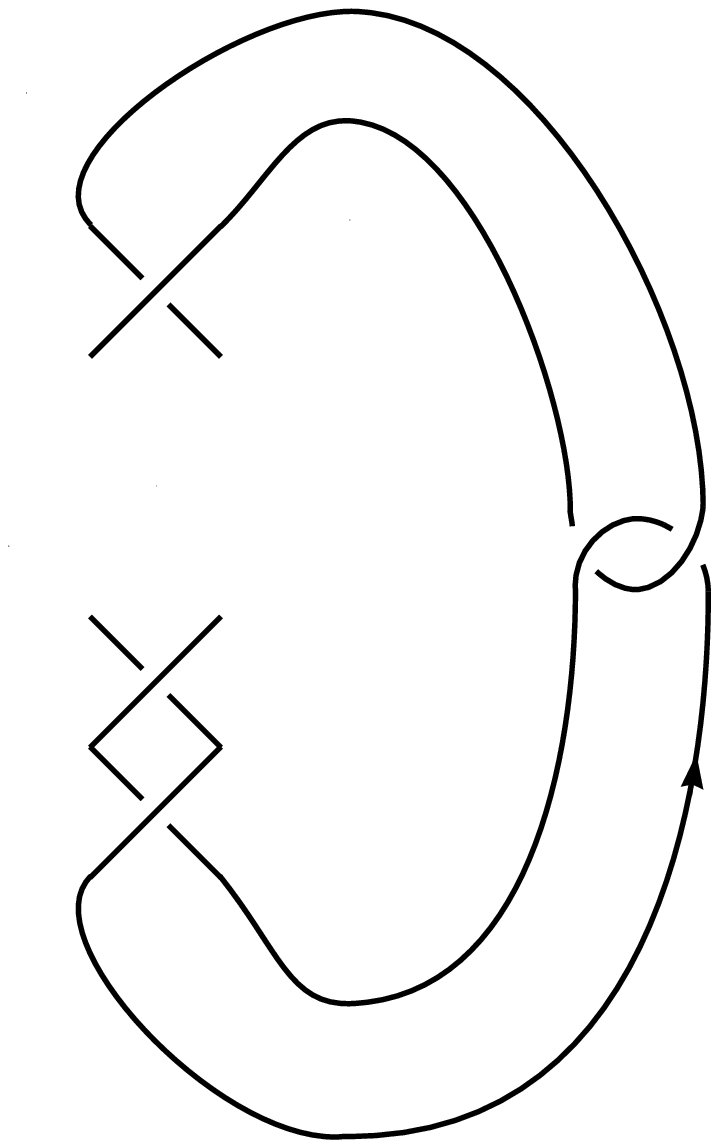}
\end{minipage}
\caption{\label{F:metacycbase} A complete set of base-knots with respect to $\bar\rho$--equivalence for a metacyclic group, with $k=1,2,\ldots,n$. These are also a complete set of base-knots with respect to $\rho$--equivalence if $2(\phi^{-3}-\mathrm{id})$ is invertible.}
\end{figure}

\begin{equation}\label{E:metacycReps}
(\M_k,\V)\ =\ \genusoneknot{a+kn}{0}{1}{1}{s}{s^{\frac{\xi}{1-\xi}}},
\end{equation}

\noindent where $a\in\mathds{N}$ is the minimal natural number such that $a\bmod n= \frac{-\xi}{(1-\xi)^2}$ and $k=1,\ldots,n$. These $\bar\rho$-equivalence classes are distinguished by
$\mathrm{su}(F_k,\bar{\rho}_{k})=k(\xi-1)$ (plug the surface data into Equation \ref{E:su-formula2}), where $F_k$ is the obvious Seifert surface for $T_k$ in the projection of Figure \ref{F:metacycbase}.

\begin{rem}
A more explicit way to establish the upper bound of $n$ for the number of $\rho$--equivalence classes would have been to apply the algorithm of \cite[Section 4]{KM09}. Given a $G$--coloured knot $(K,\rho)$, the arguments of Sections 4.2 and of 4.3.1 provide an algorithm to relate $(K,\rho)$ by an explicit sequence of null-twists to a genus $1$ knot with surface data
\begin{equation}\genusoneknot{a_{1,1}}{a_{1,2}}{a_{1,2}+1}{a_{2,2}}{s}{0}.\end{equation}
Moreover, $a_{2,2}$ can be made to vanish by null-twists, and one may add or subtract $n$ from $a_{1,2}$ and $n^2$ from $a_{1,1}$ by the arguments of \cite[Page 1382]{KM09}. Finally, Proposition \ref{P:HNN} tells us that $a_{1,1}\bmod n=0$, and that $a_{1,2}\bmod n =\frac{1}{\xi-1}$.
\end{rem}

\subsubsection{$\rho$--equivalence}\label{SSS:R1M3}

\begin{thm}\label{T:metacyclic}
 For $G$ metacyclic, the number of $\rho$--equivalence classes of $G$--coloured knots is bounded from below by the order of  $2(\phi^{-3}-\mathrm{id})$. In particular, if $2(\phi^{-3}-\mathrm{id})$ is invertible, then $\set{(T_k,\rho_{k})}_{1\leq k<n}$ is a complete set of base-knots for $G$.
\end{thm}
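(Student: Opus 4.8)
The strategy is the one laid out in the paragraph preceding the theorem: pin down the $\bar\rho$--equivalence classification first, then show that for metacyclic $G$ the coloured untying invariant separates the $\bar\rho$--equivalence classes finely enough to recover the full $\rho$--equivalence classification. For the lower bound, the plan is to evaluate the coloured untying invariant on the base-knots $(T_k,\rho_k)$ with surface data $(\M_k,\V)=\genusoneknot{a+kn}{0}{1}{1}{s}{s^{\frac{\xi}{1-\xi}}}$ from Equation \ref{E:metacycReps}. First I would write down $L(\M_k)$ and $\V_{(m)}$ explicitly (they are built from a single genus-one Seifert matrix, so $L(\M_k)$ is the $(m-1)\times(m-1)$ block tridiagonal matrix of Section \ref{SS:cu}, and $\V_{(m)}=(\V;t\cdot\V;\ldots;t^{m-2}\cdot\V)$ with $t$ acting by multiplication by $\xi$ on $\mathds{Z}/n\mathds{Z}$). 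Then I would plug into Equation \ref{E:L-equation-2}, $\widetilde{\mathrm{cu}}(C_m(T_k),\tilde\rho)=\epsilon\,\V_{(m)}^{\,T}\tfrac{L(\M_k)\widetilde{\V}_{(m)}}{n}\bmod n$, and extract the dependence on $k$. Since $\M_k=\M_0+kn E_{11}$, the $k$--dependent part of $L(\M_k)\widetilde{\V}_{(m)}$ is linear in $k$ with coefficient an explicit vector whose pairing against $\V_{(m)}$ I expect to be a unit multiple of $\sum_{i,j}\xi^{i+j}$--type sums, collapsing (because $\xi^m=1$ and $1+\xi+\cdots+\xi^{m-1}=0$, $\xi\neq1$) to something proportional to $2(\xi^{-3}-1)$, i.e. to the image of $2(\phi^{-3}-\mathrm{id})$ acting on $\mathds{Z}/n\mathds{Z}$. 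Hence $\mathrm{cu}(T_k,\rho_k)$ runs through (a translate of) the subgroup generated by $2(\phi^{-3}-\mathrm{id})$ as $k$ varies, and the number of distinct values is the order of $2(\phi^{-3}-\mathrm{id})$. By the proposition that $\mathrm{cu}$ is a $\rho$--equivalence invariant, this gives the stated lower bound.

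\textbf{The invertible case.} When $2(\phi^{-3}-\mathrm{id})$ is invertible, the computation above shows the $n$ values $\mathrm{cu}(T_k,\rho_k)$, $1\le k\le n$, are pairwise distinct, so the $(T_k,\rho_k)$ are pairwise non--$\rho$--equivalent. On the other hand, Section \ref{SSS:R1M0} establishes that $\{(T_k,\rho_k)\}_{1\le k\le n}$ is already a complete set of base-knots with respect to $\bar\rho$--equivalence (upper bound $|H_3(\mathds{Z}/n\mathds{Z};\mathds{Z})|=n$ from Corollary \ref{C:Wallacebound}, matched by the $n$ distinct values $\mathrm{su}(F_k,\bar\rho_k)=k(\xi-1)$), and $\bar\rho$--equivalence implies $\rho$--equivalence, so every $G$--coloured knot is $\rho$--equivalent to some $(T_k,\rho_k)$. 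Combining, $\{(T_k,\rho_k)\}_{1\le k\le n}$ (equivalently, indexed $1\le k<n$ after absorbing the $k=n$ representative, which is $\bar\rho$--equivalent to $k=0$; I would double-check the indexing convention against Figure \ref{F:metacycbase}) is a complete set of base-knots for $\rho$--equivalence, and the number of $\rho$--equivalence classes is exactly $n$.

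\textbf{Main obstacle.} The only real work is the explicit evaluation of Equation \ref{E:L-equation-2} on the base-knots and the verification that the $k$--dependence is governed precisely by $2(\phi^{-3}-\mathrm{id})$ rather than by some other unit multiple of a power of $\phi$; getting the exact exponent $-3$ (as opposed to, say, $\pm2$ or $\pm3$) requires care with the block-tridiagonal structure of $L(\M)$, the orientation-convention signs flagged in Section \ref{SS:cu}, and the substitution $t\mapsto\xi$, together with the vanishing identity $1+\xi+\cdots+\xi^{m-1}=0$. This is the same computation that underlies the $m=2$ case in \cite[Section 5.1]{KM09}, so I would model the bookkeeping on that reference. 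Everything else — the completeness statement, the passage from $\bar\rho$ to $\rho$, the invariance of $\mathrm{cu}$ — is already in hand from the earlier sections.
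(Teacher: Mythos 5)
Your proposal is correct and follows essentially the same route as the paper: reduce to distinguishing the base-knots $(T_k,\rho_k)$ by the coloured untying invariant, split $\M_k$ into a $k$--independent part and the $knE_{11}$ part, and evaluate Equation \ref{E:L-equation-2} on the latter to get $2k\sum\xi^i$, whose number of distinct values (using that $\xi-1$ is a unit) is the order of $2(1-\xi^{-3})$. The only difference is that you defer the explicit block-tridiagonal computation, which is exactly the content of the paper's displayed calculation, so there is no gap in substance.
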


\begin{proof}
Two $\bar\rho$--equivalence knots are in particular $\rho$--equivalent, therefore it suffices to check that the coloured untying invariant distinguishes  $(T_k,\rho_{k})$. We do this by plugging the surface data of Equation \ref{E:metacycReps} into Equation \ref{E:L-equation-2}. Decompose this equation as

\begin{equation}
\mathrm{cu}(T_K,\rho_K)= \ \epsilon\,\V_{(m)}^{\,T}\frac{\left(L\left[\begin{matrix}kn & 0\\ 0 & 0\end{matrix}\right]+L\left[\begin{matrix}a & 0\\ 1 & 1\end{matrix}\right]\right)\widetilde{\V}_{(m)}}{n}\bmod n.
\end{equation}

The matrix $L\left[\begin{matrix}a & 0\\ 1 & 1\end{matrix}\right]$ does not depend on $k$, so it suffices to calculate

\begin{equation}
\left[\tilde\xi^0,\tilde\xi^1,\ldots,\tilde\xi^{n-2}\right]\,\frac{L[kn]}{n}\left(\xi^0;\xi^1;\ldots;\xi^{n-2}\right)=2k\sum_{i=1}^{2(n-2)}\xi^i.
\end{equation}

\noindent  with $\tilde\xi\in \mathds{N}$ the smallest number such that $\tilde{\xi}^m=1\bmod n^2$ and $\xi= \tilde{\xi}\bmod n$. \par

Because $1-\xi$ is invertible in $\mathds{Z}/n_1\mathds{Z}$, the number of distinct numbers in the set $\left\{2k\sum_{i=1}^{2n-4}\xi\right\}_{1\leq k\leq n_1}\subset \mathds{Z}/n_1\mathds{Z}$ equals the order of
$2(1-\xi)\sum_{i=1}^{2n-4}\xi^i=2(1-\xi^{2n-3})$, which is equal to the order of $2(1-\xi^{-3})$ in $\mathds{Z}/n\mathds{Z}$.
\end{proof}

This generalizes \cite[Theorem 3]{KM09} to all metacyclic groups with $2(1-\xi^{-3})$ invertible in $\mathds{Z}/n\mathds{Z}$. Thus, the simplest group $G$ for which we have not classified $G$--coloured knots up to $\rho$--equivalence is then $G=\mathcal{C}_3\ltimes_{[2]} (\mathds{Z}/7\mathds{Z})$.

\subsection{The $r=2$ case}\label{SS:r2}

In Sections \ref{SSS:R2M0D} and \ref{SSS:R2M3D} we consider groups of the form $\mathcal{C}_m\ltimes_{\phi}(\mathds{Z}/n_1\mathds{Z}\times\mathds{Z}/n_2\mathds{Z})$. Matrix notation is misleading when $n_1\neq n_2$, but we'll use it anyway, with care. Given a basis $s_{1,2}\ass\left(s_{1,2}^1,s_{1,2}^2\right)$ for $A=\mathds{Z}/n_1\mathds{Z}\times\mathds{Z}/n_2\mathds{Z}$, there is a $2\times 2$ matrix $N$ such that $\phi(s_{1,2})=s_{1,2}N$.

\subsubsection{$\bar\rho$--equivalence; $N$ is diagonalisable}\label{SSS:R2M0D}

Again $n_{1,2}>0$. Relative bordism gives an upper bound of on the number of $\bar\rho$--equivalence classes of $n_1n_2\gcd(n_1,n_2)$ by the K\"{u}nneth Formula, which simplifies to

\begin{equation}
0\to A\to H_3(A;\mathds{Z})\to \mathds{Z}/\gcd(n_1,n_2)\mathds{Z}\to 0 .
\end{equation}

The surface untying invariant will be seen to detect the `$A$' part, while the $S$--equivalence class of the colouring detects the `$\mathrm{Tor}$' part, noting for our groups that

\begin{equation}\mathrm{Tor}_1(\mathds{Z}/n_1\mathds{Z},\mathds{Z}/n_2\mathds{Z}) \approx \mathds{Z}/\gcd(n_1,n_2)\mathds{Z}\approx A\wedge A.\end{equation}

Choose a basis $s_{1,2}$ for $A$, with respect to which the matrix $N$ is of the form $\left[\begin{smallmatrix}\tilde{\xi}_1 & 0\\ 0 & \tilde{\xi}_2\end{smallmatrix}\right]$, with $\tilde\xi_{1,2}^m=1\bmod n_{1,2}^2$ correspondingly, and set $\xi_{1,2}\ass\tilde{\xi}\bmod n_{1,2}$. The $S$--equivalence classes of $G$--colourings are represented by the colouring vectors $(s_1;is_2)$ and $(s_1;0;s_2;0)$, where $i=1,\ldots,\gcd(n_1,n_2)-1$.\par

By explicitly solving $\M\thinspace\V N=\M^{\thinspace T}\thinspace\V$, we see that there exist $G$--coloured knots in the $S$--equivalence class represented by $(s_1;is_2)$ only if there exists a number $x\in \mathds{N}$ with $\frac{\xi_1}{1-\xi_1}=x\bmod n_1$ and with $\frac{1}{\xi_2-1}=x\bmod n_2$. For $n_1=n_2$, this condition would become $\xi_1=\xi_2^{-1}$, while for $n_{1,2}$ coprime it would be vacuous. The $\bar\rho$--equivalence classes for such knots are represented by $G$--coloured knots $(K_{k,l},\rho_{k,l,i})$ with surface data

\begin{equation}(\M_{k,l},\V_i)\ =\ \genusoneknot{kn_1}{x}{x+1}{ln_2}{s_1}{is_2},\end{equation}

\noindent with $x\in\mathds{N}$ being the minimal integer satisfying the above, and with $k=1,\ldots,n_1$ and $l=1,\ldots,n_2$ and $i=1,\ldots,\gcd(n_1,n_2)-1$.\par

These $\bar\rho$-equivalence classes are distinguished by the surface untying invariant. The simplest way to see this is to decompose $M$ as $\left[\begin{smallmatrix}kn_1 & 0\\ 0 & ln_{2}\end{smallmatrix}\right]+ \left[\begin{smallmatrix}0 & x\\ x+1 & 0\end{smallmatrix}\right]$ and observe that the contribution of the first summand to Equation \ref{E:su-formula2} is $\left((\xi_1-1)k, (\xi_2-1)l\right)$ which spans $A$, while the contribution of the second summand is constant.

The knots in the $S$--equivalence class represented by $(s_1;0;s_2;0)$ are represented by $G$--coloured knots $(K^\ast_{k,l},\rho^\ast_{k,l})$ with surface data

\begin{equation}(\M^\ast_{k,l},\V^\ast)\ =\ \left(\rule{0pt}{38pt}\begin{bmatrix} kn_1 & x_1 & 0 & 0\\ x_1+1 & 0 & 0 & 0\\
0 & 0 & ln_2 & x_2\\ 0 & 0 & x_2+1 & 0 \end{bmatrix}\raisebox{-5.5pt}{\huge{,}}\normalsize \begin{pmatrix} s_1\\ 0\\ s_2\\ 0 \end{pmatrix}\right),\end{equation}

\noindent where $x_{1,2}\in\mathds{N}$ are the smallest natural numbers such that
$\frac{\xi_{1,2}}{1-\xi_{1,2}}=x_{1,2}\bmod n_{1,2}$ correspondingly.\par

These $\bar\rho$-equivalence classes are distinguished by
\begin{equation}\mathrm{su}(F^\ast_{k,l},\bar\rho^\ast_{k,l})=
\left((\xi_1-1)k, (\xi_2-1)l\right).\end{equation}

\begin{rem}\label{R:AlgoFail}
The algorithm of \cite[Section 4]{KM09} would give an upper bound on the number of $\bar\rho$--equivalence classes, which would not be sharp for $\gcd(n_1,n_2)>1$. Given a $G$--coloured knot $(K,\rho)$, the arguments of \cite[Sections 4.2 and 4.3.1]{KM09} provide an algorithm to relate $(K,\rho)$ by an explicit sequence of null-twists to a knot $(K_0,\rho_0)$ of genus $\leq 2$. We can arrange for the colouring vector to take the form $(s_1;is_2)$ or $(s_1;0;s_2;0)$ by band slides \cite[Section 4.1.4 and Section 4.2.2]{KM09}, where $i=1,\ldots,\gcd(n_1,n_2)-1$. For genus $1$, write the Seifert matrix as $\left[\begin{smallmatrix}a_{1,1} & a_{1,2}\\
a_{1,2}+1 & a_{2,2}\end{smallmatrix}\right]$. Proposition \ref{P:HNN} determines the values of $a_{1,1}\bmod n_1$, of $a_{1,2}\bmod\gcd(n_1,n_2)$, and of $a_{2,2}\bmod n_2$. Moreover, one may add or subtract $n_1^2$ from $a_{1,1}$, and $\gcd(n_1,n_2)^2$ from $a_{1,2}$, and $n_2^2$ from $a_{2,2}$ by the arguments of \cite[Page 1382]{KM09}. For genus $2$ let the Seifert matrix be $\left[\begin{smallmatrix}a_{1,1} & a_{1,2} & a_{1,3} & a_{1,4}\\
a_{1,2}+1 & a_{2,2} & a_{2,3} & a_{2,4}\\
a_{1,3} & a_{2,3} & a_{3,3} & a_{3,4}\\
a_{1,4} & a_{2,4} & a_{3,4}+1 & a_{4,4}\end{smallmatrix}\right]$. We may kill $a_{2,2}$, $a_{2,4}$, and $a_{4,4}$ by \cite[Equation 4.7]{KM09}. Proposition \ref{P:HNN} determines the values of $a_{1,1}\bmod n_1$, of $a_{3,3}\bmod n_2$, and of $a_{1,3}\bmod\gcd(n_1,n_2)$. The other entries are determined on the nose, because they are determined modulo either $n_1$ or $n_2$, and we can add or subtract either $n_1$ or $n_2$ from them by \cite[Equation 4.11]{KM09}. On the other hand, all we can add or subtract from $a_{1,1}$, $a_{3,3}$, and $a_{1,3}$ is $n_1^2$, $n_2^2$, and $\gcd(n_1,n_2)^2$ correspondingly. In summary, the upper bound which the algorithm gives is $n_1n_2\gcd(n_1,n_2)^2$, which in general is not sharp.
\end{rem}

\subsubsection{$\rho$--equivalence; $N$ is diagonalisable}\label{SSS:R2M3D}

For $A$ of rank $2$ and for $N$ diagonalisable, we obtain a complete set of base knots if $2(\phi^{-3}-\mathrm{id})$ is invertible. We remark that this would hold for $A$ of any rank if $N$ were diagonalisable, with pairwise coprime diagonal entries (in this case $A\wedge A$ and $A\wedge A\wedge A$ both vanish).

\begin{thm}\label{T:R2M3Diag}\hfill
\begin{itemize}
\item For each $1\leq l_0\leq n_2$, the number of non-$\rho$--equivalent $G$--coloured knots in the set $\left\{(K_{k,l_0},\rho_{k,l_0,i})\right\}_{1\leq k=1\leq n_1}$, and also the number of non-$\rho$--equivalent $G$--coloured knots in the set $\left\{(K^\ast_{k,l_0},\rho^\ast_{k,l_0})\right\}_{1\leq k=1\leq n_1}$, are bounded from below by the order of $2(1-\xi_{1}^{-3})\in\mathds{Z}/n_1\mathds{Z}$.
\item For each $1\leq k_0\leq n_1$, the number of non-$\rho$--equivalent $G$--coloured knots in the set $\left\{(K_{k_0,l},\rho_{k_0,l,i})\right\}_{1\leq l=1\leq n_2}$, and also the number of non-$\rho$--equivalent $G$--coloured knots in the set $\left\{(K^\ast_{k_0,l},\rho^\ast_{k_0,l})\right\}_{1\leq l=1\leq n_2}$, are bounded from below by the order of $2(1-\xi_{2}^{-3})\in\mathds{Z}/n_2\mathds{Z}$.
\end{itemize}
\end{thm}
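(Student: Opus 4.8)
The plan is to reduce everything to the coloured-untying-invariant computation already carried out in the proof of Theorem~\ref{T:metacyclic}. Since two $\bar\rho$--equivalent $G$--coloured knots are in particular $\rho$--equivalent, and since the families in question were shown in Section~\ref{SSS:R2M0D} to consist of genuine, pairwise non--$\bar\rho$--equivalent $G$--coloured knots, it suffices to show that the coloured untying invariant $\mathrm{cu}$ already separates $\mathrm{ord}\bigl(2(1-\xi_1^{-3})\bigr)$ of the knots in each family $\set{(K_{k,l_0},\rho_{k,l_0,i})}_{1\le k\le n_1}$ and $\set{(K^\ast_{k,l_0},\rho^\ast_{k,l_0})}_{1\le k\le n_1}$, and symmetrically for the second bullet. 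As $\mathrm{cu}$ takes values in $A=\mathds{Z}/n_1\mathds{Z}\times\mathds{Z}/n_2\mathds{Z}$, it is enough to control its first coordinate.

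First I would compute $\mathrm{cu}$ by substituting the surface data into Equation~\ref{E:L-equation-2}. For $(K_{k,l_0},\rho_{k,l_0,i})$, write the Seifert matrix as $\M_{k,l_0}=k n_1 E_{11}+\M'$, where $E_{11}$ is the elementary matrix with a single nonzero entry and $\M'$ is independent of $k$; since $\M\mapsto L(\M)$ is $\mathds{Z}$--linear, the $\M'$--summand contributes a quantity independent of $k$. Because $N$ is diagonal in the basis $s_{1,2}$, the vector $\V_{(m)}$ splits into a block supported on the first band, whose entries are $s_1,\ \xi_1\cdot s_1,\ \ldots,\ \xi_1^{m-2}\cdot s_1$, and a block supported on the second band, and $L(E_{11})$ couples only the first of these blocks to itself. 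Projecting $A$ onto its $\phi$--invariant cyclic summand $\mathds{Z}/n_1\mathds{Z}$ therefore turns the $k$--dependent part of $\mathrm{cu}$ into precisely the metacyclic quantity $2k\sum_i\xi_1^i\bmod n_1$ attached to the genus--one datum with Seifert matrix $k n_1 E_{11}$ and colouring vector $(s_1;0)$ --- the same expression analysed in the proof of Theorem~\ref{T:metacyclic} with $\xi=\xi_1$ and $n=n_1$. By that analysis it takes $\mathrm{ord}\bigl(2(1-\xi_1^{-3})\bigr)$ distinct values as $k$ runs over $1,\ldots,n_1$, which is the asserted lower bound. The case $(K^\ast_{k,l_0},\rho^\ast_{k,l_0})$ is identical once one observes that its block--diagonal Seifert matrix still has all of its $k$--dependence concentrated in a single $k n_1 E_{11}$ summand acting on the band coloured $s_1$. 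The second bullet follows by interchanging $(k,n_1,\xi_1,s_1)$ with $(l,n_2,\xi_2,s_2)$.

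The main obstacle is bookkeeping rather than anything deep. Equation~\ref{E:L-equation-2} is stated for $A\cong(\mathds{Z}/n\mathds{Z})^r$, so when $n_1\ne n_2$ one must first pass to a $\phi$--invariant cyclic decomposition $A=\mathds{Z}/n_1\mathds{Z}\oplus\mathds{Z}/n_2\mathds{Z}$ --- available precisely because $N$ is diagonalisable --- and check that the projected data $(\M_{k,l_0}\bmod n_1,\ (s_1;0))$ is itself valid surface data for a $\mathds{Z}/n_1\mathds{Z}$--coloured knot, which reduces to the congruence $(x+1)\xi_1\equiv x\pmod{n_1}$ and hence is immediate from the definition of $x$. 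One then applies the rank--one case of the formula to each summand, exactly as the primary--decomposition device is used in the proof of Proposition~\ref{P:HNN}. It should also be recorded that the first coordinate of $\mathrm{cu}$ is independent of $i$, so that the count of $\rho$--equivalence classes is uniform in that parameter.
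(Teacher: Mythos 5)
Your proposal is correct and follows essentially the same route as the paper: decompose the Seifert matrix into the $k$--dependent summand $kn_1E_{11}$ and a $k$--independent remainder, feed this into the linking-form formula for the coloured untying invariant, and reduce the count of distinct values to the metacyclic computation $2k\sum_j\xi_1^j$ from Theorem \ref{T:metacyclic}, whose image has order equal to that of $2(1-\xi_1^{-3})$. The extra bookkeeping you record (the $\phi$--invariant splitting of $A$ when $n_1\neq n_2$ and the independence of the relevant coordinate from $i$) is implicit in the paper's "the proof is analogous" remarks, so no substantive difference remains.
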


\begin{proof}
Consider the claim for $G$--coloured knots in the $S$--equivalence class represented by $(s_1;s_2)$.
To prove the first assertion, decompose $M$ as $\left[\begin{smallmatrix}kn_1 & 0\\ 0 & 0\end{smallmatrix}\right]+\left[\begin{smallmatrix}0 & x\\ x+1 & l_0n_2\end{smallmatrix}\right]$. The matrix $\left[\begin{smallmatrix}0 & x\\ x+1 & l_0n_2\end{smallmatrix}\right]$ is independent of $k$. Thus, to show that the coloured untying invariant (Equation \ref{E:L-equation-2}) distinguishes our base knots, it suffices to calculate

\begin{equation}
\left[\tilde\xi_1^0,\tilde\xi_1^1,\ldots,\tilde\xi_1^{n-2}\right]\,\frac{L\left[kn_1\right]}{n_1}\,\left(\xi_1^0;\xi_1^1;\ldots;\xi_1^{n-2}\right)=
2k\sum_{j=1}^{2(n-2)}\xi_1^j.
\end{equation}

Because $1-\xi_1$ is invertible in $\mathds{Z}/n_1\mathds{Z}$, the number of distinct numbers in the set $\left\{2k\sum_{j=1}^{2n-4}\xi_j\right\}_{1\leq k\leq n_1}\subset \mathds{Z}/n_1\mathds{Z}$ equals the order of
$2(1-\xi_1)\sum_{j=1}^{2n-4}\xi^j_1$, which is the order of $2(1-\xi_1^{-3})$ in $\mathds{Z}/n_1\mathds{Z}$.
The proof of the second assertion is analogous, as is the proof for $i>1$ and for $G$--coloured knots in the $S$--equivalence class represented by $(s_1;0;s_2;0)$.
\end{proof}

\subsubsection{$\bar\rho$--equivalence; $N$ is not diagonalisable}\label{SSS:R2M0N}

Set $n\ass n_1=n_2$. The bordism upper bound is $n^3$. Choose a basis $s_{1,2}$ for $A$ such that $\phi(s_1)=s_2$. With respect to such a basis, $N$ takes the form $\left[\begin{smallmatrix}0 & 1\\ N_{2,1} & N_{2,2}\end{smallmatrix}\right]$, such that $N^m=I_2\bmod n^2$. Because $N$ and $N-I_2$ are both invertible in $\mathds{Z}/n\mathds{Z}$, it follows that $\abs{N}=-N_{2,1}$ and $\abs{N-I_2}=N_{2,1}+N_{2,2}-1$ are both invertible modulo $n$. Let $\xi\in \mathds{Z}/n\mathds{Z}$ be an element satisfying $(1-N_{2,1}-N_{2,2})\xi=1$.\par

 Solving $\M\,\V N=\M^{\thinspace T}\thinspace\V$ shows that there exist $G$--coloured knots in the $S$--equivalence class represented by $(s_1;is_2)$ for $1\leq i<n$ only if $N_{2,1}\bmod n=-1$, and that $\rho$--equivalence classes for such knots are represented by $G$--coloured knots $(J_{k,l,i},\rho_{k,l,i})$ with surface data

\begin{equation}
(\M_{k,l,i},\V_i)\ =\ \left(\rule{0pt}{18pt}\tilde\xi\,\begin{bmatrix} \tilde{i}+kn & -1\\ 1-N^{\prime}_{2,2} & \widetilde{i^{-1}}+ln \end{bmatrix}\raisebox{-5.5pt}{\huge{,}}\normalsize \begin{pmatrix} s_1\\ is_2 \end{pmatrix}\right),
\end{equation}

\noindent where $N^{\prime}_{2,2}$ denotes the minimum integer which agrees modulo $n$ with $N_{2,2}$ for which $1-2\tilde\xi+\tilde\xi N^{\prime}_{2,2}\bmod n=0$. For this surface data

\begin{equation}
\mathrm{su}(F_{k,l,i},\bar\rho_{k,l,i})=
\left(k-N_{2,1}\tilde{i}l, (N_{2,2}-1)\tilde{i}l-k\right)\bmod n.
\end{equation}

\noindent Because  $N_{2,1}+N_{2,2}-1$ is a unit modulo $n$, the number of distinct values of the surface untying invariant for these knots is $n\sum_{j=1}^{n-1} \frac{n}{\gcd(n,j)}$. In particular, if $n$ is prime then all possible values are realized.

Knots in the $S$--equivalence class represented by $(s_1;0;s_2;0)$ are represented by $G$--coloured knots $(J^\ast_{k,l},\rho^\ast_{k,l})$ with surface data

\begin{equation}
(\M^\ast_{k,l},\V^\ast)\ =\ \left(\rule{0pt}{38pt}\begin{bmatrix} kn & \tilde{\xi}N_{2,1} & 0 & \tilde\xi\\ \tilde\xi N_{2,1}+1 & 0 & \tilde\xi & 0\\
0 & \tilde{\xi} & ln & a-1\\ \tilde\xi & 0 & a & 0 \end{bmatrix}\raisebox{-5.5pt}{\huge{,}}\normalsize \begin{pmatrix} s_1\\ 0\\ s_2\\ 0 \end{pmatrix}\right),
\end{equation}

\noindent where $a\in\mathds{N}$ is the minimal natural number congruent modulo $n$ to $\frac{\xi}{N_{2,1}}$.\par

The surface coloured untying invariant for these is

\begin{equation}\mathrm{su}(F^\ast_{k,l},\bar\rho^\ast_{k,l})=
\left(N_{2,1}l-k, k+l(N_{2,2}-1)\right)\bmod n.\end{equation}

For this $S$--equivalence class, the number of possible values of the surface untying invariant equals $n$ times the order of $(N_{2,2}-N_{2,1}+1)\bmod n$. If this number is a unit, then we have classified knots coloured by such groups up to $\bar\rho$--equivalence.

\begin{rem}
As in Remark \ref{R:AlgoFail}, the algorithm of \cite[Section 4]{KM09} gives a non-sharp upper bound of $n^4$ for the number of $\bar\rho$--equivalence classes.
\end{rem}

\subsubsection{$\rho$--equivalence; $N$ is not diagonalisable}\label{SSS:R2M3N}

Because $N$ is not diagonalisable, $m$ must be greater than $2$. We consider only the case $m=3$.

\begin{thm}\label{T:R2M3Non}
 The number of non-$\rho$--equivalent $G$--coloured knots among elements of the set $\left\{(J_{k,l,i},\rho_{k,l,i})\right\}_{1\leq k,l\leq n}$ for each $1\leq i<n$, and also the number of non-$\rho$--equivalent $G$--coloured knots in the set $\left\{(J^\ast_{k,l},\rho^\ast_{k,l})\right\}_{1\leq k,l\leq n}$, are bounded from below by the order of $6(1+N_{2,2}+N^{2}_{2,2}-N_{2,1}^2)\bmod n$.
\end{thm}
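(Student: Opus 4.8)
The plan is to argue exactly as in the proofs of Theorem~\ref{T:metacyclic} and Theorem~\ref{T:R2M3Diag}. Since two $\bar\rho$--equivalent $G$--coloured knots are in particular $\rho$--equivalent, the knots $(J_{k,l,i},\rho_{k,l,i})$ and $(J^\ast_{k,l},\rho^\ast_{k,l})$ constructed in Section~\ref{SSS:R2M0N} do realize genuine $G$--coloured knots, and it suffices to show that the coloured untying invariant of Equation~\ref{E:L-equation-2} takes at least as many distinct values, on each of these two families, as the order of $6(1+N_{2,2}+N_{2,2}^{2}-N_{2,1}^{2})\bmod n$. Because $m=3$, the matrix $L(\M)$ of Section~\ref{SS:cu} is the $4g\times 4g$ block matrix $\left[\begin{smallmatrix}\M+\M^{\,T} & \M^{\,T}\\ \M & \M+\M^{\,T}\end{smallmatrix}\right]$ and $\V_{(3)}=(\V;t\cdot\V)$, so the quantity to be evaluated is $\mathrm{cu}=\epsilon\,\V_{(3)}^{\,T}\frac{L(\M)\widetilde\V_{(3)}}{n}\bmod n$.

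The key observation is that in each family the Seifert matrix splits as a $(k,l)$--independent part plus a piece supported on the diagonal: $\M_{k,l,i}=M_{0}+\tilde\xi\,n\cdot\mathrm{diag}(k,l)$ for the first family, and $\M^{\ast}_{k,l}=M_{0}^{\ast}+n(kE_{1,1}+lE_{3,3})$ for the second, where $E_{i,i}$ is the elementary matrix. Since $L$ is $\mathds{Z}$--linear, $L(\M_{k,l,i})=L(M_{0})+\tilde\xi\,n\,L(\mathrm{diag}(k,l))$, and therefore
\[
\mathrm{cu}(J_{k,l,i},\rho_{k,l,i})=c_{i}+\tilde\xi\,\epsilon\,\V_{(3)}^{\,T}\,L\!\left(\mathrm{diag}(k,l)\right)\widetilde\V_{(3)}\bmod n,
\]
where $c_{i}$ does not depend on $(k,l)$, and similarly for the starred family. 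For a symmetric $2\times 2$ matrix $D$ one has $L(D)=\left[\begin{smallmatrix}2D & D\\ D & 2D\end{smallmatrix}\right]$, so with $D=\mathrm{diag}(k,l)$ the $(k,l)$--dependent term is a $\mathds{Z}/n\mathds{Z}$--valued bilinear pairing of $\V_{(3)}$ against $D\widetilde\V_{(3)}$ whose overall numerical weight is the $2+1+1+2=6$ coming from the four blocks of $L(D)$; this is the source of the $6$ in the statement.

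Next I would substitute $\V_{i}=(s_{1};is_{2})$ and $t\cdot\V_{i}=(s_{2};\,iN_{2,1}s_{1}+iN_{2,2}s_{2})$, using $\phi(s_{1})=s_{2}$ and $\phi(s_{2})=N_{2,1}s_{1}+N_{2,2}s_{2}$, and expand the pairing. The outcome is an affine--linear function of $(k,l)\in(\mathds{Z}/n\mathds{Z})^{2}$, and the number of values it takes equals the order of the cyclic subgroup of $\mathds{Z}/n\mathds{Z}$ generated by its two linear coefficients. To see that this order is the order of $6(1+N_{2,2}+N_{2,2}^{2}-N_{2,1}^{2})\bmod n$, I would feed in the defining congruence $(1-N_{2,1}-N_{2,2})\xi=1$ for $\tilde\xi$ and the defining congruence for $N'_{2,2}$, together with the relation $N^{3}\equiv I_{2}$, which by Cayley--Hamilton is equivalent to $N_{2,1}N_{2,2}\equiv 1$ and $N_{2,1}+N_{2,2}^{2}\equiv 0\bmod n$; these reduce the a priori quadratic expression in $N_{2,1}$ and $N_{2,2}$ to the stated one. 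The starred family is handled identically: the colouring vector $(s_{1};0;s_{2};0)$ has nonzero entries only in the two slots paired by the $(1,1)$ and $(3,3)$ positions of $L(\M^{\ast})$, so its $(k,l)$--dependent term is the same bilinear pairing and yields the same subgroup.

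The main obstacle is precisely this final computation. It is routine in spirit --- the metacyclic analogue is the one--line calculation $2k\sum_{i=1}^{2(n-2)}\xi^{i}$ in the proof of Theorem~\ref{T:metacyclic} --- but here one must track the prescribed integer lifts $\widetilde{(\cdot)}$, which are pinned down only modulo $n^{2}$, so that the division by $n$ is genuinely sensitive to the choice of lift; one must verify that the $(k,l)$--independent contributions really do assemble into a single constant $c_{i}$; and one must carry out the modular arithmetic that collapses the coefficient --- built a priori from $N_{2,1}$, $N_{2,2}$, $N_{2,1}^{2}$, $N_{2,2}^{2}$, and $N_{2,1}N_{2,2}$ --- down to $6(1+N_{2,2}+N_{2,2}^{2}-N_{2,1}^{2})\bmod n$ using the group--theoretic constraints on $N$.
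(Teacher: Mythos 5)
Your proposal follows the paper's proof essentially verbatim: split off the $(k,l)$--dependent diagonal part $n\,\mathrm{diag}(k,l)$ of the Seifert matrix, use linearity of $L$ and of the pairing to reduce to evaluating $\epsilon\left(\V,\V N\right)L\left[\begin{smallmatrix}kn & 0\\ 0 & ln\end{smallmatrix}\right]\left(\V;\V N\right)$, which comes out as $(3,3)k+(x,y)l$ with $y-x=2+2N_{2,2}+2N_{2,2}^{2}-2N_{2,1}^{2}$, whence the image contains an element of order equal to that of $3(y-x)=6(1+N_{2,2}+N_{2,2}^{2}-N_{2,1}^{2})$. Two minor corrections to your sketch: the $(k,l)$--dependent values form a coset of the subgroup of $A=(\mathds{Z}/n\mathds{Z})^{2}$ generated by the two coefficient \emph{vectors} (not a cyclic subgroup of $\mathds{Z}/n\mathds{Z}$), and the lower bound only uses that this subgroup contains $3(x,y)-x(3,3)=\left(0,3(y-x)\right)$; moreover the collapse of the quadratic expression is a polynomial identity, so the Cayley--Hamilton relations $N_{2,1}N_{2,2}\equiv 1$ and $N_{2,1}+N_{2,2}^{2}\equiv 0$ are not actually needed there.
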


\begin{proof}
Consider the claim for $G$--coloured knots in the $S$--equivalence class represented by $(s_1;s_2)$.
As in the proof of Theorem \ref{T:R2M3Diag}, it suffices to consider the quantity

\begin{multline}
\epsilon\left(\V,\V N\right)\, L\left[\begin{matrix}kn & 0\\ 0 & ln\end{matrix}\right]\,\left(\V;\V N\right)
=\\\left(3,3\right)k+\left(N_{2,1}(1+2N_{2,1}+2N_{2,2}),2+N_{2,1}+N_{2,2}(2+2N_{2,1}+2N_{2,2})\right)l\bmod n.
\end{multline}

To see how many $\rho$--equivalence classes we can distinguish by the surface coloured untying invariant, we calculate

\[ (2+N_{2,1}+N_{2,2}(2+2N_{2,1}+2N_{2,2}))-N_{2,1}(1+2N_{2,1}+2N_{2,2})=2+2N_{2,2}+2N^{2}_{2,2}-2N_{2,1}^2.
\]
\noindent The theorem follows.\par

The proof is analogous for $G$--coloured knots in the other $S$--equivalence classes.
\end{proof}

\section{$A_4$-Coloured Knots}\label{S:A4}

To finish this paper, we go beyond the algebraic techniques of Section \ref{S:untyinginvariants},
to classify $G$--coloured knots up to $\rho$--equivalence for a specific small but interesting group.

\subsection{Setup}
The alternating group $A_4$ is the group of orientation preserving symmetries of an oriented tetrahedron. As a metabelian group it is of the form

\begin{equation}A_4 = \mathcal{C}_3\ltimes_{\phi}  \left(\mathds{Z}/2\mathds{Z}\right)^{2},\end{equation}

\noindent where the matrix associated to $\phi$ is  $N=\left[\begin{smallmatrix} 0 & 1\\
-1 & -1
\end{smallmatrix}\right]$.\par

The number of $\rho$--equivalence classes of $A_4$-coloured knots is bounded from above by the number of $\bar\rho$--equivalence classes of such knots equipped with marked Seifert surfaces, which is $8$ by the bordism upper bound of Corollary \ref{C:Wallacebound}. For the $S$--equivalence class represented by $(s_1;s_2)$, the four distinct $\bar\rho$--equivalence classes are represented by the knots in Figure \ref{F:A4s1s2}, which are denoted $3_1^l, 3_1^r, 4_1^l,$ and $4_1^r$ correspondingly.

\begin{figure}
\centering
\begin{minipage}[c]{0.2\textwidth}
\psfrag{a}[c]{$a$}\psfrag{b}[c]{$b$}
\includegraphics[width=50pt]{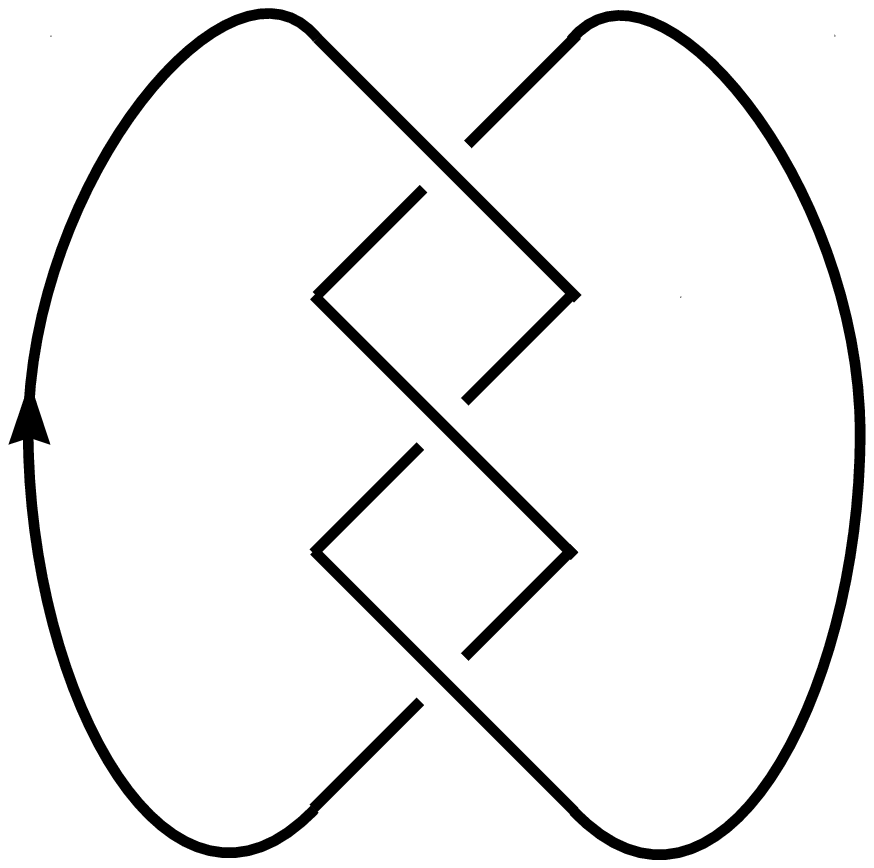}
\end{minipage}\quad
\begin{minipage}[c]{0.2\textwidth}
\psfrag{a}[c]{$a$}\psfrag{b}[c]{$b$}
\includegraphics[width=50pt]{T32r}
\end{minipage}\quad
\begin{minipage}[c]{0.2\textwidth}
\psfrag{a}[c]{$a$}\psfrag{b}[l]{$b$}
\includegraphics[width=60pt]{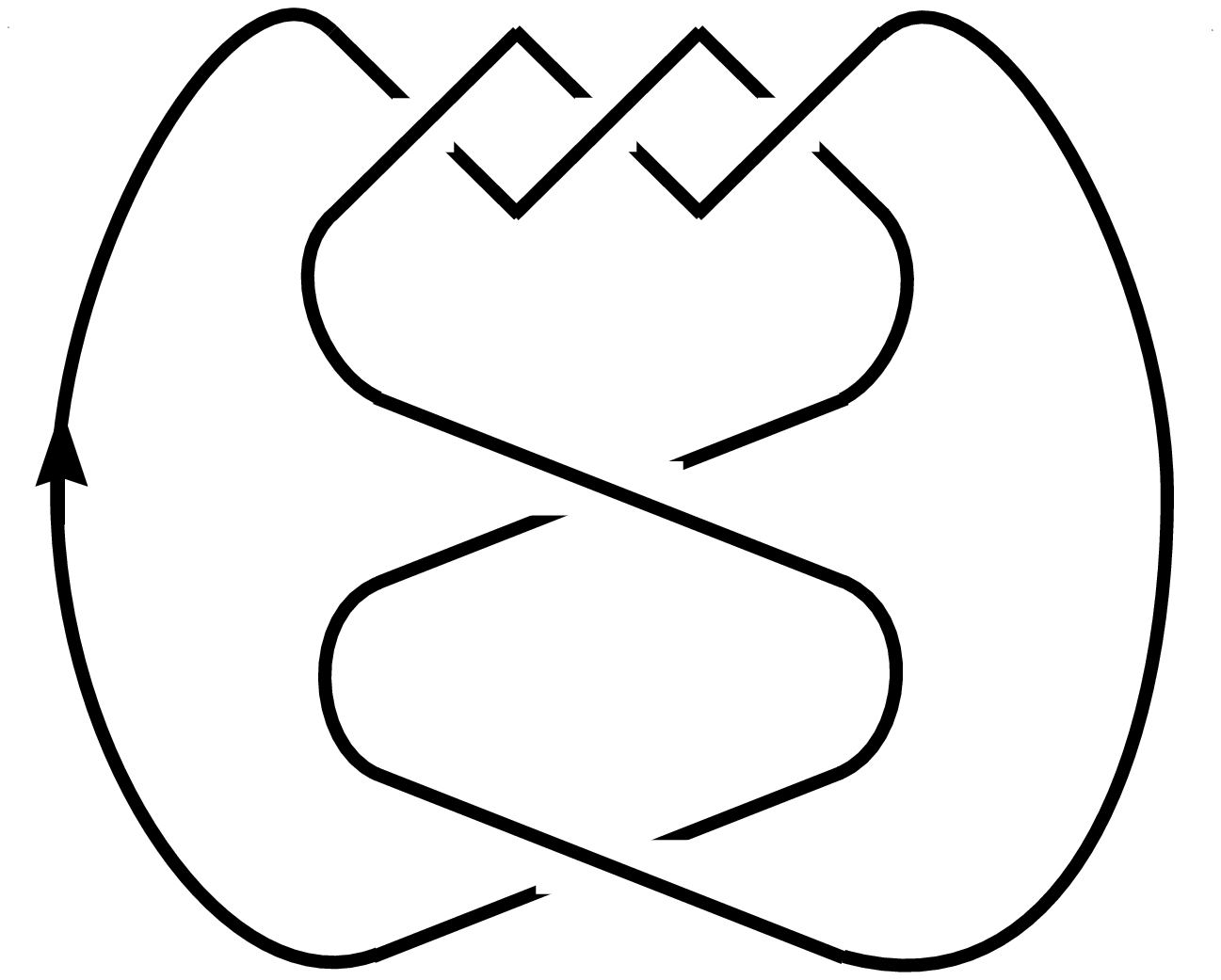}
\end{minipage}\quad\quad
\begin{minipage}[c]{0.2\textwidth}
\psfrag{a}[c]{$a$}\psfrag{b}[l]{$b$}
\includegraphics[width=60pt]{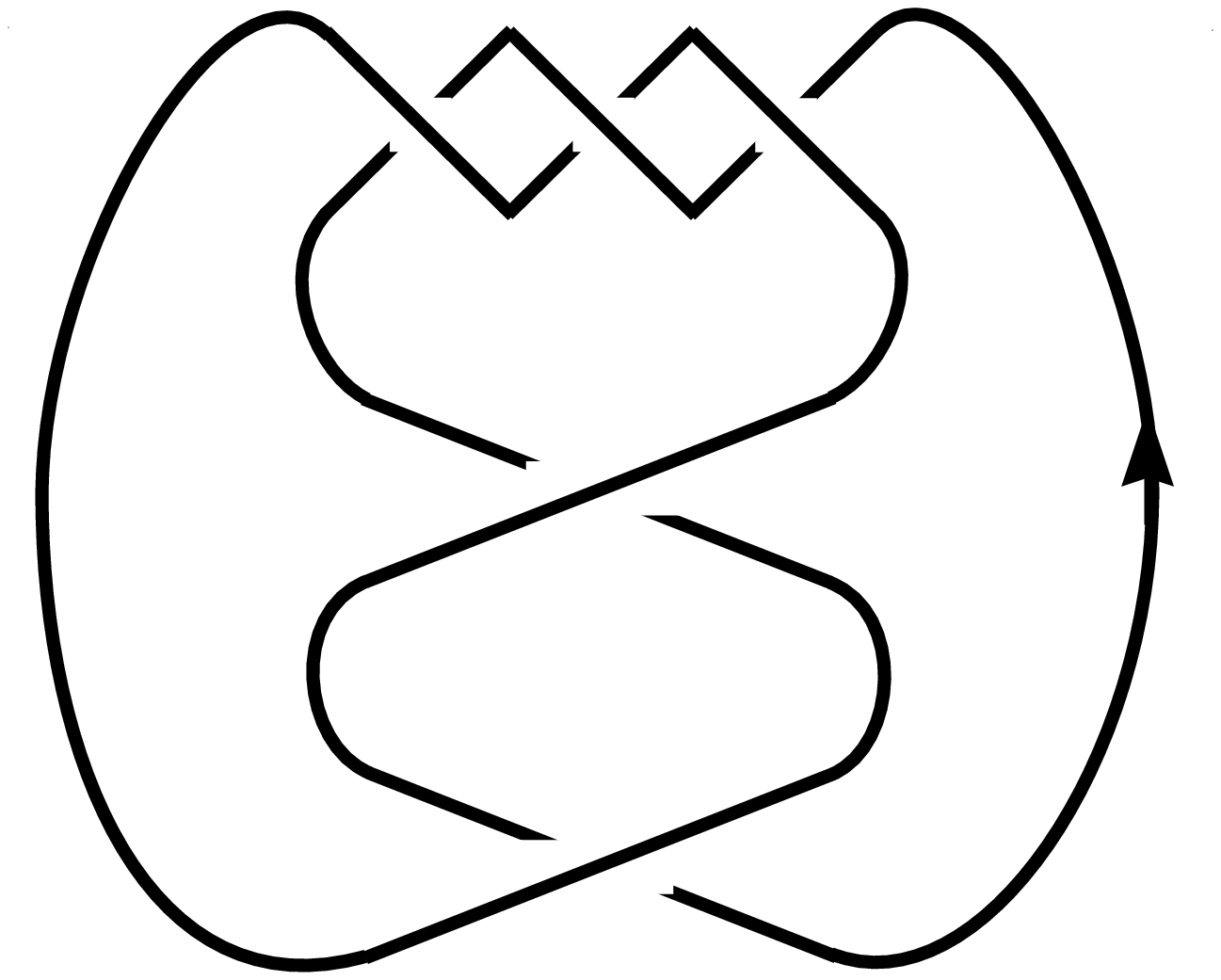}
\end{minipage}
\caption{\label{F:A4s1s2} Representatives for $\bar\rho$--equivalence classes of $A_4$-coloured knots $S$--equivalence class $s_1\wedge s_2$. Each knot diagram comes equipped with its evident Seifert surface.}
\end{figure}

We choose the colouring vector $(s_1;s_2;s_1;s_2)$ to represent the remaining $S$--equivalence class. The four distinct $\bar\rho$--equivalence classes of $A_4$-coloured knots with this colouring vector are represented by $3_1^l\Hash 3_1^l$, $3_1^l\Hash 4_1^l$, $3_1^l\Hash 4_1^r$, and $4_1^l\Hash 4_1^r$ (these are well-defined $A_4$-coloured knots by Lemma \ref{L:consumwelldef}).\par

On the other hand, the number of $\rho$--equivalence classes of $A_4$-coloured knots is bounded from below by $2$, because $3_1^l$ and $4_1^l$ are distinguished by their coloured untying invariants, which are $1$ and $s_1$ correspondingly. Notice first that $4_1^l$ and $4_1^r$ are ambient isotopic, and we therefore don't distinguish between them, and call them $4_1$ collectively. We finish this paper by showing that the lower bound of $2$ is sharp, by reducing each knot in our list to either $3_1^l$ or to $4_1$ by twist moves. Let $S$ denote the commutative semigroup of $\rho$--equivalence classes of $A_4$-coloured knots, equipped with the connect sum operation (see Section \ref{SS:A4Prelim}). Consider $\psi\co \mathcal{C}_2\to S$ which maps $0$ and $1$ to the $\rho$--equivalence classes of $3_1^l$ and of $4_1$ correspondingly.

\begin{thm}\label{T:A4Theorem}
 The map $\psi$ is a bijection. In particular, $S$ is isomorphic to a group with two elements, which are distinguished by the coloured untying invariant.
\end{thm}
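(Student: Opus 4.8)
The upper bound (there are at most two $\rho$--equivalence classes) must be established, since the lower bound of $2$ is already in hand via the coloured untying invariant ($\mathrm{cu}(3_1^l)=1$, $\mathrm{cu}(4_1)=s_1$, and these are distinct in $(\mathds{Z}/2\mathds{Z})^2$). So the real content is: every $A_4$--coloured knot is $\rho$--equivalent either to $3_1^l$ or to $4_1$. By the bordism bound (Corollary \ref{C:Wallacebound}) and the fact that $\Rank(A)=2$ forces $\bigwedge^3 A=0$, Theorem \ref{T:clasperprop} tells us $S$--equivalence implies $\bar\rho$--equivalence, so every $A_4$--coloured knot is $\bar\rho$--equivalent (hence $\rho$--equivalent) to one of the eight listed representatives: $3_1^l, 3_1^r, 4_1^l, 4_1^r$ in the $S$--class of $(s_1;s_2)$, and $3_1^l\Hash 3_1^l$, $3_1^l\Hash 4_1^l$, $3_1^l\Hash 4_1^r$, $4_1^l\Hash 4_1^r$ in the $S$--class of $(s_1;s_2;s_1;s_2)$. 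It therefore suffices to exhibit, for each of these eight, an explicit sequence of twist moves reducing it to $3_1^l$ or to $4_1$.

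**Key steps in order.** First I would dispose of $4_1^l$ versus $4_1^r$: these are ambient isotopic (the figure-eight knot is amphichiral, and one checks the colourings match up to an inner automorphism of $A_4$, invoking Lemma \ref{L:inneriso}), so we identify them as $4_1$ and the eight representatives collapse to at most $3_1^l, 3_1^r, 4_1$ together with the four connect sums. Second, I would show $3_1^r$ is $\rho$--equivalent to $3_1^l$: compute $\mathrm{cu}(3_1^r)$ using Equation \ref{E:L-equation-2} and the surface data of the two trefoils; since it must land in $(\mathds{Z}/2\mathds{Z})^2$ and one verifies $\mathrm{cu}(3_1^r)=1=\mathrm{cu}(3_1^l)$, they lie in the same $\rho$--class provided the lower-bound count is tight on this pair — but more honestly, I would produce the explicit twist move (a single $\pm 1$--framed unknot ringing an appropriate set of strands of $3_1^r$ lying in $\ker\rho$) turning $3_1^r$ into $3_1^l$, imitating the band-sliding pictures of Section \ref{S:A4}'s setup. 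Third, and this is the bulk, I would handle the four connect sums using the semigroup structure on $S$: once $3_1^l\Hash 3_1^l$, $3_1^l\Hash 4_1$, $4_1\Hash 4_1$ are each shown $\rho$--equivalent to either $3_1^l$ or $4_1$, bijectivity of $\psi$ follows because $\psi$ is then a surjective semigroup homomorphism from $\mathcal{C}_2$ onto $S$ which is injective by the coloured untying invariant. Concretely: $3_1^l\Hash 3_1^l$ should reduce to $3_1^l$ (the $cu$--invariant is additive-ish under $\Hash$ and $1+1$ behaves like the unknotting of the double trefoil by a twist move absorbing one summand); $4_1\Hash 4_1$ should reduce to $3_1^l$ (its $cu$ is $s_1+s_1=0=\mathrm{cu}(3_1^l)$); and $3_1^l\Hash 4_1$ should reduce to $4_1$ ($\mathrm{cu}=1+s_1$, which under the identification equals $s_1$ up to the group structure — here I must be careful that the $cu$--invariant genuinely takes values compatible with a two-element group, which is exactly the assertion being proved, so the reductions must be done by hand, not deduced from $cu$).

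**The main obstacle.** The hard part is the explicit twist-move reductions of the connect sums, i.e. actually drawing the $\pm 1$--framed unknots in $\ker\rho$ and checking the local pictures — this is precisely why the paper says it is done ``by hand'' and cannot be squeezed out of the linear algebra of Sections \ref{S:surfacedata}--\ref{S:untyinginvariants}. In particular the subtle point is that $A_4$--coloured knots can be ambient isotopic without being $\bar\rho$--equivalent, so one cannot blindly transport twist moves across isotopies; each reduction has to be verified to respect the $A_4$--colouring (every surgery curve must lie in $\ker\rho$, which for $A_4$ means it links the knot an even number of times with the right ``colour pattern''). I would organize this as a short lemma per connect sum, each proved by a labelled sequence of diagrams analogous to Figures \ref{F:clasprho} and \ref{F:threadsep}, and then assemble: $\psi$ surjective (every representative hits $3_1^l$ or $4_1$), $\psi$ injective (coloured untying invariant, Equations \ref{E:L-equation-3} and \ref{E:L-equation-2}), hence $\psi$ a bijection and $S\cong \mathcal{C}_2$.
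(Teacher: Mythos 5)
Your skeleton agrees with the paper's: list the eight $\bar\rho$--representatives, collapse them to two $\rho$--classes by explicit twist moves, and separate those two by the coloured untying invariant. The gap is in the concrete reductions, which are the entire content of this ``by hand'' theorem, and every one of the three you propose to verify is false. The paper's Lemma \ref{L:A4idents} establishes, by explicit twist-move sequences, the identities $3_1^l\Hash 4_1\seq 3_1^r$, $3_1^r\Hash 3_1^r\seq 4_1$, and $3_1^l\Hash 3_1^r\seq 4_1$; from these one gets $3_1^l\seq 3_1^r$ and then $3_1\Hash 3_1\seq 4_1$, $3_1\Hash 4_1\seq 3_1$, $4_1\Hash 4_1\seq 4_1$, so that $[4_1]$ is the identity of $S$ and $[3_1]$ the element of order two. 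You predict the opposite in each case ($3_1\Hash 3_1\seq 3_1$, $4_1\Hash 4_1\seq 3_1$, $3_1\Hash 4_1\seq 4_1$), so the diagrammatic verifications you plan would be attempts to prove false statements --- e.g.\ $3_1\Hash 3_1\seq 3_1$ cannot hold, since $3_1\Hash 3_1\seq 4_1$ while $3_1\not\seq 4_1$ by the very coloured-untying lower bound you invoke. The source of the error is the assumption that $\mathrm{cu}$ is ``additive-ish'' under connect sum; the identities above show it is not ($\mathrm{cu}(3_1)$ is trivial, yet $\mathrm{cu}(3_1\Hash 3_1)=\mathrm{cu}(4_1)=s_1$), so it cannot be used even heuristically to guess the targets.

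The second missing ingredient is how $3_1^l\seq 3_1^r$ is obtained. You propose to exhibit a single twist move carrying one mirror trefoil to the other, with no evidence that such a move exists; the paper never does this. Instead it derives the equivalence formally from the three lemma identities:
\begin{equation*}
3_1^r\ \seq\ 3_1^l\Hash 4_1\ \seq\ 3_1^l\Hash 3_1^r\Hash 3_1^r\ \seq\ 4_1\Hash 3_1^r\ \seq\ 3_1^l .
\end{equation*}
The identity $3_1^l\Hash 4_1\seq 3_1^r$ --- a twist move converting a trefoil-plus-figure-eight into the \emph{mirror} trefoil --- is the key diagrammatic input your outline does not identify, and without it (or a substitute) the collapse from the residual representatives $3_1^l,3_1^r,4_1$ and the four connect sums down to two classes does not go through. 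To repair the proposal you would need to discover and prove the correct versions of the three identities, which is exactly the paper's Lemma \ref{L:A4idents}.
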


\subsection{Preliminaries}\label{SS:A4Prelim}

According to our conventions, $\rho$ sends Wirtinger generators to elements of the coset $t\left(\mathds{Z}/2\mathds{Z}\right)^{2}$. To simplify notation we write its elements $\left\{t,ts_1,ts_2,ts_1s_2\right\}$ as $\left\{a,b,c,d\right\}$ correspondingly. Let $Q$ denote the conjugation quandle whose elements are $\left\{a,b,c,d\right\}$ and whose quandle operation is given by Table \ref{T:A4Table}. This table is found also in \cite[Figure 2]{HN05}.

\begin{figure}
\begin{minipage}[b]{0.4\textwidth}
\begin{center}
\begin{tabular}{c | c c c c}
$x\ast y$ & a & b & c & d\\ \hline
\rule{0pt}{13pt}a & a & d & b & c \\
b & c & b & d & a\\
c & d & a & c & b\\
d & b & c & a & d\\
\end{tabular}
\end{center}
\end{minipage}%
\quad\quad
\begin{minipage}[c]{0.2\textwidth}
\psfrag{a}[c]{$y$}\psfrag{b}[c]{$x$}\psfrag{c}[c]{$x\ast y$}
\includegraphics[width=50pt]{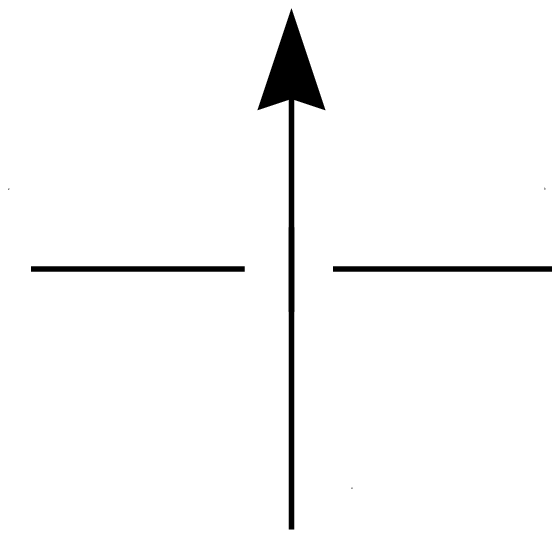}
\end{minipage}\ \
\begin{minipage}[c]{0.2\textwidth}
\psfrag{a}[c]{$y$}\psfrag{b}[l]{$x\ast y$}\psfrag{c}[c]{$x$}
\includegraphics[width=50pt]{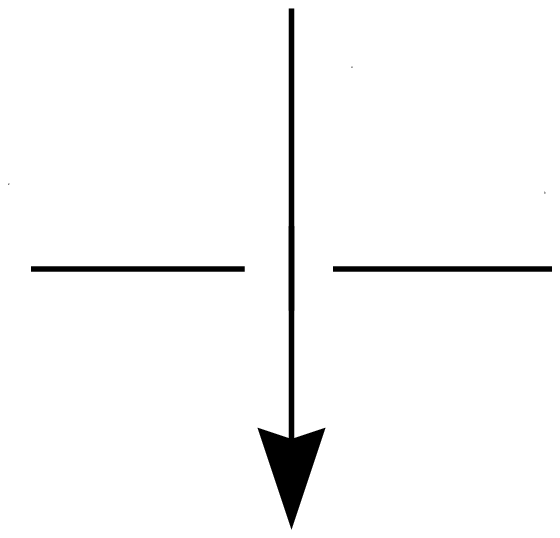}
\end{minipage}
\renewcommand{\figurename}{Table\rule{0pt}{11pt}}
\caption{\label{T:A4Table}Conjugation table for $Q$.}
\end{figure}


The connect-sum $(K_1,\rho_1)\Hash(K_2,\rho_2)$ of $A_4$-coloured knots $(K_{1,2},\rho_{1,2})$ is well-defined, and does not depend on the choice of basepoints, as proven \cite[Lemma 4]{Mos06b}. If one of the connect summands is an invertible knot (ambient isotopic to itself with the opposite orientation), and if its $A_4$-colouring is unique up to inner automorphism, then the connect sum is independent of the choice of orientations. This implies in particular the following.

\begin{lem}\label{L:consumwelldef}
If $K_{1,2}$ are connect sums of trefoil knots and of figure-eight knots, and if $\rho_{1,2}$ are their corresponding unique $A_4$-colourings, then $(K_1,\rho_1)\Hash(K_2,\rho_2)$ is independent of the orientations of $K_{1,2}$.
\end{lem}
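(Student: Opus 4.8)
The plan is to prove Lemma \ref{L:consumwelldef} by reducing the orientation-independence of the connected sum to the hypotheses of the orientation-insensitivity criterion stated in the preceding paragraph, namely that a connect summand which is both \emph{invertible} and carries an $A_4$-colouring \emph{unique up to inner automorphism} contributes to the connect sum in a way that does not depend on its chosen orientation. The two building blocks are the trefoil $3_1$ and the figure-eight knot $4_1$, so first I would verify that each of these, with its $A_4$-colouring, satisfies both hypotheses.

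First I would establish invertibility. The figure-eight knot $4_1$ is amphichiral and invertible, and the trefoil $3_1$ is invertible (it is a $(2,3)$-torus knot, and all torus knots are invertible); these are classical facts requiring no computation here. Next I would address uniqueness of the colouring up to inner automorphism. By Example \ref{Ex:A_4} together with Proposition \ref{P:HNN}, an $A_4$-colouring of a genus-one knot is encoded by a colouring vector $\V=(v_1;v_2)\in A^{2}$ whose entries generate $A=(\mathds{Z}/2\mathds{Z})^2$ and satisfy $\M^{\,T}\V=\M\,t\cdot\V$. For the trefoil and the figure-eight, whose Seifert matrices realize the $S$--equivalence class represented by $(s_1;s_2)$, I would check that any two solution vectors differ by the action of the symmetry realizing reorientation, and that this action is realized by an inner automorphism of $A_4$ via Lemma \ref{L:inneriso}. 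In other words, reversing the orientation of one of these knots conjugates all Wirtinger labels, and since $A_4$ is normally generated by $\rho(\mu)$ (as used in the proof of Lemma \ref{L:inneriso}), this conjugation is an inner automorphism, so the reoriented colouring is ambient-isotopic to the original by Lemma \ref{L:inneriso}.

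Having verified both hypotheses for each prime summand, I would invoke the criterion quoted from \cite[Lemma 4]{Mos06b}: the connect sum $(K_1,\rho_1)\Hash(K_2,\rho_2)$ is independent of basepoints, and if one summand is invertible with colouring unique up to inner automorphism then the sum is independent of that summand's orientation. Since $K_{1,2}$ are iterated connect sums of trefoils and figure-eights, I would argue by induction on the number of prime summands: at each stage one splits off a single trefoil or figure-eight factor, which is invertible with colouring unique up to inner automorphism, so its orientation may be reversed freely without changing the $A_4$-coloured connect sum; the associativity and commutativity of the (coloured) connect sum then let one reorder factors so that every summand's orientation is addressed in turn.

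The main obstacle I anticipate is the uniqueness-up-to-inner-automorphism step. Invertibility of $3_1$ and $4_1$ is standard, and the inductive assembly is formal, but one must genuinely check that the finitely many colouring vectors solving $\M^{\,T}\V=\M\,t\cdot\V$ over $(\mathds{Z}/2\mathds{Z})^2$ for these specific Seifert matrices fall into a single orbit under the combined action of inner automorphisms of $A_4$ and orientation reversal. This is a small explicit computation in $A_4$ (tracking how the cyclic action $\phi$ of $N=\left[\begin{smallmatrix}0&1\\-1&-1\end{smallmatrix}\right]$ permutes the non-identity elements of $A$, equivalently how conjugation permutes the labels $b,c,d$ via Table \ref{T:A4Table}), and care is needed because $A_4$ has trivial centre so not every automorphism we might wish to use is inner; the content of the lemma is precisely that the automorphism induced by reorientation happens to be inner.
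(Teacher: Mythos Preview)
Your proposal is correct and follows exactly the route the paper takes: the paper gives no separate proof environment for this lemma but simply states the criterion (invertible summand with $A_4$-colouring unique up to inner automorphism) in the paragraph immediately preceding it and then writes ``This implies in particular the following.'' Your write-up is a faithful elaboration of that one-line deduction, supplying the verifications (invertibility of $3_1$ and $4_1$, uniqueness of their colourings via Proposition~\ref{P:HNN} and Lemma~\ref{L:inneriso}) and the inductive assembly that the paper leaves implicit.
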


\subsection{Proof of Theorem {\ref{T:A4Theorem}}}

We identify some $\rho$--equivalences between trefoils and figure-eight knots by explicitly finding sequences of twist moves which relate them. The notation $(K_1,\rho_1)\seq (K_2,\rho_2)$ means that $(K_{1,2},\rho_{1,2})$ are $\rho$--equivalent.

\begin{lem}\label{L:A4idents}\hfill
\begin{enumerate}
\item $3_1^l\Hash 4_1\seq 3_1^r$ and by reflection $3_1^r\Hash 4_1\seq 3_1^l$.
\item $3_1^r\Hash 3_1^r\seq 4_1$.
\item $3_1^l\Hash 3_1^r\seq 4_1$.
\end{enumerate}
\end{lem}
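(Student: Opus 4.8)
The plan is to prove each of the three $\rho$--equivalences in Lemma~\ref{L:A4idents} by exhibiting an explicit finite sequence of twist moves, working entirely with $A_4$--coloured knot diagrams (labelled by the quandle $Q$ of Table~\ref{T:A4Table}) rather than with surface data. Recall that a twist move is defined whenever a word $g_1^{\epsilon_1}\cdots g_r^{\epsilon_r}$ in the arc-labels vanishes in $A_4$; in practice the most useful instances are twists around a small bundle of strands whose labels multiply (with signs for orientation) to the identity, e.g.\ a strand threaded through itself, or two oppositely-oriented parallel strands with the same label, or a triple of strands labelled $a,b,c$ with $abc=1$. The general strategy for each item is: start from a standard $A_4$--coloured diagram of the left-hand side; locate a configuration of strands whose labels admit a twist; perform the twist to change a crossing or to introduce/cancel a clasp; and iterate until a standard diagram of the right-hand side is reached.

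First I would handle item~(2), $3_1^r\Hash 3_1^r\seq 4_1$, and item~(3), $3_1^l\Hash 3_1^r\seq 4_1$, since these are of the form "connect-sum of two genus-one coloured knots $\seq$ figure-eight''. Here the natural approach is to take the genus-$2$ Seifert surface of the connect sum, whose surface data is the block sum of two genus-one pieces with colouring vector $(s_1;s_2;s_1;s_2)$, and to note that by the $\bar\rho$--classification of Section~\ref{S:A4} this is already $\bar\rho$--equivalent (hence $\rho$--equivalent up to the remaining null-twist issues) to the appropriate genus-$2$ representative; then a single twist move (not a null-twist) across the two summands collapses the linkage and destabilises down to the genus-one figure-eight. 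Concretely I would draw the band picture, identify the two bands joining the summands as carrying labels whose product is trivial in $A_4$ (using that $s_1,s_2$ and $s_1s_2$ pairwise multiply appropriately under the $\phi$-action), twist to unlink, and destabilise. Item~(1), $3_1^l\Hash 4_1\seq 3_1^r$, is essentially the statement that connect-summing with the $A_4$--coloured figure-eight changes chirality of the trefoil's colouring; I would prove it the same way, or alternatively derive it formally once (2) and (3) are known, since $3_1^l\Hash 3_1^r\seq 4_1$ and $3_1^r\Hash 3_1^r\seq 4_1$ together with associativity/commutativity of $\Hash$ (Lemma~\ref{L:consumwelldef} and \cite[Lemma~4]{Mos06b}) force $3_1^l\Hash 4_1 \seq 3_1^l\Hash 3_1^l\Hash 3_1^r \seq 3_1^l\Hash 3_1^l\Hash 3_1^r$; chasing these relations in the semigroup $S$ pins down $3_1^l\Hash 4_1$ as $3_1^r$ provided one also knows $3_1^l\Hash 3_1^l\seq 4_1$, which follows from (2) by reflection. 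The reflection symmetry $\ell\leftrightarrow r$ (mirror image, which sends a twist move to a twist move) gives the "by reflection'' halves for free.

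The main obstacle I expect is item~(1) in isolation: unlike (2) and (3), it is not a statement about destabilising a connect sum of two "small'' pieces down to something smaller, but rather a chirality-flipping identity, and a direct diagrammatic proof requires genuinely finding the right sequence of Reidemeister-plus-twist moves — there is real bookkeeping in tracking the quandle labels through each move to verify that every twist performed is legal (its defining word vanishes in $A_4$), and in particular one must be careful that the moves used are honest twist moves and not merely null-twists or ambient isotopies. My preferred route is therefore to prove (2) and (3) diagrammatically (where the twist that unlinks the two summands is visibly legal because the connecting bands carry labels whose signed product is $1$), and then obtain (1) purely algebraically inside the commutative semigroup $S$ from the relations already established, invoking Lemma~\ref{L:consumwelldef} to make all the connect sums well-defined independent of orientation. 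A secondary point to be careful about: one must check that the genus-$2$ representatives used for the connect-sum $S$--equivalence class really do connect-sum-decompose as claimed (this is where \cite[Lemma~4]{Mos06b} and Lemma~\ref{L:consumwelldef} are used), so that "$3_1^l\Hash 4_1$'' as a $\rho$--equivalence class is unambiguous before we start manipulating it.
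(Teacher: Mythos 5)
Your diagrammatic plan for items (2) and (3) is essentially the paper's: the paper proves all three identities by exhibiting explicit sequences of twist moves and isotopies on labelled diagrams (for (3) it additionally needs a small trick, removing four full twists by a pair of null-twists, before the figure-eight appears). So far so good, modulo the unverified bookkeeping you acknowledge.

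The genuine gap is your preferred treatment of item (1). You propose to avoid a direct diagrammatic proof by deriving $3_1^l\Hash 4_1\seq 3_1^r$ algebraically in the commutative semigroup $S$ from $3_1^l\Hash 3_1^l\seq 4_1$, $3_1^r\Hash 3_1^r\seq 4_1$ and $3_1^l\Hash 3_1^r\seq 4_1$. This cannot work. Writing $x,y,z$ for the classes of $3_1^l,3_1^r,4_1$, those three relations say $x^2=y^2=xy=z$; from them the most you can deduce about $xz$ is
\[
xz \seq x(y\Hash y)\seq (xy)\Hash y\seq zy\seq y^3\seq x^3,
\]
i.e.\ all length-three words coincide. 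A semigroup has no cancellation, and nothing in these relations equates a length-three word with the length-one word $y$; the commutative semigroup presented by $\langle x,y,z\mid x^2=y^2=xy=z\rangle$ genuinely fails to satisfy $xz=y$ (it is the free commutative semigroup on $x,y$ with all words of each fixed length $\geq 2$ identified). The content of item (1) is precisely that $4_1$ acts as an identity on the trefoil classes, and that is independent of the squaring relations (2), (3) and their reflections; indeed the paper's proof of Theorem \ref{T:A4Theorem} needs (1) as a separate input to conclude $3_1^l\seq 3_1^r$ at all. So item (1) must be established directly. (It is in fact the easiest of the three: the paper does it with a single twist move, around three strands of the figure-eight summand whose labels multiply to $1$ in $A_4$, which unknots that summand and leaves the right-handed trefoil.) A smaller quibble: the $\bar\rho$--classification cannot do any of the work here, since each identity in the lemma relates knots with different colouring vectors, hence in different $\bar\rho$--equivalence classes; every step that changes the class must be an honest non-null twist move, so there is no shortcut through Section \ref{S:A4}'s $\bar\rho$--representatives.
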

\begin{proof}
\hfill
\begin{enumerate}
\item
\begin{multline*}
\begin{minipage}{110pt}
\psfrag{1}[c]{$b$}\psfrag{2}[c]{$c$}\psfrag{4}[c]{$a$}\psfrag{5}[c]{$d$}
\includegraphics[width=110pt]{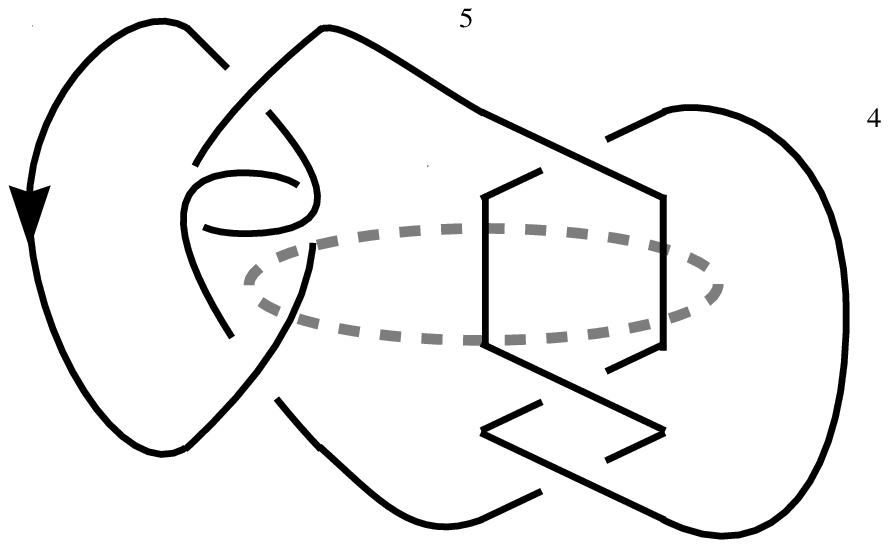}
\end{minipage}\ \overset{\raisebox{2pt}{\scalebox{0.8}{\text{twist}}}}{\Longleftrightarrow}\ \
\begin{minipage}{110pt}
\psfrag{1}[c]{$b$}\psfrag{2}[c]{$c$}\psfrag{4}[b]{$a$}\psfrag{5}[c]{$d$}
\includegraphics[width=110pt]{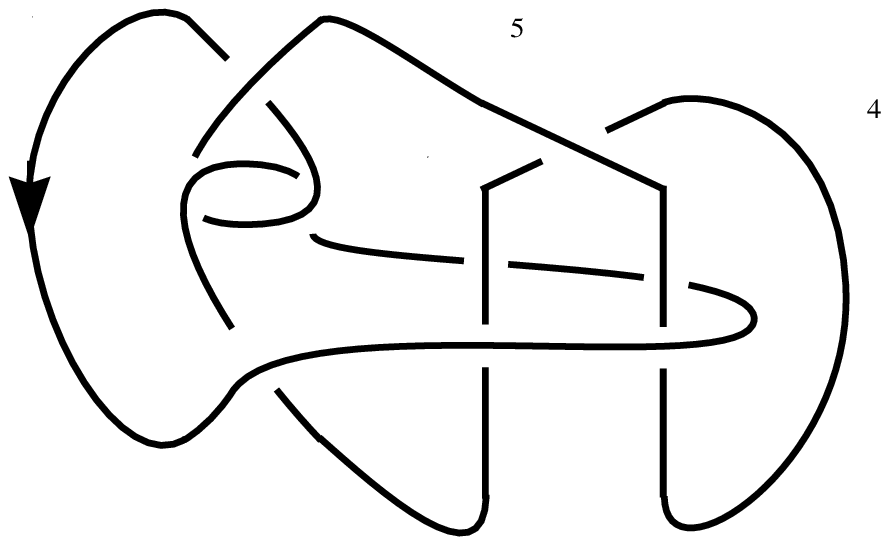}
\end{minipage}
\overset{\raisebox{2pt}{\scalebox{0.8}{\text{isotopy}}}}{\Longleftrightarrow}\ \
\begin{minipage}{65pt}
\psfrag{a}[c]{$a$}\psfrag{b}[c]{$b$}
\includegraphics[width=65pt]{T32r}
\end{minipage}
\end{multline*}

\item
\begin{multline*}
\begin{minipage}{120pt}
\psfrag{1}[c]{$c$}\psfrag{2}[c]{$b$}\psfrag{3}[c]{$d$}\psfrag{5}[c]{$b$}\psfrag{6}[c]{$c$}\psfrag{7}[c]{$a$}
\includegraphics[width=120pt]{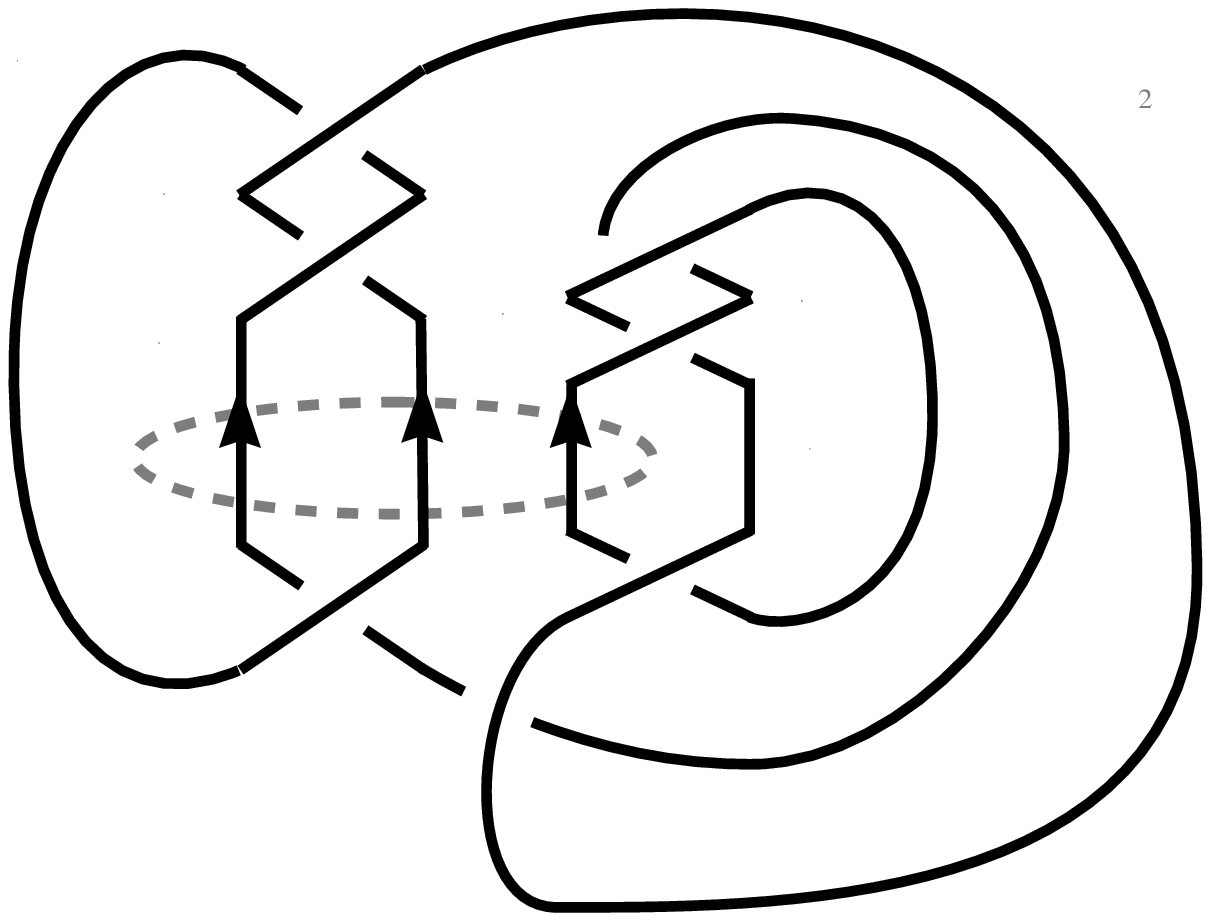}
\end{minipage}\ \ \overset{\raisebox{2pt}{\scalebox{0.8}{\text{twist}}}}{\Longleftrightarrow}\ \
\begin{minipage}{120pt}
\psfrag{1}[c]{$c$}\psfrag{2}[c]{$b$}\psfrag{3}[c]{$d$}\psfrag{5}[c]{$b$}\psfrag{6}[c]{$c$}\psfrag{7}[c]{$a$}
\includegraphics[width=120pt]{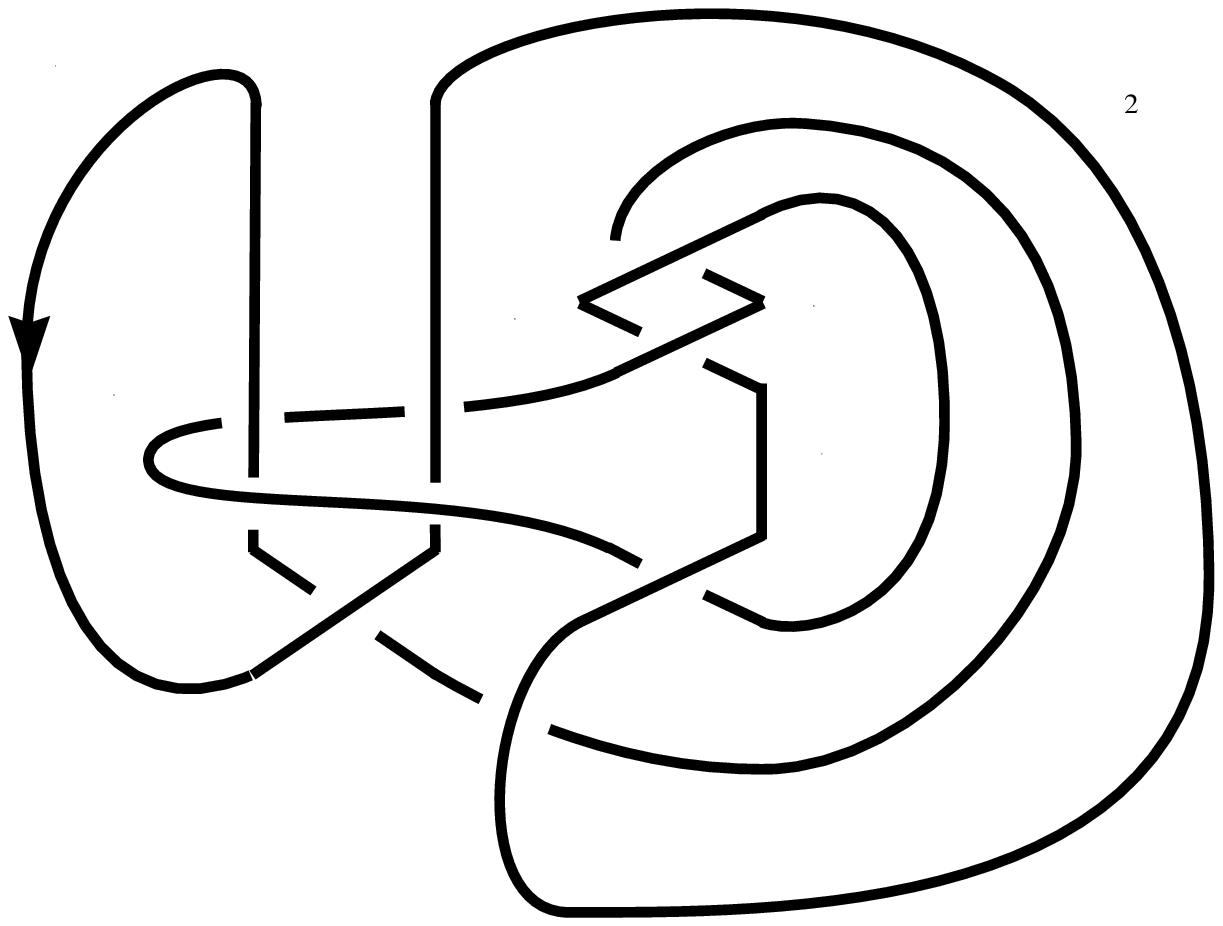}
\end{minipage}\\[0.3cm]
\ \ \overset{\raisebox{2pt}{\scalebox{0.8}{\text{isotopy}}}}{\Longleftrightarrow}\ \
\begin{minipage}{90pt}
\psfrag{1}[c]{$a$}\psfrag{2}[c]{$b$}
\includegraphics[width=90pt]{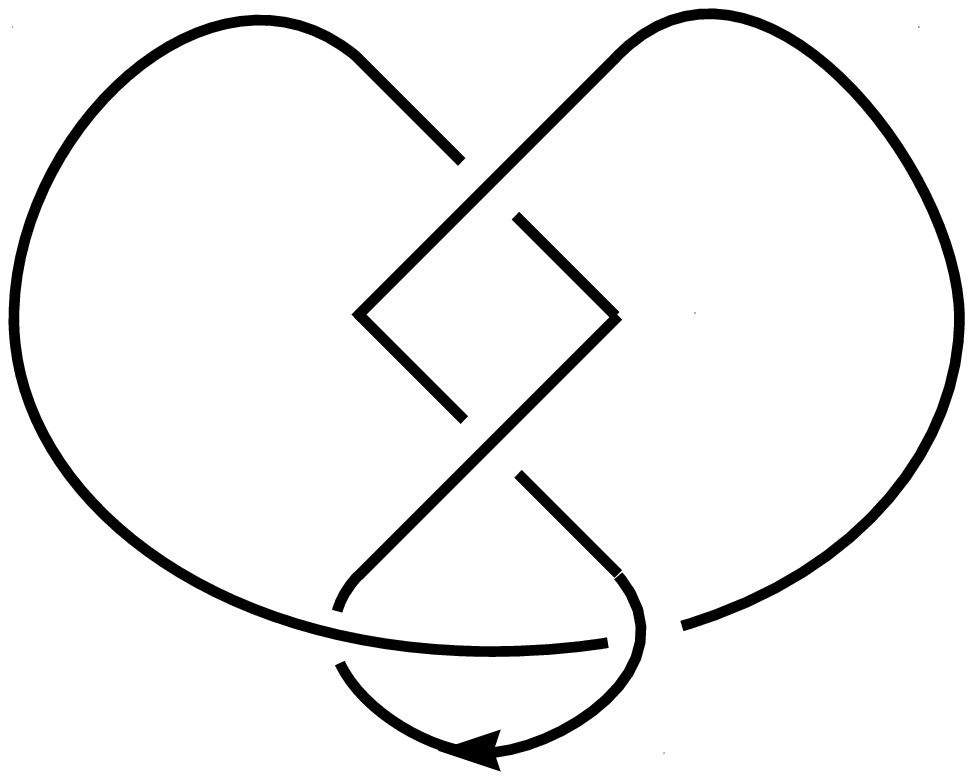}
\end{minipage}
\end{multline*}

\item
\begin{multline*}
\begin{minipage}{110pt}
\psfrag{1}[c]{$a$}\psfrag{8}[c]{$d$}\psfrag{2}[c]{$b$}\psfrag{5}[c]{$b$}\psfrag{3}[c]{$c$}\psfrag{4}[c]{$a$}
\includegraphics[width=110pt]{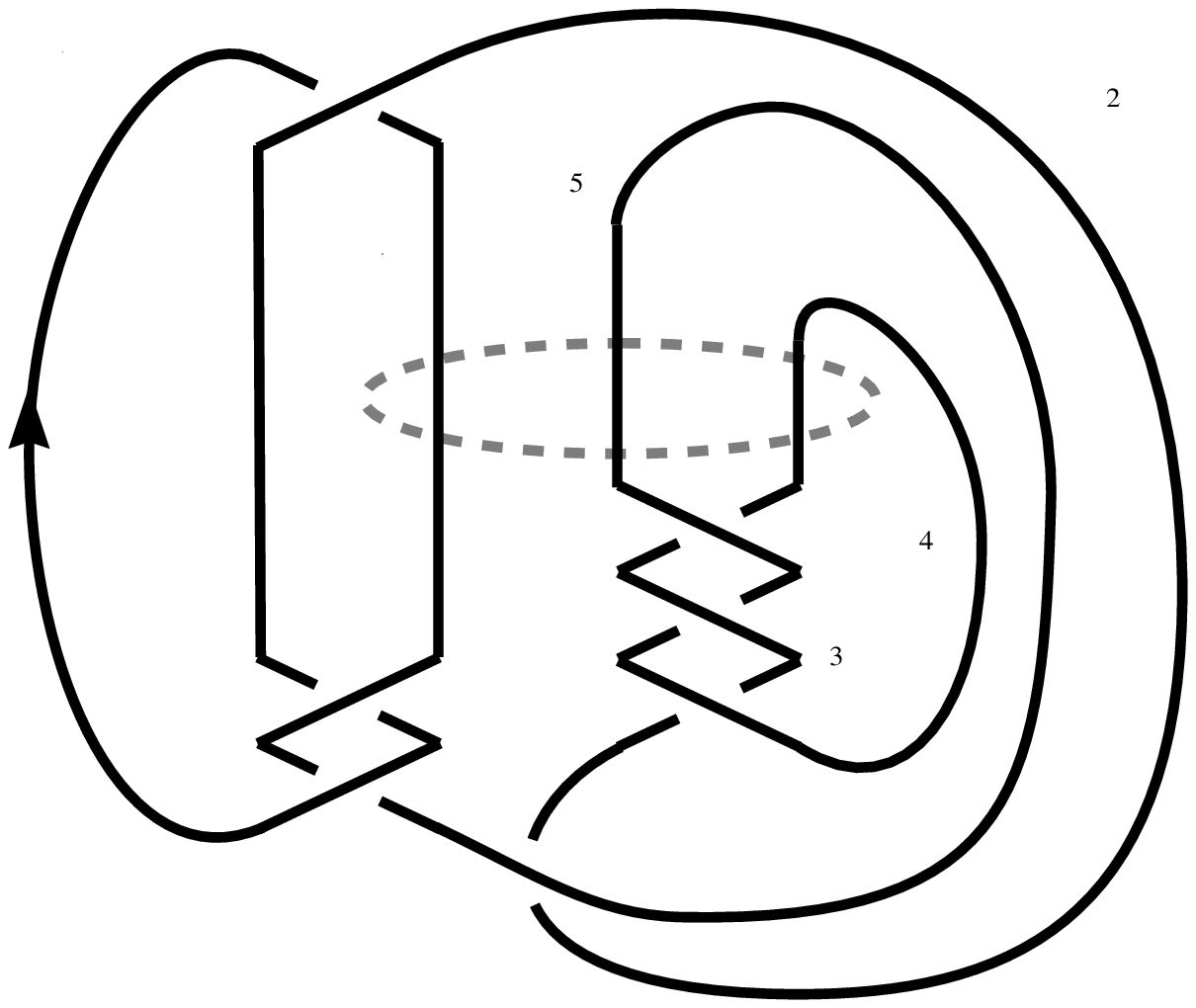}
\end{minipage}\ \ \ \overset{\raisebox{2pt}{\scalebox{0.8}{\text{twist}}}}{\Longleftrightarrow}\ \
\begin{minipage}{110pt}
\psfrag{1}[c]{$a$}\psfrag{6}[c]{$b$}\psfrag{8}[c]{$b$}\psfrag{9}[c]{$d$}\psfrag{2}[c]{$a$}
\includegraphics[width=110pt]{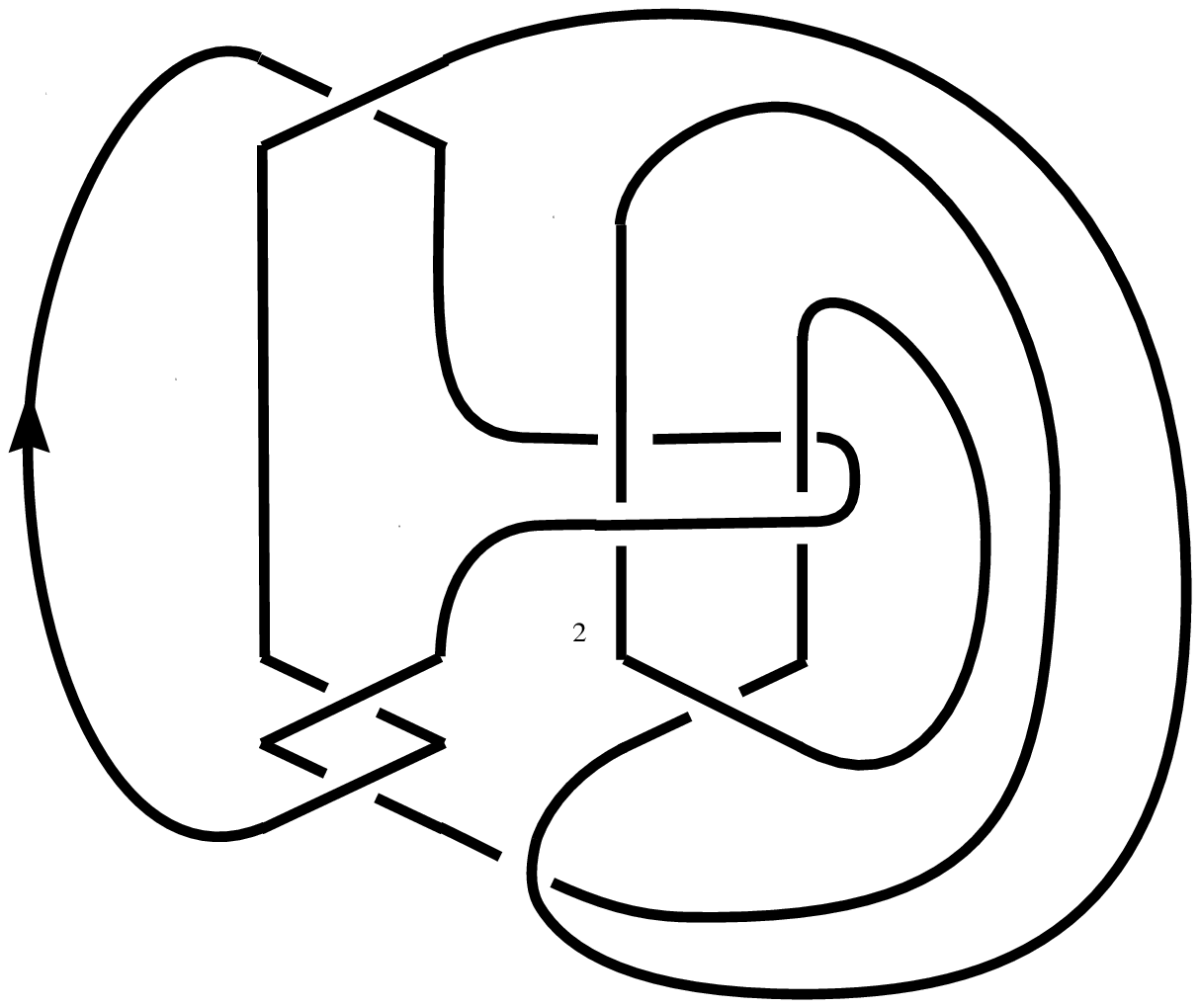}
\end{minipage}\\[0.3cm]
\overset{\raisebox{2pt}{\scalebox{0.8}{\text{isotopy}}}}{\Longleftrightarrow}\
\begin{minipage}{90pt}
\psfrag{a}[c]{$a$}\psfrag{b}[c]{$b$}
\includegraphics[width=90pt]{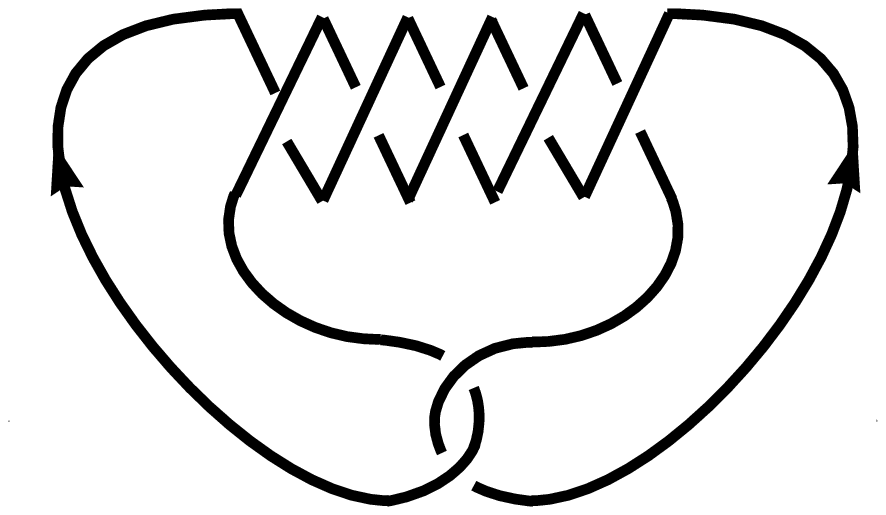}
\end{minipage}
\ \ \overset{\raisebox{2pt}{\scalebox{0.8}{{\ref{E:add4twist}}}}}{\Longleftrightarrow}\
\begin{minipage}{90pt}
\psfrag{a}[c]{$a$}\psfrag{b}[c]{$b$}
\includegraphics[width=90pt]{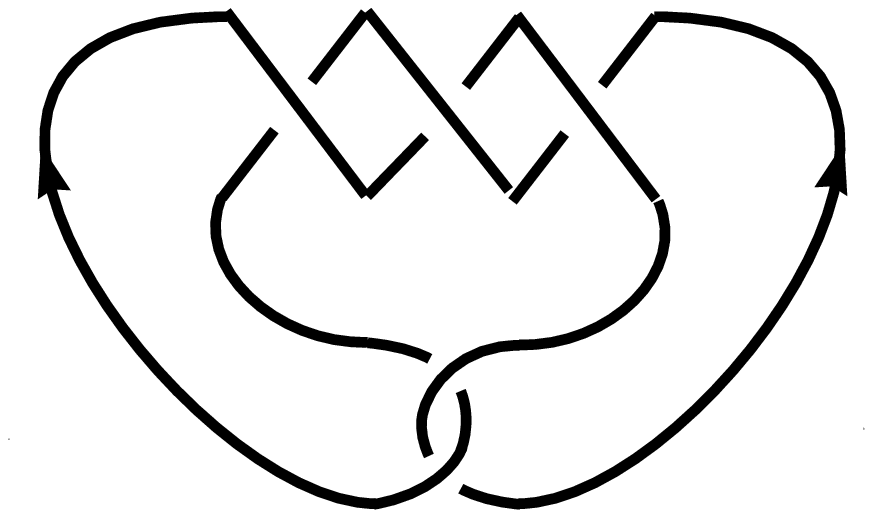}
\end{minipage}\\
\overset{\raisebox{2pt}{\scalebox{0.8}{\text{isotopy}}}}{\Longleftrightarrow}\ \  \begin{minipage}{80pt}
\psfrag{1}[c]{$a$}\psfrag{2}[c]{$b$}
\includegraphics[width=80pt]{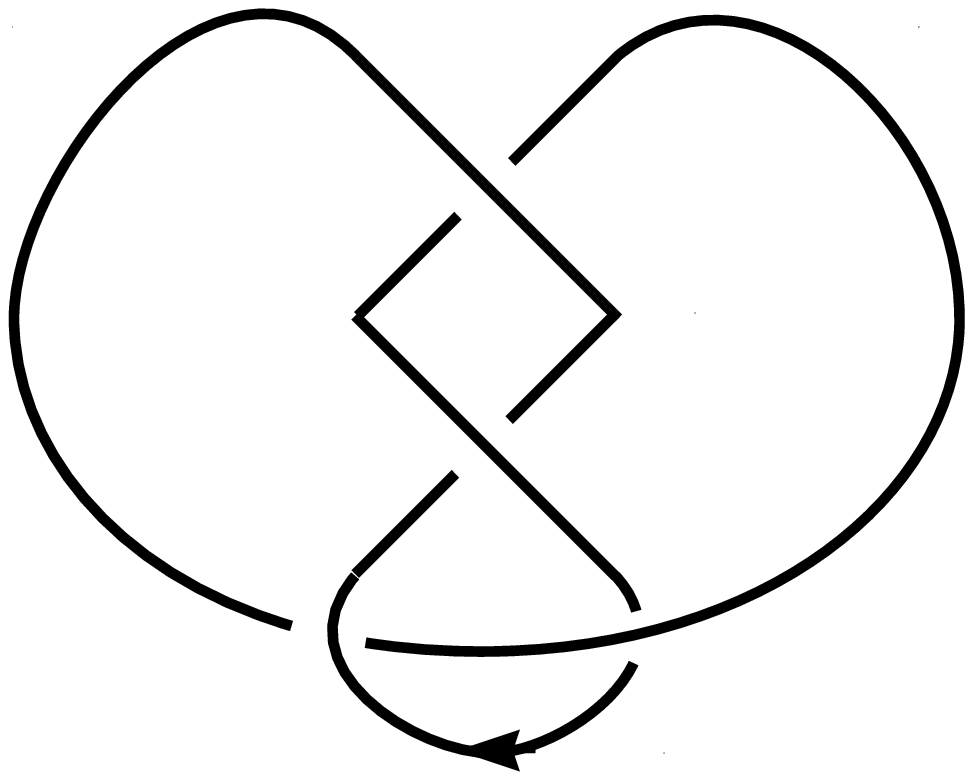}
\end{minipage}
\end{multline*}

\noindent where for the penultimate step we subtract four full twists via the following sequence of null-twists:

\begin{equation}\label{E:add4twist}
\begin{minipage}{20pt}
\includegraphics[height=120pt]{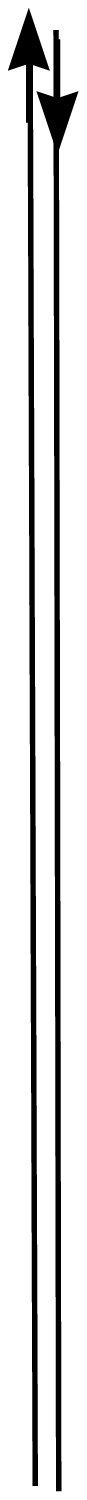}
\end{minipage}\overset{\raisebox{2pt}{\scalebox{0.8}{\text{isotopy}}}}{\Longleftrightarrow}\ \ \
\begin{minipage}{60pt}
\includegraphics[height=120pt]{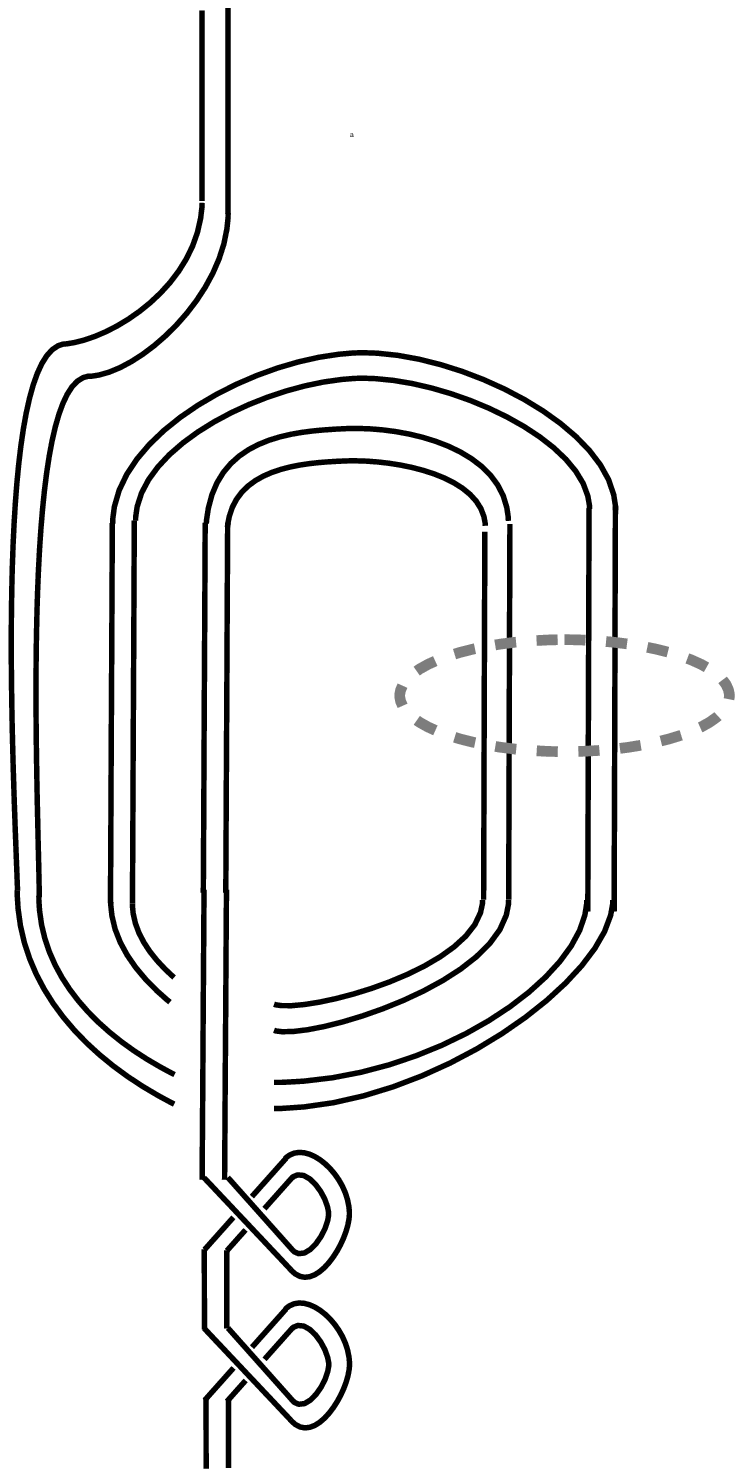}
\end{minipage}\ \overset{\raisebox{2pt}{\scalebox{0.8}{\text{null-twist}}}}{\Longleftrightarrow}\
\begin{minipage}{60pt}
\includegraphics[height=120pt]{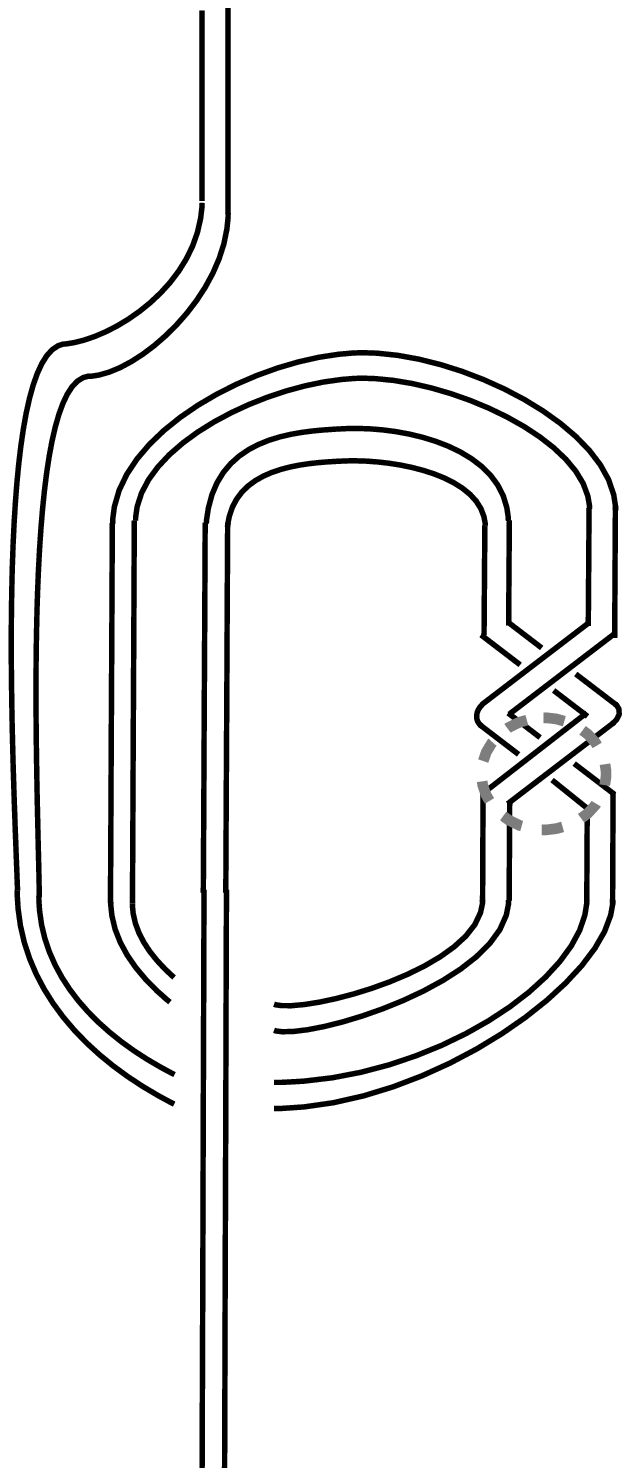}
\end{minipage}
\overset{\raisebox{2pt}{\scalebox{0.8}{\text{null-twist}}}}{\Longleftrightarrow}\ \ \
\begin{minipage}{30pt}
\includegraphics[height=120pt]{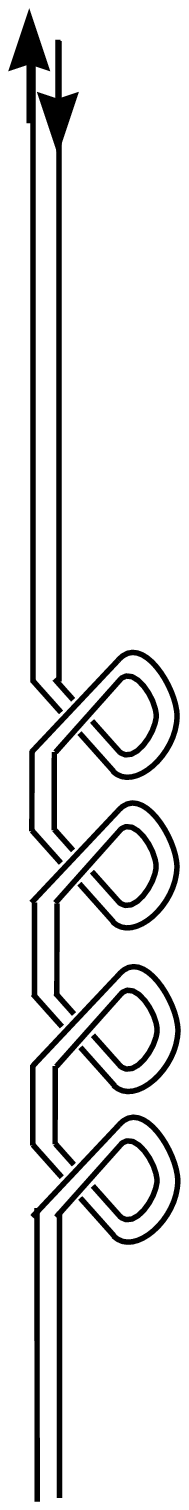}
\end{minipage}.
\end{equation}
\end{enumerate}
\end{proof}

\begin{proof}[Proof of Theorem {\ref{T:A4Theorem}}]

As a corollary to Lemma \ref{L:A4idents} we have

\begin{equation}
3_1^r\seq 3_1^l\Hash 4_1 \seq 3_1^r\Hash 3_1^l\Hash 3_1^r \seq 3_1^r\Hash 4_1\seq 3_1^l.
\end{equation}

Thus, up to $\rho$--equivalence, there is no need to distinguish between $3_1^r$ and $3_1^l$, and we may call them both $3_1$. By looking back at our list of representatives of $\bar\rho$--equivalence classes, we now know that any $A_4$ coloured knot is $\rho$--equivalent to one of $\left\{3_1,4_1,3_1\Hash 3_1,3_1\Hash 4_1,4_1\Hash 4_1\right\}$. The classes of $3_1\Hash 3_1$ and of $4_1$ are the same by Lemma \ref{L:A4idents}, as are the classes of $3_1\Hash 4_1$ and of $3_1$. Finally, the classes of $4_1\Hash 4_1$ and of $4_1$ are the same, because

\begin{equation}
4_1\Hash 4_1 \seq 4_1\Hash 3_1\Hash 3_1\seq 4_1\Hash 3_1\seq 4_1.
\end{equation}

Therefore the map $\psi$, which maps $0$ to $3_1$ and $1$ to $4_1$, is a bijection of groups where the connect-sum gives rise to the group operation on $S$.
\end{proof}

\section{Additional questions}\label{S:conclusion}

We have classified $G$--coloured knots up to $\rho$--equivalence for a large class of metabelian groups $G=\mathcal{C}_m\ltimes A$ with $\Rank(A)\leq 2$. This work raises a number of additional questions.

\begin{enumerate}
\item Classify $G$--coloured knots up to $\rho$--equivalence for a wider class of groups. The particularly interesting cases seem to be:
    \begin{itemize}
    \item For metacyclic groups with $\Ab\,G\approx \mathcal{C}_3$, we have classified $G$--coloured knots up to $\bar\rho$--equivalence. However the coloured untying invariant is trivial, so we have no lower bound on the number of $\rho$--equivalence classes.
    \item For metabelian groups $G$ with $\Rank(A)>2$, the techniques are the same but the matrices are bigger, and one must take the $Y$--obstruction into account. In general, can you determine for which groups our invariants classify $G$--coloured knots up to $\bar\rho$--equivalence?
    \item Polycyclic groups. How can our methods be iterated?
    \item The symmetric group $S^4$ and the alternating group $A^5$ are finite subgroups of $SO(3)$, and the classification of their $\rho$--equivalence classes looks interesting.
    \item It makes sense to consider the $\rho$--equivalence classification problem not only for groups, but also for more general quandle colourings.
    \end{itemize}
\item For $G$ metabelian, classify $G$--coloured links, perhaps in $3$--manifolds, up to $\rho$--equivalence. My guess is that one would need to figure out how handle with the maximal abelian covering directly, instead of using a Seifert matrix.
\item In order to apply our classification results to the construction of quantum topological invariants, the base knots have to be sufficiently `nice'. What are the conditions on $G$ for each $G$--coloured knot to be $\rho$--equivalent to:
    \begin{itemize}
    \item A knot with unknotting number 1?
    \item A fibred knot?
    \end{itemize}
\item Find a conceptual reason that different flavours of $\rho$--equivalence should coincide for some groups but not for others. Can this be detected homologically?
\end{enumerate}

\bibliographystyle{amsplain}

\end{document}